\newtheorem{lem}{\bf Lemma}
\newtheorem{theo}[lem]{\bf Theorem}
\newtheorem{coro}[lem]{\bf Corollary}
\newtheorem{prop}[lem]{\bf Proposition}
\theoremstyle{definition}
\newtheorem{definition}[lem]{\bf Definition}
\newtheorem{rem}[lem]{\bf Remark}
\newtheorem{ex}[lem]{\bf Example}
\renewcommand{\descriptionlabel}[1]%
     {\hspace{\labelsep}\textsf{#1}}
\newtheorem*{lemma*}{\bf Lemma}
\newtheorem*{theorem*}{\bf Theorem}
\theoremstyle{remark}
\newcommand\F{{\mathbb F}}
\newcommand\Fhat{{\widehat{\mathbb F}}}
\def\(({{(\!(}}
\def\)){{)\!)}}
\def\vep{{\varepsilon}}
\def\ha{{\,\sf Haus}}
\def\d{{\bf d}}
\def\rs{{\sf rs}}
\def\cs{{\sf cs}}
\def\L{{\mathcal L}}
\def\Mbar{{\overline{\mathcal M}}}
\newcommand\bF{{\bf F}} 
\newcommand\C{{\mathbb C}}
\newcommand\wC{\widehat{\mathbb C}}
\newcommand\wR{\widehat{\mathbb R}}
\def\U{{\mathfrak U}}
\newcommand\Z{{\mathbb Z}}
\newcommand\R{{\mathbb R}}
\newcommand\bP{{\mathbb P}}
\newcommand\PGL{{\rm PGL}}
\newcommand\cD{{\mathcal D}}
\newcommand\cC{{\mathscr C}}
\newcommand\ssm{{\smallsetminus}}
\def\bK{{\bf K}}
\def\PGL{{\rm PGL}}
\def\GL{{\rm GL}}\def\x{{\bf x}}
\def\HG{{H_1/N}}
\def\fg{{\mathfrak g}}
\def\cB{{\mathcal B}}
\def\cC{{\mathcal C}}
\def\cS{{\mathcal S}}
\def\bT{{\bf T}}
\def\bP{{\mathbb P}}
\def\Q{{\mathbb Q}}
\def\cH{{\mathcal H}}
\def\g{{\bf g}}
\def\p{{\bf p}}
\def\q{{\bf q}}
\def\r{{\bf r}}
\def\msk{\medskip}
\def\bsk{\bigskip}
\def\ssk{\smallskip}
\def \M {{\mathbb M}}
\def\cM{{\mathcal M}}
\def \bX {{\bf X}}
\def\floor{{\bf floor}}
\def\xx{{\bf x}}
\def\yy{{\bf y}}
\def\zz{{\bf z}}
\def\g{{\bf g}}
\def\h{{\bf h}}
\def\v{{\bf v}}
\def\cA{{\mathcal A}}
\def\bpi{{\boldsymbol\pi}}
\def\bbet{{\boldsymbol\beta}}
\def\bgam{{\boldsymbol\gamma}}
\def\bdel{{\boldsymbol\delta}}
\def\bep{{\boldsymbol\varepsilon}}
\def\vep{{\varepsilon}}
\def\bzet{{\boldsymbol\zeta}}
\def\balph{{\boldsymbol\alpha}}
\def\brh{{\boldsymbol\rho}}
\def\bmu{{\boldsymbol\mu}}
\def\bph{{\boldsymbol\varphi}}
\def\sm{{\,\sf sm}}
\def\bY{{\bf Y}}
\def\fs{{\,\sf fs}}
\def\eqv{{\sim}}
\def\fC{{\mathfrak C}}
\def\wfC{{\widehat{\mathfrak C}}}
\def\dim{{\rm dim}}
\def\irr{{\,\sf irr}}
\def\fW{{\mathfrak W}} %{{ W}}
\def\fD{{\mathfrak D}}
\def\fDno{{{\mathfrak D}_n^{\sf ord}}}
\def\wfD{{\widehat{\mathfrak D}}}
\def\fM{{\mathfrak M}}
\def\bJ{{\bf J}}
\def\fS{{\mathfrak S}}
\def\bG{{\bf G}}
\def\bE{{\bf E}}
\def\bH{{\bf H}}
\def\fV{{\mathfrak V}}
\def\<{{\langle}}
\def\>{{\rangle}}
\def\qu{{\setminus}}
\def\gr{{\g_{\textstyle r}}}
\def\fN{{\mathfrak N}}
\def\vp{{\stackrel{\to}{\p}}}
\def\bS{{\bf S}}
 \def\Pdual{{\bP^{2\textstyle{*}}}}
 \def\cR{{\mathcal R}}
\begin{document}

\setcounter{footnote}{1}
\setcounter{equation}{0}

\title[Group Actions, Divisors and Plane Curves]
{Group Actions, Divisors, and Plane Curves}

\author[Araceli Bonifant and John Milnor]
{Araceli Bonifant$^1$ and John Milnor$^2$}

\footnotetext[1]{Mathematics Department, University of Rhode Island; e-mail:\hfill{\mbox{}}\\ 
\hbox{bonifant@uri.edu}}
\footnotetext[2]{Institute for Mathematical Sciences, Stony Brook University;
e-mail: \hfill{\mbox{}}\\\hbox{jack@math.stonybrook.edu }}

\date{}

\keywords{effective 1-cycles, moduli space of curves, smooth complex curves, 
stabilizer  of curves,  algebraic set, group actions,  proper action, 
improper action,  orbifolds,  weakly locally proper, effective action, 
rational homology  manifold, tree-of-spheres, W-curves, moduli space of 
divisors, Deligne-Mumford compactification.}

\begin{abstract}
  After a general discussion of group actions, 
orbifolds, and ``weak orbifolds''  
this note will provide elementary introductions to two basic moduli spaces
over the real or complex numbers: First the moduli space of effective 
divisors with finite stabilizer on the projective space $\bP^1$
modulo the group $\PGL_2$ of projective transformations of $\bP^1$;
and then the moduli space of effective 1-cycles with finite stabilizer 
on $\bP^2$ modulo the group $\PGL_3$ of projective transformations of $\bP^2$.
\end{abstract}

\maketitle

\tableofcontents

\setcounter{lem}{0}
\setcounter{footnote}{2}

\section{Introduction.}\label{s-intro}
Section \ref{s-orb} of this paper will be a general discussion of 
group actions, and the associated quotient spaces. If the action is proper,
then the quotient space is an  orbifold; but we also introduce a notion
of ``weakly proper'' action, yielding a ``weak orbifold''.
The subsequent sections consist of detailed studies of two particular families
of examples.

Section \ref{s-div} describes the moduli space $\fM_n(\F)$ for divisors;
where the symbol $\F$ stands for either the real numbers $\R$ or the complex
numbers $\C$. By definition, an \textbf{\textit{effective divisor}} of 
degree $n$ over $\F$ is a formal sum of the form 
$$ \cD~=~ m_1\<\p_1\>+\cdots+m_k\<\p_k\>~,$$
where the $m_j$ are strictly positive integer coefficients,  where the 
$\p_j$ are distinct points of the projective line $\bP^1(\F)$, and where
$\sum m_j=n$. Each element $\g$ of the group $\bG=\PGL_2(\F)$ of projective 
transformations of $\bP^1(\F)$ acts on the space
$\wfD_n(\F)$ of all such divisors. 
The \textbf{\textit{stabilizer}} of such a divisor
$\cD$ is the subgroup $\bG_\cD$ consisting of all $\g\in\bG$ 
with $\g(\cD)=\cD$. By definition, the moduli space $\fM_n(\F)$ is the 
quotient space $\wfD^\fs_n(\F)/\bG$, where $\wfD^\fs_n$ is the open 
subset consisting of all $\cD\in\wfD_n$ with finite stabilizer. 
The moduli space $\fM_3(\R)\cong\fM_3(\C)$ is a single point, while
$\fM_4(\C)$ is naturally isomorphic to the 
projective line $\bP^1(\C)$ with one ``improper point'', corresponding to the
case where two of the four points crash together. It also has two ramified
(or orbifold) points, corresponding to curves 
with extra symmetries. The space $\fM_4(\R)$ can be identified with a 
line segment in $\fM_4(\C)$ which joins
one of the two ramified points to the improper point (Figure \ref{F-M4}).
For $n>4$, the moduli space $\fM_n(\F)=\wfD_n^\fs/\PGL_2$
is not a Hausdorff space. However, it contains a unique maximal 
Hausdorff subspace, which is compact when $n$ is odd.
The section concludes with a comparison of $\fM_n$ with the 
 moduli spaces $\cM_{0,n}$ and $\cM^{\sf un}_{0,n}$ for ordered or 
 unordered $n$-tuples of distinct points in $\bP^1$, and with their
 compactifications.
\ssk

Section \ref{s-mod} begins the study of the moduli space $\M_n(\F)$
 for curves (or more
generally \hbox{1-cycles)} of degree $n$ in the projective plane $\bP^2(\F)$.
Here and in later sections $\bG$ will be the automorphism group
of $\bP^2$, so that $\bG=\PGL_3(\R)$ in the real case, and  $\bG=\PGL_3(\C)$
in the complex case. 

Section \ref{s-deg3} is a detailed study of the degree three case, it shows
that $\M_3(\F)$ has a natural analytic structure. In the complex case
$\M_3(\C)$ is isomorphic to the Riemann sphere $\bP^1(\F)$; but with one
``improper point'',
 corresponding to curves with a simple double point, and also with two ramified
(or orbifold) points corresponding to curves with extra symmetries. 
(In fact $\M_3(\C)$ is naturally isomorphic to the moduli space
$\fM_4(\C)$ for divisors.) In the real case, the space $\M_3(\R)$ forms a 
circle of curve-classes. There is still one improper point (corresponding to
a curve with a simple self-crossing), but there is no ramification.\ssk

Section \ref{s-cc} begins the study of $\M_n=\M_n(\C)$ for $n>3$. 
Although $\M_n$ is non-Hausdorff for $n\ge 7$ (and possibly also for some
smaller $n$),  it does have large open 
subsets which are Hausdorff. In fact, we provide two different ways of
proving that a suitable region is Hausdorff. This section describes
one method, which makes use of
``virtual flex points''.  This section also compares $\M_n$ with the 
classical moduli space $\cM_\fg$, consisting of conformal isomorphism 
classes of closed Riemann surfaces of genus $\fg$, where $\fg={n-1\choose 2}$.

Section \ref{s-G} describes another method of proving that suitable subsets of
 $\M_n$ are Hausdorff, making use of the genus invariant associated with a 
 singular point. Although our methods are actually
 applied only to curves,  
 there is a concluding attempt to adapt them to the more general case of
 1-cycles.
\ssk

Section \ref{s-aut} studies the stabilizers $\bG_\cC$  
for cycles $\cC\in\wfC_n(\C)$. In particular, it studies the algebraic set
$\fW_n\subset\wfC_n$  consisting of cycles with infinite stabilizer. It
also contains remarks about which finite stabilizers are possible, and some
comparison with automorphism groups of more general Riemann surfaces.

Section \ref{s-genR}  discusses the real moduli space $\M_n(\R)$,
concentrating on the Harnack-Hilbert classification problem for smooth
real curves. In conclusion, there is an appendix discussing the 
relevant literature. 
\smallskip

\setcounter{lem}{0}
\section{Proper and Improper Group Actions: 
The Quotient Space.}\label{s-orb}
This will be a general exposition of quotient spaces under a smooth 
group action. In the best case, with a proper action, the quotient space is 
Hausdorff, with an orbifold structure. 
Since the group actions that we consider are
 not always proper, we also introduce a modified requirement of
``weakly proper'' action, which suffices to prove that the quotient
is locally Hausdorff, with a ``weak orbifold structure'' which includes
only some of the usual orbifold properties.\msk

First consider the complex case. 
Let $\bX$ be a complex manifold and $\bG$ a
complex Lie group\footnote{Although our Lie groups are always positive
dimensional, the discussion would apply equally well to the case 
of a discrete group, which we might think of as a zero-dimensional Lie group.}
 which acts on the left by a holomorphic map $\bG\times \bX\to \bX$,
$$ (\g,~\xx)~\mapsto~ \g(\xx)~,$$
where $~\g_1\big(\g_2(\xx)\big)=(\g_1\g_2)(\xx)$. We will always assume that the
action is \textbf{\textit{effective}} in the sense that 
$$ \g(\xx)=\xx \quad{\rm for~all~}~~\xx\quad\Longleftrightarrow\quad
\g\quad{\rm is~ the~ identity~ element} ~~{\bf e}\in \bG\,.$$
The \textbf{\textit{quotient space}} (or orbit space)
 in which $\xx$ is identified with $\xx{\bf '}$ if and only if
$\xx{\bf '}=\g(\xx)$ for some $\g$ will be denoted\,\footnote{Since $\bG$ acts
 on the left, many authors would use the notation $\bG\qu \bX$.} by $\bX/\bG$.
\smallskip
  
In the real case, the definitions are completely analogous, although
we could equally well work 
either in the $C^\infty$ category or in the real analytic category.
To fix ideas, let us choose the real analytic category.
Thus in the real case, we will  assume that $\bG$ is a real Lie group, that
 $\bX$ is a real analytic manifold,
 and that $\bG\times \bX\to \bX$ is a real analytic map. It will often be 
convenient to use the word ``analytic'', by itself, to mean 
real analytic in the real case, or complex analytic in the complex case.
\medskip

\begin{definition}
For each $\xx\in \bX$ the set $\((\xx\))$ consisting of all images $\g(\xx)$
with $\g\in \bG$ is called the \textbf{\textit{$\bG$-orbit}} of $\xx$. 
We will also use the notation $$ \((\xx\))~=~\bF~=~\bF_\xx $$
if we are thinking of  $\((\xx\))$ as a  "\textbf{\textit{fiber}}"
of the  \textbf{\textit{projection map}} ${\boldsymbol\pi}:\bX\to \bY=\bX/\bG$.
\end{definition}
\medskip

\begin{rem}\label{R-qtop} In other words, each fiber is an equivalence
class, where two points of $\bX$ are equivalent if and only if they belong
to the same orbit under the action of $\bG$. More generally, given any
equivalence relation $\eqv$ on $\bX$, we can form the quotient space  
$\bY=\bX/\!\eqv$. Such a quotient space $\bY$ always has a well defined 
\textbf{\textit{quotient topology}}, defined by the condition that a set 
${\bf U}\subset \bY$ is open if and only if the preimage 
$\bpi^{-1}({\bf U})$ is open as a subset of $\bX$. 
\end{rem}
\medskip

\begin{rem}[{\bf Closed Orbits and the ${\rm T}_1$ Condition}]\label{R-T1} 
By definition, a topological space $\bY$ is a 
${\rm T}_1$-\textbf{\textit{space}} if every point of $\bY$ is closed as a
subset of $\bY$. Evidently a quotient space $\bX/\bG$ (or more generally
$\bX/\!\eqv$) is a ${\rm T}_1$-space if and only if each orbit (or each 
equivalence class) is closed as a subset of $\bX$.
\end{rem}

\subsection*{\bf Orbifolds and Weak Orbifolds}\bsk

Let $\F$ stand for either the real or the complex numbers.

\begin{definition} \label{D-orbi} By a  $d$-dimensional 
  $\F$-\textbf{\textit{orbifold chart}}  around a point $\yy$ of a topological
  space $\bY$ will be meant the following: 

  \begin{quote}
    \begin{enumerate}
    \item[{\bf(1)}] a finite group  $\cR\subset\GL_d(\F)$
acting linearly on   $\F^d$;

\item[{\bf(2)}] an $\cR$-invariant open neighborhood $W$ of the point
 ${\bf 0}\in\F^d$; and

\item[{\bf(3)}] a homeomorphism $h$ from the quotient space $W/\cR$ onto an open
neighborhood $U$ of $\yy$ in $\bY$ such that
the zero vector in $W$ maps to $\yy$.
\end{enumerate}
\end{quote}

\noindent The group $\cR$ will be called the
\textbf{\textit{ramification group}}
at $\yy$, and its order $r_\yy\ge 1$ will be called the \textbf{\textit
  {ramification index}}. A point $\yy'$ is \textbf{\textit{ramified}}
if $r_{\yy'}>1$ and \textbf{\textit{unramified}} if $r_{\yy'}=1$.\ssk

 The space $\bY$ together with an integer valued function
$\yy\mapsto r_\yy\ge 1$ will be called
a $d$-dimensional \textbf{\textit{weak orbifold}} over $\F$ if there exists
such an orbifold chart $W/\cR\cong U$  around every point $\yy\in\bY$,
such that the
associated ramification function from $U$ to the positive integers
coincides with the specified function $\yy'\mapsto r_{\yy'}$ throughout $U$.
 \end{definition}

{\bf Example.} On any Riemann surface, we can choose any
function $\yy\mapsto r_\yy\ge 1$ which takes the value $r_\yy=1$ except
at finitely many points.  A corresponding collection of orbifold charts
is easily constructed.\ssk
 
\begin{lem}\label{L-usc}  Every weak orbifold is 
  a locally Hausdorff space; and the function  $\yy\mapsto r_\yy\ge 1$
from $\bY$ to the set of positive integers is always upper semicontinuos, taking
 the value $r_\yy=1$ on a dense open set. More precisely,
for any orbifold chart
  $W/\cR\stackrel{\cong}{\longrightarrow} U=U_\yy$,
  we have $r_{\yy'}\le r_\yy$ for every
  $\yy'\in U$, with $r_{\yy'}=1$ on a dense open subset of $U$. 
\end{lem}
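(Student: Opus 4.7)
The plan is to handle the three assertions of the lemma separately, each starting from the given orbifold chart $h\colon W/\cR \to U_\yy$.

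\textbf{Locally Hausdorff.} It suffices to show each chart neighborhood $U_\yy \cong W/\cR$ is Hausdorff. Given distinct $\cR$-orbits $\cR\mathbf{u},\cR\mathbf{v}\subset W$, both are finite sets, so the Hausdorffness of $\F^d$ yields an open neighborhood of $\mathbf{u}$ disjoint from $\cR\mathbf{v}$, together with an open neighborhood of $\mathbf{v}$ disjoint from $\cR\mathbf{u}$. Replacing each by the intersection of its $\cR$-translates (an operation that preserves openness because $\cR$ is finite) produces disjoint $\cR$-invariant open sets whose images in $W/\cR$ separate the two orbits.

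\textbf{Upper semicontinuity.} Fix $\yy'\in U_\yy$ with preimage $\mathbf{w}\in W$ and stabilizer $\cR_{\mathbf{w}}=\{g\in\cR:g\mathbf{w}=\mathbf{w}\}$. Since $\cR$ is finite and $g\mathbf{w}\neq\mathbf{w}$ for every $g\in\cR\setminus\cR_{\mathbf{w}}$, one can choose a $\cR_{\mathbf{w}}$-invariant open neighborhood $W'$ of $\mathbf{w}$ so small that $g(W')\cap W'=\emptyset$ for all $g\in\cR\setminus\cR_{\mathbf{w}}$. The natural map $W'\to W/\cR$ then factors through an injection $W'/\cR_{\mathbf{w}}\hookrightarrow W/\cR$ whose image is an open neighborhood of $\yy'$. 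Translating $\mathbf{w}$ to the origin of $\F^d$ preserves the linear action of $\cR_{\mathbf{w}}$, since $g(\mathbf{v}+\mathbf{w})-\mathbf{w}=g\mathbf{v}$ whenever $g\mathbf{w}=\mathbf{w}$, so this yields a genuine orbifold chart around $\yy'$ with ramification group $\cR_{\mathbf{w}}$. The compatibility clause in the definition of a weak orbifold then forces $r_{\yy'}=|\cR_{\mathbf{w}}|\le|\cR|=r_\yy$.

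\textbf{Density of the unramified locus.} Under $h$, the ramified points of $U_\yy$ correspond to $W\cap\bigcup_{g\in\cR\setminus\{e\}}\mathrm{Fix}(g)$. Each $\mathrm{Fix}(g)=\ker(g-I)$ is a proper $\F$-linear subspace of $\F^d$, so this is a finite union of proper linear subspaces, hence closed and nowhere dense in $W$. Its complement is open and dense in $W$ and projects to an open dense subset of $U_\yy$ on which the specified function equals $1$.

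\textbf{Main obstacle.} The crux is the chart construction in the second paragraph: one has to check both that $W'/\cR_{\mathbf{w}}\hookrightarrow W/\cR$ is a homeomorphism onto an open subset (so that the ramification index read from this local chart is exactly $|\cR_{\mathbf{w}}|$, not a proper divisor), and that after the affine change of coordinates the ramification group still acts linearly in the sense of Definition~\ref{D-orbi}. Both points reduce to the observation that $\mathbf{w}$ is a common fixed point of $\cR_{\mathbf{w}}$, but recording them explicitly is what makes the argument go through.
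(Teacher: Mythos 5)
Your proof follows the same outline as the paper's argument for the semicontinuity and density assertions (the paper constructs the open dense unramified set from the complement of the finitely many proper fixed subspaces, and relies on the restriction-of-charts construction for $r_{\yy'}\le r_\yy$). Your treatment of upper semicontinuity is actually more explicit than the paper's, which leaves that point to the restriction discussion appearing just after the lemma.

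For the locally Hausdorff claim the two arguments diverge: the paper metrizes $U\cong W/\cR$ directly by putting the Hausdorff metric on the (finite, hence compact) $\cR$-orbits in $W$, whereas you separate orbits by invariant open sets. Your separation argument, however, has a slip. Starting from an open $V_{\mathbf u}\ni\mathbf u$ disjoint from $\cR\mathbf v$ and passing to $\bigcap_{g\in\cR} gV_{\mathbf u}$ produces an $\cR$-invariant open set, but this set contains $\mathbf u$ only if $g^{-1}\mathbf u\in V_{\mathbf u}$ for every $g$, i.e.\ only if $V_{\mathbf u}$ already contained the whole orbit $\cR\mathbf u$; as written it may miss $\mathbf u$ entirely. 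The fix is to start with disjoint open sets $V_{\mathbf u}\supset\cR\mathbf u$ and $V_{\mathbf v}\supset\cR\mathbf v$ (possible because they are disjoint finite subsets of the metric space $\F^d$), and \emph{then} intersect the $\cR$-translates: the resulting invariant open sets still contain the respective orbits and remain disjoint. Alternatively, average the Euclidean inner product over $\cR$ to get a $\cR$-invariant metric and take $\varepsilon/2$-neighborhoods. With that repair the proof is sound, and aside from the Hausdorff step it tracks the paper's reasoning.
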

\ssk

Of course in good cases $\bY$ will be a Hausdorff space; but even a 
locally Hausdorff space can be quite useful.\footnote{Perhaps the
most startling application of locally Hausdorff spaces in science would be to
the ``Many Worlds''  interpretation of quantum mechanics, in which 
the space-time universe  continually splits into two or more alternate 
universes.  (See for example \cite{Bec}.)
The  resulting object is possibly best described as a space which is locally
Hausdorff, but wildly non-Hausdorff. It can be constructed mathematically
 out of infinitely  many copies of the Minkowski space 
$\R^{3,1}$ by gluing together corresponding open subsets. 
 (Of course it does 
not make any objective sense to ask whether these alternate universes
 ``really exist''. The only legitimate question is whether a mathematical model
including such alternate universes can provide a convenient and testable
model for the observable universe.)} \ssk

\begin{proof}[Proof of Lemma \ref{L-usc}] 
  For  any chart $W/\cR\stackrel{\cong}{\longrightarrow} U$ around $\yy$, and
any non-identity element
$\g\in\cR$, the set of elements of  $\F^d$ fixed by $\g$ must be a linear
subspace of $\F^d$ of dimension at most  $d-1$. If $W_0$ is
the complement of this finite union of linear subspaces intersected with $W,$ and
$U_0$ is its image, then the associated  mapping $W_0\to U_0$ is precisely
$r_\yy$-to-one. In a small neighborhood of any point of $W_0$, it follows
that this map is one-to-one.
Each $\cR$-orbit in $U$ is compact and non-empty, 
so we can use the Hausdorff metric for compact subsets of  $\F^d$ to make 
$U\cong W/\cR$ into a metric space. In particular, it follows that
$\bY$ is locally Hausdorff.
  \end{proof}\msk
  
  An orbifold chart $W/\cR\cong U$ around $\yy$ gives rise to a smaller
  orbifold chart
around any point $\zz$ of the neighborhood $U$. In fact, choosing a
representative point $w_0\in W$ over $\zz$, let $\cR_{w_0}$ be the stabilizer,
consisting of all $\g\in\cR$ for which $\g(w_0)=w_0$. Evidently $\cR_{w_0}$
acts linearly on $W$, fixing the point $w_0$.
Note that the $\cR$-orbit of $w_0$ contains $r_\yy/r_{\zz}$
distinct points. Choose a $\cR_{w_0}$-invariant neighborhood $W'$ of $w_0$
which is small enough so that its $r_\yy/r_{\zz}$ images under the action of
$\cR$ are all disjoint. Then the projection  from $W'$ to
$W'/\cR_{w_0}\subset U$ is the required \textbf{\textit{restriction}} to an
orbifold chart around $\zz$.
\msk

\begin{definition}
An \textbf{\textit{orbifold atlas}} on $Y$ is a collection of orbifold charts
$$W_j/\cR_j\stackrel{\cong}{\longrightarrow} U_j\subset Y ~, $$
where the $U_j$ are open sets covering $Y$,
which satisfy the following compatibility condition.
\begin{quote} For each point $\yy$ in an overlap $U_i\cap U_j$
and each sufficiently small neighborhood $U'$ of $\yy$, the restriction
of the $i$-th and $j$-th orbifold charts to $U'$ are \textbf{\textit{
isomorphic}} in the following sense. Let
$$ W'_i/\cR_{i,w_i}\stackrel{\cong}{\longrightarrow} U'\qquad{\rm and}
\qquad W'_j/\cR_{j,w_j}\stackrel{\cong}{\longrightarrow} U' $$
be the two restrictions. Then we require that there should be a group
isomorphism $~\phi:\cR_{i, w_i}\stackrel{\cong}{\longrightarrow}\cR_{j,w_j}~$
and an  analytic
isomorphism $~\psi:W'_i\stackrel{\cong}{\longrightarrow} W'_j~$ so
that the following diagram is commutative
for every $\g\in\cR_{i,w_i}$. 
$$\xymatrix{W'_i~ \ar[r]^{\psi} \ar[d]^\g & W'_j\ar[d]^{\phi(\g)}\\
  W'_i  \ar[r]^{\psi} & W'_j}$$
\end{quote}
Two such atlases are \textbf{\textit{equivalent}} if their union also
satisfies this compatibility condition. The space $\bY$ together with
an equivalence class of such atlases is called an
\textbf{\textit{orbifold}}.\footnote{Caution. Most authors require
  orbifolds to be Hausdorff spaces; but we will allow orbifolds which are only
  locally Hausdorff.}
(Compare \cite{Th}, \cite{BMP}.) 
\end{definition}
\ssk

The main object of this section will be to describe conditions on the
group action which guarantee that the quotient will be an orbifold or weak
orbifold.
\msk

\subsection*{Proper and Weakly Proper Actions.}

\begin{definition}\label{D-prop} A  continuous action of $\bG$ on a
 locally compact space $\bX$ is 
\textbf{\textit{proper}} if the following condition is satisfied: 
\begin{quote} For every pair of points $\xx$
and $\xx{\bf '}$ in $\bX$,  there exist neighborhoods $U$ of $\xx$
 and $U'$ of $\xx{\bf '}$ which are small enough
  so that the set of \textbf{\textit{all}}  $\g\in \bG$
        with $\g(U)\cap U'\ne \emptyset$ has compact closure.\footnote
{For further discussion, see Remark \ref{R-p}. In the special case of a 
discrete group, such an action is called 
\textbf{\textit{properly discontinuous}}.}
\end{quote}

\noindent
The action  is \textbf{\textit{locally proper}} at $\xx$ if this condition
is satisfied for the special case where $\xx=\xx{\bf '}$; or in other words if
 the action is proper throughout some $\bG$-invariant open neighborhood of 
$\xx$.
\smallskip

It will be called \textbf{\textit{weakly $($locally$)$ proper}}
 at $\xx$ if the following still weaker condition is satisfied. There should be
a neighborhood $U$ of $\xx$ and a compact set 
$K\subset \bG$ such that two points 
$\xx_1{\bf '}$ and $\xx_2{\bf '}$ of $U$ belong to the same $\bG$-orbit if and 
only if $\xx_2{\bf '}=\g(\xx_1{\bf '})$ for \textbf{\textit{at least one}}
 $\g\in K$. 
\end{definition}

\begin{definition}\label{D-stab}
Given an action of $\bG$ on $\bX$, the \textbf{\textit{stabilizer}}
 $\bG_\xx$ of a point $\xx\in \bX$ is the  closed 
subgroup of $\bG$ consisting of all $\g\in \bG$ for which $\g(\xx)=\xx$. 
Note that points on the same fiber have isomorphic stabilizers, since 
$$\bG_{\g(\xx)}~=~\g\, \bG_\xx \g^{-1}~.$$
If the stabilizer $\bG_\xx$ is finite, 
then it follows easily that the fiber $\bF$ 
through $\xx$ (consisting of all images $\g(\xx)$ with $\g\in \bG$)
is a smoothly embedded submanifold which  is locally diffeomorphic to $\bG$.  
\end{definition}

Under the hypothesis that all stabilizers are finite. we will prove
 the following (in Theorem \ref{T-wot} together with Lemma \ref{L-Haus} 
 and  Corollary \ref{C-wot}):

\begin{quote}\it  For a proper action the quotient space is
a Hausdorff orbifold.

\noindent For a locally proper action the quotient
is a locally Hausdorff\break orbifold.\msk

\noindent For a weakly proper action the quotient is a locally
Hausdorff weak orbifold.
\end{quote}

\noindent
(See Figure \ref{F-badpic} for an example of a smooth locally proper action
with trivial stabilizers where the quotient is not a Hausdorff space.)
 \medskip
 
It will be convenient to call a  point in $\bX/\bG$ either 
\textbf{\textit{proper}} or \textbf{\textit{improper}} according as the action
 of $\bG$ on corresponding points of $\bX$ is or is not locally proper.
Similarly an improper point in $\bX/\bG$ will be called 
\textbf{\textit{weakly proper}} if the action of $\bG$
 on corresponding points of $\bX$ is weakly proper. 
\smallskip

\begin{rem}\label{R-weak-but} Although there are examples which are 
  weakly proper but not locally proper, they seem to be hard to find.
Remark~\ref{R-improp} will show that divisors of degree four with only
three distinct points give rise to such examples;  and the proof of 
Lemma \ref{L-M3C} will show that curves of degree three with a simple
double point also provide such examples. However,
these are the only examples we know.
\end{rem}
\smallskip

The following is well known.
\smallskip

\begin{lem}\label{L-Haus}
If the action is proper, then the quotient $\bX/\bG$ is a Hausdorff space.
\end{lem}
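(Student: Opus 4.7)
The plan is to separate two points $\bpi(\xx)$ and $\bpi(\xx')$ of $\bX/\bG$ lying on distinct orbits by finding saturated disjoint open sets. Equivalently, given $\xx,\xx'\in \bX$ with $\((\xx\))\neq\((\xx'\))$, I would produce open neighborhoods $V\ni\xx$ and $V'\ni\xx'$ such that $\g(V)\cap V'=\emptyset$ for \emph{every} $\g\in\bG$. Once this is done, $\bpi(V)$ and $\bpi(V')$ are disjoint; they are open because $\bpi$ is an open map, since $\bpi^{-1}(\bpi(V))=\bigcup_{\g\in\bG}\g(V)$ is a union of open sets.

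First, I would apply the properness hypothesis directly to the pair $(\xx,\xx')$ to obtain neighborhoods $U\ni\xx$ and $U'\ni\xx'$ such that the ``return set'' $K=\{\g\in\bG : \g(U)\cap U'\neq\emptyset\}$ has compact closure $\overline{K}$. Shrinking $V\subset U$ and $V'\subset U'$ then automatically takes care of every $\g\notin K$, because for such $\g$ we have $\g(V)\cap V'\subset\g(U)\cap U'=\emptyset$. So the problem reduces to handling the compact set of ``troublesome'' group elements $\overline K$.

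Next, for each $\g\in\overline K$, the points $\g(\xx)$ and $\xx'$ are distinct (otherwise $\xx$ and $\xx'$ would share an orbit), so by the Hausdorff property of the locally compact manifold $\bX$ I can choose disjoint open sets $A_\g\ni\g(\xx)$ and $B_\g\ni\xx'$. Continuity of the action map $\bG\times\bX\to\bX$ at $(\g,\xx)$ yields an open neighborhood $N_\g$ of $\g$ in $\bG$ and an open neighborhood $V_\g\subset U$ of $\xx$ with $h(V_\g)\subset A_\g$ for every $h\in N_\g$.

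The family $\{N_\g\}_{\g\in\overline K}$ is an open cover of the compact set $\overline K$, so I extract a finite subcover $N_{\g_1},\ldots,N_{\g_m}$ and set
\[
V \;=\; U\cap V_{\g_1}\cap\cdots\cap V_{\g_m},\qquad V' \;=\; U'\cap B_{\g_1}\cap\cdots\cap B_{\g_m}.
\]
Then for any $\g\in\overline K$, picking $i$ with $\g\in N_{\g_i}$ gives $\g(V)\subset A_{\g_i}$, which is disjoint from $B_{\g_i}\supset V'$; and for $\g\notin K$ the disjointness was observed above. This completes the separation. I don't foresee a serious obstacle: the only delicate point is the compactness reduction, and it is precisely what properness was designed to enable. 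The argument uses only that $\bX$ is Hausdorff and the action is continuous, so it goes through identically in the real analytic and complex analytic categories considered in the paper.
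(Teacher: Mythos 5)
Your proof is correct, and it takes a genuinely different route from the paper's. The paper argues by contrapositive: it fixes a metric on $\bX$, takes shrinking balls $U_j\ni\xx$, $U'_j\ni\xx'$ of radius $1/j$, and shows that if no such pair separates the orbits then one can choose $\g_j$ with $\g_j(U_j)\cap U'_j\ne\emptyset$; properness confines the $\g_j$ to a compact set, a convergent subsequence yields $\g$ with $\g(\xx)=\xx'$, and the orbits coincide after all. Your argument instead constructs the separating neighborhoods directly: properness isolates the compact closure $\overline K$ of the return set, you shrink neighborhoods of $\xx$ and $\xx'$ to deal with each $\g\in\overline K$ via continuity of the action map, and a finite subcover of $\overline K$ lets you intersect finitely many such shrinkages. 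The paper's version is shorter because it exploits the ambient metric and sequential compactness; yours is metric-free, uses only that $\bX$ is locally compact Hausdorff and $\bG$ acts continuously, and produces explicit saturated open sets rather than concluding by contradiction. Both are standard and both are sound; yours is the form more commonly found in general topological transformation group texts, and has the minor advantage that it generalizes without comment to settings where no metric is at hand.
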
\smallskip

It follows as an immediate Corollary 
 that a locally proper action yields a quotient space
which is locally Hausdorff. However, such a quotient need not be Hausdorff.
(Compare Figure  \ref{F-badpic}.) If stabilizers are finite,  
then we will see in Theorem \ref{T-wot} that even a weakly proper  
action yields a quotient space which is locally Hausdorff.
\smallskip

\begin{rem}\label{R-T1a}
Note that every locally Hausdorff space is 
${\rm T}_1$. In fact if one point $\p$ belongs to the closure of a 
different point $\q$, then no neighborhood of $\p$ is Hausdorff.
\end{rem}
\smallskip

\begin{proof}[Proof of Lemma~$\ref{L-Haus}$] It will be convenient to
 choose a metric on $\bX$. Given $\xx$ and
$\xx{\bf '}$ there are two possibilities. If we can choose neighborhoods $U$
 and $U'$ so that no translate $\g(U)$ intersects $U'$, then the images
 $\bpi(U)$ and  $\bpi(U')$ in the quotient space are disjoint open sets.

On the other hand, taking $U_j$ and $U'_j$ to be  respectively a sequence
 of neighborhoods of $\xx$ and $\xx{\bf '}$  of radius
 $1/j$, if we can choose group elements $\g_j$ for all $j$ 
with $\g_j(U_j)\cap U'_j\ne\emptyset$, 
then by compactness we can pass to an infinite subsequence so that the $\g_j$ 
converge to a limit $\g$. It follows easily that
 $\g(\xx)=\xx{\bf '}$, so that $\xx$ and $\xx{\bf '}$ map
to the same point in the quotient space.\end{proof}\medskip

\begin{figure}[h!]
\centerline{\includegraphics[width=3 in]{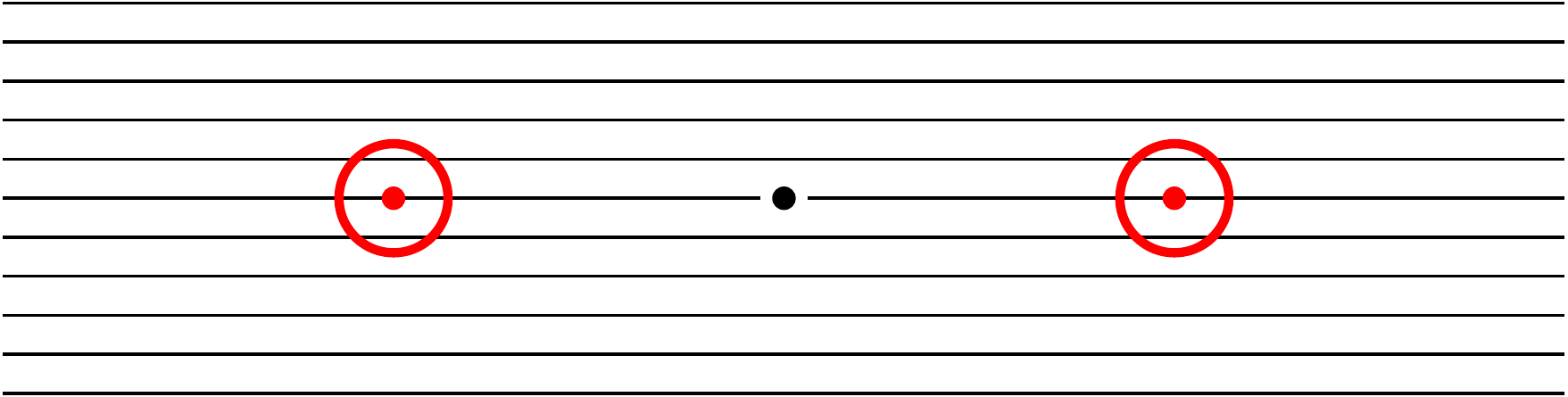}}
\caption{\sf Example:
The additive group of real numbers acts on the punctured 
$(x,y)$-plane $~\R^2\ssm\{(0,0)\}~$ by an action 
$(x,y)\mapsto \g_t(x,y)$ for $t\in\R$ which
satisfies the differential equation \hfill{\mbox{}}
\centerline{$ d\g_t(x,\,y)/dt~=~\big(\sqrt{x^2+y^2}.~0\big),$}
Since $\sqrt{x^2+y^2}$ is strictly positive throughout the punctured plane,
it follows that $\g_t$ moves every point 
to the right for $t>0$; although no orbit can reach the origin.
(Note that $g_t$ acts on the real axis by $\g_t(x,0)=(e^{\pm t}x,\,0)$ where
$\pm t$ stands for $+t$ when $x>0$, but $-t$ when $x<0$.) 
The action is locally proper but not proper; and the 
quotient space is locally Hausdorff but not Hausdorff. In fact, within every 
neighborhood of a point on the negative real axis and every neighborhood
of a point on the positive real axis (as illustrated by circles 
 in the figure), we can choose points which belong to the
same orbit under the action.
\label{F-badpic}}
\end{figure}
\bigskip

\begin{rem}
The converse to Lemma \ref{L-Haus} is false: A quotient space may be Hausdorff 
even when the action is not proper. Compare the discussions of $\fM_4(\C)$
and $\M_4(\R)$ in Remark \ref{R-improp}, as well as
 the quotient spaces $\M_3(\C)$ and $\M_3(\R)$ of Section \ref{s-deg3}. 
Both of these quotients are Hausdorff, even though the associated group
action fails to be proper everywhere.
(See the proof of Lemma~\ref{L-M3C} below.)
 However, for large $n$ we will have to deal with moduli spaces $\M_n(\C)$
and $\M_n(\R)$ which are definitely not Hausdorff. 
 (Proposition \ref{P-nh}.)
\end{rem}
\medskip

The discussion of weakly proper actions will be based on the following.
Given any  fiber $\bF$,  and given any point $\xx\in \bF$, we will refer to
the quotient of tangent vector spaces 
$$ V_\xx~=~T_\xx \bX/T_\xx \bF $$
as the \textbf{\textit{transverse vector space}} to $\bF$
 at $\xx$. 
(If $\bX$ is provided with a Riemannian metric,  
then $V_\xx$ can be identified with the normal vector space at $\xx$.)
 Note that the finite group $\bG_\xx$ acts linearly 
on both $T_\xx \bX$ and $T_\xx \bF$, and hence acts
 linearly on the \hbox{$d$-dimensional} quotient space $V_\xx$, where $d$
is the codimension\footnote{In practice, we will always assume 
that the stabilizer $\bG_\xx$ is finite, so that $d$ is equal to the difference
$\dim(\bX)-\dim(\bG)$.}
of $\bF$ in $\bX$. However, this action is  not always effective. 
(The group $\bG_\xx$ may act non-trivially
on $T_\xx\bF$, while leaving the transverse vector space pointwise fixed.)
In order to describe a weak orbifold structure on the quotient, we must first
construct the associated ramification groups. 
\medskip

\begin{definition} 
Let $\bH_\xx$ be the normal subgroup of $\bG_\xx$ consisting of all
group elements which act as the identity map on $V_\xx$ (that is,
all $\h \in \bG_\xx$ such that $\h(\v)=\v$ for all $\v \in V_\xx$).
The quotient group $$~{\mathcal R}_\xx~=~\bG_\xx/\bH_\xx~$$ will be called the 
\textbf{\textit{\hbox{ramification} group}} at $\xx$. Note that by 
its very \hbox{definition,} ${\mathcal R}_\xx$ comes with a linear 
action on the vector space $V_\xx$, which is isomorphic to $\R^m$ or $\C^m$.
It is not hard to check that different
points on the same fiber $\bF$ have isomorphic ramification groups.
As in Definition \ref{D-orbi},
the order $|{\mathcal R}_\xx|\ge 1$ of this finite group will be called the  
\textbf{\textit{ramification index}} $r=r(\bF)$. 
The fiber $\bF$ (or its image in the quotient space $\bY=\bX/\bG$) will be
 called \textbf{\textit{unramified}} if $r=1$.
\end{definition}
\smallskip

Now let $\bF$ be any fiber with finite 
stabilizers,  and let $\xx_0\in \bF$ be an arbitrary base 
point. Since the stabilizer $\bG_{\xx_0}$ is finite, we can choose a 
$\bG_{\xx_0}\!$-invariant metric.
Using this metric, the transverse vector space
\hbox{$V_{\xx_0}~=~T_{\xx_0} \bX/T_{\xx_0}\bF$} can be identified 
with the normal vector space consisting of all tangent vectors to $\bX$ 
at $\xx_0$ which are orthogonal to the fiber at ${\xx_0}$.
Given $\varepsilon>0$, we can consider geodesics of length $\varepsilon$
starting at $\xx_0$ which are orthogonal to $\bF$ at $\xx_0$. 
If $\varepsilon$
 is small enough, these geodesics will sweep out a smooth $d$-dimensional disk 
$D_\varepsilon$ which meets $\bF$ transversally, 
where  $d$ is the codimension
 of $\bF$ in $\bX$. 
Since the transverse disk $D_\varepsilon$ is canonically diffeomorphic to the 
$\varepsilon$-disk in $V_{\xx_0}$ by this construction, it follows that the 
group ${\mathcal R}_{\xx_0}$ acts effectively on $D_\varepsilon$.
\bigskip

\begin{figure}[h!]
\centerline{\includegraphics[width=2.in]{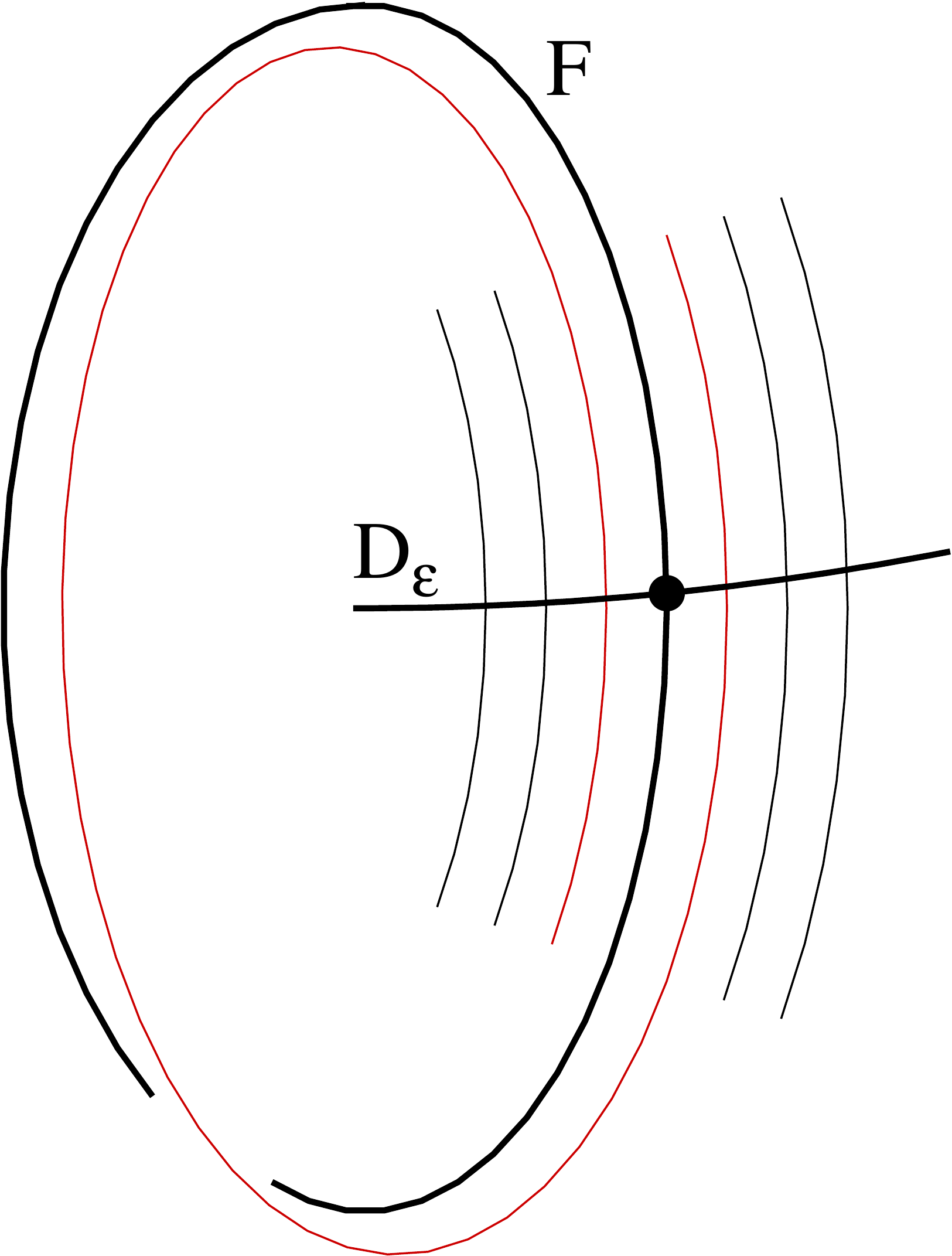}}
\caption{\label{F-moeb} \sf A transversal $D_\varepsilon$ to the fiber $\bF$ and
several nearby fibers. In this example, a neighborhood of $\bF$ within $\bX$
 is a M\"obius band.}
\end{figure}
\bigskip

We will prove the following.
\smallskip

\begin{theo}[{\bf Weak Orbifold Theorem}]\label{T-wot} 
Let $\xx$ be a point with finite stabilizer, and let $\yy=\bpi(\xx)$ be its
image in the quotient space $\bY=\bX/\bG$. If the action is weakly 
 proper, then $\bY$  is a weak orbifold. More explicitly,
$\bY$ is locally homeomorphic at $\yy$ to
the quotient of the \hbox{$d\!$-dimensional} transverse vector space $V_\xx$ 
by the action of the  
finite group ${\mathcal R}_\xx$, which acts linearly on it, at the origin.
In particular, $\bY$ is locally Hausdorff and metrizable near $\yy$,
and also locally compact. Furthermore, the projection map from a small
transverse disk $D_\varepsilon$ to $\bY$ is $r$-to-one 
outside of a subset of measure zero.
\end{theo}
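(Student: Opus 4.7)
The plan is to exhibit a continuous map $\iota: D_\varepsilon/\mathcal{R}_\xx\to\bY$ sending the origin to $\yy$, and to show, using weak properness, that for sufficiently small $\varepsilon$ it is a homeomorphism onto a neighborhood of $\yy$. Since $\bG_\xx$ is finite, first average an arbitrary Riemannian (or Hermitian) metric defined near $\xx$ to obtain a $\bG_\xx$-invariant one, and build the transverse disk $D_\varepsilon$ from it as in Figure \ref{F-moeb}. Each $\g\in\bG_\xx$ is then a local isometry fixing $\xx$ and preserving the orbit $\bF$; it therefore preserves $T_\xx\bF$ and its orthogonal complement $V_\xx$, permutes the orthogonal geodesics defining $D_\varepsilon$, and so acts on $D_\varepsilon$. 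By the very definition of $\bH_\xx$ this action factors through an effective linear action of $\mathcal{R}_\xx$ on $D_\varepsilon\cong V_\xx$, and $\iota$ is the induced map $[\xx']\mapsto\bpi(\xx')$.

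The key analytic input is that the smooth action map $\Phi:\bG\times D_\varepsilon\to\bX$, $\Phi(\g,\xx')=\g(\xx')$, is a local diffeomorphism at $(\mathbf{e},\xx)$. Indeed, its derivative along $\bG$ at $\mathbf{e}$ is a linear isomorphism from $T_\mathbf{e}\bG$ onto $T_\xx\bF$ (injective because the finite stabilizer $\bG_\xx$ has trivial Lie algebra, surjective by the definition of $\bF$ as an orbit), while the derivative along $D_\varepsilon$ is the inclusion $V_\xx\hookrightarrow T_\xx\bX$, transverse to $T_\xx\bF$; domain and target have equal dimension $\dim(\bG)+d=\dim(\bX)$. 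The inverse function theorem then gives, after shrinking $D_\varepsilon$ and choosing a small neighborhood $N$ of $\mathbf{e}$, that $\Phi$ restricts to a diffeomorphism from $N\times D_\varepsilon$ onto an open neighborhood of $\xx$ in $\bX$. Projecting to $\bY$ shows at once that $\iota$ has open image containing $\yy$.

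The hard step is injectivity. Apply weak properness to obtain a compact $K\subset\bG$ and a neighborhood $U$ of $\xx$ such that any two points of $U$ on the same $\bG$-orbit are related by some $\g\in K$. Assuming injectivity fails on every arbitrarily small $D_\varepsilon\subset U$, there exist sequences $\xx_1^{(n)},\xx_2^{(n)}\to\xx$ in $D_\varepsilon$ and $\g_n\in K$ with $\g_n(\xx_1^{(n)})=\xx_2^{(n)}$ but $\xx_2^{(n)}\notin\mathcal{R}_\xx\cdot\xx_1^{(n)}$. Passing to a subsequence with $\g_n\to\g_0\in K$, continuity gives $\g_0(\xx)=\xx$, so $\g_0\in\bG_\xx$. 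Setting $\tilde\g_n=\g_0^{-1}\g_n\to\mathbf{e}$ and $\tilde\xx_2^{(n)}=\g_0^{-1}(\xx_2^{(n)})\in D_\varepsilon$ (since $\g_0$ acts on $D_\varepsilon$), one obtains two preimages $(\tilde\g_n,\xx_1^{(n)})$ and $(\mathbf{e},\tilde\xx_2^{(n)})$ of $\tilde\xx_2^{(n)}$ under $\Phi$, both eventually inside the neighborhood $N\times D_\varepsilon$ on which $\Phi$ is a diffeomorphism. Hence these preimages coincide for large $n$, forcing $\tilde\g_n=\mathbf{e}$, so $\g_n=\g_0\in\bG_\xx$ and $\xx_2^{(n)}=\g_0(\xx_1^{(n)})\in\mathcal{R}_\xx\cdot\xx_1^{(n)}$, a contradiction. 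Converting the bare compactness supplied by weak properness into this uniform control on identifying group elements is the principal obstacle.

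Once $\iota$ is a continuous bijection onto an open set, it is automatically open (the same local-diffeomorphism argument applies at every point of $D_\varepsilon$, since both weak properness and the submersion structure of $\Phi$ persist in a neighborhood of $\xx$), hence a homeomorphism. The quotient $D_\varepsilon/\mathcal{R}_\xx$ inherits local compactness, metrizability (via the Hausdorff metric on $\mathcal{R}_\xx$-orbits, as in the proof of Lemma \ref{L-usc}), and local Hausdorffness from the finite linear action of $\mathcal{R}_\xx$ on $V_\xx$, and these properties transport to $\bY$ near $\yy$ through $\iota$. For the concluding ``$r$-to-one'' claim, $\mathcal{R}_\xx$ acts faithfully and linearly on $V_\xx$, so every non-identity element fixes only a proper linear subspace of real codimension at least one; the union of these finitely many subspaces has measure zero in $D_\varepsilon$, and on its complement the projection $D_\varepsilon\to D_\varepsilon/\mathcal{R}_\xx$ is exactly $r$-to-one.
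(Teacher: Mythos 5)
Your proposal follows essentially the same route as the paper's proof: an invariant metric yields the transverse disk $D_\varepsilon$, the inverse function theorem applied to $(\g,\xx')\mapsto\g(\xx')$ on $\bG\times D_\varepsilon$ is exactly the content of Lemma \ref{L-lps}, the injectivity step is the paper's sequential argument (use weak properness to extract a convergent subsequence $\g_n\to\g_0\in\bG_\xx$, then invoke the local product structure to conclude $\g_n\in\bG_\xx$ for large $n$), and the metrizability and $r$-to-one claims are handled just as the paper does via the Hausdorff metric and the finiteness of the fixed-point locus of the linear $\cR_\xx$-action. The reorganization through an explicitly defined map $\iota:D_\varepsilon/\cR_\xx\to\bY$ is a cosmetic rather than substantive difference.
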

\smallskip

\begin{coro}\label{C-?}
Given an action of a Lie group $\bG$ on a manifold $\bX$ with finite 
stabilizers, there are three well
defined open subsets 
$$ U_{\sf LP}~\subset~U_{\sf WP}~\subset~U_{\sf LHaus}~\subset~\bX/\bG~.$$
Here $U_{\sf LP}$ is the set of locally proper points,
$U_{\sf WP}$ is the set of weakly proper points, and $U_{\sf LHaus}$
is the set of all locally Hausdorff points.
\end{coro}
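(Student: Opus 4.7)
My plan is to lift everything upstairs. Define three subsets of $\bX$, namely
$$V_{\sf LP}=\{\xx\in\bX:\text{the action is locally proper at }\xx\},\qquad V_{\sf WP}=\{\xx\in\bX:\text{the action is weakly proper at }\xx\},$$
and $V_{\sf LHaus}=\bpi^{-1}(U_{\sf LHaus})$. By construction $U_{\sf LHaus}$ is open in $\bX/\bG$ (local Hausdorffness is an open property in any topological space). I would then check that $V_{\sf LP}$ and $V_{\sf WP}$ are open and $\bG$-invariant in $\bX$, so that after pushing down via $\bpi$ we obtain open sets $U_{\sf LP}=\bpi(V_{\sf LP})$ and $U_{\sf WP}=\bpi(V_{\sf WP})$ in the quotient topology, with $\bpi^{-1}(U_{\sf LP})=V_{\sf LP}$ and similarly for WP.

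The $\bG$-invariance and openness of $V_{\sf LP}$ are essentially built into Definition \ref{D-prop}: local properness at $\xx$ furnishes a $\bG$-invariant open neighborhood on which the action is proper, and every point in that neighborhood is again locally proper. For $V_{\sf WP}$, if $(U,K)$ witnesses weak properness at $\xx$, then for any $\xx'\in U$ the same pair $(U,K)$ still witnesses weak properness at $\xx'$, so $U\subset V_{\sf WP}$ and $V_{\sf WP}$ is open. Invariance follows by conjugation: if $\g\in\bG$, then $(\g(U),\,\g K\g^{-1})$ witnesses weak properness at $\g(\xx)$, because two points $\yy_1',\yy_2'\in\g(U)$ in the same orbit pull back to $\xx_1',\xx_2'\in U$ linked by some $\k\in K$, whence $\yy_2'=\g\k\g^{-1}(\yy_1')$ with $\g\k\g^{-1}\in \g K\g^{-1}$.

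For the inclusion $U_{\sf WP}\subset U_{\sf LHaus}$ I would just invoke Theorem \ref{T-wot}: a weakly proper point with finite stabilizer has an honest weak orbifold neighborhood in the quotient, which in particular is locally Hausdorff. For $U_{\sf LP}\subset U_{\sf WP}$, given $\xx\in V_{\sf LP}$ with $\bG$-invariant proper neighborhood $V$, use local compactness of $\bX$ to pick a neighborhood $U$ of $\xx$ with $\overline U\subset V$ and $\overline U$ compact. Set
$$K\;=\;\overline{\{\g\in\bG\,:\,\g(\overline U)\cap\overline U\ne\emptyset\}}.$$
If $\xx_1',\xx_2'\in U$ satisfy $\xx_2'=\g(\xx_1')$ for some $\g\in\bG$, then $\g(\overline U)$ meets $\overline U$, so $\g\in K$. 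The one thing left to verify, and the only real work in the whole proof, is that $K$ is compact. This is the standard upgrade from the pointwise formulation of properness used in Definition \ref{D-prop} to the compact-set formulation: cover the compact product $\overline U\times\overline U$ by finitely many pairs of neighborhoods $(U_i,U_j')$ each satisfying the pointwise properness clause, and observe that $\{\g:\g(\overline U)\cap\overline U\ne\emptyset\}$ is contained in the finite union of the corresponding relatively-compact translator sets. That finite-subcover step is the main (only) technical obstacle; the rest is bookkeeping.
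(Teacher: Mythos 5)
Your proof is correct and carries out, in full detail, the argument the paper leaves as a one-line remark (``This corollary follows easily from the theorem and the discussion above''): openness and $\bG$-invariance of the three loci upstairs, the inclusion $U_{\sf LP}\subset U_{\sf WP}$ via the compact-set reformulation of properness already flagged in Remark~\ref{R-p}, and the inclusion $U_{\sf WP}\subset U_{\sf LHaus}$ via Theorem~\ref{T-wot} (which is where the finite-stabilizer hypothesis enters).
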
\ssk

This corollary follows easily from the theorem and the discussion above.
The proof of Theorem \ref{T-wot} will depend on three lemmas. 
\medskip

\begin{lem}[\bf Invariant Metrics]\label{L-met} In the real case,
given any finite subgroup  
$\boldsymbol\Gamma\subset \bG$ 
there exists a smooth \hbox{$\boldsymbol\Gamma\!$-invariant} 
Riemannian metric on the
 space $\bX\,.$ Similarly, in  the complex case $\bX$ has
 a smooth $\boldsymbol\Gamma\!$-invariant Hermitian metric.
\end{lem}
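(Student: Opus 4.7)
The plan is to use the standard averaging trick. I would start by producing any smooth Riemannian metric $g$ on $\bX$ (in the real case): since $\bX$ is a manifold, one can paste together local Euclidean inner products using a smooth partition of unity, giving a smooth positive-definite symmetric $(0,2)$-tensor $g$ on $\bX$. Similarly, in the complex case, a partition-of-unity argument on local holomorphic charts produces a smooth Hermitian metric $h$ on $\bX$.

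Next, I would average over the finite group $\boldsymbol\Gamma$. Define
$$\tilde g \;=\; \frac{1}{|\boldsymbol\Gamma|}\sum_{\g\in\boldsymbol\Gamma}\g^{*}g,$$
and analogously $\tilde h = \frac{1}{|\boldsymbol\Gamma|}\sum_{\g\in\boldsymbol\Gamma}\g^{*}h$ in the complex case. Since each $\g:\bX\to\bX$ is a diffeomorphism (real analytic or biholomorphic, respectively), each pullback $\g^{*}g$ is a smooth $(0,2)$-tensor of the same type, and each $\g^{*}h$ is a smooth Hermitian form. Smoothness of $\tilde g$ and $\tilde h$ follows from smoothness of each summand and the fact that the sum is finite.

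To verify positive definiteness, I would note that for any nonzero tangent vector $v\in T_\xx\bX$, each term $(\g^{*}g)_\xx(v,v)=g_{\g(\xx)}(d\g_\xx(v),d\g_\xx(v))$ is strictly positive, because $d\g_\xx$ is an isomorphism and $g$ is positive definite; averaging finitely many positive numbers remains positive. The analogous statement holds for $\tilde h$. Finally, invariance is immediate: for any $\g_0\in\boldsymbol\Gamma$,
$$\g_0^{*}\tilde g \;=\; \frac{1}{|\boldsymbol\Gamma|}\sum_{\g\in\boldsymbol\Gamma}(\g\g_0)^{*}g \;=\;\tilde g,$$
since $\g\mapsto \g\g_0$ permutes $\boldsymbol\Gamma$; the same computation works for $\tilde h$.

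There is no real obstacle here; the only thing to be slightly careful about is that the averaging does not accidentally destroy the tensor type (symmetric bilinear in the real case, Hermitian in the complex case), which is automatic because each summand has the desired type and the type is preserved under finite linear combinations with positive coefficients. The essential hypothesis used is that $\boldsymbol\Gamma$ is \emph{finite}, so that the sum makes sense without any integration theory; for a general compact $\boldsymbol\Gamma$ one would instead integrate against Haar measure, but that is not needed here.
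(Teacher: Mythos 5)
Your proof is correct and follows essentially the same route as the paper: construct an arbitrary smooth metric and average over $\boldsymbol\Gamma$, with the standard verification that averaging preserves type, smoothness, and positive definiteness, and that the resulting tensor is $\boldsymbol\Gamma$-invariant. The paper's version is more terse, relegating the averaging formula to a footnote, but the argument is identical.
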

\smallskip

\begin{proof} Starting with an arbitrary smooth Riemannian or Hermitian metric,
average over its transforms\footnote{A Riemannian metric
can be described as a smooth function $\mu$ which
assigns to each $\xx\in \bX$ a symmetric positive definite inner product
$\mu_\xx(v,w)$ on the vector space $T_\xx \bX$ of tangent vectors at $\xx$. 
Given any diffeomorphism $f:\bX\to \bX{\bf '}$, and given a Riemannian metric
 $\mu$ on $\bX{\bf '}$, we can use the first derivative map $f_*:T_\xx \bX
\stackrel{\cong}{\longrightarrow}T_{f(\xx)}\bX{\bf '}$ 
to pull back the metric, setting
$$ (f^*\mu)_\xx(v,w)~=~\mu_{f(\xx)}\big(f_*(v),\,f_*(w)\big)~\in~\R~.$$
In particular, given any finite group $\boldsymbol\Gamma$ consisting of $r$ 
diffeomorphisms from $\bX$ to itself, we can form the average 
$\widehat\mu ~=~\frac{1}{r}\sum_{\g\in \boldsymbol\Gamma} \g^*\mu~.$
The construction in the complex case is similar, using Hermitian inner
 products.} by elements of $\boldsymbol\Gamma$. Then each element
of $\boldsymbol\Gamma$ will represent an isometry for the averaged metric.
\end{proof}
\smallskip

\begin{lem}[{\bf Local Product Structure}]\label{L-lps}
Using this metric, let $D_\varepsilon$  be the
open disk swept out by normal geodesics of length less than $\varepsilon$ at
 $\xx$. Then any translated disk $\g(D_\varepsilon)$ is determined uniquely by
 its center point $\xx' = \g(\xx)$.  For $\xx'$ near $\xx$ in $\bF$,
 two such translated disks are disjoint, unless 
they have the same center point. It follows that some neighborhood of $\xx$ 
in $\bX$  is diffeomorphic to the product of $D_\vep$ with a neighborhood of
 the identity element in $\bG$.
\end{lem}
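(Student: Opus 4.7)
The plan is to prove the three assertions separately, relying on Lemma \ref{L-met} and the inverse function theorem.

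First I would invoke Lemma \ref{L-met} applied to the finite group $\boldsymbol\Gamma = \bG_\xx$ to arrange that the ambient Riemannian (or Hermitian) metric is $\bG_\xx$-invariant. Suppose $\g_1(\xx) = \g_2(\xx)$; then $\h = \g_2^{-1}\g_1 \in \bG_\xx$ is an isometry fixing $\xx$, and it preserves the fiber $\bF$. Hence $\h$ maps the normal space to $\bF$ at $\xx$ onto itself, and consequently sends normal geodesics of length less than $\varepsilon$ starting at $\xx$ to other such geodesics. Therefore $\h(D_\varepsilon) = D_\varepsilon$, and
$$ \g_1(D_\varepsilon) \;=\; \g_2\bigl(\g_2^{-1}\g_1(D_\varepsilon)\bigr) \;=\; \g_2(D_\varepsilon),$$
so the translated disk depends only on its center $\g(\xx)$.

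Next I would produce the local product structure via a standard slice argument. Consider the analytic map
$$ \Phi\colon \bG \times D_\varepsilon \longrightarrow \bX, \qquad \Phi(\g,\yy) = \g(\yy). $$
Since $\bG_\xx$ is finite, the orbit map $\g \mapsto \g(\xx)$ is a local diffeomorphism from a neighborhood of the identity $\mathbf{e}$ onto a neighborhood of $\xx$ in $\bF$; in particular its derivative carries $T_{\mathbf{e}}\bG$ isomorphically onto $T_\xx\bF$. On the other hand the restriction of $\Phi$ to $\{\mathbf{e}\}\times D_\varepsilon$ is the inclusion, whose derivative at $\xx$ sends $T_\xx D_\varepsilon$ isomorphically onto the transverse space $V_\xx$, by construction of $D_\varepsilon$. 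Since $T_\xx\bF$ and $V_\xx$ are complementary subspaces of $T_\xx\bX$, the total derivative of $\Phi$ at $(\mathbf{e},\xx)$ is an isomorphism. The inverse function theorem then yields a neighborhood $N$ of $\mathbf{e}$ in $\bG$ and a smaller disk $D' \subset D_\varepsilon$ such that $\Phi$ restricts to an analytic diffeomorphism from $N \times D'$ onto an open neighborhood of $\xx$ in $\bX$, which is precisely the asserted product decomposition.

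Finally, disjointness is immediate: if $\g_1(\xx) \ne \g_2(\xx)$ are both sufficiently near $\xx$, then by the uniqueness proved in the first step we may — after multiplying each $\g_i$ on the right by an appropriate element of $\bG_\xx$, which leaves $\g_i(D_\varepsilon)$ unchanged — assume $\g_1, \g_2 \in N$. Injectivity of $\Phi$ on $N \times D'$ then forbids $\g_1(D')$ and $\g_2(D')$ from intersecting, giving disjointness of the translated disks (of radius corresponding to $D'$) whose centers are distinct points near $\xx$.

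The principal subtlety is the first step: one must know that $\g(D_\varepsilon)$ is intrinsically attached to the point $\g(\xx)$ and not merely to the particular element $\g$ chosen, for otherwise the ``product'' neighborhood would have ambiguous transverse fibers. The $\bG_\xx$-invariance of the metric — furnished by Lemma \ref{L-met} — is precisely what makes this step go through; the rest is a routine inverse function theorem computation.
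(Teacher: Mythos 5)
Your proof is correct and takes essentially the same route as the paper: establish that $\g(D_\varepsilon)$ depends only on the center point using the $\bG_\xx$-invariance of the metric from Lemma~\ref{L-met}, and then obtain the product structure by applying the inverse function theorem to $(\g,\yy)\mapsto\g(\yy)$, using that $T_\xx\bF$ and the normal space at $\xx$ are complementary in $T_\xx\bX$. Your derivation of the disjointness clause from injectivity of $\Phi$ on $N\times D'$ — after adjusting $\g_1,\g_2$ by elements of $\bG_\xx$ so that both land near $\mathbf{e}$, which is legitimate since the orbit map $\bG\to\bF$ is a finite covering — spells out a step the paper leaves implicit, and is a welcome addition.
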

\smallskip

\begin{proof} (Compare Figure \ref{F-moeb}.) 
If $\g_1(\xx)=\g_2(\xx)$, then evidently $\g_1^{-1}\g_2\in \bG_\xx$. Since 
elements of $\bG_\xx$ map $D_\varepsilon$ to itself, it follows that
$\g_1(D_\varepsilon)=\g_2(D_\varepsilon)$.  Finally, if $W$ is a small
 neighborhood of  the identity in $\bG$, and if $\varepsilon$ is small enough,
 then since the  tangent space to $\bX$ at $\xx$ is the direct sum of the 
tangent space to $\bF$ and the space of normal vectors at $\xx$, it is not hard
 to check that  the map $(\g,\,\delta)\mapsto \g(\delta)$  sends  
 $W\times D_\varepsilon$  diffeomorphically onto an open subset of $\bX$. 
\end{proof}
\smallskip

\begin{figure}[h!]
\centerline{\includegraphics[width=2.5in]{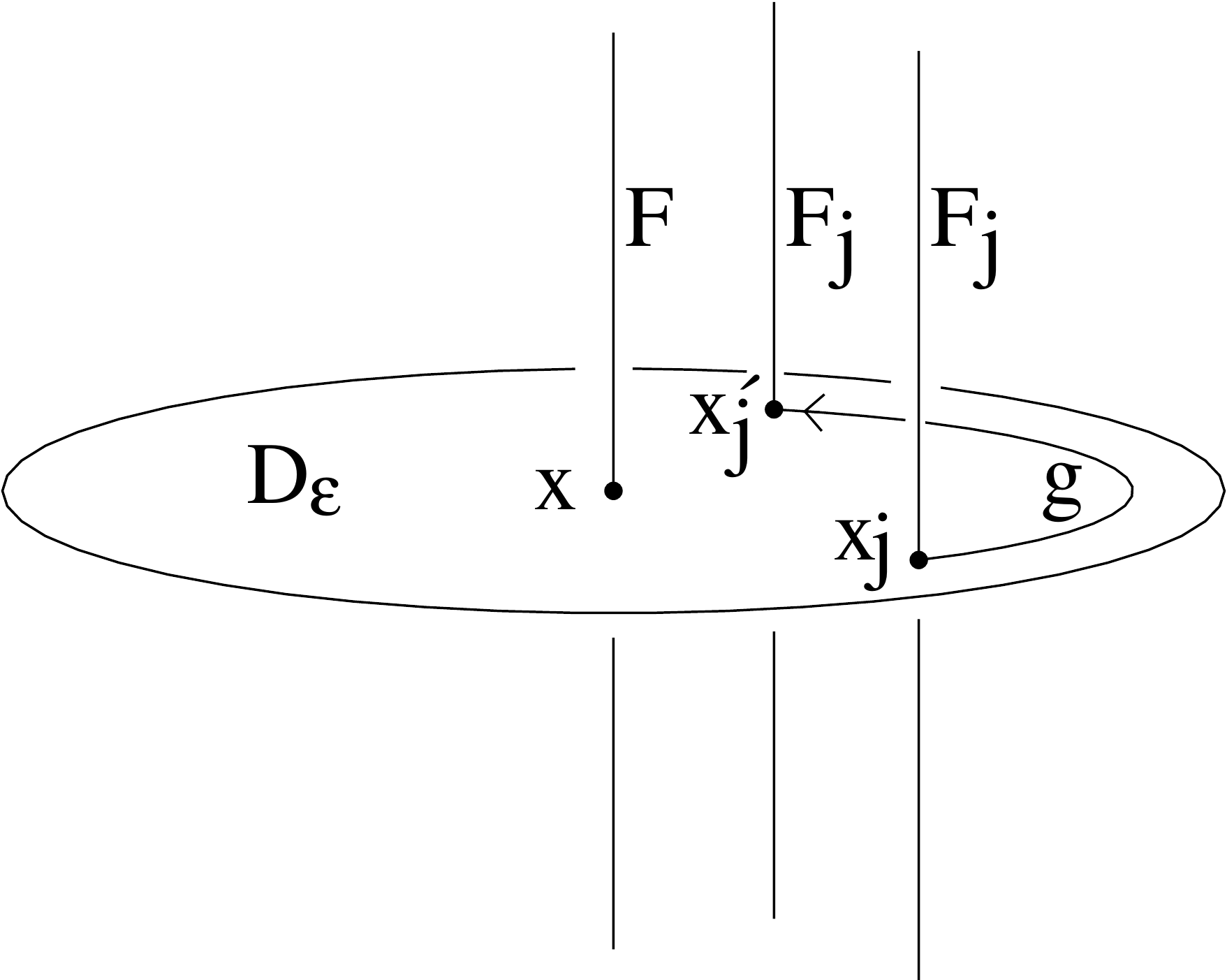}}
\caption{\label{F-thmpic}\sf In the weakly proper case, 
two points of
$D_\varepsilon$ belong to the same fiber only if there is an element
$\g\in \bG_\xx$ carrying one to the other.}
\end{figure}
\medskip

\begin{proof}[Proof of Theorem \ref{T-wot}]
Note first that elements of $\bG_\xx$ carry geodesics to geodesics, mapping a
small disk $D_\varepsilon$ onto itself, and mapping each fiber onto itself.
We must first show that two points of a sufficiently small disk
$D_\vep$ will belong to the same fiber only if some element of $\bG_\xx$
maps one to the other. (Compare Figure~\ref{F-thmpic}.) 
Suppose, for arbitrarily large $j>0$, that there exist points $\xx_j$ 
and $\xx_j{\bf '}$ in $D_{1/j}$ which belong to the same fiber, so that 
$\xx_j{\bf '}=\g_j(\xx_j)$ for some $\g_j\in \bG$; but so that $\xx_j$ and 
$\xx_j{\bf '}$ are not in the same orbit under $\bG_\xx$. Since the action is
 weakly proper, we  can choose
these group elements $\g_j$ within a compact subset $K\subset \bG$. After
passing to an infinite subsequence, we can assume that these elements $\g_j$
tend to a  limit \hbox{$\g\in K$}. Since $\g(\xx_j)=\xx_j{\bf '}$ with both 
$\xx_j$ and $\xx_j{\bf '}$ tending to $\xx$, it follows by continuity that 
$\g(\xx)=\xx$. Therefore $\g_j(\xx)$ tends to $\xx$ as $j\to\infty$ within the 
subsequence. Thus, by Lemma \ref{L-lps}, each image  $\g_j(D_\varepsilon)$ is 
either equal to or disjoint from $D_\varepsilon$. 
Since $\g_j(\xx_j)=\xx_j{\bf '}\in D_\varepsilon$, 
it follows that $\g_j\in \bG_\xx$ whenever $j$ is sufficiently 
large,  as required. 

This shows that the quotient $D_\vep/\bG$ maps bijectively to its image
in $\bX/\bG$. If $\vep$ is small enough, the same will be true for the compact
disk $\overline D_\vep$.
It is easy to see that the quotient of any compact metric space by a
finite group action is compact, with an induced metric. 
In fact, we can use the Hausdorff metric
on the space of all non-empty compact subsets; and each $\bG$-orbit is such a
compact subset.  Therefore, under the hypothesis of Theorem~\ref{T-wot}
it follows that
the quotient space is locally compact, metric, 
and hence Hausdorff, near $\yy$. Since the action of ${\mathcal R}_\xx$ 
on the transverse vector space $V_\xx$ is linear and effective,
it follows, for each non-trivial cyclic subgroup of ${\mathcal R}_\xx$, that the
action is free except on some proper linear subspace of $V_\xx$. Therefore the
 projection map from $D_\varepsilon$ to $\bY$ is  $r$-to-one except on a 
finite union  of  linear subspaces.
\end{proof}

\begin{rem}\label{R-anal}
In the unramified case, of course $\bY$ inherits the structure of a real
or complex analytic manifold locally. However, in general the quotient
need not  be even a topological manifold.
 Perhaps the simplest non-manifold example is the quotient of
the Euclidean space $\R^3$ by the two element group $\{\pm 1\}$ acting by
$\g(\xx)=\pm\xx$. In this case, the quotient is not locally orientable
near the origin. 
\medskip

\begin{rem}[{\bf Rational Homology Manifolds}]\label{R-rathom} 
We will show that:

\begin{quote} \it
Any complex weak orbifold or any locally orientable real weak 
 \hbox{orbifold}
is a  \hbox{\textbf{\textit{rational homology manifold}}}, 
 in the sense that any point  of such 
an orbifold has a neighborhood homeomorphic to the cone over a space
with the rational homology of a sphere.
\end{quote}
\noindent
Here the local orientability condition in the real case means that the
 space $\bY$ must be locally of the form
$D_\varepsilon/{\Gamma}$, where the action of the finite group $\Gamma$
preserves orientation. In other words $\Gamma$ must be contained in the
rotation group ${\rm SO}(d)$, rather than the full orthogonal group 
${\rm O}(d)$.\ssk

The statement then follows from the following more general principle:

\begin{lem} \label{L-rathom}
If a finite group $\Gamma$ acts on a finite cell complex $K$, then the
rational homology $H_*(K/\Gamma;\,\Q)$ is isomorphic to the subgroup 
$$H_*(K;\,\Q)_\Gamma\subset H_*(K;\,\Q)$$ consisting of all elements which are
 fixed under the induced action of $\Gamma$. 
There is a similar statement for cohomology.
\end{lem}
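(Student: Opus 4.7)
The plan is to work at the chain level and exploit the fact that $|\Gamma|$ is invertible in $\Q$, so that Maschke's theorem makes the functor of $\Gamma$-invariants (and equivalently coinvariants) exact on $\Q[\Gamma]$-modules.

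First I would arrange that $\Gamma$ acts cellularly on $K$ in a well-behaved way. By replacing $K$ with a sufficiently fine iterated barycentric subdivision, one may assume that $\Gamma$ permutes the cells and that whenever some $\gamma\in\Gamma$ carries a cell $\sigma$ to itself, it fixes $\sigma$ pointwise. With this normalization, $K/\Gamma$ inherits a CW-structure whose cells are the $\Gamma$-orbits of cells of $K$, and the cellular chain complex satisfies
\[
C_*(K/\Gamma;\Q)\;\cong\;C_*(K;\Q)_{\Gamma}
\;:=\;C_*(K;\Q)\big/\langle\gamma c-c:\gamma\in\Gamma,\,c\in C_*(K;\Q)\rangle.
\]
This is the only genuinely geometric input; the rest is pure algebra.

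Next I would introduce the averaging operator $e=\frac{1}{|\Gamma|}\sum_{\gamma\in\Gamma}\gamma$ on $C_*(K;\Q)$. Because $\Gamma$ acts by chain maps, $e$ commutes with the boundary $\partial$, and it is an idempotent whose image is exactly the invariant subcomplex $C_*(K;\Q)^\Gamma$. The composition
\[
C_*(K;\Q)^{\Gamma}\;\hookrightarrow\;C_*(K;\Q)\;\twoheadrightarrow\;C_*(K;\Q)_{\Gamma}
\]
is an isomorphism of chain complexes, with inverse induced by $e$: an element $[c]$ in the coinvariants is sent to $e(c)\in C_*(K;\Q)^\Gamma$, independent of the representative. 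Passing to homology therefore gives
\[
H_*\!\bigl(C_*(K;\Q)^{\Gamma}\bigr)\;\cong\;H_*\!\bigl(C_*(K;\Q)_{\Gamma}\bigr)\;\cong\;H_*(K/\Gamma;\Q).
\]

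Finally I would interchange the functor $(-)^\Gamma$ with the homology functor. By Maschke's theorem $\Q[\Gamma]$ is semisimple, so $(-)^\Gamma$ is exact on $\Q[\Gamma]$-modules, and therefore commutes with the formation of cycles, boundaries, and quotients. Concretely, the averaging operator $e$ descends to a projection on $H_*(K;\Q)$ whose image equals $H_*(K;\Q)_\Gamma$, and it identifies this image with $H_*\!\bigl(C_*(K;\Q)^\Gamma\bigr)$. Combining with the previous display yields the desired isomorphism
\[
H_*(K/\Gamma;\Q)\;\cong\;H_*(K;\Q)_{\Gamma}.
\]
The cohomology statement follows by dualizing, or equally by running the same argument with the cellular cochain complex in place of chains.

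The main obstacle is the first step: justifying that $C_*(K/\Gamma;\Q)$ is literally the module of coinvariants of $C_*(K;\Q)$. Without the subdivision trick a cell may be reversed by some $\gamma$, contributing $2$-torsion that would ruin the identification over $\Z$ but is harmless over $\Q$; once one works rationally and subdivides, the identification is clean and the rest of the proof is formal.
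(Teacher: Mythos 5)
Your proof is correct, and it reaches the same conclusion by a genuinely different packaging of the same underlying idea (invertibility of $|\Gamma|$ in $\Q$). The paper's argument, due to Sullivan, is a transfer argument carried out entirely at the chain and homology level: it constructs a transfer map $\bpi^*\colon C_*(K/\Gamma;\Q)\to C_*(K;\Q)$ that sends each cell downstairs to the sum of its preimages weighted by the order of the stabilizer, verifies directly that $\bpi_*\circ\bpi^*$ is multiplication by $|\Gamma|$ while $\bpi^*\circ\bpi_*$ induces $\eta\mapsto\sum_{\gamma\in\Gamma}\gamma_*\eta$ on homology, and reads off from these two identities that $\bpi^*$ embeds $H_*(K/\Gamma;\Q)$ onto a direct summand of $H_*(K;\Q)$ which is precisely the $\Gamma$-fixed subspace. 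You instead identify the chain complex downstairs with the coinvariants $C_*(K;\Q)_\Gamma$, use the averaging idempotent $e=\frac{1}{|\Gamma|}\sum_\gamma\gamma$ to pass to the invariant subcomplex $C_*(K;\Q)^\Gamma$, and then invoke exactness of $(-)^\Gamma$ over $\Q[\Gamma]$ (Maschke) to commute invariants with homology. Both routes rest on the same two ingredients --- the subdivision normalization so that cell stabilizers act trivially, and the fact that $|\Gamma|$ is a unit in $\Q$ --- but you make the module-theoretic structure explicit, whereas the paper stays at the level of explicit chain maps and avoids naming Maschke. The paper's version buys a slightly more self-contained and elementary exposition; yours is more conceptual and makes it transparent why the statement fails with $\Z$ coefficients. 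One small expository point: when you assert that $e$ descends to a projection on $H_*(K;\Q)$ with image $H_*(K;\Q)_\Gamma$ and that this image is identified with $H_*\bigl(C_*(K;\Q)^\Gamma\bigr)$, it is worth spelling out that the inclusion $\iota\colon C_*^\Gamma\hookrightarrow C_*$ is split by $e$, so $\iota_*$ is injective, and that the image of $\iota_*$ is exactly the $\Gamma$-fixed subspace because any fixed class $\eta$ satisfies $\eta=e_*\eta$ and $e$ lands in $C_*^\Gamma$.
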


\begin{proof} After passing to a suitable subdivision of the cell complex $K$
we may assume that each group element which maps a cell onto itself acts 
as the identity map on this cell. Choosing some orientation for each cell,
the associated chain complex $C_*(K)=C_*(K;\,\Q)$ is the graded rational vector
 space with one  basis element for each cell.  The projection map 
$\bpi:K\to K/\Gamma$ induces a chain mapping\break 
$\bpi_*:C_*(K)\to C_*(K/\Gamma)$ between these rational chain complexes. 
That is, $\bpi_*$ maps $C_n(K)$ to $C_n(K/\Gamma)$, and commutes with the 
boundary operator \break $\partial:C_n(K)\to C_{n-1}(K)$  i.e., 
we obtain the following commutative diagram
$$\xymatrix{C_n(K) \ar[d]_\partial\ar[r]^{\bpi_n} & C_{n}(K/\Gamma)
\ar[d]^\partial\\
C_{n-1}(K) \ar[r]^{\bpi_{n-1}} & C_{n-1}(K/\Gamma)}~.$$
  But there is also  a less familiar chain map 
$$ \bpi^*:C_*(K/\Gamma)\to C_*(K)$$
in the other direction, which sends each cell of $K/\Gamma$ 
to the weighted sum of the cells of $K$ which lie over it. 
Here each such cell $\sigma$ of $K$ is to be weighted
by the number of elements in the stabilizer $\Gamma_\sigma\subset\Gamma$.
Then the composition
$$C_*(K/\Gamma)\stackrel{\pi^*}{\longrightarrow} C_*(K)
\stackrel{\pi_*}{\longrightarrow} C_*(K/\Gamma) $$
is just multiplication by the order of $\Gamma$; so the same is true of the
 induced composition
$$H_*(K/\Gamma)\stackrel{\pi^*}{\longrightarrow}  H_*(K)
\stackrel{\pi_*}{\longrightarrow} H_*(K/\Gamma) $$
of rational  homology groups. Since this composition is bijective,
it follows easily that $H_*(K/\Gamma)$ maps isomorphically onto its 
image in $H_*(K)$; and also that 
$H_*(K)$ splits as the direct sum of the image of 
$\bpi^*$ and the kernel of $\bpi_*$.
On the other hand the other composition
$$H_*(K)\stackrel{\pi_*}{\longrightarrow} H_*(K/\Gamma)
\stackrel{\pi^*}{\longrightarrow}  H_*(K)$$
maps each element of $H_*(K)$ to the sum of its images under the various 
elements of $\Gamma$. It follows that the kernel of $\bpi_*$ is the subspace
consisting of all elements $\eta\in H_*(K)$ such that 
$~\sum_{\bgam\in\Gamma}~\bgam_*(\eta)\,=\,0$. Since every element of the image
of $\bpi^*$ is $\Gamma$-invariant, and no non-zero element of the kernel of 
$\bpi_*$ is $\Gamma$-invariant, the conclusion follows.
(We thank Dennis Sullivan for supplying this argument.)\end{proof}
\medskip

In particular, if $K$ is a rational homology sphere and the action of $\Gamma$
preserves orientation, then it follows that $K/\Gamma$ is also a rational 
homology sphere. 

Now suppose as in Theorem \ref{T-wot} that the quotient space
 $\bY$ is locally homeomorphic to $\R^d/\Gamma$, where $\Gamma$ is now the
ramification group.
Then we can choose a $\Gamma$-invariant simplicial structure on $\R^d$. 
Taking $K$ to be the star boundary of the origin; that is the boundary
of the union of all closed simplexes which contain the origin, it follows that
$K/\Gamma$ is a homology $(d-1)$-sphere, and hence that
$\bY/\Gamma$  is a rational homology $d$-manifold. 
The corresponding statement in the complex case follows easily.
\end{rem}
\medskip

\begin{rem}[{\bf Quotient Analytic Structures}]\label{R-QAS}
Whether or not the quotient $\bY$ is a (possibly non-Hausdorff)
 topological manifold, we can put
some kind of ``analytic structure'' on it as follows. (Recall  that we
 use the word
analytic as an abbreviation for real analytic in the real case, and complex
analytic in the complex case.)
To every open subset $\bY{\bf '}\subset \bY$ assign the algebra 
$\cA(\bY{\bf '})$ consisting of 
all functions $f$, from $\bY{\bf '}$ to $\R$ in the real case or to $\C$ in the
complex case, such that the composition $~  f\circ\bpi~$ mapping 
$\bpi^{-1}(\bY{\bf '})$ to $\R$ or $\C$ is analytic,  where 
$\bpi: \bX \rightarrow \bY$ is the  projection  map. \medskip

{\bf Definition.\/} We say that $\bY$ is a $d$-dimensional
\textbf{\textit{analytic manifold}} if for every point of $\bY$ there is a 
neighborhood $\bY{\bf '}$ and functions 
\hbox{$f_1,\ldots,\,f_d\in\cA(\bY{\bf '})$} 
such that:

\begin{quote}
\begin{itemize}
\item[{\bf(1)}] The correspondence 
$\quad\yy\mapsto \big(f_1(\yy),\,\ldots,\,f_d(\yy)\big)\quad$
maps $\bY{\bf '}$ homeomorphically onto an open subset of\,\footnote
{In the real case, it would be natural to also include
manifolds-with-boundary by allowing a closed half-space as model space in 
Item {\bf(1)} above. In fact, one could also include 
``manifolds with corners'' by allowing a convex polyhedron as model space.}
 $\R^d$ or $\C^d$; and
\item[{\bf(2)}] Every element $f\in\cA(\bY{\bf '})$ can be expressed as an
 analytic function of $f_1,\ldots,\, f_d$. 
\end{itemize}
\end{quote}
\noindent In other words, for every such $f$
there must be an analytic
function $F$, defined on some open subset of $\R^d$ or $\C^d$, such that
$$f(\yy)~=~F\big(f_1(\yy),\,\ldots,\,f_d(\yy)\big)\quad{\rm for~ all}
\quad \yy\in \bY{\bf '}~.$$
\end{rem} 
\medskip

{\bf Another Smooth Example.} 
Let ${\mathfrak S}_n$ be the symmetric group on $n$ elements acting on $\R^n$ 
by permuting the $n$ coordinates. Then the quotient
$\R^n/{\mathfrak S}_n$ is a real analytic manifold which is isomorphic to $\R^n$
itself. We can simply choose $f_1,\, \cdots,\, f_n$ to be the elementary
symmetric functions of the $n$ coordinates. Similarly $\C^n/{\mathfrak S}_n$ is
biholomorphic to $\C^n$.\medskip

Such smooth examples seem to be rather rare when $n\ge 2$.
Here is more typical example of a ramified action  with a topological
manifold as quotient.\bigskip

{\bf A Simple Non-Smooth Manifold Example.} Let the two element group 
$\{\pm 1\}$ act on $\R^2$ by 
$$ (x,\,y)~\mapsto ~\pm(x,\,y)~.$$ 
Then the quotient space $\bY$ is clearly homeomorphic to $\R^2$. In fact 
if we introduce the complex variable $z=x+iy$, then
\hbox{$z^2=x^2-y^2+2ixy$} provides a good complex parametrization. 
However, the set $\cA(\bY)$ consists of all maps \hbox{$\bY\to \R$} 
which can be expressed as real analytic functions of 
$$x^2,~~~y^2,\quad{\rm and}\quad x\,y~.$$ The
functions $f_1=x^2-y^2$ and $f_2=2\,x\,y$ would satisfy Condition~$\bf(1)$
of the Definition in Remark~\ref{R-QAS}. However there is no way of
 expressing
$$ x^2+y^2~=~\sqrt{f_1^{\,2}+f_2^{\,2}}$$
as a smooth function of $f_1$ and $f_2$. In fact,
 no choice of $f_1$ and $f_2$ will satisfy Condition~{\bf(2)}. One way to see 
this is to note that the correspondence
$$ (x,\,y)~\mapsto~(\xi,\,\eta,\,\zeta)~=~(x^2,\,y^2,\,xy)$$ 
sends the real plane $\R^2$ to a topological submanifold of $\R^3$ which is
clearly not smooth, since it projects onto the positive quadrant in the
  $(\xi,\,\eta)$-plane.

In the complex analog, with the group $\{\pm 1\}$ acting on $\C^2$,
the quotient $\C^2/\{\pm 1\}$ is not even a topological manifold, since the
 quotient space with the origin removed is not simply-connected. 
\bigskip

{\bf A Wild Example.} Let $\mathbb H$ be the space of quaternions.
We will give an example of a finite group acting smoothly on 
$\R\times{\mathbb H}$ with the following rather startling property.
The quotient space $(\R\times{\mathbb H})/G_{120}$ is homeomorphic to $\R^5$;
but the set of ramified points $\R\times{\bf 0}$ corresponds to a line
in this quotient space which is so wildly embedded that its complement
is not simply connected.

To begin the construction, note that the unit sphere $S^3\subset{\mathbb H}$
can be described as the universal covering group of the rotation group
 ${\rm SO}(3)$. The 60 element icosahedral subgroup of ${\rm SO}(3)$ is
covered by the 120 element \textbf{\textit{double icosahedral group}} 
\hbox{$G_{120}\subset S^3$.} The quotient 
space $S^3/G_{120}$ is the ``Poincar\'e fake sphere", with
 the\break homology of the standard \hbox{3-sphere.} 
If we let the group $G_{120}$ act on $\mathbb H$ by left multiplication,
then the quotient 
${\mathbb H}/G_{120}$ is not a manifold, since a punctured neighborhood
of the origin is not simply-connected. However, the 
``double suspension theorem'' of Cannon and Edwards implies that the product
 
\centerline{$\R\times({\mathbb H}/G_{120}~)\cong~(\R\times{\mathbb H})/G_{120}$}

\noindent is a simply-connected manifold, homeomorphic to $\R^5$.
(Compare  \cite{Ca} or \cite{Ed}.) This product cannot 
be given  any differentiable structure such that the subset
\hbox{$\R\times {\bf 0}$} of ramified points is a differentiable submanifold.
This follows since the complement of this
one-dimensional topological submanifold has fundamental group $G_{120}$.
\end{rem}
\smallskip

\begin{rem}[{\bf Locally Proper Actions and Orbifolds}]\label{R-p}
 Recall from Definition~\ref{D-prop}
that the action of $\bG$ on a locally compact space $\bX$ is called 
\textbf{\textit{proper}} if every pair $(\xx,\xx{\bf '}\,)\in \bX\times \bX$ has
 a neighborhood $U\times U'$ such that the set of $\g\in G$ with 
$\g(U)\cap U'\ne\emptyset$ has compact closure. 

(Here are two alternative versions of the definition. The action is proper
if and only if: 

\begin{quote} for any compact sets $K_1\,,~K_2\subset \bX$, the set of all
$\g\in \bG$ with $\g(K_1)\cap K_2\ne\emptyset$ is compact; 
\end{quote}

\noindent or equivalently, if and only if
 the map  $(\g,~\xx)\mapsto\big(\g(\xx),~\xx\big)$
from $\bG\times \bX$ to $\bX\times \bX$ is a proper map. The proofs
are straightforward.) \smallskip

The action is \textbf{\textit{locally proper}} at $\xx$ if it is proper 
throughout some $\bG$-invariant neighborhood of $\xx$.
\end{rem}
\medskip

One important property of locally proper actions is the following.
\smallskip

\begin{lem}\label{L-prop} If the action is locally proper, 
with finite stabilizers, then  for all $\xx{\bf '}$ sufficiently close to
 $\xx$ the stabilizer $\bG_{\xx{\bf '}}$  is isomorphic to a subgroup of 
$\bG_\xx$. In  particular, the order $|\bG_\xx|$ of the stabilizer is upper
 semi-continuous as a function of $\xx$, so that $|\bG_{\xx{\bf '}}|\le|\bG_\xx|$
 for all $\xx{\bf '}$  sufficiently close to $\xx$. 

\end{lem}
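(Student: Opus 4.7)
The plan is to exhibit, for every $\xx'$ sufficiently close to $\xx$, an injective group homomorphism $\phi_{\xx'}:\bG_{\xx'}\hookrightarrow\bG_\xx$. Since nearby orbits produce conjugate stabilizers, Lemma \ref{L-lps} lets me reduce to the case $\xx'\in D_\vep$ in a transverse disk through $\xx$: every point near $\xx$ has the form $\g'(\xx'')$ for some $\g'$ close to the identity and $\xx''\in D_\vep$, and the two stabilizers are conjugate by $\g'$. I would fix once and for all a $\bG_\xx$-invariant metric via Lemma \ref{L-met}, together with the product diffeomorphism $\Phi:V\times D_\vep\to\bX$, $(\g_1,\xx'')\mapsto\g_1(\xx'')$, of Lemma \ref{L-lps}, where $V$ is a small symmetric neighborhood of ${\bf e}\in\bG$.

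Local properness supplies a compact set $K\subset\bG$ such that $\bG_{\xx'}\subset K$ for every $\xx'$ in some neighborhood $U$ of $\xx$. A standard compactness argument then shows that any sequence $\g_n\in\bG_{\xx'_n}$ with $\xx'_n\to\xx$ has all limit points in $\bG_\xx$: pass to a subsequence with $\g_n\to\g\in K$ and note that $\g(\xx)=\lim \g_n(\xx'_n)=\lim\xx'_n=\xx$. Since $\bG_\xx$ is finite and $\bG$ is Hausdorff, I can pick a symmetric neighborhood $V_0\subset V$ of ${\bf e}$ so that the translates $\{\g_0 V_0:\g_0\in\bG_\xx\}$ are pairwise disjoint, and then shrink $U$ further so that $\bG_{\xx'}\subset\bigsqcup_{\g_0\in\bG_\xx}\g_0 V_0$ for every $\xx'\in U$. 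This defines $\phi_{\xx'}$ by sending each $\g\in\g_0 V_0$ to $\g_0$.

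Injectivity of $\phi_{\xx'}$ is then immediate from the product structure: if $\phi_{\xx'}(\g)={\bf e}$, then $\g\in V$ and $\Phi(\g,\xx')=\g(\xx')=\xx'=\Phi({\bf e},\xx')$, so $\g={\bf e}$ by injectivity of $\Phi$. The main obstacle is verifying the homomorphism property, which requires a further shrinking: using continuity of multiplication and the finiteness of $\bG_\xx$, I would choose a smaller symmetric neighborhood $V_1\subset V_0$ such that $(\g_{0,1}V_1)(\g_{0,2}V_1)\subset \g_{0,1}\g_{0,2}V_0$ for every pair $\g_{0,1},\g_{0,2}\in\bG_\xx$, the key identity being $\g_{0,1}v_1\g_{0,2}v_2=\g_{0,1}\g_{0,2}(\g_{0,2}^{-1}v_1\g_{0,2})v_2$, with $\g_{0,2}^{-1}v_1\g_{0,2}$ uniformly close to ${\bf e}$ over the finite set $\bG_\xx$. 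Arranging $\bG_{\xx'}\subset\bigsqcup_{\g_0}\g_0 V_1$ and using that $\bG_{\xx'}$ is a group, I conclude $\phi_{\xx'}(\g_1\g_2)=\phi_{\xx'}(\g_1)\phi_{\xx'}(\g_2)$, and the stated upper semicontinuity $|\bG_{\xx'}|\le|\bG_\xx|$ follows at once.
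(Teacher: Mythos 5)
Your proof is correct, and it takes a genuinely different route from the paper's. The paper argues by contradiction: it puts the stabilizers $\bG_{\xx_j'}$ of a sequence $\xx_j'\to\xx$ into the compact metric space of compact subsets of $K$ with the Hausdorff metric, extracts a limit set $\bG'\subset\bG_\xx$, asserts that the nearest-point map $\bG_{\xx_j'}\to\bG'$ is a surjective homomorphism for large $j$, and then uses the exponential map to show that a non-identity kernel element $\g_j\to{\bf e}$ would generate a subgroup whose Hausdorff limit is infinite, contradicting the finiteness of $\bG_\xx$. You instead build the injective homomorphism $\phi_{\xx'}:\bG_{\xx'}\hookrightarrow\bG_\xx$ directly: you carve a disjoint union of coset neighborhoods $\g_0 V_0$ around the elements of $\bG_\xx$, use the compactness-and-limit argument (which both proofs share) to confine $\bG_{\xx'}$ inside that union for $\xx'$ near $\xx$, get injectivity from the injectivity of the product chart $\Phi:V\times D_\vep\to\bX$ of Lemma~\ref{L-lps}, and get the homomorphism property by a further shrinking $V_1\subset V_0$ using continuity of multiplication and finiteness of $\bG_\xx$. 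In effect you give a rigorous version of the paper's asserted ``nearest-point homomorphism'': your coset decomposition makes ``nearest'' precise, and replacing the exponential-map argument with the local product structure avoids the hand-waving about dense one-parameter subgroups. The trade-off is that you lean on Lemma~\ref{L-lps}, which the paper's proof does not need; in exchange you avoid the Hausdorff-metric machinery and obtain the embedding constructively rather than by contradiction.
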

\smallskip

\begin{proof}
Suppose that there were points $\xx_j{\bf '}$ arbitrarily close to $\xx$ with
$\bG_{\xx_j{\bf '}}$\, {\it not}\, isomorphic to a subgroup of $\bG_\xx$.
Since the action is locally proper, there is a compact set $K\subset \bG$
such that the stabilizer $\bG_{\xx{\bf '}}$ is contained in $K$
for all $\xx{\bf '}$ in some neighborhood of $\xx$. The collection 
of all compact subsets of $K$ forms a 
compact metric space, using the Hausdorff metric. 
Therefore, given any sequence
of such points $\xx_j{\bf '}$ converging to $\xx$, after passing to an infinite 
subsequence we can assume that
the sequence of finite groups $\bG_{\xx_j{\bf '}}\subset K$ converges to a 
Hausdorff limit set $\bG{\bf '}\subset \bG_\xx$ as $j$ tends to infinity. It is
 not hard to see that this limit $\bG'$ must be a subgroup of $\bG_\xx$.
We claim that the group $\bG_{\xx_j{\bf '}}$ is actually isomorphic to $\bG'$ for
large $j$. In fact, the correspondence which maps each $\g\in \bG_{\xx_j{\bf '}}$
to the closest point of $\bG'$ is certainly a surjective homomorphism  
for large $j$. Since we are assuming that $\bG_{\xx_j{\bf '}}$ is not isomorphic
to any subgroup of $\bG_\xx$, it follows that the kernel of this surjection
$\bG_{\xx_j{\bf '}}\to \bG'$ must contain some non-identity element $\g_j$ of
 $\bG$.
 But the sequence $\{\g_j\}$ must converge to the identity element.
Now consider the exponential map $\exp:{\L}\to \bG$,  which maps a 
neighborhood of the zero element in the Lie algebra to
a neighborhood of the identity. (Recall that the Lie algebra $\L$
 can be  identified with the tangent space to $\bG$ at the identity element.)
 We can set $\g_j=\exp(v_j)$ where $v_j$ tends 
to zero. Thus the group generated by $\g_j$ corresponds to the set
of all images $\exp(k\,v_j)$ with $k\in \Z$. Clearly these images fill out the
corresponding one-parameter subgroup more and more densely as 
$v_j\to 0$, so 
that the Hausdorff limit could not be a finite group. This contradicts our
hypothesis that $\bG_\xx$ is finite; and hence completes the proof.
\end{proof}

Note: This statement also follows from the proof of  Corollary 2.26.
\smallskip

\subsection*{ $\bG$-Invariant Tubular Neighborhoods}
Given any smooth $\bG$-action with finite stabilizers, and given any fiber 
$\bF\subset \bX$, it is not difficult to construct arbitrarily small
 $\bG$-invariant neighborhoods 
 $\bE$ of $\bF$ in $\bX$. Simply choose  a transverse disk 
$D_\varepsilon=D_\varepsilon(\xx_0)$ to the fiber $\bF$ at 
$\xx_0$ as in Lemma \ref{L-lps},  and let $\bE$ be the union of its images 
$\g(D_\varepsilon)$ as $\g$ varies over $\bG$. Recall that the image disk
$\g(D_\varepsilon)$ depends only on its center point $\xx=\g(\xx_0)$.
 We will use the alternate notation
$$D_\varepsilon(\xx)~=~ \g(D_\varepsilon) \qquad{\rm whenever}
\qquad \xx~=~\g(\xx_0)~.$$
\smallskip

If the action is locally proper, we can give a much more precise description.
\smallskip

\begin{theo}\label{T-tub} If the action is locally proper with finite
stabilizers, and if $\varepsilon$ is small enough, then the various 
disks $D_\varepsilon(\xx)$ with $\xx\in \bF$ are pairwise disjoint. It follows
 that $\bE$ is the total space of a locally trivial fiber bundle, with
 projection map  $\bE\to \bF$ which carries each fiber 
$D_\varepsilon(\xx)\subset \bE$ to its center point $\xx\in \bF$.
Furthermore, the $\bG$-orbits provide a foliation of $\bE$ which is 
transverse to the fibers, providing a local product structure as in Lemma 
$\ref{L-lps}$.
\end{theo}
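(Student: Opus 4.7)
The plan is to prove the disjointness assertion first, by contradiction, reducing everything to a single disk at a base point $\xx_0\in\bF$ and using local properness to produce a compact ``catch-all'' set of potentially colliding group elements. The bundle and foliation structures will then be essentially automatic consequences of the disjointness together with Lemma \ref{L-lps}.

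For the disjointness, note that $D_\varepsilon(\xx_1)\cap D_\varepsilon(\xx_2)\ne\emptyset$ with $\xx_i=\g_i(\xx_0)$ is equivalent to $\g_1^{-1}\g_2(D_\varepsilon)\cap D_\varepsilon\ne\emptyset$, so by translation invariance it suffices to show that for $\varepsilon$ small enough, any $\g\in\bG$ with $\g(D_\varepsilon)\cap D_\varepsilon\ne\emptyset$ already lies in $\bG_{\xx_0}$ (and hence preserves $D_\varepsilon$ setwise and fixes $\xx_0$). By local properness at $\xx_0$, I first choose a neighborhood $U\ni\xx_0$ and a compact $K\subset\bG$ so that $\g(U)\cap U\ne\emptyset$ implies $\g\in K$; for $\varepsilon$ small we have $D_\varepsilon\subset U$, so every offending $\g$ lies in $K$. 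Suppose no such $\varepsilon$ exists: pick $\varepsilon_j\to 0$, $\g_j\in K\setminus\bG_{\xx_0}$, and $\delta_j,\delta_j'\in D_{\varepsilon_j}$ with $\g_j(\delta_j)=\delta_j'$. Passing to a subsequence $\g_j\to\g_\infty\in K$, continuity gives $\g_\infty(\xx_0)=\xx_0$, so $\g_\infty\in\bG_{\xx_0}$. Using the $\bG_{\xx_0}$-invariant metric provided by Lemma \ref{L-met}, the finite-order element $\g_\infty$ carries geodesics orthogonal to $\bF$ at $\xx_0$ to such geodesics, hence $\g_\infty(D_\varepsilon)=D_\varepsilon$. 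Writing $\g_j=\g_\infty \h_j$ with $\h_j\to e$, the point $\h_j(\delta_j)=\g_\infty^{-1}(\delta_j')$ lies in $D_\varepsilon$, so $\h_j(D_\varepsilon)\cap D_\varepsilon\ne\emptyset$; since $\h_j(\xx_0)$ is close to $\xx_0$ in $\bF$, Lemma \ref{L-lps} forces $\h_j(\xx_0)=\xx_0$, i.e.\ $\h_j\in\bG_{\xx_0}$. Then $\g_j=\g_\infty\h_j\in\bG_{\xx_0}$, contradicting the choice of $\g_j$ and completing the disjointness argument.

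Once disjointness holds, the map $\pi:\bE\to\bF$ sending a point to the unique center of the disk containing it is well defined. Local triviality and the transverse orbit foliation both follow directly from Lemma \ref{L-lps}: near any $\xx\in\bF$ the map $(\g,\delta)\mapsto\g(\delta)$ identifies a product $W\times D_\varepsilon$ (with $W$ a small identity neighborhood in $\bG$) diffeomorphically onto an open piece of $\bE$, intertwining $\pi$ with projection to the first factor and identifying the slices $\{\g\}\times D_\varepsilon$ with the transverse disks and the slices $W\times\{\delta\}$ with local pieces of the $\bG$-orbit through $\delta$. Translating these trivializations around $\bF$ by elements of $\bG$ produces a full bundle atlas.

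The main obstacle is the uniformity of $\varepsilon$ over the (possibly noncompact) fiber $\bF$: a priori one might need $\varepsilon$ to shrink as $\xx$ moves along $\bF$. The point is that translation invariance reduces everything to the single question of which $\g\in\bG$ move $D_\varepsilon$ to overlap with itself, and local properness is exactly what prevents such $\g$'s from escaping to infinity in $\bG$, allowing the subsequence-and-limit argument above to kick in. Without local properness, the sequence $\g_j$ could wander off, and the disjointness would genuinely fail—as illustrated by Figure \ref{F-badpic}.
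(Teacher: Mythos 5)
Your argument is correct and follows the paper's own proof essentially verbatim: reduce to the base disk $D_\varepsilon$ at $\xx_0$, use local properness to trap the potentially overlapping group elements in a compact set $K$, extract a limit, and invoke the local product structure of Lemma~\ref{L-lps} for the contradiction. The only (harmless) overcomplication is the factorization $\g_j = \g_\infty\h_j$: once you have $\g_j(\xx_0)\to\g_\infty(\xx_0)=\xx_0$ with $\g_j(\xx_0)\ne\xx_0$ and $\g_j(D_\varepsilon)\cap D_\varepsilon\ne\emptyset$, Lemma~\ref{L-lps} is already contradicted directly, which is exactly the shortcut the paper takes.
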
 
\smallskip

\begin{proof} (Compare \cite{Mei}, \cite{DK}.) 
 {\bf Step 1.} Since the stabilizer is finite, it follows 
that $\bF$ is locally diffeomorphic
to $\bG$. Therefore it follows as in Lemma~\ref{L-lps} that the various 
image disks  $\g\big( D_\varepsilon(\xx)\big)$, with 
$\xx$ close to $\xx_0$ in $\bF$,
are all disjoint, provided that $\varepsilon$ is small enough.\smallskip

{\bf Step 2.} Taking $\varepsilon>0$ as in Step 1. Suppose that there is a 
sequence of numbers $\varepsilon>\varepsilon_1>\varepsilon_2>\cdots$ tending
 to zero such that for each $j$  there are points 
$\xx_j\ne \xx'_j$ on $\bf F$ and group elements $\g_j$ and $\g'_j$ 
 with $\g_j(\xx_0)=\xx_j$ and $\g'_j(\xx_0)=\xx_j'$ such that
the two disks
$$  D_{\varepsilon_j}(\xx_j)~=~\g_j\big(D_{\varepsilon_j}(\xx_0)\big)
\quad{\rm and}\quad  D_{\varepsilon_j}(\xx'_j)~=~
\g'_j\big(D_{\varepsilon_j}(\xx_0)\big) $$
intersect each other at some point $\xx^*$. (See Figure \ref{F-prop}.) Then
we can write 
$$ \xx^*~=~\g_j(\delta_j)~=~\g'_j(\delta'_j)$$
for appropriate points $\delta_j\,,~\delta'_j\in D_{\varepsilon_j}$.
Now setting $\g_j^*=\g_j^{-1}\g'_j$, 
it follows that the disk $\g^*_j(D_\varepsilon)$ 
intersects $D_\varepsilon$ at the point  $\delta_j=\g^*_j(\delta')$,
although $\g^*_j(\xx_0)\ne \xx_0$.

Since the action is locally proper, it follows that all group elements $\g$
which satisfy
$\g(\overline D_\varepsilon)\cap \overline D_\varepsilon\ne\emptyset$ 
must be contained in some compact set $K\subset \bG$.
After passing to an infinite subsequence, we may assume that the
 group elements $\g^*_j$ tend to a limit in $\g^*\in K$.  Taking the limit
of the equation $\g^*_j(\delta_j)=\delta'_j$ as $j\to\infty$, we see that
$\g^*(\xx_0)=\xx_0$. Therefore the sequence
 $\g^*_j(\xx_0)$  must tend to $\xx_0$. Thus we have constructed disks
$\g^*_j(D_\varepsilon)$ with center point arbitrarily close to $\xx_0$ which
intersect $D_\varepsilon$ but are not equal to $D_\varepsilon$. 
This contradicts Step 1, and proves that {\it all}
of the disks $D_\varepsilon(\xx)$ must be pairwise disjoint.

\begin{figure}[t]
\centerline{\includegraphics[width=2in]{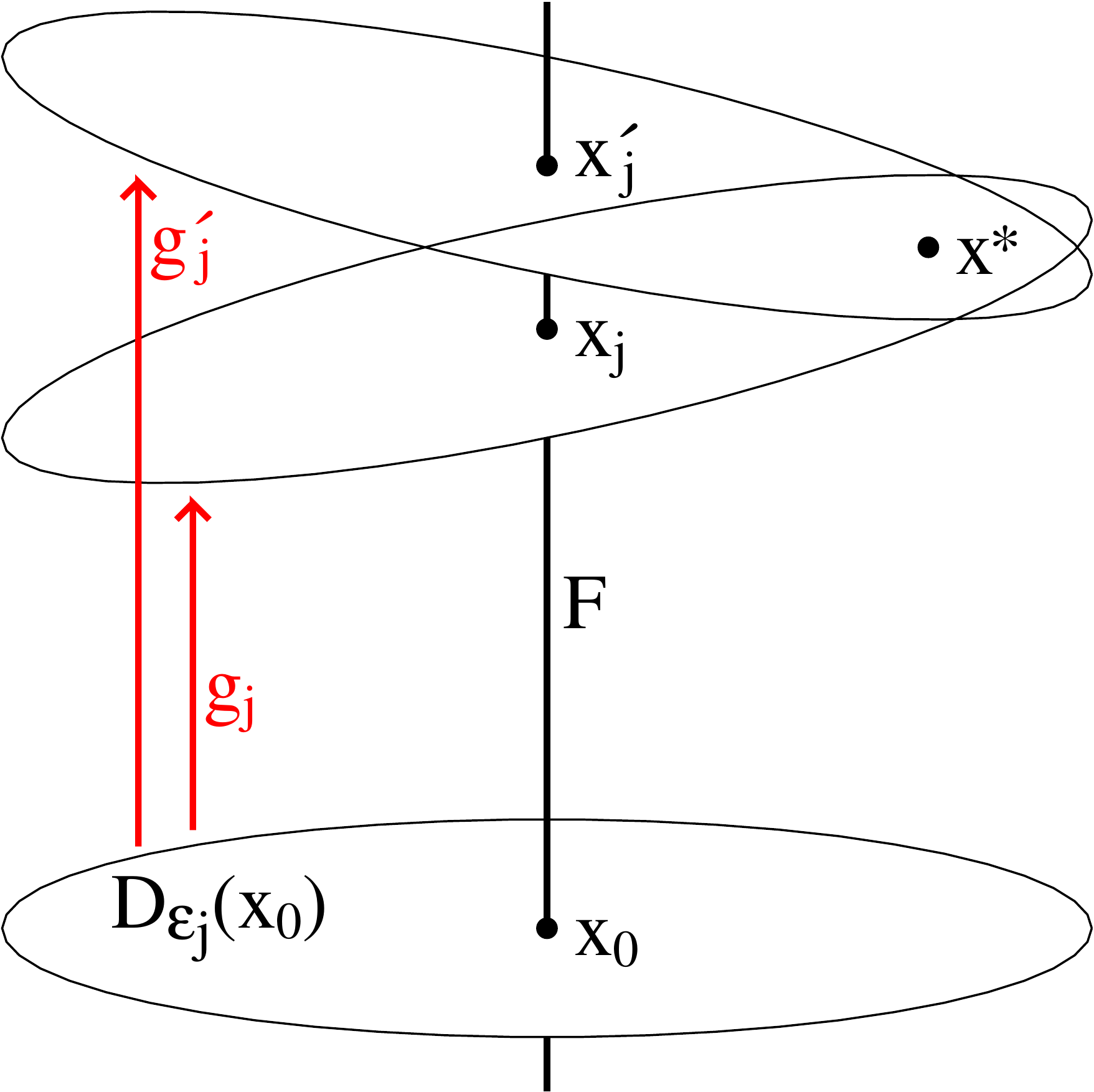}}
\caption{Illustrating the proof of Theorem \ref{T-tub}.\label{F-prop}}
\end{figure}

It follows from Lemma \ref{L-lps} that the mapping $\bE\to {\bf F}$ has a 
local product structure near the disk $D_\varepsilon$. Since we can use
translation by any group element $\g$ to translate this product structure
to a neighborhood of any disk
$\g(D_\varepsilon)$, this  completes the proof of Theorem \ref{T-tub}.
\end{proof}
\begin{figure}
\centerline{\includegraphics[width=2in]{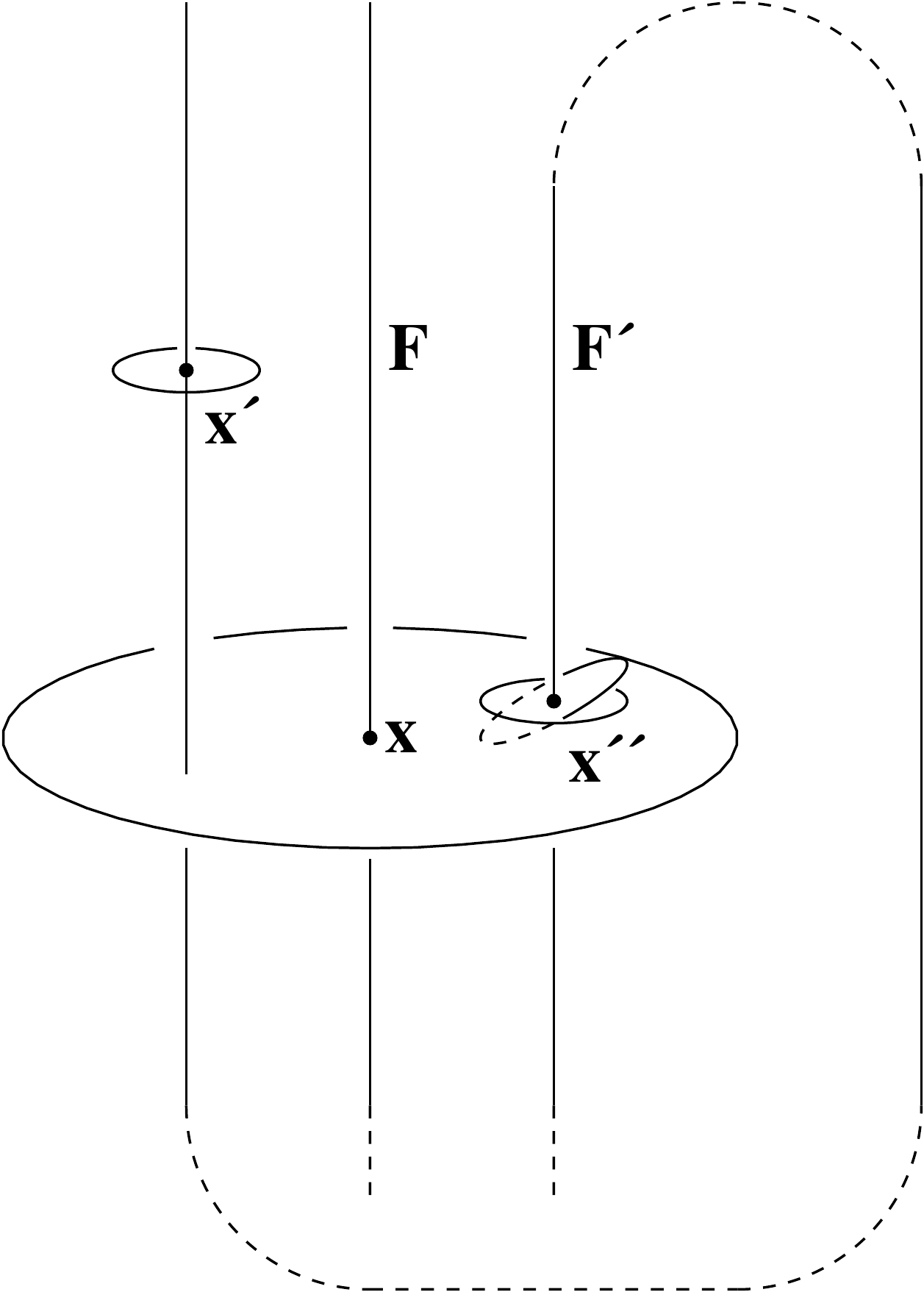}}
\caption{\sf Illustrating the proof of Corollary \ref{C-wot}.\label{F-orbi}}
\end{figure}

\msk
\begin{coro}\label{C-wot} If the action is locally proper with finite
 stabilizers, then the quotient is an orbifold $($Definition~$\ref{D-orbi})$.
\end{coro}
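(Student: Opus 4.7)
The plan is to upgrade the weak-orbifold conclusion of Theorem \ref{T-wot} to a genuine orbifold by exhibiting an atlas satisfying the chart-compatibility axiom. Since any locally proper action is in particular weakly proper, Theorem \ref{T-wot} already supplies, for each $\xx\in\bX$ with image $\yy=\bpi(\xx)$, an orbifold chart $D_\varepsilon(\xx)/\cR_\xx\stackrel{\cong}{\longrightarrow} U_\yy$, where $D_\varepsilon(\xx)$ is the transverse disk built from a $\bG_\xx$-invariant metric (Lemma \ref{L-met}) and $\cR_\xx$ acts effectively and linearly. The first step is to strengthen this local picture by invoking Theorem \ref{T-tub}: for $\varepsilon$ small enough the disks $D_\varepsilon(\xx')$ with $\xx'\in\bF_\xx$ are pairwise disjoint, their union $\bE$ is a $\bG$-invariant tubular neighborhood of $\bF_\xx$, and the $\bG$-orbits foliate $\bE$ transversely. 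A direct consequence is that the full stabilizer $\bG_w$ of any $w\in D_\varepsilon(\xx)$ is automatically contained in $\bG_\xx$, so the local quotient structure near $\yy$ is completely encoded by the $\bG_\xx$-action on $D_\varepsilon(\xx)$.

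To verify the compatibility condition, consider two such charts $W_i/\cR_i\cong U_i$ ($i=1,2$), arising from transverse disks through base points $\xx_i$, and take a point $\yy\in U_1\cap U_2$. Pick representatives $w_i\in W_i$ over $\yy$, and choose $\g\in\bG$ with $\g(w_1)=w_2$. Applying the restriction procedure described just after Lemma \ref{L-usc} inside each chart produces smaller orbifold charts $W'_i/(\cR_i)_{w_i}\cong U'$ about $\yy$. The candidate group isomorphism $\phi\colon(\cR_1)_{w_1}\to(\cR_2)_{w_2}$ is conjugation by $\g$, which makes sense because (by the previous paragraph) these isotropy subgroups come from $\bG_{w_i}\subset\bG_{\xx_i}$ and satisfy $\bG_{w_2}=\g\,\bG_{w_1}\,\g^{-1}$. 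For the analytic isomorphism $\psi\colon W'_1\to W'_2$, note that the translate $\g(W'_1)$ is a small transverse slice through $w_2$; composing $\g$ with the canonical analytic identification of any two small transverse slices at $w_2$ provided by the local product structure (Lemma \ref{L-lps} and Theorem \ref{T-tub}) yields the required $\psi$. The commutative square in the orbifold-atlas definition then reduces to the identity $\psi\circ\h=\phi(\h)\circ\psi$ for $\h\in(\cR_1)_{w_1}$, which holds because $\g\circ\h=\g\h\g^{-1}\circ\g$.

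The main obstacle is precisely the construction of $\psi$: the translated slice $\g(W'_1)$ need not literally coincide with the slice $W'_2$, since the two were built from different $\bG_{\xx_i}$-invariant metrics that $\g$ does not a priori respect. The remedy is the uniqueness, up to a canonical analytic diffeomorphism, of sufficiently small transverse slices through a common point in a foliated $\bG$-invariant tubular neighborhood; this is exactly what Theorem \ref{T-tub} and the local product structure of Lemma \ref{L-lps} provide. Composing $\g$ with this canonical diffeomorphism produces an analytic, equivariant $\psi$ that closes the compatibility diagram, and the same argument shows that two atlases obtained from different metric choices are equivalent in the sense of the orbifold definition, completing the upgrade from weak orbifold to orbifold.
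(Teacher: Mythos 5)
Your proposal is correct and follows essentially the same strategy as the paper's own argument: upgrade the weak-orbifold conclusion from Theorem \ref{T-wot} by using the tubular neighborhood of Theorem \ref{T-tub} and the transverse local product structure (Lemma \ref{L-lps}) to project one transported slice onto another, thereby exhibiting the required analytic isomorphism between restricted charts. The paper's proof is terser — it simply moves a transverse disk $D'_{\vep'}$ into $\bE$ via some group element, projects it diffeomorphically onto a subdisk of $D_\vep$, and asserts that each $\g\in\cR_{\xx'}$ ``corresponds'' to a unique $\phi(\g)\in\cR_\xx$ — whereas you spell out $\phi$ as conjugation by $\g$ and then verify commutativity. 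One small point you elide: defining $\phi$ as conjugation on $\bG_{w_i}$ requires checking that $\g\,\bH_{\xx_1}\,\g^{-1}\subset\bH_{\xx_2}$ so that the map descends to the quotients $(\cR_i)_{w_i}=\bG_{w_i}/\bH_{\xx_i}$; this does hold (an element of $\g\bH_{\xx_1}\g^{-1}$ acts trivially on the slice $\g(W'_1)$, hence, by equivariance of the foliation projection and linearity of the $\cR_{\xx_2}$-action, trivially on $V_{\xx_2}$), but it deserves a sentence. The paper glosses over the same point by defining $\phi$ dynamically rather than by conjugation, so this is a shared informality rather than a defect peculiar to your write-up.
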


\begin{proof} We  know from Theorem \ref{T-wot}
that the quotient is a weak orbifold.
 Choose a disk $D_\vep$ as in Theorem \ref{T-tub}, and let $\bE$
be the associated tubular neighborhood. For any
fiber $\bF'$ which intersects $D_\vep$, we must study how a sufficiently small
tubular neighborhood of $\bF'$ is related to $\bE$. Let $\xx'$ be an 
arbitrary base point on $\bF'$, and let  $D'_{\vep'}$ be a small
transverse disk to $\bF'$ at  $\xx'$, using a $\bG_{\xx'}$-invariant metric.
Using any group element $\g$ which moves $\xx'$ to a point 
 $\xx''\in D_\vep\cap\bF'$,
we can move $D'_{\vep'}$ to a disk which is transverse to $\bF'$ at $\xx''$.
Using the local product structure, we can project $\g(D'_{\vep'})$ to a
subdisk of $D_\vep$. If $\vep'$ is small enough, this projection will
be a diffeomorphism. Now for any  $\g\in \cR_{\xx'}$ the action of $\g$
on $D'_{\vep'}$ will correspond to the action of some uniquely defined 
$\phi(\g)\in\cR_\xx$ on the image disk in $D_\vep$.
\end{proof}
\bsk

\begin{rem}[{\bf The Projective Linear Group ${\rm PGL}_m$}]\label{R-PGL}
In our applications, the group ${\bf G}$  
will be the real or complex projective linear group, 
either $\PGL_2$ in \S\ref{s-div} or $\PGL_3$ in later sections.

More generally, the group $\PGL_m$ over any field
can be defined as the quotient $\GL_m/N$, where $\GL_m$ is the group of linear
automorphisms of an $m$-dimensional vector space, and $N$ is the normal
 subgroup consisting of scalar transformations 
$$ \x\mapsto t\,\x~.$$
Here $t$ can be any fixed non-zero field element.
Writing the linear transformation as $(x_1,\,x_2,\,\ldots,\,x_m)\mapsto
(x'_1,\,x'_2,\,\ldots,\,x'_m)$, there is an associated automorphism 
$$(x_1:x_2:\ldots:x_m)\mapsto(x'_1:x'_2:\ldots:x'_m)$$
of the $(m-1)$-dimensional projective space over the field.
Automorphisms obtained in this way are called
\textbf{\textit{projective automorphisms}}.  Thus:
\smallskip

\begin{quote}\it Over any field, $\PGL_m$ can be identified with the group of 
all projective automorphisms of the projective space ${\mathbb P}^{m-1}$. 
\end{quote}\smallskip

\noindent Equivalently, $\PGL_m$ can be described as the group of all
equivalence classes of non-singular
$m\times m$ matrices over the field, where two matrices are equivalent if
one can be obtained from the other by multiplication by
a non-zero constant. Let ${\mathbb P}^{m^2-1}$ be the projective space 
consisting of  all lines through the origin in the $m^2$-dimensional
vector space consisting of
$~m\times m~$ matrices. Then it follows easily that: 

\smallskip

\begin{center}\it Over any field, the group $\PGL_m$ can be considered as\\ 
a Zariski open subset of the projective space ${\mathbb P}^{m^2-1}$.
\end{center}\medskip

\noindent 
Specializing to the real or complex case, it follows that $\PGL_m$ 
is a smooth real or complex manifold of dimension $m^2-1$,
with a smooth product operation. Hence it is a Lie group. 
\end{rem}

We will also need the following statement:\ssk

\begin{lem}\label{L-linalg}  Every element of $\PGL_m(\R)$ or $\PGL_m(\C)$
can be written as a composition
$${\bf g}~=~{\bf r}\circ{\bf d}\circ{\bf r}'~,$$
where ${\bf r}$ and ${\bf r}'$ are isometries, that is elements of the
projective orthogonal group ${\rm PO}_m$ in the real case
or the projective unitary group ${\rm PU}_m$ in the complex case,
 and where $\bf d$ is a
diagonal transformation of the form
$${\bf d}(x_1:\cdots:x_m)~=~(a_1x_1:\cdots:a_mx_m)$$
where the $a_j$ are real numbers with
$a_1\ge a_2\ge\cdots\ge a_m>0$. Furthermore, these real numbers $a_j$ are
 uniquely determined by $\bf g$ $($although   $\bf r$ and $\bf r'$ may not be
 uniquely determined$)$.
\end{lem}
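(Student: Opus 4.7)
My plan is to lift the problem to $\GL_m(\F)$, invoke the classical singular value decomposition (SVD) there, and then project the resulting factorization back down to $\PGL_m(\F)$. The only subtlety is the bookkeeping of the scalar ambiguity introduced by the projectivization; the SVD itself will be used as a black box.

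First I would choose any lift $G \in \GL_m(\F)$ of $\bf g$ and form the positive-definite Hermitian (respectively symmetric) matrix $H = G G^*$, where $G^*$ denotes the conjugate transpose in the complex case and the ordinary transpose in the real case. By the spectral theorem, $H = R D^2 R^*$ for some orthogonal (resp.\ unitary) matrix $R$ and some diagonal matrix $D = {\rm diag}(a_1,\ldots,a_m)$ with $a_1 \ge a_2 \ge \cdots \ge a_m > 0$. Then $P := R D R^*$ is the unique positive-definite square root of $H$, and the polar decomposition $G = P \cdot (P^{-1} G)$ exhibits $U := P^{-1} G$ as orthogonal (resp.\ unitary), because $U U^* = P^{-1} G G^* P^{-1} = P^{-1} P^{2} P^{-1} = I$. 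Setting $R' := R^* U$, one gets the matrix identity $G = R D R'$, which descends to the desired factorization ${\bf g} = {\bf r} \circ {\bf d} \circ {\bf r}'$ in $\PGL_m(\F)$, with ${\bf r}$ and ${\bf r}'$ in ${\rm PO}_m$ (resp.\ ${\rm PU}_m$) and ${\bf d}$ the required diagonal transformation.

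For the uniqueness assertion, suppose $\bf g$ admits a second factorization $\tilde{\bf r} \circ \tilde{\bf d} \circ \tilde{\bf r}'$. Lifting gives a matrix identity $G = \lambda\, \tilde R\, \tilde D\, \tilde R'$ for some scalar $\lambda \in \F$ nonzero and appropriate lifts of $\tilde{\bf r}, \tilde{\bf d}, \tilde{\bf r}'$, hence $G G^* = |\lambda|^2\, \tilde R\, \tilde D^2\, \tilde R^*$. Comparing eigenvalues of $G G^*$ in decreasing order with multiplicity yields $(a_1^2,\ldots,a_m^2) = (|\lambda|^2 \tilde a_1^2, \ldots, |\lambda|^2 \tilde a_m^2)$, so $\tilde a_j = a_j/|\lambda|$ for every $j$. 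Thus the tuple $(a_1,\ldots,a_m)$ is determined by $\bf g$ only up to a common positive scalar, but this is precisely the ambiguity intrinsic to the projective quotient, so ${\bf d} \in \PGL_m(\F)$ is uniquely determined. The only real obstacle is the careful tracking of this scalar ambiguity together with the non-uniqueness of $R$ and $R'$ when there are repeated singular values; these issues never impact the diagonal factor, so the statement for $\bf d$ is clean.
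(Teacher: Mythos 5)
Your proof is correct, and it takes a genuinely different route from the paper's. You invoke the spectral theorem on the positive-definite matrix $GG^*$, form the polar decomposition $G = P\,U$, and then split $P = RDR^*$ to obtain $G = R\,D\,(R^*U)$; this is the standard derivation of the singular value decomposition via the polar decomposition. The paper instead gives a direct constructive argument: it picks a unit vector $u_1$ maximizing $\|\ell(u_1)\|$, uses a first-order variational computation to show that $\ell$ carries the orthogonal complement of $u_1$ to the orthogonal complement of $u_1' = \ell(u_1)/\|\ell(u_1)\|$, and then inducts on the dimension. Both arguments are short; yours outsources the work to the spectral theorem (and needs to form and invert the square root $P$), while the paper's is self-contained and builds the two orthonormal bases $\{u_j\}$, $\{u_j'\}$ simultaneously from scratch. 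There is no advantage of substance to either — they are two textbook proofs of the same SVD.

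One thing worth flagging: you handle the uniqueness assertion more carefully than the paper does. The paper's proof stops after establishing existence and never returns to uniqueness; moreover, as stated the lemma asserts that the individual real numbers $a_j$ are determined by $\mathbf{g}$, which is not quite right — replacing a lift $G$ by $\lambda G$ rescales every $a_j$ by $|\lambda|$, so only the tuple $(a_1:\cdots:a_m)$ up to a common positive factor (equivalently, the element $\mathbf{d}\in\PGL_m$, or the ratios $a_i/a_j$) is an invariant of $\mathbf{g}$. You identify this ambiguity explicitly and correctly observe that it is exactly the projective ambiguity, so the diagonal factor $\mathbf{d}$ is uniquely determined. That is the precise version of the claim, and indeed the paper only ever uses ratios such as $a_1/a_m$ in later applications (the Distortion Lemmas). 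Your write-up is complete; just make sure to state the uniqueness conclusion as "$\mathbf{d}$ is unique" or "the $a_j$ are unique up to a common positive scalar" rather than asserting uniqueness of the $a_j$ themselves.
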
\medskip
 
(The numbers $a_j$ provide an
invariant description of how far $\bf g$ is from being an isometry with 
respect to the standard metric for ${\mathbb P}^{m-1}$.)
\ssk

\begin{proof}[ Proof of Lemma~\ref{L-linalg}.] To fix ideas we will discuss
 only  the complex case; but the real case is completely analogous. 
 This is proved by applying the Gram-Schmidt process to a
 corresponding linear transformation $\ell:V\to V'$, where 
 $V$ and $V'$ are $m$-dimensional complex vector spaces
with Hermitian inner product and with
 associated norm $\|v\|=\sqrt{v\cdot v}$. Given a linear bijection 
$\ell:V\to V'$, choose a unit vector $u_1\in V$
 which maximizes the norm $\|\ell(u_1)\|$. Then $\ell(u_1)$ can be
written as a product $a_1u'_1$ where $a_1>0$ is this maximal norm, and
where $u_1'$ is a unit vector in $V'$. Note that $\ell$
maps any unit vector $v$ orthogonal to $u_1$ in $V$ to a vector $v'$
 orthogonal  to $u'_1$ in $V'$. In fact, each linear combination
$u_1\cos(\theta)+v\sin(\theta)$ is another unit vector in $V$, which maps
to $a_1u'_1\cos(\theta)+v'\sin(\theta)$ in $V'$. A brief
computation shows that the derivative of the squared norm of this image
vector with respect to $\theta$ at $\theta=0$ is $2a_1 u'_1\cdot v'$.
Since the derivative at a maximum point must be zero, this proves that
$u'_1\cdot v'=0$, as asserted.

Thus $\ell$ maps  the orthogonal complement of $u_1$ to the orthogonal
complement of $u'_1$. Repeating the same argument for this map of 
orthogonal complements
we find unit vectors $u_2$ orthogonal to $u_1$
and $u'_2$ orthogonal to $u'_1$ so that  $\ell(u_2)=a_2u'_2$
with $a_1\ge a_2>0$. Continuing inductively,
we find an orthonormal basis $\{u_j\}$ for $V$ and an orthonormal
basis $\{u'_j\}$ for $V'$ so that
$$\ell(u_j)~=~ a_ju'_j\qquad{\rm with}
\qquad a_1\ge a_2\ge\cdots\ge a_m>0~.$$
Now taking $V$ and $V'$ to be copies of the standard $\C^m$,
it follows that $\ell$ is the composition of:
\begin{quote}
\begin{itemize}
\item[(1)] a unitary transformation which takes the standard basis for 
$~\C^m$ to the basis $\{u_j\}$,

\item[(2)] a diagonal transformation of the required form, and

\item[(3)] a unitary transformation taking $\{u'_j\}$ to the standard basis.
\end{itemize}
\end{quote}
\noindent This statement about the general linear group ${\rm GL}_m$
clearly implies the required statement about the projective
linear group $\PGL_m$.
This proves Lemma \ref{L-linalg}.
\end{proof}
\bigskip 

\setcounter{lem}{0}
\section{Moduli Space for Effective Divisors  of Degree $n$.}\label{s-div}
We first look at a basic family of moduli spaces which are relatively 
easy to understand. Since the discussions in the real and complex cases
are sometimes very similar, it will be convenient to use the symbol $\F$
to denote either $\R$ or $\C$. Let $\bP^n=\bP^n(\F)$ be the $n$-dimensional
projective space over $\F$. In this section, we will be interested in the
 projective line $\bP^1$, which is a circle in the real case, or a Riemann
sphere in the complex case. It will often be convenient to identify
$\bP^1$ with the union ${\widehat\F}=\F\cup\{\infty\}$. More precisely, 
 each point $(x:y)\in\bP^1$ can be identified
with the quotient $~~x/y\,\in\, {\widehat\F}=\F\cup\{\infty\}$. Note that 
the group $\bG=\PGL_2(\F)$ acting on $\bP^1(\F)$ corresponds to the group
of fractional linear transformations,
$$ z~\mapsto \frac{az+b}{cz+d}\quad {\rm with}\quad \
 a,\,b,\,c,\,d\in\F\,,~~~ ad-bc\ne 0~,$$
acting on $\Fhat$.

\ssk

By definition, an \textbf{\textit{effective divisor}} of degree $n$ on $\bP^1$
is a formal sum of the form
$$\cD~=~m_1\<\p_1\>+\cdots+m_k\<\p_k\>~,$$
where the $\p_j$ are distinct points of $\bP^1$,
and where the \textbf{\textit{multiplicities}} $m_k\ge 1$ are integers,
with $\sum m_j=n$. The set $|\cD|=\{\p_1,\ldots,\p_k\}\subset \bP^1$ 
will be called the \textbf{\textit{support}} of $\cD$.\ssk

Let $\wfD_n=\wfD_n(\F)$ be the space 
of all effective divisors of degree $n$ on  $\bP^1=\bP^1(\F)$. 
 In the complex case, if we think of a divisor
as the set of roots of a homogeneous polynomial, then it follows easily
that the space $\wfD_n(\C)$ can be
given the structure of a complex projective space $\bP^n(\C)$.
In the real case,
$\wfD_n(\R)$ can be  identified with the closed subset of $\bP^n(\R)$ 
corresponding to those real homogeneous polynomials which have only real roots.

The group $~\bG=\PGL_2(\F)~$ 
acts on $\bP^1$, and hence on the  space $\wfD_n$ of formal sums. Note that
the action on $\bP^1$ is \textbf{\textit{three point simply transitive}}. 
That is, there is one and only only one group element which take any ordered 
set of three distinct points of $\bP^1$ to any other ordered set of
 three distinct points.
It follows easily that the stabilizer $\bG_\cD$ for the action 
at a point $\cD\in\wfD_n$
is finite if and only if the number $k$ of points in $|\cD|$
satisfies $k\ge 3$.\ssk

\begin{definition} Let $\wfD_n^\fs$  be the open subset
of $\wfD_n$ consisting of effective divisors with finite stabilizer,
or in other words with at least three distinct 
points, and define the \textbf{\textit{moduli space}} for divisors
to be the quotient $\fM_n=\wfD_n^\fs/\bG$.
\end{definition}

One basic invariant for the $\bG$-orbit of a divisor is the 
\textbf{\textit{maximum multiplicity}} $1\le\max_j\{m_j\}\le n$ 
 of the points in $|\cD|$.
Note that the collection of all unordered $n$-tuples of distinct points
 of $\bP^1$ can be identified with the open subset $\fD_n\subset\wfD_n$ 
consisting of divisors with  \hbox{$\max_j\{m_j\}=1$}. 

In the complex case, this subset can be compared with the classical 
moduli space $\cM_{0,n}$ consisting of closed\footnote{By definition, 
a Riemann surface is closed if it is compact without boundary.}
Riemann surfaces of genus zero which are provided with an
ordered list of $n\ge 3$ distinct points; where two such marked Riemann
surfaces are identified
if there is a conformal isomorphism taking one to the other. (See 
Remark~\ref{R-oms} below.)
We can unorder these points by taking the quotient 
$\cM^{\sf un}_{0,n}=\cM_{0,n}/\fS_n$
under the action of the symmetric group of permutations of the ordered list.
It is not hard to check that the resulting space, consisting of isomorphism 
classes of genus zero curves with an unordered
list of distinct marked points, can be identified with our
open subset $\fD_n\subset\wfD_n$.
\ssk

\begin{theo}\label{T-D1} $\fM_n$ is a ${\rm T}_1$-space for every $n$; 
but it is a
Hausdorff space only for $n\le 4$. For any $n$, the
open subset of $\fM_n$ consisting of \hbox{$\bG\!$-equivalence} 
classes of divisors with maximum multiplicity satisfying
$$\max_j\{m_j\}<n/2$$ 
is a Hausdorff space and an orbifold. However if $n>4$, then any point for which
\hbox{$\max_j\{m_j\}\ge n/2~$} is not even locally Hausdorff.
\end{theo}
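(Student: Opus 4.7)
The plan is to exploit one common mechanism for all three assertions: a sequence $\g_j \in \PGL_2(\F)$ that leaves every compact set must, after passing to a subsequence, collapse the projective line onto at most two points. Writing $\g_j = r_j \circ d_j \circ r'_j$ as in Lemma~\ref{L-linalg}, with $r_j, r'_j$ in the compact group ${\rm PO}_2$ or ${\rm PU}_2$ and $d_j$ diagonal with singular-value ratio tending to infinity, pass to a subsequence with $r_j \to r$ and $r'_j \to r'$; a direct calculation in homogeneous coordinates then shows that for any convergent sequence $\cD_j \to \cD$ in $\wfD_n(\F)$, the images $\g_j \cdot \cD_j$ converge to a divisor supported within the two-point set $\{r(1{:}0),\, r(0{:}1)\}$.

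From this the first two assertions follow immediately. For the $T_1$ property, if $\g_j \cdot \cD \to \cD^*$ with $\cD^* \in \wfD_n^\fs$, the sequence $\{\g_j\}$ cannot be unbounded (else $\cD^*$ would have at most two support points, contradicting $\cD^* \in \wfD_n^\fs$); a convergent subsequence $\g_j \to \g$ then gives $\cD^* = \g \cdot \cD$, so every orbit is closed in $\wfD_n^\fs$ and Remark~\ref{R-T1} supplies the $T_1$ property. The same collapse argument proves that the $\bG$-action on $U := \{\cD \in \wfD_n^\fs : \max_j m_j < n/2\}$ is proper: a divergent sequence $\g_j$ with $\g_j(\cD_j) = \cD'_j$ and with both endpoints in compact subsets of $U$ would force $\lim \cD'_j$ to have at most two distinct support points, hence $\max_j m_j \ge n/2$, contradicting membership in $U$. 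Since stabilizers on $\wfD_n^\fs$ are finite by definition, Lemma~\ref{L-Haus} makes $U/\bG$ Hausdorff and Corollary~\ref{C-wot} makes it an orbifold. The $n \le 4$ cases are then handled by inspection: $\fM_n$ is empty for $n \le 2$, a single point for $n = 3$ by three-point transitivity of $\bG$, and $\fM_4$ has only the single $\{2,1,1\}$-orbit outside $U$, whose Hausdorffness follows from the explicit description $\fM_4(\C) \cong \bP^1(\C)$ (with a unique improper point attached) to be developed later in this section.

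For the last and most delicate claim, assume $n > 4$ and $\cD \in \wfD_n^\fs$ has $\max_j m_j \ge n/2$. The plan is to construct explicit non-Hausdorff witnesses by a peeling construction. Choose a point $q \in \supp(\cD)$ of multiplicity $m_q \ge 2$, conjugate so that $q = 0$, and set
\[
\cD_j \;=\; (m_q - 1)\langle 0 \rangle + \langle \epsilon_j \rangle + \sum_{r \ne q} m_r \langle r \rangle, \qquad \epsilon_j \to 0,
\]
so that $\cD_j \to \cD$ in $\wfD_n^\fs$. Applying the zoom $\g_j(z) = z/\epsilon_j$ one computes directly that
\[
\g_j \cdot \cD_j \;\longrightarrow\; \cD' \;:=\; (m_q - 1)\langle 0 \rangle + \langle 1 \rangle + (n - m_q)\langle \infty \rangle,
\]
a three-point divisor lying in $\wfD_n^\fs$. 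Thus $[\cD_j]$ converges in $\fM_n$ both to $[\cD]$ and to $[\cD']$, and non-local-Hausdorffness at $[\cD]$ reduces to choosing $q$ so that the multiplicity multisets $\{m_r\}$ of $\cD$ and $\{m_q - 1,\, 1,\, n - m_q\}$ of $\cD'$ differ. The main obstacle is this combinatorial case analysis: if $\cD$ has at least four distinct support points the two multisets differ trivially; otherwise $\cD$ has multiplicity multiset $\{a, b, c\}$ with $a \ge b \ge c$ and $a \ge n/2$, and peeling at the heavy point ($m_q = a$) succeeds except in the symmetric exceptional case $a = n/2$, $\{b, c\} = \{a - 1, 1\}$. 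In that remaining case one peels instead at the point of multiplicity $b = a - 1$, which is at least $2$ because $n > 4$ forces $a \ge 3$; the resulting multiset $\{a - 2,\, 1,\, a + 1\}$ has strictly larger maximum than $\{a, a - 1, 1\}$, so $[\cD'] \ne [\cD]$ and $[\cD]$ is not locally Hausdorff. Since such $\cD$ exist for every $n \ge 5$, this also forces $\fM_n$ to fail Hausdorffness precisely when $n \ge 5$.
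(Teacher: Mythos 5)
Your ${\rm T}_1$ and properness arguments take essentially the paper's route (the collapse of $\bP^1$ under a divergent sequence of projective maps, here derived from Lemma~\ref{L-linalg} rather than packaged as the Distortion Lemma~\ref{L-dis1}). One caution: for a varying sequence $\cD_j\to\cD$, the limit $\lim_j\g_j\cdot\cD_j$ need \emph{not} be supported on two points, contrary to your opening claim. For instance with $n=6$, $\cD_j=\<1/j\>+\<-1/j\>+2\<1\>+2\<\infty\>$ and $\g_j(z)=jz$, the limit $\<1\>+\<-1\>+4\<\infty\>$ has three support points. What is true, and what the properness argument actually requires, is that strictly more than $n/2$ of the mass accumulates at a single point --- so your contradiction still lands, but the stated intermediate claim is false.

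The genuine gap is in the last paragraph. Exhibiting one class $[\cD']\ne[\cD]$ from which $[\cD]$ is inseparable shows that $\fM_n$ is not Hausdorff, but it does \emph{not} show that $[\cD]$ fails to be locally Hausdorff: a point can be inseparable from another and still have a Hausdorff open neighborhood omitting that partner (the line with two origins is the standard example). Indeed, your peel $\cD'$ carries the aggregated mass $n-m_q=\sum_{r\ne q}m_r$ at $\infty$, so its multiplicity multiset is a coarsening --- not a refinement --- of that of $\cD$; hence no divisor in the $\bG$-orbit of $\cD'$ lies near $\cD$ in $\wfD_n$, and $[\cD']$ never enters a small neighborhood of $[\cD]$ in $\fM_n$. (If inseparability alone sufficed, the same peeling applied to any $\cD$ with $\max_j m_j<n/2$ and some $m_q\ge 2$ would contradict the Hausdorffness of $\fV_n/\bG$ that you have just proved.) To establish non-\emph{local}-Hausdorffness one must find, inside \emph{every} neighborhood of $[\cD]$, two inseparable classes. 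The paper's Lemma~\ref{L-D5} does this by constructing two inseparable divisors of the same simple form that can be taken arbitrarily close to one another, and then approximating a general $\cD$ with $\max_j m_j\ge n/2$ by one of that form; this two-step approximation is what your argument is missing.
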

\smallskip

For $n>4$, we will use the notation $\fM^\ha_n$ for this maximal open 
Hausdorff subset of $\fM_n$.
\smallskip

\begin{theo}\label{T-D2} For $n\ge 5$,  
the space $\fM^\ha_n$ is compact for $n$ odd; but not for $n$ even.
\end{theo}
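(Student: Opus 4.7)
The plan is to work at the level of the ambient compact space $\wfD_n$ (which is $\bP^n(\C)$ in the complex case, and a closed subset of $\bP^n(\R)$ in the real case), together with its $\bG$-invariant open subset $\bX_n := \{\cD\in\wfD_n : \max_j m_j < n/2\}$. By Theorem~\ref{T-D1} one has $\fM_n^\ha = \bX_n/\bG$. The condition $\max_j m_j < n/2$ is precisely Mumford's numerical stability criterion for this action, while $\max_j m_j\le n/2$ is semistability; because the $m_j$ are integers, these two conditions coincide exactly when $n$ is odd, and differ when $n$ is even (the strictly semistable locus then consists of divisors with a single point of multiplicity $n/2$, such as $(n/2)\<0\>+(n/2)\<\infty\>$). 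This dichotomy drives the theorem.

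For $n$ odd (compactness), I would normalize representatives by a balance point. Identify $\bP^1(\C) = \partial\mathbb{H}^3$ (or $\bP^1(\R) = \partial\mathbb{H}^2$ in the real case) and assign to each $\cD\in\bX_n$ its conformal barycenter $b(\cD)\in\mathbb{H}^3$, defined as the unique minimizer of a strictly convex $\bG$-equivariant potential weighted by the multiplicities. Existence and uniqueness hold precisely on the stable locus; if $\max_j m_j\ge n/2$ the infimum escapes to $\partial\mathbb{H}^3$ at the concentrating point. Given $[\cD_i]\in\fM_n^\ha$, choose $\g_i\in\bG$ with $b(\g_i(\cD_i))$ equal to a fixed base point $o$. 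By compactness of $\wfD_n$, pass to a subsequence with $\g_i(\cD_i)\to\cD_\infty$. If $[\cD_\infty]\notin\fM_n^\ha$, then $\max_j m_j(\cD_\infty)\ge n/2$; integrality plus oddness force $\max_j m_j(\cD_\infty)\ge (n+1)/2$, so $\cD_\infty$ is strictly unstable and $b(\g_i(\cD_i))$ would have been forced to escape to $\partial\mathbb{H}^3$ rather than remain at $o$, a contradiction.

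For $n\ge 6$ even (non-compactness), I would exhibit an explicit divergent sequence. For $t\in(0,1)$ set
\[\cD_t ~=~ \bigl(\tfrac{n}{2}-1\bigr)\<0\> ~+~ \<t\> ~+~ \bigl(\tfrac{n}{2}-1\bigr)\<\infty\> ~+~ \<1/t\>.\]
Each $\cD_t$ has four distinct support points, multiplicities summing to $n$, and $\max_j m_j = n/2 - 1 < n/2$, so $[\cD_t]\in\fM_n^\ha$. A direct computation gives cross-ratio $[0,t;\infty,1/t] = 1 - t^2 \to 1$ as $t\to 0$. Since cross-ratio is a $\PGL_2$-invariant of the four labeled points, for any $\g_i\in\bG$ and $t_i\to 0$ the translates $\g_i(\cD_{t_i})$ retain cross-ratio $1-t_i^2$. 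If $\g_i(\cD_{t_i})\to\cD_\infty$ in $\wfD_n$, the limiting cross-ratio $1$ forces $(p_1-p_2)(p_3-p_4) = 0$, so in the limit either $\g_i(0)$ merges with $\g_i(t_i)$ or $\g_i(\infty)$ merges with $\g_i(1/t_i)$; in either case the fused multiplicity is $(n/2-1)+1 = n/2$. Thus $\cD_\infty$ has a point of multiplicity $\ge n/2$, so $[\cD_\infty]\notin\fM_n^\ha$, and $[\cD_{t_i}]$ has no convergent subsequence in $\fM_n^\ha$.

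The main obstacle is rigorously implementing the balance-point construction in the odd case: strict convexity, existence and uniqueness of $b$ on $\bX_n$, continuity, and escape to $\partial\mathbb{H}^3$ precisely at the boundary of stability. A more elementary substitute closer to the paper's style uses Lemma~\ref{L-linalg}: writing $\g = \r\d\r'$, the compact rotational factors $\r,\r'$ are controlled by extraction of subsequences, while divergence of the diagonal part $\d$ (some ratio $a_j/a_k\to\infty$) would force the translated divisor to concentrate at least $(n+1)/2$ units of mass at an attracting fixed point of $\d$ on $\bP^1$; this is impossible when $\max_j m_j \le (n-1)/2$ and $n$ is odd, so $\d$ stays bounded and a convergent representative exists.
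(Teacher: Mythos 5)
Your non-compactness argument for even $n$ is valid and takes a genuinely different route from the paper's. The paper uses the family $\cD_h=\sum_{j=1}^{k}\big(\<j/h\>+\<jh\>\big)$ with all multiplicities $1$ and a ``two clusters of $k$ points'' heuristic; you instead take a family with two points already heavy (multiplicity $n/2-1$) and pin down the limiting behavior with the $\PGL_2$-invariance of the cross-ratio. This is cleaner in one respect: the cross-ratio invariant gives a rigid obstruction to spreading the clusters apart, whereas the paper's phrase ``in either case there will always be $k$ summands tending to a single point'' leaves more to the reader. Two small corrections: first, with the labeling $(\g(0),\g(t),\g(\infty),\g(1/t))$ and the paper's formula~(\ref{E-cr1}), the cross-ratio is $t^2/(t^2-1)\to 0$, not $1-t^2\to 1$; but $\brh\to 0$ forces exactly the mergers you describe ($\g(0)$ with $\g(t)$, or $\g(\infty)$ with $\g(1/t)$), so the conclusion stands. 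Second, you should also rule out the case where \emph{only} the two light points $\g_i(t_i)$ and $\g_i(1/t_i)$ merge (multiplicity $2<n/2$); this is excluded because that coincidence alone would drive the cross-ratio to $\infty$, not $0$. With these repairs, the argument is sound.

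The compactness half for odd $n$, however, is not actually established. You candidly flag the conformal-barycenter plan as the ``main obstacle'': existence and uniqueness of the barycenter on the stable locus, equivariance, continuity, and the escape-to-boundary dichotomy at the edge of stability all need proof, and none is supplied. The fallback paragraph using Lemma~\ref{L-linalg} does not fill the gap either: writing a \emph{given} $\g$ as $\r\circ\d\circ\r'$ helps analyze a group element after the fact, but it does not tell you how to \emph{choose} a canonical representative $\g_i(\cD_i)$ in each orbit so that a convergent subsequence can be extracted. That choice is precisely the missing normalization. The paper resolves this with a concrete variational device: it defines $\Theta(\cD)$ as the smallest spherical diameter of a subset of $|\cD|$ carrying at least $k+1=(n+1)/2$ of the $n$ points (with multiplicity), picks a representative in each orbit maximizing $\Theta$, and shows by an explicit expansion $z\mapsto\kappa z$ that this maximum must be at least $\pi/4$. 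The arithmetic point that makes the expansion work --- and the exact analog of your ``no atom of mass $\ge 1/2$'' observation --- is that for $n=2k+1$ any two subsets of size $\ge k+1$ must intersect, so all the small heavy clusters sit in a common hemisphere and can be simultaneously spread apart. If you want to salvage the barycenter route, you would need to prove at minimum: (i) the weighted conformal barycenter $b(\cD)\in\mathbb{H}^3$ exists and is unique whenever all normalized atom masses are $<1/2$, (ii) $b$ is continuous and $\bG$-equivariant, and (iii) if $\cD_i\to\cD_\infty$ in $\wfD_n$ with $\cD_\infty$ carrying an atom of mass $>1/2$ then $b(\cD_i)$ leaves every compact subset of $\mathbb{H}^3$. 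These are all true, but each is a real lemma, and (iii) in particular is essentially equivalent in content to the paper's Lemma~\ref{L-comp}.
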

\smallskip

The case $n=5$ is particularly striking, since the non-Hausdorff
space $\fM_5$ consists of a compact Hausdorff space $\fM^\ha_5$ together
with just one ``bad'' point   of the form 
$~\((\,3\<\p\>+\<\q\>+\<\r\>\,\))$.\bsk

To begin the proof of Theorem \ref{T-D1}, we will study the cases $n\le 4$.
It is easy to check that $\fM_n$ is empty for $n<3$.
 For $n=3$, since the action of $\bG$ on 
$\bP^1$ is three-point simply transitive,
 it follows easily  that $\fM_3(\R)=\fM_3(\C)$ consists of a single point; 
with stabilizer the symmetric group $\fS_3$. For $n=4$ we have the following.
(Recall that each point of $\fM_n$ corresponds to an entire $\bG$-orbit of
divisors $\cD\in\fD^\fs_n$.)
\smallskip

\begin{prop}\label{P-D4}
The moduli space $\fM_4(\C)$ is isomorphic to the Riemann sphere
$\widehat\C$, while $\fM_4(\R)$ corresponds to a closed interval
contained in the circle\break
$\widehat\R\subset\widehat\C$. In both cases, $\fM_4$
contains one and only one improper point, corresponding to the $\bG$-orbit 
consisting of divisors $\cD$ of degree four with only three distinct
points. $(${\rm See Definition} $\ref{D-prop}.)$
However, this improper point  is weakly proper. In both the real 
and complex cases, the stabilizer $\bG_\cD$ is isomorphic to $\Z/2\oplus\Z/2$ 
at a generic point.\footnote{We say that a property holds
  for a \textbf{\textit{generic point}}
   if it is true for all points in some set which
   is dense and open in the Zariski topology.
   (Some authors prefer the term ``general point''.)}
In the complex case, there are three exceptional points,
namely the improper point with stabilizer $\Z/2$, and two
ramified points with ramification 
indices $r=2$ and $r=3$ respectively, and with stabilizers 
the dihedral group of order $8$ and the tetrahedral group of order
$12$. In the real case, only the improper point and the
dihedral group with $r=2$ can occur. 
\end{prop}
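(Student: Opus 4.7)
The plan is to exploit the three-point simple transitivity of $\bG=\PGL_2(\F)$ on $\bP^1(\F)$ to reduce every divisor $\cD\in\wfD_4^\fs(\F)$ to a unique normal form and thus to compute $\fM_4(\F)$ explicitly. For a divisor with four distinct support points, any choice of an ordered triple among them can be sent by a unique $\g\in\bG$ to $(0,1,\infty)$, so $\cD$ becomes $\langle 0\rangle+\langle 1\rangle+\langle\infty\rangle+\langle\lambda\rangle$ for a unique $\lambda\in\bP^1\smallsetminus\{0,1,\infty\}$; the $4!=24$ possible orderings of the four points give an $\fS_4$-action on $\lambda$ whose kernel is the Klein four-group $V_4$ of double transpositions, leaving the familiar $\fS_3$-action generated by $\lambda\mapsto 1-\lambda$ and $\lambda\mapsto 1/\lambda$. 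For a divisor with only three distinct support points, the double point together with the two simple points can be sent canonically to $0$, $1$, $\infty$ (with the double point distinguished), showing that such divisors form a single $\bG$-orbit.

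In the complex case this gives a continuous bijection from $\bigl(\bP^1\smallsetminus\{0,1,\infty\}\bigr)/\fS_3$, together with one extra ``improper'' point, onto $\fM_4(\C)$. Since the $\fS_3$-action extends to all of $\bP^1(\C)$ with $\{0,1,\infty\}$ as a single orbit, and since the improper point naturally corresponds to this limiting orbit (as $\lambda\to 0$ the fourth point crashes into the point already at $0$), I would identify $\fM_4(\C)$ with the quotient Riemann surface $\bP^1(\C)/\fS_3\cong\widehat\C$. The ramification structure is then read off from the fixed-point analysis of the six Möbius transformations: the three exceptional $\fS_3$-orbits on $\bP^1(\C)$ are $\{0,1,\infty\}$ with stabilizer of order $2$, the harmonic orbit $\{-1,1/2,2\}$ with stabilizer of order $2$ generated by $\lambda\mapsto 1/\lambda$, and the equianharmonic orbit $\{(1\pm i\sqrt3)/2\}$ with stabilizer of order $3$ generated by $\lambda\mapsto 1/(1-\lambda)$.

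To compute the full stabilizers $\bG_\cD\subset\PGL_2(\C)$ I would combine the $V_4$ realized inside $\bG$ by explicit Möbius involutions (for example $z\mapsto\lambda/z$ swaps $0\leftrightarrow\infty$ and $1\leftrightarrow\lambda$, and similarly for the other two double transpositions) with the extra $\fS_3$-stabilizer at a special point, yielding a group of order $4\cdot|\mathrm{Stab}_{\fS_3}(\lambda)|$ acting faithfully on four points. At the harmonic point this is $V_4\rtimes\Z/2\cong D_4$, the dihedral group of order $8$; at the equianharmonic point it is $V_4\rtimes\Z/3\cong A_4$, the tetrahedral group of order $12$. At the improper divisor $2\langle 0\rangle+\langle 1\rangle+\langle\infty\rangle$ the Klein four-group degenerates because there are only three distinct points to permute, and a direct computation (an element of $\bG$ stabilizing the divisor must fix $0$ and permute $\{1,\infty\}$) yields $\bG_\cD=\{1,\,z\mapsto z/(z-1)\}\cong\Z/2$.

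For the real case I would restrict the above picture to $\lambda\in\widehat\R$: the six Möbius transformations have real coefficients and the equianharmonic orbit is not real, so $\widehat\R/\fS_3$ is a closed arc whose endpoints are the images of the two real special orbits $\{0,1,\infty\}$ and $\{-1,1/2,2\}$; this yields the claimed segment inside $\widehat\C\cong\fM_4(\C)$, with only the dihedral ramification surviving. Finally, for the weakly-proper-but-not-locally-proper assertion at the improper point, I would normalize each nearby divisor to $\langle 0\rangle+\langle 1\rangle+\langle\infty\rangle+\langle\lambda\rangle$ with $\lambda$ near $0$: two such normalized divisors lie in the same $\bG$-orbit only through an element of the order-two stabilizer of $2\langle 0\rangle+\langle 1\rangle+\langle\infty\rangle$, giving a compact $K$ as required by weak properness, whereas the existence of unbounded $\g\in\bG$ identifying $\lambda\approx 0$ with its $\fS_3$-images $\approx 1,\infty$ exhibits the failure of local properness. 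The main obstacle will be verifying that $V_4\rtimes\Z/3$ really is the alternating group $A_4$ (i.e.\ that the generating $\fS_3$-stabilizer permutes the three involutions of $V_4$ non-trivially), together with the careful bookkeeping needed to construct the compact $K$ in the weakly proper step.
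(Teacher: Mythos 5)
Your argument is correct and takes essentially the same route as the paper: both hinge on the cross-ratio $\lambda$ and its residual $\fS_3$-action, and your identification $\fM_4(\C)\cong\bP^1(\C)/\fS_3$ is the paper's shape-invariant map $\bJ$ in abstract form (the paper writes $\bJ$ out as an explicit rational function precisely so that bijectivity onto $\widehat\C$ can be checked directly). Your two minor departures are cosmetic but pleasant: the stabilizer identifications $V_4\rtimes\Z/2\cong D_4$ and $V_4\rtimes\Z/3\cong A_4$ are a cleaner algebraic packaging of the paper's concrete geometric models (four points at $\{-1,0,1,\infty\}$, resp.\ at the vertices of an inscribed tetrahedron), and your detection of non-local-properness by exhibiting the Klein-four involutions $z\mapsto\lambda/z$ escaping to infinity in $\PGL_2$ as $\lambda\to 0$ is equivalent in substance to the paper's one-line appeal to upper semicontinuity of $|\bG_\cD|$ under locally proper actions (Lemma~\ref{L-prop}), since the order drops from $4$ to $2$ at the improper point.
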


\begin{figure} [t!]
\begin{minipage}{1in}\vspace{.5in}$\fM_4(\C)$
\end{minipage}\begin{minipage}{3.5in}\includegraphics[width=3.in]{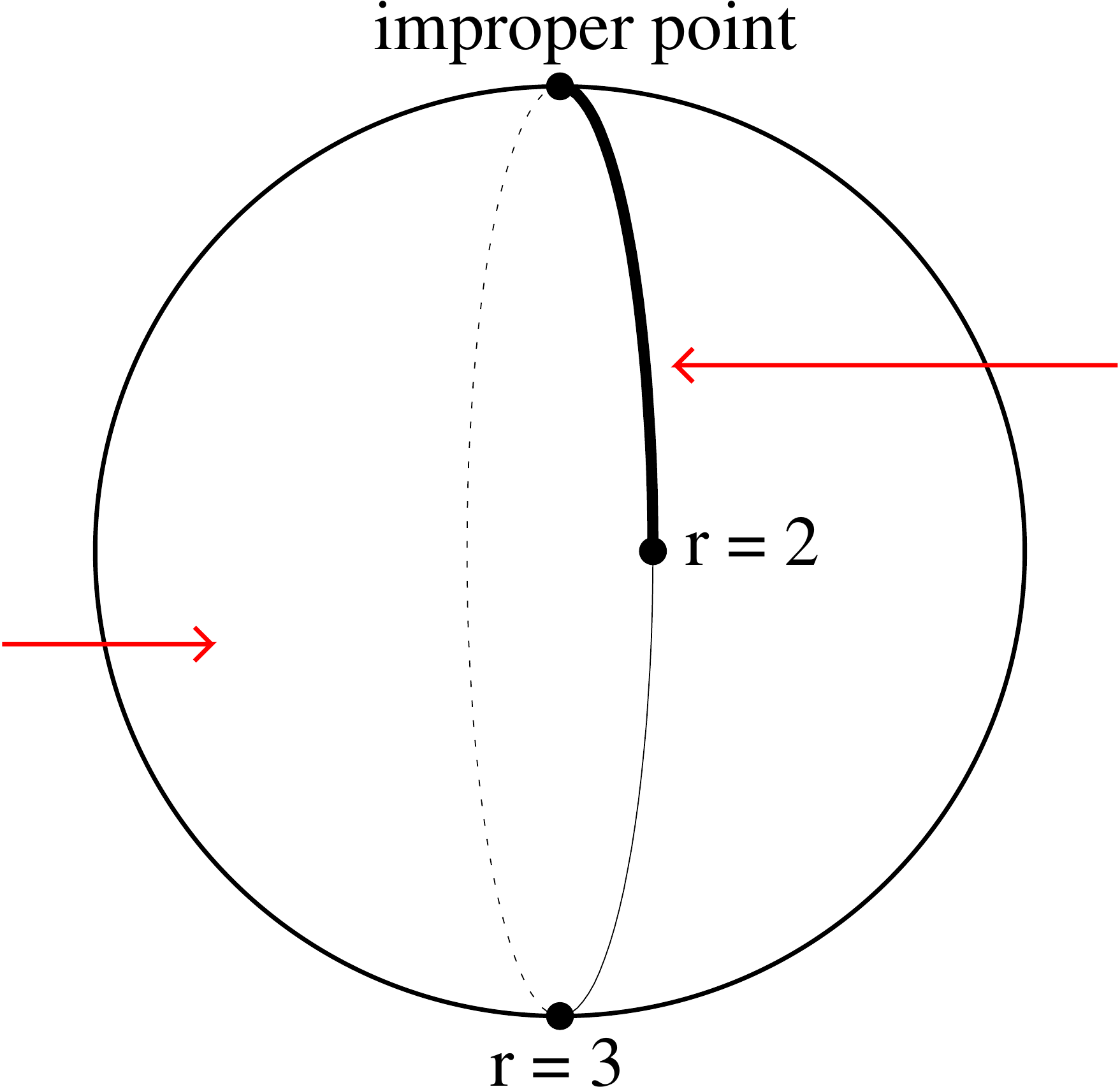}
\end{minipage}\begin{minipage}{1in}$\fM_4(\R)$\vspace{.89in}\end{minipage}
\caption{The moduli spaces $~~\fM_4(\R)\subset\fM_4(\C)$.\label{F-M4}}
\end{figure}\bigskip

In particular, it follows that $\fM_4$ is a compact Hausdorff space in both
the real and complex cases, and an orbifold except at one point. 
\smallskip

The proof of Proposition \ref{P-D4} will make use of two different projective
invariants associated with a 4-tuple of points in $\bP^1$. The first is the
cross-ratio, which depends on the ordering of the four arguments, and the
second is the ``shape invariant'' $\bJ$ which is independent of order.

It will be convenient to use cross-ratios of the form
\begin{equation}\label{E-cr1}
\brh(x,\,y,\,z,\,w)~=~ 
 \brh\left[\begin{matrix} x&y\\z&w\end{matrix}\right]~=~\frac{(x-y)(z-w)}
{(x-z)(y-w)}~, \end{equation}
where $x,\,y,\,z,\,w$ are distinct real or complex numbers.
This expression is well defined and continuous
on the space of ordered \hbox{4-tuples} of distinct points of $\R$ or $\C$,
taking values in $\widehat\R$ or $\widehat\C$. In either case it extends
uniquely to the case where any one of the four points is allowed to take 
the value $\infty$. For example as $w\to\infty$ Equation~(\ref{E-cr1}) tends
 to the limit 
\begin{equation}\label{E-cr2}
 \brh\left[\begin{matrix} x&y\\z&\infty\end{matrix}\right]
~=~\frac{x-y}{x-z}~.
\end{equation}
\smallskip

\begin{lem}\label{L-cr1}
 There is a necessarily unique projective
 automorphism carrying one ordered set of four distinct points of $\bP^1$ to
another if and only if they have the same cross-ratio, which can take any 
value other than $0,\,1$, or $\infty$.
\end{lem}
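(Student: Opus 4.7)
The plan is to combine the three-point simple transitivity of the $\bG$-action on $\bP^1$ (already observed in the text) with the projective invariance of the cross-ratio. I would organize the argument in three steps.

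First, I would verify that the cross-ratio is invariant under $\bG=\PGL_2(\F)$: for any fractional linear $\g(t)=(at+b)/(ct+d)$ with $ad-bc\ne 0$, a direct computation gives
\[
\g(u)-\g(v)~=~\frac{(ad-bc)(u-v)}{(cu+d)(cv+d)}
\]
for $u,v\in\F$, and the $(ad-bc)$ factors and the denominators $(c\cdot+d)$ cancel in pairs when this identity is substituted into the four differences of $\brh$. The cases where one of the four arguments equals $\infty$ (or is sent to $\infty$ by $\g$) are handled by taking the obvious limits, using the convention of (\ref{E-cr2}).

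Second, suppose two ordered $4$-tuples $(x,y,z,w)$ and $(x',y',z',w')$ of distinct points have the same cross-ratio. By three-point simple transitivity, there is a unique $\g\in\bG$ with $\g(x)=x'$, $\g(y)=y'$, $\g(z)=z'$. Set $w'':=\g(w)$; by invariance,
\[
\brh(x',y',z',w'')~=~\brh(x,y,z,w)~=~\brh(x',y',z',w').
\]
So the proof reduces to the injectivity statement: with the first three arguments fixed at distinct points, the cross-ratio is injective in the fourth. For this I would apply the unique $\bG$-element taking $(x',y',z')$ to the standard triple $(0,1,\infty)$ and use (\ref{E-cr2}) (and its obvious limit at $z=\infty$) to observe that $w\mapsto\brh(0,1,\infty,w)$ is itself a fractional linear function of $w$, hence a bijection of $\Fhat$. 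Therefore $w''=w'$, i.e.\ $\g(w)=w'$, and the uniqueness of such a $\g$ was already built into three-point simple transitivity.

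Third, for the range statement, the same normalization shows that as $w$ ranges over $\Fhat\setminus\{0,1,\infty\}$ the cross-ratio $\brh(0,1,\infty,w)$ ranges over $\Fhat\setminus\{0,1,\infty\}$. Conversely, I would check from (\ref{E-cr1}) that $\brh=0$ forces $x=y$ or $z=w$, $\brh=\infty$ forces $x=z$ or $y=w$, and $\brh=1$ simplifies (via $(x-y)(z-w)=(x-z)(y-w)$) to $(x-w)(z-y)=0$; hence when the four points are distinct these three values are excluded. There is no real obstacle in this proof; the only point requiring a little care is the bookkeeping with the value $\infty$ in the cross-ratio formula, which is handled cleanly by taking limits as indicated by the author in the preceding paragraph.
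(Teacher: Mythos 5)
Your proposal is correct and follows essentially the same route as the paper: normalize three of the points to $0,1,\infty$ using three-point simple transitivity, then use the projective invariance of the cross-ratio to conclude that the fourth point is determined. The paper's version is terser (it normalizes positions $1,3,4$ to $0,1,\infty$ so that the cross-ratio is literally the image of the remaining point, leaving the invariance computation and the range analysis implicit), while you spell out the invariance check and the excluded-values computation; the substance is the same.
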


\begin{proof} It suffices to consider the special case
where the second 4-tuple has the form $(0,\,y,\,1,\,\infty)$, so that
the cross-ratio is $y$ by equation (\ref{E-cr2}). Using three point
transitivity, there is a unique projective automorphism taking the 
appropriate points to $0,\,1$ and $\infty$; and it follows that the remaining
point must map to the cross-ratio~$y$.\end{proof}
\medskip

However, as two of the four points  come together (so that only three
are distinct), the cross-ratio will tend to a limit belonging to
the complementary set $\{0,\,1,\,\infty\}$. (If only two of the four
points are
distinct, then the cross-ratio cannot be defined in any useful way.)

 Note that the cross-ratio is always unchanged as we interchange
the two rows, or the two columns, of the matrix 
$\left[\begin{matrix}x&y\\z&w\end{matrix}\right]$.
Thus we obtain the following:
\smallskip

\begin{lem}\label{L-cr2} For any $4\!$-tuple $(x,y,z,w)$ of four 
distinct points, there is a transitive $4$ element group of permutations 
of the  four points, isomorphic to\break $\Z/2\oplus\Z/2$, which preserves their
cross-ratio. Hence the stabilizer $\bG_\cD$ for the associated divisor
$\cD=\<x\>+\<y\>+\<z\>+\<w\>$ always contains $\Z/2\oplus\Z/2$ as a subgroup.
\end{lem}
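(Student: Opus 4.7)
The plan is to directly verify the invariance of the cross-ratio formula under two commuting involutions of the four arguments and then transfer this to the projective linear action.

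First I would write the cross-ratio $\brh(x,y,z,w)=(x-y)(z-w)/(x-z)(y-w)$ and check by inspection that it is unchanged under the two permutations indicated in the paragraph preceding the lemma: the row swap $\sigma_r:(x,y,z,w)\mapsto(z,w,x,y)$ and the column swap $\sigma_c:(x,y,z,w)\mapsto(y,x,w,z)$. Both substitutions leave the numerator and denominator fixed up to reordering factors (in the row swap each factor becomes its negative, so the two sign changes in numerator and denominator cancel; the column swap does the same within each factor). Their composition $\sigma_r\sigma_c:(x,y,z,w)\mapsto(w,z,y,x)$ is therefore also cross-ratio preserving. Together with the identity these four permutations form a subgroup $V\subset\fS_4$ isomorphic to $\Z/2\oplus\Z/2$. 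The orbit of the first coordinate under $V$ hits all four positions (via $\mathrm{id},\sigma_c,\sigma_r,\sigma_r\sigma_c$), so the action is transitive on the four points.

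Next I would invoke Lemma \ref{L-cr1}. For each $\sigma\in V$, the two ordered $4$-tuples $(x,y,z,w)$ and $\sigma(x,y,z,w)$ consist of four distinct points of $\bP^1$ and have the same cross-ratio. Hence there exists a unique $\g_\sigma\in\bG=\PGL_2(\F)$ carrying the first ordered tuple to the second. Uniqueness of $\g_\sigma$ together with the associativity of composition forces $\sigma\mapsto\g_\sigma$ to be an injective group homomorphism from $V$ into $\bG$. Because each $\g_\sigma$ merely permutes the underlying set $\{x,y,z,w\}$, it preserves the divisor $\cD=\<x\>+\<y\>+\<z\>+\<w\>$, so the image of this homomorphism lies inside the stabilizer $\bG_\cD$. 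This produces the required copy of $\Z/2\oplus\Z/2\subset\bG_\cD$.

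There is no real obstacle here: the whole argument reduces to the symmetry check on the explicit rational function $\brh$ and the application of three-point transitivity packaged in Lemma \ref{L-cr1}. The only point that warrants care is making sure the map $\sigma\mapsto\g_\sigma$ really is a homomorphism, which follows from the uniqueness clause of Lemma \ref{L-cr1}: both $\g_{\sigma\tau}$ and $\g_\sigma\g_\tau$ send $(x,y,z,w)$ to $(\sigma\tau)(x,y,z,w)$, so they must coincide.
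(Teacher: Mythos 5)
Your argument is correct and follows the same approach the paper intends: the paper's own proof is simply "this follows immediately from the discussion above," referring to the observation that the cross-ratio is unchanged under the row and column swaps of $\left[\begin{smallmatrix}x&y\\z&w\end{smallmatrix}\right]$, combined with Lemma \ref{L-cr1}. You have just spelled out the details (the explicit sign cancellations, the Klein four-group structure, and the uniqueness-based homomorphism check) that the paper leaves implicit.
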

\smallskip

\begin{proof} In both the real and complex cases, this follows immediately 
from the discussion above.\end{proof}
\smallskip

\subsection*{\bf The Shape Invariant $\bJ$.}
We next describe a number $\bJ=\bJ(x,y,z,w)$ which is invariant
under permutations of the four variables, and also under  projective
automorphisms of $\bP^1$. 

First consider the generic case where all four
points are distinct. After a projective transformation, we may assume that
$w=\infty$ and that $x,\,y,\,z$ are finite. Then $x,\,y,\,z$ are uniquely 
determined up to a simultaneous affine transformation. Therefore the
differences
\begin{equation}\label{E-difs}
 \alpha=x-y~,\quad \beta=y-z~,\quad \gamma=z-x
\end{equation}
are uniquely determined up to multiplication by a common non-zero constant.
Next consider the elementary symmetric functions
$$\sigma_1=\alpha+\beta+\gamma=0\,,\qquad  \sigma_2
=\alpha\beta+\alpha\gamma+\beta\gamma\,,\qquad \sigma_3=\alpha\beta\gamma~.$$
If we multiply $\alpha,\,\beta,\,\gamma$ by a common constant $t\ne 0$,
then each $\sigma_j$ will be multiplied by $t^j$. Therefore the ratio\footnote
{Here the factor of $-4/27$ has been inserted
so that $\bJ$ will take the value $+1$ in the case of dihedral symmetry,
where two of the three numbers $\alpha,\,\beta,\,\gamma$ are equal.}
\begin{equation}\label{E-J1}
\bJ~:=~ -\frac{4}{27}\frac{\sigma_2^{\;3}}{\sigma_3^{\;2}}~. 
\end{equation}
will remain unchanged. 
It might seem that $w$ plays a special role in this construction;
but remember from Lemmas \ref{L-cr1} and \ref{L-cr2} that there is a
 transitive group of projective automorphisms permuting the four variables.
Therefore it doesn't matter which of the four variables we put at infinity.

If only three of the four variables $x,\,y,\,z,\,w$ are distinct,
then we set $\bJ=\infty$. For example if $x=y$ so that $\alpha=0$, then
$\sigma_3=0$, hence $\bJ=\infty$. A brief computation shows that
$\bJ$ also tends to infinity if one of the variables $x,\,y,\,z$  tends
to infinity while the other two remain bounded.\medskip

In general, four points of $\bP^1$ determine six different cross-ratios,
according to the order in which they are listed 
(compare Remark \ref{R-cr}), but only one shape invariant.
\smallskip

\begin{lem}\label{L-shape}
Any one of these six cross-ratios determines the shape invariant according 
to the formula
\begin{equation}\label{E-J2}
\bJ~=~ \frac{4}{27}\frac{(\brh^2-\brh+1)^3}{\brh^2(1-\brh)^2}~.
\end{equation}
\end{lem}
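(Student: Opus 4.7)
The plan is to reduce to a canonical form using three-point transitivity, compute everything explicitly in that form, and then verify that the resulting formula is unchanged under the $S_3$ action on cross-ratios.

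First I would normalize: since $\bG$ acts three-point simply transitively and both $\bJ$ and $\brh$ are projective invariants (the former by construction, the latter by Lemma \ref{L-cr1}), I may assume $(x,y,z,w)=(0,\brh,1,\infty)$, using Equation~(\ref{E-cr2}) which gives $\brh[0,y;1,\infty]=y$. With $w=\infty$ the recipe in Equation~(\ref{E-difs}) applies directly, yielding $\alpha=-\brh$, $\beta=\brh-1$, $\gamma=1$. A short computation then gives
$$\sigma_2=\alpha\beta+\alpha\gamma+\beta\gamma=-(\brh^2-\brh+1),\qquad \sigma_3=\alpha\beta\gamma=\brh(1-\brh),$$
and substituting into Equation~(\ref{E-J1}) produces exactly the claimed formula~(\ref{E-J2}), with the two minus signs combining to give $+\tfrac{4}{27}$.

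Next I would justify the phrase \emph{any one} of the six cross-ratios. Recall (as alluded to in Remark~\ref{R-cr}) that the six cross-ratios obtained by permuting the four arguments form the orbit of $\brh$ under the anharmonic group $S_3$ acting on $\widehat\F$ via the two generators $\brh\mapsto 1/\brh$ and $\brh\mapsto 1-\brh$. It therefore suffices to check that the rational function
$$R(\brh)=\tfrac{4}{27}\,\frac{(\brh^2-\brh+1)^3}{\brh^2(1-\brh)^2}$$
is invariant under these two involutions. For $\brh\mapsto 1-\brh$ the numerator $\brh^2-\brh+1$ is fixed and the denominator factors swap, so $R$ is unchanged. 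For $\brh\mapsto 1/\brh$, the numerator transforms as $\brh^2-\brh+1\mapsto(\brh^2-\brh+1)/\brh^2$, so the cube introduces a factor $\brh^{-6}$, while the denominator $\brh^2(1-\brh)^2$ transforms to $(1-\brh)^2/\brh^4$, contributing a compensating factor $\brh^{-4}\cdot\brh^{-2}=\brh^{-6}$; the two cancel and $R$ is again fixed.

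Finally I would handle the degenerate case: if two of the four points coincide then $\sigma_3=0$ and $\bJ=\infty$, while the corresponding cross-ratio lies in $\{0,1,\infty\}$, at each of which $R$ also has a pole — so the formula remains valid as an identity of maps to $\widehat\F$. The only genuine computation is the Gröbner-type simplification showing $-\tfrac{4}{27}\sigma_2^3/\sigma_3^2$ equals $R(\brh)$ after substitution; since everything is a polynomial identity in the single variable $\brh$, no subtlety arises and this is the most routine part of the argument. The real work — and the only place one could slip — is keeping track of signs in the identifications $\sigma_2=-(\brh^2-\brh+1)$ and $\sigma_3=\brh(1-\brh)$, which is why I would set it up with the specific normalization $(0,\brh,1,\infty)$ that makes all signs transparent.
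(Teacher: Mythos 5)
Your proposal is correct and follows essentially the same route as the paper's proof: normalize to $(0,\brh,1,\infty)$, compute $\alpha=-\brh$, $\beta=\brh-1$, $\gamma=1$, then $\sigma_2=-(\brh^2-\brh+1)$, $\sigma_3=\brh(1-\brh)$, and substitute into~(\ref{E-J1}). Your added verification that $R(\brh)$ is fixed by $\brh\mapsto 1/\brh$ and $\brh\mapsto 1-\brh$ is a harmless bonus — the paper leaves it implicit, since the normalization argument already works for any ordering and $\bJ$ is permutation-invariant by construction — but it is a reassuring consistency check.
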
 
\smallskip

\begin{proof} 
Since both sides of  equation (\ref{E-J2}) are invariant under 
affine transformations of the plane, it suffices to consider the special 
case where the 4-tuple $(x,\,y,\,z,\,w)$ is equal to 
$(0,\, t,\, 1,\, \infty)  $, with cross-ratio $t$.
 (In fact one can choose an affine transformation
which maps $x$ to zero and $z$ to one, while keeping $\infty$ fixed.
The point $y$ will then necessarily map to the cross-ratio.) We then
have 
$$ \alpha=-t\,,~~\beta=t-1\,,~~\gamma=1~,$$
hence $\sigma_2=-(t^2-t+1)$ and $\sigma_3=t(1-t)$; and
the required identity (\ref{E-J2}) follows immediately.\end{proof}
\smallskip

\begin{rem}\label{R-J}
The shape invariant $\bJ$ is just the classical $j$-invariant of an 
associated cubic curve, divided by a constant factor 
of $12^3=1728$. (Compare Section \ref{s-deg3}.) To see the relationship,
first subtract the average $(x+y+z)/3$ from $x,\,y$, and $z$, in order to 
obtain a triple with $x+y+z=0$. These corrected variables will then be the roots
of a uniquely defined cubic equation $~X^3+AX+B=0$. If we express the
$\sigma_2$ and $\sigma_3$ of equation (\ref{E-J1})  as functions of these three
variables,  then computation shows that
$$ \bJ~=~\frac{4A^3}{4A^3+27B^2}~.$$ 
Here the denominator is the classical expression for the discriminant of a 
cubic polynomial, up to sign. Details of the computation will be omitted.
\end{rem}
\medskip

\begin{proof}[Proof of Proposition \ref{P-D4}]
First consider the complex case. The discussion above shows that every
divisor $~\cD=\<x\>+\<y\>+\<z\>+\<w\>~$ with at least three distinct elements
determines a point $\bJ(x,\,y,\,z,\,w)$ in the Riemann sphere $\wC$,
and that this image point is 
invariant under the action of the group $\bG=\PGL_2(\C)$ on the divisor.
It is easy to check that the resulting correspondence 
$$\fM_4(\C)~~\longrightarrow~~ \wC$$
is continuous and bijective, and hence is a homeomorphism.  

To describe the precise stabilizers for the various points of $\fM_4(\C)$
we will need the following. \smallskip

\begin{rem}[\bf More About Cross-Ratios]\label{R-cr}
For an arbitrary permutation of a set $\{x,\,y,\,z,\,w\}$ of 
four distinct points the cross-ratio $\brh(x,\,y,\,z,\,w)$
 will be  transformed by some corresponding rational map. 
By Lemma \ref{L-cr2}, we can always construct a permutation which preserves
cross-ratios so that the composition will map $w$ to itself.
Therefore it suffices to consider the symmetric group $\fS_3$
consisting of permutations of $\{x,\,y,\,z\}$ with $w$ fixed.
This group $\fS_3$ consists of a cyclic
subgroup of order three, together with three elements of order two.
It is not hard to check that the  elements of order two correspond
to the involutions which takes $\brh$ to either
\begin{equation}\label{E-odd}
 1/\brh\qquad{\rm or}\qquad 1-\brh\qquad{\rm or}\qquad \brh/(\brh-1)~;
\end{equation}
while the two elements of order three correspond to the rational maps 
\begin{equation}\label{E-even}
\brh~~\mapsto~~ 1/(1-\brh)\qquad{\rm and}\qquad\brh~~\mapsto~~
1-1/\brh~.\end{equation}
Thus a generic element  $\cD\in\wfD_4$ has six different associated 
cross-ratios, and any one of the six determines the other five. 
\end{rem}

If $x,\,y,\,z,\,w$ are distinct, then
evidently the stabilizer $\bG_\cD$
of the associated divisor $$\cD=\<x\>+\<y\>+\<z\>+\<w\>$$
can be identified with the group of all 
permutations of $\{x,\,y,\,z,\,w\}$ which preserve the cross-ratio. 
In particular, it always contains a subgroup isomorphic to 
\hbox{$\Z/2\oplus\Z/2$}.
If the six cross-ratios are all distinct, then the
stabilizer is equal to this commutative subgroup of order 4; but there are
three exceptional cases (including the degenerate case), 
corresponding to equalities between various
of the numbers (\ref{E-odd}) and (\ref{E-even}) and $\brh$.\medskip

{\bf Dihedral Symmetry.} If the shape invariant is $\bJ=1$, then
 there are only three associated cross-ratios, namely $-1,\,1/2$ and $2$.
(Each of these is fixed under one of the involutions of equation (\ref{E-odd}).)
As an example, a corresponding divisor can be chosen as
$$\cD~=~\<-1\>+\<0\>+\<1\>+\<\infty\>.$$
The associated stabilizer is the dihedral group of order eight,
generated by the rotation
$$ x\mapsto\frac{1+x}{1-x}
\quad{\rm with}\quad 0\mapsto 1\mapsto\infty\mapsto -1\mapsto 0~,$$
 together with the reflection $x\mapsto -x$. The ramification index is $r=2$.
\medskip

{\bf Tetrahedral Symmetry.} If $\bJ=0$ then there are only two associated
cross-ratios, namely $\brh=\frac{1\pm\sqrt{-3}}{2}$. A corresponding divisor 
can be obtained by placing the four points at the vertices
of a tetrahedron on the Riemann sphere (identified with the unit sphere
 in Euclidean 3-space). Then evidently the corresponding stabilizer
is the tetrahedral group of order 12 (the group of orientation preserving
isometries of the tetrahedron).
Since the cross-ratios are not real, this possibility can occur
only in the complex case. \medskip

It is not hard to check from the equations (\ref{E-odd}) and (\ref{E-even})
that these are the only non-degenerate examples for which there are not
six distinct cross-ratios.\medskip

{\bf The Degenerate Case.} If two of the four points come  together, then 
the possible cross ratios are $0,\,1,\,\infty$. (Compare Lemma \ref{L-cr1}.)
 Much of the discussion above breaks down
in this case. In particular, it is easy to check that the stabilizer has
only two elements. According to Lemma~\ref{L-prop}, this implies that the 
action of $\bG$ is not locally proper at such points. 
\medskip

\begin{rem}\label{R-improp}
Although the action of $\bG$ is not locally proper at such degenerate 
points, it is still weakly proper (Definition \ref{D-prop}).
In fact, any two divisors  in a neighborhood of an improper
divisor will have the form 
$$\cD_j~=~\<x_j\>+\<y_j\>+\<z_j\>+\<w_j\>\quad{\rm for}\quad j=1,\,2\,;$$ where
 the $x_j$ and $y_j$ are
very close (or equal) to each other for each $j$ (we will write 
$x_j\approx y_j$), but where $z_1\approx z_2$ and 
$w_1\approx w_2$ are well separated. If there is a group element
 taking $\cD_1$ to $\cD_2$,  then since $\brh(x_j,\,y_j,
\,z_j,\,w_j)\approx 0$, it must take $\{x_1,\,y_1\}$ to either  $\{x_2,\,y_2\}$
or   $\{z_2,\,w_2\}$. After composing with an
element of the central subgroup $\Z/2\oplus\Z/2$, we may assume that
$$ \{x_1,\,y_1\}\mapsto\{x_2,\,y_2\}\quad{\rm and~that}\quad
z_1\mapsto z_2,\,~~~w_1\mapsto w_2~.$$ This shows that we can choose the group
element to belong to a compact subgroup of $\PGL_2$, which proves
that the action is weakly proper. 
\end{rem}
\medskip

This completes proof of Proposition \ref{P-D4} in the complex case.\medskip

{\bf The Real Case.} A completely 
 analogous argument shows that the shape invariant induces an injective
map from $\fM_4(\R)$  into $\wR$. However the image $\bJ(\fM_4)$ is
no longer the entire $\wR$. It follows by inspection of equation 
(\ref{E-J2}) that the image is contained in the half-open interval 
$0<\bJ\le \infty$. In fact, we will show that the image 
is equal to the closed interval $1\le\bJ\le\infty$.

Here is a more precise statement in terms of cross-ratios. Recall that 
the  cross-ratio takes the value $-1,\,1/2,\,$ or $2$ in the case of dihedral
symmetry, and the value $~0,\,1,\,$ or $\infty$ in the degenerate case.
\medskip

\begin{lem} 
These six special values 
$$ -1,~~~ 0,~~~ 1/2,~~~ 1,~~~ 2,~~~ \infty$$
divide the circle $\bP^1(\R)$ into six closed subintervals.
 Each of these six intervals maps homeomorphically
 onto the interval $1\le\bJ\le\infty$.
\end{lem}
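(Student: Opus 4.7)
The plan is to verify the claim by a direct analysis of the rational function $\bJ(\brh)$ from equation (\ref{E-J2}), viewed as a map $\bP^1(\R)\to\bP^1(\R)$. The key point will be to show that $\bJ$ has no real critical point other than the six special values themselves, from which monotonicity on each subinterval follows immediately.

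First I would evaluate $\bJ$ at each of the six listed points. Direct substitution into (\ref{E-J2}) shows
\[
\bJ(-1)=\bJ(1/2)=\bJ(2)=1, \qquad \bJ(0)=\bJ(1)=\bJ(\infty)=\infty,
\]
so the six values alternate between $\bJ=1$ and $\bJ=\infty$ as one traverses the circle $\bP^1(\R)$ in cyclic order $-1,\,0,\,1/2,\,1,\,2,\,\infty$. Thus each of the six closed subintervals has one endpoint mapping to $1$ and the other mapping to $\infty$.

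Next I would compute the logarithmic derivative of $\bJ$. Writing $N=\brh^2-\brh+1$, equation (\ref{E-J2}) gives
\[
\frac{\bJ'}{\bJ}~=~\frac{3(2\brh-1)}{\brh^2-\brh+1}-\frac{2}{\brh}+\frac{2}{1-\brh}.
\]
Putting these over a common denominator and simplifying collapses the numerator to $-(2\brh-1)(\brh+1)(\brh-2)$, so that
\[
\frac{\bJ'}{\bJ}~=~\frac{-(2\brh-1)(\brh+1)(\brh-2)}{\brh(1-\brh)(\brh^2-\brh+1)}.
\]
Since $\brh^2-\brh+1>0$ for every real $\brh$, the only real points where $\bJ'$ vanishes are $\brh\in\{-1,\,1/2,\,2\}$, and the only real poles of $\bJ$ are at $\brh\in\{0,\,1,\,\infty\}$. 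In particular $\bJ$ has no critical points in the interior of any of the six subintervals, so on each such interval $\bJ$ is a strictly monotone continuous real-valued function (finite in the interior, tending to $\infty$ at the appropriate endpoint). Together with the endpoint values computed above, this shows that $\bJ$ restricts to a continuous monotone bijection from each subinterval onto $[1,\infty]$, and hence to a homeomorphism since both source and target are compact Hausdorff.

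The main potential obstacle is purely computational, namely carrying out the simplification of the logarithmic derivative cleanly enough to identify its numerator as $-(2\brh-1)(\brh+1)(\brh-2)$; but this reduces to checking that the polynomial $3\brh(1-\brh)+2(\brh^2-\brh+1)$ equals $-(\brh+1)(\brh-2)$, which is immediate. No further input beyond the formula (\ref{E-J2}) is required.
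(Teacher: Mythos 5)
Your proposal is correct and takes essentially the same approach as the paper: both compute $d\bJ/d\brh$, observe that its zeros and poles are exactly the six special values, and deduce alternating monotonicity combined with the endpoint values $\bJ\in\{1,\infty\}$. The only cosmetic difference is that you derive the factorization via the logarithmic derivative, whereas the paper simply quotes the equivalent formula $\dfrac{d\bJ}{d\brh}=\dfrac{4(\brh-2)(2\brh-1)(\brh+1)(\brh^2-\brh+1)^2}{27\brh^3(\brh-1)^3}$ directly.
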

\smallskip

\begin{proof} Computation shows that the derivative of the function
$\brh\mapsto \bJ(\brh)$ of equation (\ref{E-J2}) is given by
$$ \frac{d\,\bJ}{d\,\brh}
~=~\frac{4(\brh-2)(2\,\brh-1)(\brh+1)\big((\brh-1)\brh+1\big)^2}{27\brh^3(\brh-1)^3}~.$$
It follows easily that this function $\brh\mapsto \bJ$ is 
alternately  increasing and decreasing  on these six intervals. Since we 
know that it takes the value $\infty$ at improper points, and the 
value $1$ at points with dihedral symmetry, this completes the proof.
\end{proof}
\medskip

The rest of the 
proof of Proposition \ref{P-D4} in the real case can easily be completed,  
since the arguments are almost the same as those in the complex case.
\end{proof}
\bigskip 

Next we must study the case $n>4$.
\smallskip

\begin{lem}\label{L-D5}
If $\cD=\sum_j m_j\<\p_j\>$ is a divisor of degree $n>4$ with 
$$~\max_j\{m_j\}~\ge ~n/2\,,$$ 
then the quotient space $\fM_n$ is not locally Hausdorff at the image
point $\bpi(\cD)$.
\end{lem}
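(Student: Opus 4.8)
The plan is to show that the action of $\bG=\PGL_2$ fails to be weakly proper at such a divisor $\cD$, which together with Theorem~\ref{T-wot} (and Remark~\ref{R-T1a}) implies that $\fM_n$ is not locally Hausdorff at $\bpi(\cD)$. Write $m=\max_j\{m_j\}\ge n/2$ and let $\p$ be the point of $|\cD|$ carrying this multiplicity, so that $\cD=m\<\p\>+\cD_0$ where $\cD_0$ is an effective divisor of degree $n-m\le m$ supported away from $\p$. Normalize coordinates so that $\p=\infty$; then $\cD_0$ is a divisor of degree $n-m$ on the affine line $\F$, say with support $\{q_1,\dots\}$. The idea is to exhibit, in every neighborhood of $\cD$, two divisors lying in the same $\bG$-orbit but related only by group elements escaping to infinity in $\bG$.

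First I would use the one-parameter group of dilations $\bg_s:z\mapsto sz$ (for $s\in\F^\times$, or $s>0$ in the real case), which fixes both $0$ and $\infty$. For $s$ close to $1$ the divisor $\bg_s(\cD)=m\<\infty\>+\bg_s(\cD_0)$ is close to $\cD$, and $\bg_s(\cD)=\cD$ only when $\bg_s$ permutes $|\cD_0|$, which happens for only finitely many $s$ near $1$; so generic small $s$ gives $\bg_s(\cD)\ne\cD$ but $\bg_s(\cD)$ arbitrarily close to $\cD$. The point, however, is to produce nearby divisors that are equivalent \emph{via a different and unbounded route}. Here I would instead perturb: pick a small $\vep\ne 0$ and compare the divisor $\cD_\vep:=m\<\infty\>+\cD_0+$ (a perturbation splitting $\infty$) — more precisely, split the mass at $\infty$ into $m\<\infty\>\rightsquigarrow (m-k)\<\infty\>+k\<1/\vep\>$ is not allowed since multiplicities must stay positive integers and the total degree is fixed; so the cleaner move is to keep all multiplicities and instead translate. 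The key construction: for small $t$, consider $\cD'_t:=\tau_t(\cD)$ where $\tau_t:z\mapsto z+t$; this is close to $\cD$. Now $\cD'_t$ and $\cD$ agree at $\infty$ with multiplicity $m$, and the relation $\tau_t$ itself is bounded, so this alone does not break weak properness. The genuine obstruction must come from the possibility of matching the big atom $m\<\infty\>$ of one divisor against a \emph{different} large atom obtained after conjugating by an inversion.

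So the core step — and the main obstacle — is this: because $m\ge n/2$, the total mass of $\cD_0$ is at most $m$, so there is "room" for a group element to send $\infty$ (the heavy point) to a point near $|\cD_0|$ while still landing near $\cD$. Concretely, take two small nonzero parameters $\vep_1,\vep_2$ and let $\cD^{(i)}$ be obtained from $\cD$ by the inversion-type map $z\mapsto 1/(z-a)+ (\text{large})$ applied so that the heavy atom $m\<\infty\>$ is moved to a cluster near a chosen point while the light part $\cD_0$ is pushed out toward $\infty$; the hypothesis $n-m\le m$ is exactly what guarantees that the resulting divisor still has a heavy atom of multiplicity $m$ (from the old light part now piled near $\infty$? — no: one must be careful, the light part has several distinct points). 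I expect the correct and clean argument is: normalize $\cD=m\<\infty\>+\cD_0$ with $\cD_0$ of degree $n-m\le m$; for a parameter $s\to\infty$ consider $\bg_s(\cD)=m\<\infty\>+\bg_s(\cD_0)$, where $\bg_s(\cD_0)$ now clusters all its mass $n-m$ near $0$; this is close to the divisor $\widetilde\cD:=m\<\infty\>+(n-m)\<0\>$ only if $\cD_0$ was already supported at one point, so instead run $\bg_s$ on a slightly perturbed divisor. Comparing $\bg_s(\cD_\vep)$ for two different small $\vep$ and letting $s\to\infty$ along a subsequence chosen so the two limits coincide, one gets two divisors arbitrarily close to $\cD$, in the same $\bG$-orbit, with the only connecting group elements of the form $\bg_{s}\circ(\text{bounded})$, $s\to\infty$, hence escaping every compact set.

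Therefore the plan concludes: assume toward a contradiction that the action is weakly proper at a point $\xx\in\wfD_n^\fs$ over $\cD$, so there are a neighborhood $U$ of $\cD$ and a compact $K\subset\bG$ witnessing Definition~\ref{D-prop}. Using the construction above, produce sequences $\cD_j^{(1)},\cD_j^{(2)}\in U$ with $\cD_j^{(1)},\cD_j^{(2)}\to\cD$, lying in a common $\bG$-orbit, such that every $\bg\in\bG$ with $\bg(\cD_j^{(1)})=\cD_j^{(2)}$ satisfies $\bg\notin K$ for $j$ large (because any such $\bg$ must move a point near the support of $\cD_0$ arbitrarily far, forcing the associated matrix to leave $K$). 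This contradicts weak properness, so $\bpi(\cD)$ is an improper point which — again by the dichotomy in Corollary~\ref{C-?} and the fact that the quotient near a weakly proper point \emph{is} locally Hausdorff by Theorem~\ref{T-wot} — fails to be locally Hausdorff. The main obstacle is organizing the perturbation $\cD\rightsquigarrow\cD_\vep$ and the escaping sequence $\bg_{s_j}$ carefully enough, respecting that multiplicities are fixed positive integers and total degree is $n$, so that the two perturbed-and-dilated divisors genuinely converge to the \emph{same} limit while the connecting group elements provably leave every compact subset of $\PGL_2$; I would handle this by working in the affine chart, tracking where the heavy atom and the light atoms go under $z\mapsto sz$, and invoking $n-m\le m$ precisely at the point where one needs the limiting divisor to still lie in $\wfD_n$ with a legitimate multiplicity pattern.
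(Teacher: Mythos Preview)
Your overall strategy has a genuine logical gap. You aim to show the action is not weakly proper at $\cD$ and then deduce that $\fM_n$ is not locally Hausdorff at $\bpi(\cD)$. But Theorem~\ref{T-wot} and Corollary~\ref{C-?} only give the implication in the other direction: weakly proper $\Rightarrow$ locally Hausdorff, i.e.\ $U_{\sf WP}\subset U_{\sf LHaus}$. The paper never claims (and does not prove) the reverse inclusion, so exhibiting a failure of weak properness does not, by itself, rule out local Hausdorffness. Your final sentence appeals to a ``dichotomy'' in Corollary~\ref{C-?}, but that corollary records a chain of inclusions, not a trichotomy.

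Separately, your construction never quite lands. You want two nearby divisors in the same orbit related only by group elements leaving every compact set, but the candidates you try (dilations $z\mapsto sz$, translations $z\mapsto z+t$) are themselves bounded for small parameters, and the inversion idea is left as a sketch in which it is unclear what the two divisors actually are or why their limits coincide. The hypothesis $m\ge n/2$ is invoked only vaguely.

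The paper's argument is direct and avoids weak properness entirely. It produces two \emph{distinct} orbit-classes that cannot be separated. Writing $\cD_j=\cD'_j+h\<\infty\>$ (even case $n=2h$) with $\cD'_1,\cD'_2$ generic degree-$h$ divisors in the finite plane, it uses the involution $\g_r(z)=r^2/z$: the divisors $\cD'_1+\g_r(\cD'_2)$ and $\g_r(\cD'_1)+\cD'_2$ are $\bG$-equivalent for every $r$, yet as $r\to\infty$ the first tends to $\cD_1$ and the second to $\cD_2$. Since $h+1\ge 4>\dim\bG$, one can take $\cD_1\ne\cD_2$ in distinct orbits, and also take $\cD_2$ arbitrarily close to $\cD_1$, giving failure of local Hausdorffness; divisors with $\max_j m_j>h$ are handled by approximation. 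The odd case is analogous. The key move you were circling around --- swapping a heavy cluster near $\infty$ with a heavy cluster near $0$ via an unbounded family of maps --- is exactly this $\g_r$, but used to compare two \emph{different} limits rather than to violate weak properness at a single point.
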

\medskip

For the rest of this section, the real and complex cases are completely
analogous, so it will suffice to concentrate on the complex case. 
\medskip

\begin{proof}[Proof of Lemma \ref{L-D5}]
First consider the special case of divisors of even degree
  $n=2h\ge 6$, with \hbox{$\max_j\{m_j\}=h\ge 3$}.
Identifying $\bP^1$
with $\C\cup\{\infty\}$, let $\cD_1$ and $\cD_2$ be of the form 
$$\cD_j=\cD_j'+h\langle\infty\rangle~,$$
 where both $\cD_1'$ and $\cD_2'$ are divisors of degree $h=n/2$ 
 with support consisting of $h$ distinct points in the finite plane, and where
$h\langle\infty\rangle$ is the divisor
consisting only of the point $\infty$ with multiplicity $h$. Thus
the support $|\cD_j|$ has \hbox{$h+1\ge 4$} elements. Since $4>\dim(\bG)=3$,
we can always choose two such divisors $\cD_1$ and $\cD_2$ which do not 
belong to the same $\bG$-orbit,

Now consider the projective involution $\gr(z)=r^2/z$, where $r$ is a large
 real number.  Note that $\gr$ maps the neighborhood $|z|<r$ of zero
onto the neighborhood $|z|>r$ of infinity. Then the two divisors  
$$\cD_1'+\gr(\cD'_2)\qquad{\rm and}\qquad\gr(\cD'_1) +\cD_2'$$
belong to the same $\bG$-orbit. Yet by choosing $r$ sufficiently 
large we can place the first arbitrarily close to $\cD_1$ and the 
second arbitrarily close to $\cD_2$. This proves that the
quotient $\fM_{2h}=\wfD^\fs_{2h}/\bG$ is not a Hausdorff space. In fact, 
since $\cD_2$ can be arbitrarily close to $\cD_1$, it follows that $\fM_{2h}$
is not even locally Hausdorff at $\((\cD_1\))$. Furthermore,
since any divisor with $\max_j\{m_j\}>h$ can be approximated by one
with $\max_j\{m_j\}=h$, it follows that $\fM_{2h}$ is not locally
Hausdorff at any point with $\max_j\{m_j\}\ge h$.

The proof for $n=2h+1\ge 5$ is similar. For this case we take 
$$\cD_1=\cD_1'+(h+1)\langle\infty\rangle\quad{\rm and}
\quad\cD_2=\cD_2'+h\langle\infty\rangle~,$$
where $\cD_1'$ has degree $h\ge 2$, but $\cD_2'$ has degree $h+1\ge 3$.
It then follows as above that $\fM_{2h+1}$ is not Hausdorff. Again 
$\cD_1$ and $\cD_2$ can be arbitrarily close to each other:
Starting with any $\cD_1$, it is only
necessary to replace the point of multiplicity $h+1$ for $\cD_1$ 
by a point of multiplicity $h$, together with a nearby point of 
multiplicity one, in order to obtain an appropriate $\cD_2$. 
It follows easily that $\fM_{2h+1}$ is not locally
Hausdorff at any point with $\max_j\{m_j\}\ge h+1$.
This completes the proof of Lemma~\ref{L-D5}.
\end{proof}
\bigskip

The proof of Theorem \ref{T-D1} will also require a study of group elements
which are ``close to infinity'' in $\bG$ (or in other words, outside of a large 
compact subset of $\bG$). Choose some metric on $\bP^1$, for example the
 standard spherical metric, and let $N_\vep(\p)$ be the open $\vep$-neighborhood
of $\p$. \medskip

\begin{figure}[h!]
\centerline{\includegraphics[width=2.6in]{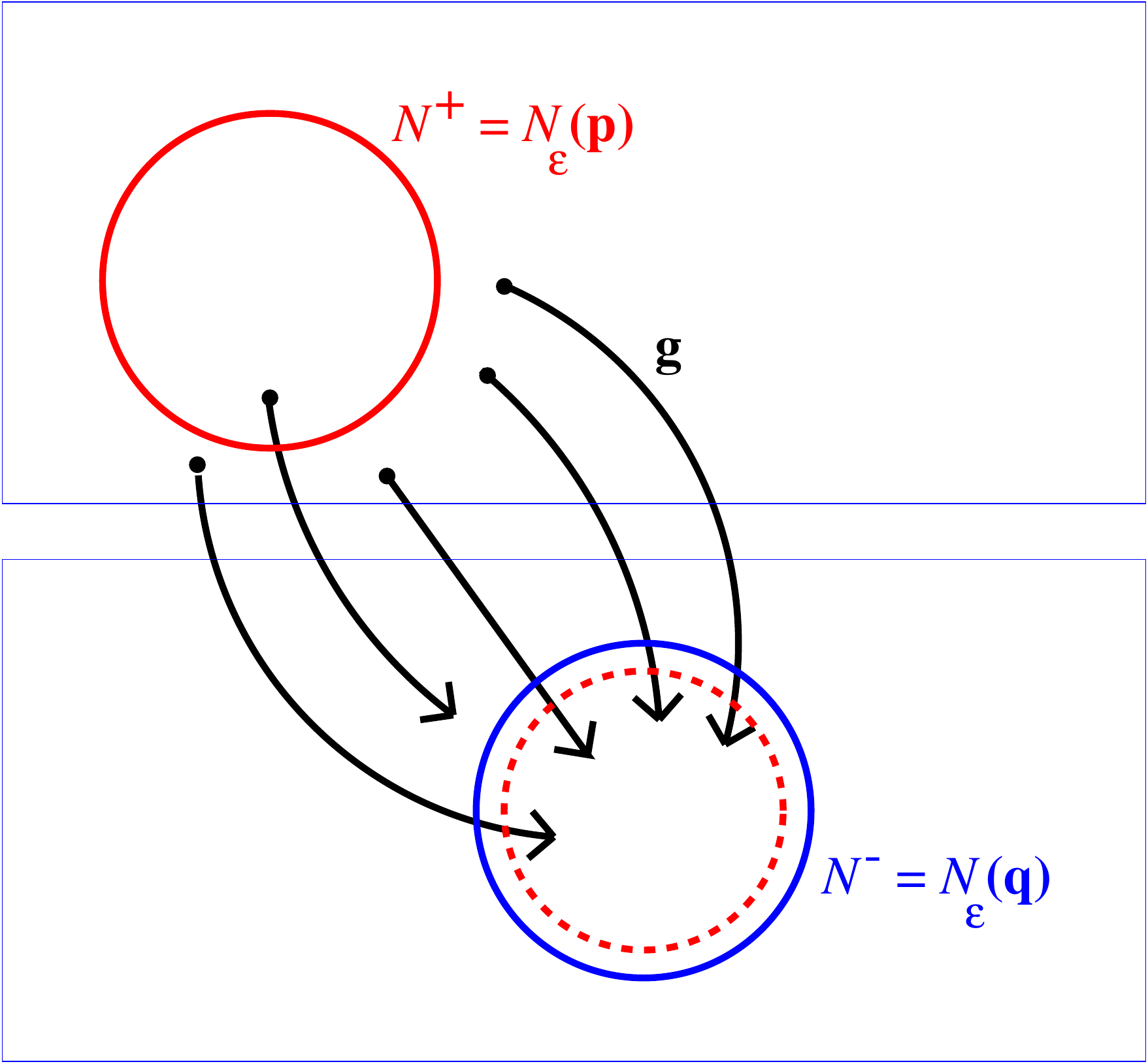}}
\caption{\sf Illustrating the Distortion Lemma. The two rectangles represent
copies of $\bP^1$. The image $\g\big(N^+\big)$
covers everything outside of the dotted circle. Hence everything outside
of $N^+$ must map into $N^-$.\label{F-dispic}}
\end{figure}
\medskip

\begin{lem}[{\bf Distortion Lemma for Automorphisms of $\bP^1$}]\label{L-dis1}
Given any $\vep>0$ there exists a compact set $K=K_\vep\subset \bG$
with the following property. For any $\g\in \bG\ssm K$ there exist two
$($not necessarily distinct$)$ open $\vep$-disks $~N^+=N_\vep(\p)~$
and $~N^-=N_\vep(\q)~$  such that
$$ \g\big(N^+\big)\,\cup\, N^-~=~\bP^1~.$$
\end{lem}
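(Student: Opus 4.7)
The plan is to reduce to the case of diagonal transformations via Lemma \ref{L-linalg}. Writing any $\g\in\bG=\PGL_2(\F)$ as $\g=\r\circ\d\circ\r'$, where $\r,\r'$ are isometries (elements of ${\rm PO}_2$ in the real case or ${\rm PU}_2$ in the complex case for the metric used to define $N_\vep$) and $\d$ is the diagonal map $(x_1:x_2)\mapsto(a_1x_1:a_2x_2)$ with $a_1\ge a_2>0$, the scale-invariant ratio $a(\g):=a_1/a_2\ge 1$ is a continuous function on $\bG$ (being the ratio of singular values of any representative matrix, and therefore well-defined on $\PGL_2$). On the affine chart $z=x_1/x_2$, the transformation $\d$ becomes $z\mapsto az$, fixing $0$ and $\infty$.

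For the compact exceptional set, I will take $K_\vep$ to be the preimage under $a$ of an interval $[1,A_\vep]$, where the threshold $A_\vep$ is chosen in the next step. Since $K_\vep$ is the image under the continuous multiplication map of the compact product consisting of the isometry group, the compact family of diagonal transformations with $1\le a\le A_\vep$, and the isometry group again, it is itself compact.

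The geometric core of the argument concerns how $\d$ distorts a small disk around its fixed point $0$. In spherical coordinates, the $\vep$-disk $D_0$ around $0$ corresponds to $\{|z|<\tan(\vep/2)\}$, while the $\vep$-disk $D_\infty$ around $\infty$ corresponds to $\{|z|>\cot(\vep/2)\}$. Hence $\d(D_0)=\{|z|<a\tan(\vep/2)\}$ contains $\bP^1\setminus D_\infty$ as soon as $a\tan(\vep/2)>\cot(\vep/2)$, which suggests choosing $A_\vep=\cot^2(\vep/2)$. Granting this estimate, for any $\g\notin K_\vep$ I set $\p=(\r')^{-1}(0)$ and $\q=\r(\infty)$, and take $N^+=N_\vep(\p)$, $N^-=N_\vep(\q)$. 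Applying the three maps in turn: the isometry $\r'$ sends $N^+$ to $D_0$, then $\d$ maps $D_0$ onto a set containing $\bP^1\setminus D_\infty$, and finally the isometry $\r$ carries this set to one containing $\bP^1\setminus N^-$. Hence $\g(N^+)\cup N^-=\bP^1$, as required.

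The only real work is the spherical-trigonometry estimate above; once the conversion between spherical radius and Euclidean radius is in hand, the rest of the argument is bookkeeping with isometries. A minor secondary point is verifying continuity of $a(\g)$, which is immediate from its description as a ratio of singular values. The argument is uniform across the real and complex cases: in the real case $\bP^1(\R)$ is a circle and the ``disks'' $D_0,D_\infty$ are arcs, but the analogous computation yields the same threshold $A_\vep$.
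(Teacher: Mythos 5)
Your proposal is correct and follows essentially the same approach as the paper: reduce to the diagonal case via the singular-value decomposition of Lemma~\ref{L-linalg} and observe that a large diagonal entry ratio pushes the complement of a small disk about $0$ into a small disk about $\infty$. You have simply filled in the details the paper leaves implicit (the exact spherical-radius threshold $\cot^2(\vep/2)$ and the compactness of $K_\vep$ as a continuous image of a compact triple product), and your isometry bookkeeping with $\p=(\r')^{-1}(0)$, $\q=\r(\infty)$ is the intended completion.
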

\medskip

It follows that $\g$ maps every point outside of $N^+$ into $N^-$. (Roughly
speaking, we can think of $N^+$ as a repelling disk and $N^-$ as an
attracting disk.) 

\begin{proof}[Proof of Lemma \ref{L-dis1}.]
First consider the corresponding statement for the group of diagonal 
automorphisms 
$$ \d_\kappa(x:y)~=~(\kappa x: y)~, \qquad {\rm with} \quad \kappa\, \in
\, {\mathbb C}\ssm \{0\}~,$$
or in affine coordinates with $z=x/y$,  $~~z\mapsto \kappa z$.
Interchanging the coordinates $x$ and $y$ if necessary, we may assume
that $|\kappa|\ge 1$. The condition that $\d_\kappa$ lies outside of a
large compact set then means that $|\kappa|$ is large. 
The proof can then easily be completed, choosing $\p=0$ and $\q=\infty$.
(For example, if $\kappa= 1/\vep^2$ then $\d_\kappa$ maps the small disk
$|z|<\vep$ onto the large disk $|z|<1/\vep$.)

The proof for the group of projective transformations $\bG$ then follows
immediately, using Lemma \ref{L-linalg}.
\end{proof}
\bigskip

\begin{proof}[Proof of Theorem~\ref{T-D1}] We will first show that 
$\fM_n$ is a ${\rm T}_1$-space. This means that every point of $\fM_n$ is a 
closed set; or equivalently that every $\bG$-orbit in $\wfD^\fs_n$ is 
a closed set. In other words, we must show that every limit point
of such an orbit within the larger space $\wfD_n$  either belongs to
the orbit or else has infinite stabilizer, so that it is outside of 
$\wfD^\fs_n$.

Given any $\cD\in\wfD^\fs_n$, 
let  $\cD'$ be a divisor which can be expressed as the limit
 $$~~\cD'=\lim_{j\to\infty} \g_j(\cD)~~$$ of points of the 
$\bG$-orbit $\((\cD\))$. If $\cD'$ itself
 does not belong to this $\bG$-orbit, then we will show that the
support $|\cD'|$ can have only one 
or two elements, so that  $\cD'\not\in\wfD^\fs_n$.

Choose $\vep>0$ small enough so that any two points of $|\cD|$ have distance
$\ge\,2\,\vep$,\; or in other words so that any $N_\vep(\p)$ can contain at 
most one point of $|\cD|$. The group elements $\g_j$ must tend to infinity in
$\bG$, since otherwise the limit point would be in the $\bG$-orbit 
$\((\cD\))$. Hence we can choose corresponding  $\vep_j$ tending to zero. 
For each $j$ with $\vep_j<\vep$, we can choose $\vep_j$-disks $N^+_j$ 
and $N^-_j$ as in the Distortion Lemma~\ref{L-dis1}, it follows that 
all but at most one point of $|\g_j(\cD)|$ must lie in the disk $N_j^-$.
Passing to the limit,  
it follows that $|\cD'|$ can have at most two points, as asserted. 

Thus it follows that $\((\cD\))$ is closed as a subset of $\wfD^\fs_n$,
and hence that $\bpi(\cD)$ is closed as a subset of $\fM_n$.\qed\bigskip

Finishing the proof of Theorem \ref{T-D1} will require one further
 preliminary step.
\smallskip

\begin{definition} Let $\fV_n$ be 
 the open set consisting of all divisors in $\wfD_n$ such that
$$ \max_j\{m_j\}~~<~n/2~.$$
\end{definition}
\smallskip

\begin{prop}\label{P-div} For any $n$,
the action of $\bG=\PGL_2$ on this open set \hbox{$\fV_n\subset\wfD_n$}
is proper.
\end{prop}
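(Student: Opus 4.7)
My plan is a proof by contradiction using the Distortion Lemma~\ref{L-dis1}. Suppose properness fails at some pair $(\cD,\cD')\in\fV_n\times\fV_n$. Then one can extract sequences $\cD_j\to\cD$ and $\cD'_j\to\cD'$ in $\fV_n$ together with $\g_j\in\bG$ satisfying $\g_j(\cD_j)=\cD'_j$, such that no subsequence of $\{\g_j\}$ converges in $\bG$. Since $\bG$ is locally compact Hausdorff, the $\g_j$ must eventually leave every compact $K_\vep\subset\bG$; applying Lemma~\ref{L-dis1} with a sequence $\vep_j\to 0$, for $j$ large there exist $\vep_j$-disks $N^+_j=N_{\vep_j}(\p_j)$ and $N^-_j=N_{\vep_j}(\q_j)$ in $\bP^1$ such that every point outside $N^+_j$ is carried by $\g_j$ into $N^-_j$. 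After passing to a subsequence I may assume $\p_j\to\p$ and $\q_j\to\q$ in $\bP^1$.

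Second, I would transport the defining inequality of $\fV_n$ through $\g_j$. Let $m_\p$ denote the multiplicity of $\p$ in $\cD$, with $m_\p=0$ if $\p\notin|\cD|$; by hypothesis $m_\p<n/2$. Choose $\delta>0$ small enough that $\overline B_\delta(\p)\cap|\cD|\subset\{\p\}$. Classical continuity of roots of a polynomial with respect to its coefficients --- valid uniformly in $\F$ via the embedding $\wfD_n(\F)\subset\bP^n(\F)$ (Rouch\'e/Hurwitz in the complex case, restricted to the real case) --- then implies that for all sufficiently large $j$, the total multiplicity of $\cD_j$ inside $\overline B_\delta(\p)$ equals $m_\p$. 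Since $\p_j\to\p$ and $\vep_j\to 0$, one has $N^+_j\subset\overline B_\delta(\p)$ eventually, so the multiplicity of $\cD_j$ inside $N^+_j$ is at most $m_\p<n/2$. The complementary multiplicity of $\cD_j$, which is strictly greater than $n/2$, is carried by $\g_j$ into $N^-_j$, so $\cD'_j$ has multiplicity exceeding $n/2$ inside the shrinking disk $N^-_j$.

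Third, I would pass to the limit. Choose a neighborhood $V$ of $\q$ with $\overline V\cap|\cD'|\subset\{\q\}$. For $j$ large one has $N^-_j\subset V$, so the multiplicity of $\cD'_j$ inside $V$ is strictly greater than $n/2$; invoking continuity of roots once more forces the multiplicity of $\cD'$ at $\q$ to exceed $n/2$, contradicting $\cD'\in\fV_n$. The main obstacle lies in step two: the disks $N^\pm_j$ have both moving centers and shrinking radii, so one cannot directly use upper semicontinuity of multiplicity in a fixed disk. The clean workaround is to first fix a single disk $\overline B_\delta(\p)$ in which the limiting multiplicity $m_\p$ is known and then sandwich $N^+_j$ inside it for $j$ large; the analogous trick handles $N^-_j$ in step three. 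Once this uniform control on multiplicities is in place, the rest is routine, with the subsequential extraction of limits $\p_j\to\p,\,\q_j\to\q$ coming for free from compactness of $\bP^1$.
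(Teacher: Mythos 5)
Your argument is correct and follows essentially the same route as the paper's proof: both invoke the Distortion Lemma~\ref{L-dis1} and then observe that the repelling disk $N^+$ can trap at most $\max_j m_j<n/2$ of the mass of the source divisor, so that more than $n/2$ of the mass is forced into the attracting disk $N^-$, contradicting the bound at the target divisor. The only difference is presentational: you argue sequentially with shrinking disks $\vep_j\to 0$ and continuity of roots, whereas the paper fixes a single $\vep$ (adapted to the spacing of the supports of $\cD_1$ and $\cD_2$) and works directly with explicit neighborhoods $\fN_\vep(\cD_\ell)$.
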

\smallskip

\begin{proof}
Consider two divisors $\cD_1\,~\cD_2\in\fV_n$.
We must construct 
neighborhoods $U_1$ of $\cD_1$ and $U_2$ of $\cD_2$ and a compact
set $K\subset \bG$ so that any $\g\in \bG$ which maps a point $\cD'_1\in U_1$
to a point $\cD'_2\in U_2$ must belong to~$K$.

Let $\vep$ be small enough so that any two distinct  
points of $|\cD_\ell|$ have distance
\begin{equation}\label{E-4eps}
{\rm dist}(\p,\,\p')~>~4\,\vep
\end{equation}
from each other, both for $\ell=1$ and 
for $\ell=2$. Thus no ball of radius $\vep$ can intersect more than 
one of the $\vep$ balls around the points of $|\cD_\ell|$.

Let $K_\vep$ be a corresponding compact subset of $\bG$, as described in  
the Distortion Lemma \ref{L-dis1}. 
Let $\fN_\vep(\cD_\ell)\subset\wfD_n$ 
be the neighborhood of $\cD_\ell$ consisting of all $\cD'_\ell\in\wfD_n$
such that, for each $\p\in|\cD_\ell|$, the number of points of $\cD'_\ell$ in 
$N_\vep(\p)$ counted with multiplicity, is precisely equal to the multiplicity
 of $\p$ as a point  of $\cD_\ell$. (In other words,
a point $\p$  of multiplicity $m_j$ for $\cD_\ell$ is allowed to split into as
many as $m_j$ distinct points in $\cD'_\ell$; but they are not allowed to 
move out of the $\vep$-neighborhood of $\p$.) \medskip

\begin{figure}[h!]
\centerline{\includegraphics[width=2.9in]{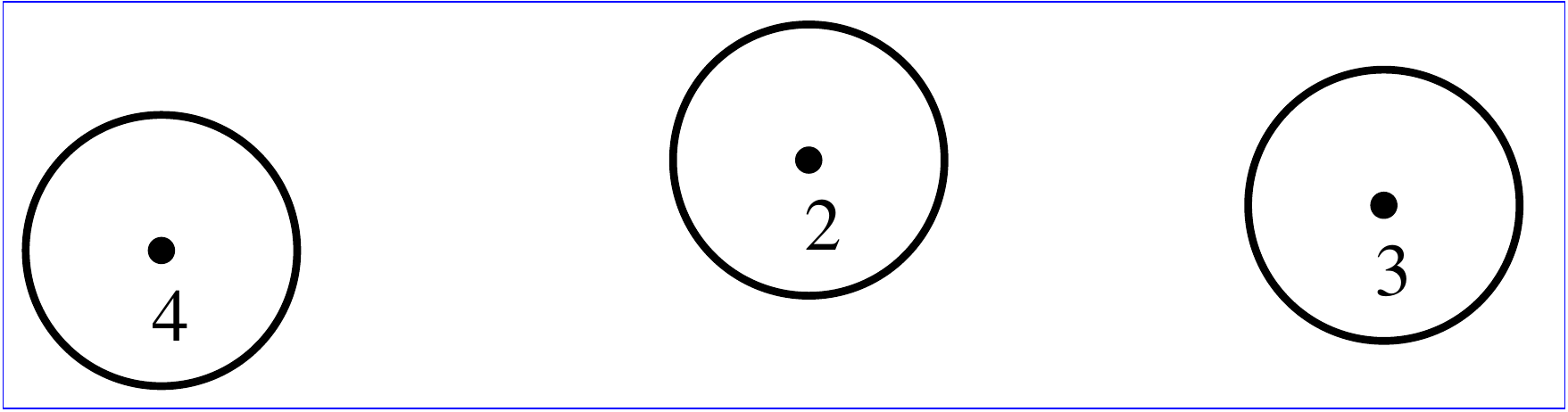}}\ssk
\centerline{\includegraphics[width=2.9in]{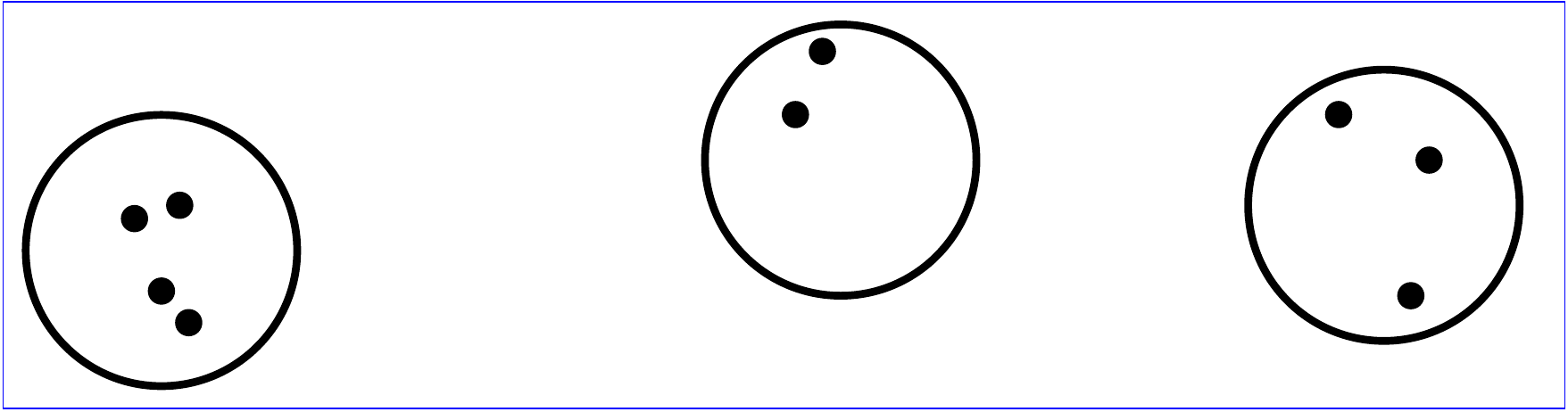}}\ssk
\centerline{\includegraphics[width=2.9in]{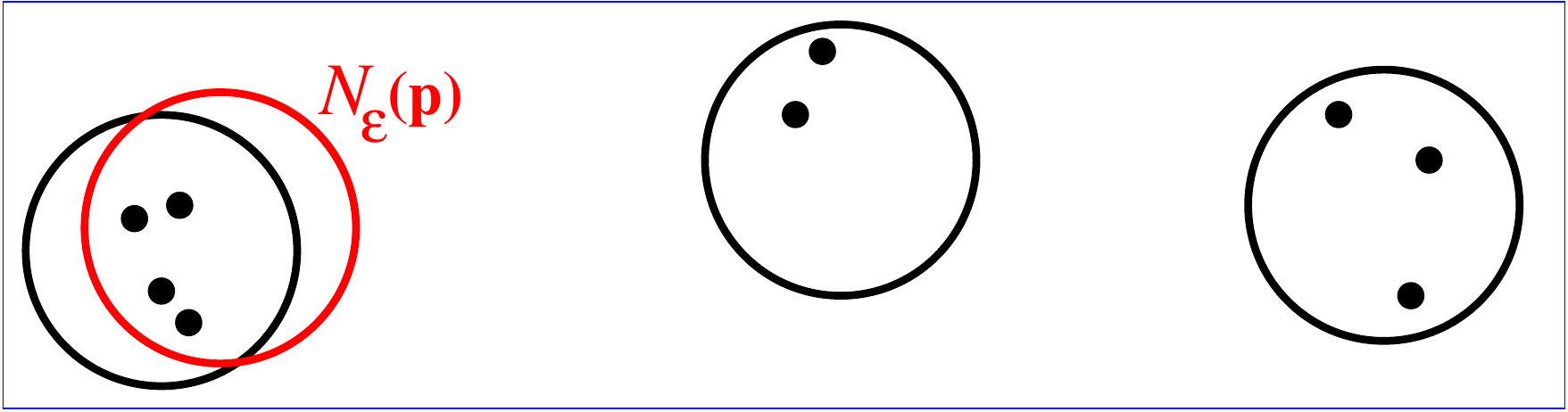}}\ssk
\centerline{\includegraphics[width=2.9in]{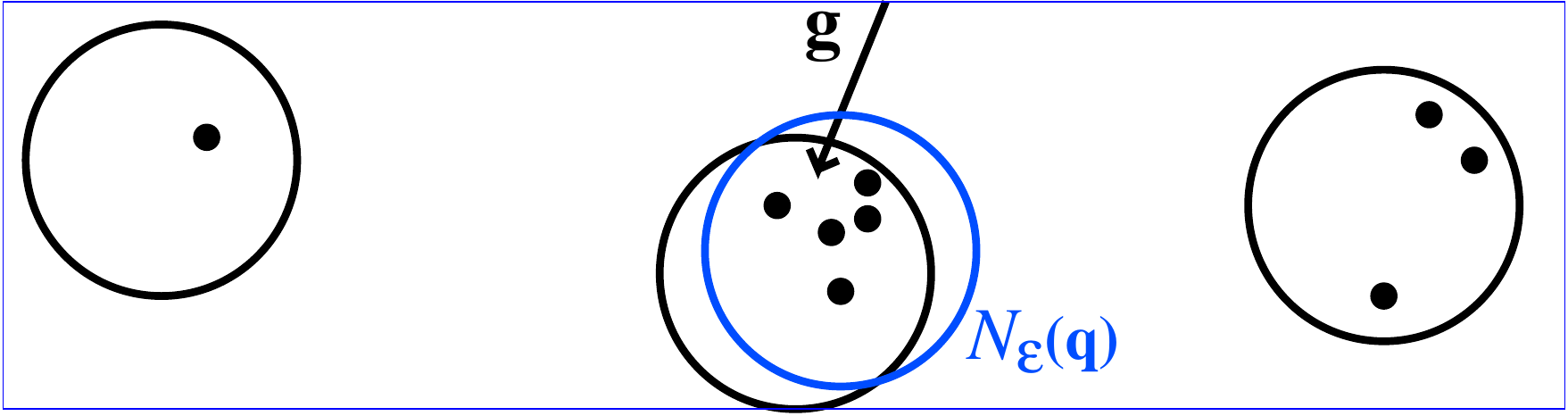}}\ssk
\caption{\sf The top frame illustrates a typical divisor $\cD_1\in\fV_9$, 
showing the  $\vep$-balls around points of multiplicity 4, 2, and 3. 
The next two frames illustrate a divisor $\cD'_1\in\fN_\vep(\cD_1)$, 
and the last frame illustrates a divisor $\cD'_2$ whose points are contained
in $\vep$-balls around the points of $\cD_2$.
Assuming that $\g\not\in K_\vep$ maps $\cD'_1$ to $\cD'_2$,
the last two frames show associated  balls  $N^+=N_\vep(\p)$ 
 and $N^-=N_\vep(\q)$
such that the five points outside of  $N^+$ all map into 
$N^-\!$, yielding a contradiction.\label{F-div2}}
\end{figure}
\medskip

We must prove that any $\g\in \bG$ which maps some
$\cD'_1\in \fN_\vep(\cD_1)$ to
a \hbox{$\cD'_2\in \fN_\vep(\cD_2)$} must belong to the compact set $K_\vep$.
Suppose to the contrary that   $\g\not\in K_\vep$. Then we could construct
corresponding $\vep$-balls  $N^+=N_\vep(\p_0$) and
  $N^-=N_\vep(\q_0)$, so that  $\g(N^+)\cup N^-=\bP^1$. 
Since any $\vep$-ball intersects at most one of the
$N_\vep(\p)$ with $\p\in|\cD|$, the ball  $N^+$
 must contain fewer than $n/2$ points of
$\cD'_1$, counted with multiplicity. Hence its complement must contain more
than $n/2$ such points. Since $\g$ maps the complement of  $N^+$ into
$N^-$, this means that $N^-$ contains more than $n/2$ points of $\cD'_\ell$,
counted with multiplicity. But this is impossible since  $N^-$ can 
intersect at most one of the $\vep$-balls around points of $|\cD_2|$.
This contradiction completes the proof of Proposition \ref{P-div}.
\end{proof} Since
proper action implies Hausdorff quotient, it also completes the proof of
Theorem \ref{T-D1}.
\end{proof}
\msk

\begin{proof}[Proof of Theorem \ref{T-D2} $($Compactness$)$]
 First consider the  case of even\break degree $n=2k\ge 6$. Consider the
 sequence of degree $n$ divisors
$$ \cD_h~=~\sum_{j=1}^k\big(\,\<j/h\> +\<jh\>\,\big)~,$$
which converges to $k\<0\>+k\<\infty\>$ as $h\to\infty$. We can spread out
either the summands $\<j/h\>$ or the summands $\<jh\>$ by suitable projective
transformations; but in either case there will always be $k$ summands tending
to a single point, so that the limit will not represent any point of 
$\fM_n^\ha$. Thus this moduli space is not compact.\ssk

To see that this cannot happen in the odd degree case $n=2k+1\ge 5$
we proceed as follows.
Using the standard spherical metric $$\frac{2\,|dz|}{1+|z|^2}~,$$ let
$0\le{\bf diam}(S)\le\pi$ denote the diameter of a set $S\subset\bP^1=
\C\cup\{\infty\}$. Define the function 
$\Theta:\wfD_n~\to~[0,\,\pi]$ as follows:

\begin{quote} \it Let $\Theta(\cD)$ be the smallest diameter among the 
finitely many sets\break $S\subset|\cD|$ which contain at least 
$k+1$ points, counted with multiplicity.
\end{quote}

\noindent Thus $\Theta(\cD)$ is zero
if and only if some point of $|\cD|$ has multiplicity $\ge k+1$.
In other words, \hbox{$\Theta(\cD)>0$} 
if and only if $\cD$ belongs to the set $\fV_n$
 consisting of divisors with \hbox{$\max_j\{m_j\}\le k$}, or in other words 
if and only $\bpi(\cD)\in\fM_n^\ha$.\ssk

We will need the  following.
\smallskip

\begin{lem}\label{L-comp}
Let $K_n\subset \wfD_n$ be the compact set consisting of divisors with
$$\Theta(\cD)\ge \pi/4~.$$ 
If $n=2k+1$, then every $\bG$-equivalence class $\((\cD\))$ with 
$\max_j\{m_j\}\le k$ has a representative $\g(\cD)$ which belongs 
to this set $K_n$. 
\end{lem}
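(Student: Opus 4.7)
The plan is to prove this by an extremal argument: show that the function $F(\g):=\Theta(\g\cD)$ on $\bG$ attains its supremum, and that this maximum is at least $\pi/4$. For the first step I would use the Distortion Lemma~\ref{L-dis1} to show $F$ vanishes at infinity in $\bG$. Choose $\vep>0$ smaller than half the minimum pairwise spherical distance among distinct points of $|\cD|$. Then for any $\g\notin K_\vep$, the disk $N^+$ of radius $\vep$ meets $|\cD|$ in at most one point, of multiplicity at most $k$, so $\g(|\cD|\setminus N^+)\subset N^-$ carries total multiplicity at least $n-k=k+1$. Since $N^-$ has spherical diameter at most $2\vep$, we get $F(\g)\le 2\vep$. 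Since $F$ is continuous on $\bG$ and vanishes at infinity, it attains its maximum at some $\g^*\in\bG$. Replacing $\cD$ by $\g^*\cD$, we may assume this maximum $M := \Theta(\cD)$ is attained at the identity.

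For the main step, suppose for contradiction that $M<\pi/4$. Pick a subset $S\subset|\cD|$ of multiplicity at least $k+1$ with $\mathrm{diam}(S)\le M$, and fix a point $c\in S$. In coordinates placing $c$ at $0$ and $-c$ at $\infty$, consider the one-parameter subgroup $h_t(z)=e^t z$ of $\bG$. A direct computation with the chordal distance $\chi(z_1,z_2)=|z_1-z_2|/\sqrt{(1+|z_1|^2)(1+|z_2|^2)}$ yields
$$\frac{d}{dt}\log\chi(h_tz_1,\,h_tz_2)\Big|_{t=0}~=~\frac{1}{1+|z_1|^2}+\frac{1}{1+|z_2|^2}-1,$$
which is strictly positive whenever $|z_1|,|z_2|<1$, i.e., whenever both points lie in the open hemisphere of spherical radius $\pi/2$ about $c$. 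Thus for small $t>0$, $h_t$ strictly increases the spherical distance between any two points in this hemisphere.

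The key geometric observation is that every subset $S'\subset|\cD|$ of multiplicity $\ge k+1$ intersects $S$, since $2(k+1)>2k+1=n$. If moreover $\mathrm{diam}(S')\le M$, then choosing $p_0\in S\cap S'$, every point of $S'$ lies within $M$ of $p_0$ while $p_0$ lies within $M$ of $c$, so all of $S'$ sits within $2M<\pi/2$ of $c$---i.e., inside the open hemisphere about $c$. By the expansion property above, every pair distance in $S'$ strictly increases under $h_t$ for small $t>0$, so $\mathrm{diam}(h_t S')>\mathrm{diam}(S')\ge M$. Non-critical majority subsets (with $\mathrm{diam}(S')>M$) satisfy the same inequality by continuity of $\mathrm{diam}$. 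Hence $\Theta(h_t\cD)>M$ for some small $t>0$, contradicting the maximality of $M$. The main obstacle is the hemisphere-containment estimate, which is precisely where the constant $\pi/4$ enters, via the bound $2M<\pi/2$; once this is in place, the extremal argument closes cleanly and $\g^*\cD$ is the required representative in $K_n$.
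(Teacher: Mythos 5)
Your proof is correct and follows essentially the same extremal strategy as the paper: maximize $\Theta$ over the orbit, suppose the maximum $M$ is less than $\pi/4$, place a point of a critical subset at the origin, and derive a contradiction via the radial expansion $z\mapsto e^tz$ using the hemisphere bound $2M<\pi/2$. The two places where you are more careful than the paper's write-up are (i) using the Distortion Lemma to show $\Theta(\g\cD)\to 0$ as $\g\to\infty$, which justifies the existence of the maximizer that the paper simply asserts, and (ii) taking $t$ small and handling the non-critical subsets by continuity, which cleanly rules out the possibility that the expansion shrinks the diameter of some large-diameter subset below $M$.
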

\smallskip

\begin{proof}We will make use of the projective 
automorphism $z\mapsto \kappa\,z$ with $\kappa>1$,\break
 which is strictly distance increasing
when considered as a map from the disk\break
 $|z|<\kappa^{-1}$ to the larger disk $|z|<1$.
To prove this, it suffices to prove the equivalent statement 
that the inverse map $w\mapsto\lambda w$ with $\lambda=\kappa^{-1}<1$
is strictly distance decreasing as a map from $|w|<1$ to the smaller disk
 $|w|<\lambda$. It is not hard to show that this inverse map multiplies 
infinitesimal distances near $w$ by a factor of 
\begin{equation}\label{E-exp-fact}
\frac{\lambda\big(1+|w|^2\big)}{1+|\lambda w|^2}~~.
\end{equation}
This expression is strictly less than one, as one can check by
 multiplying the inequality
$~\lambda|w^2|<1~$ by $~1-\lambda~$, then rearranging terms to get
$~\lambda(1+|w^2|)<1+\lambda^2|w^2|~$, and dividing.

It follows that every curve in the region $|w|<1$ maps to  a shorter
curve, and hence that all distances are decreased. Therefore, all
distances are increased by the inverse map from $|z|<\lambda$ to $|z|<1$.
\ssk

Given any $\bG$-equivalence class $\((\cD\))\subset\fV_n$, we can
 choose a  representative $\cD'=\g(\cD)$ which maximizes the value of 
$\Theta(\cD')$. We will prove that this maximum value must satisfy
$\Theta(\cD')\ge\pi/4$. Suppose, to the contrary, that this
maximum value $\Theta(\cD')$ were
less than $\pi/4$. Then consider those  subsets of $|\cD'|$ which

\begin{itemize}
\item[(1)] have at least $k+1$ points, counted with multiplicity, and 

\item[(2)] have \hbox{diameter~$<\pi/4$}. 
\end{itemize}

\noindent After a unitary change of
coordinates, we may assume that one of these sets contains the point $z=0$.
Since any two of these sets must intersect, they will all lie in the ball
of radius $\pi/2$ centered as the origin, or in other words within the 
open set $|z|<1$. Therefore they will 
all lie within the region  $|z|\le 1-\vep$ for some $\vep>0$.
Hence we can increase all of the
distances between their points by choosing the expansion
 $z\mapsto 
\kappa\,z$ with $\kappa=1+\vep$. This contradicts the construction of $\cD'$, 
and completes the proof of Lemma \ref{L-comp}. 
\end{proof}\ssk

The image of the compact set $K_n$ under the continuous map 
\hbox{$\bpi:\fV_n\to \fM^\ha_n$} 
must itself be compact. Since it follows from Lemma \ref{L-comp}  
that this image  is the entire space $\fM^\ha_n$, this
 completes the proof of Theorem \ref{T-D2}.
\end{proof}\msk 

\begin{rem}[{\bf Trivial Stabilizers}] \label{R-triv} 
For $n\ge 5$, a generic divisor \hbox{$\cD\in\fD_n$} has trivial stabilizer. 
To see this, let us temporarily work with the space $\fDno$ consisting of 
ordered $n$-tuples $\vp=(\p_1,\ldots,\p_n)$ of distinct points in $\bP^1$. 
Every such $\vp$ determines a  corresponding divisor  
$$\cD(\vp)~=~ \<\p_1\>+\cdots+\<\p_n\>~;$$
and every \hbox{4-element}  subset 
\hbox{$\Sigma=\{ j_1,\,\ldots,\,j_4\}\subset\{1,\,\ldots,\,n\}$} determines
 a corresponding degree four divisor 
$$\cD(\vp,\,\Sigma)=\<\p_{j_1}\>\,+\cdots+\<\p_{j_4}\>~.$$
 Let $\bJ(\vp,\Sigma)\in\C$ be the shape invariant associated with this divisor.
Then for each $\Sigma\ne\Sigma'$ the equation
$$ \bJ(\vp,\Sigma)~=~ \bJ(\vp,\Sigma') $$
determines a proper algebraic subvariety of $\fDno$. The complement
of the union of these finitely many subvarieties is a dense open subset of
 $\fDno$; and any element in this dense
open set corresponds to a divisor \hbox{$\<\p_1\>+\cdots+\,\<\p_n\>$} which has
 trivial stabilizer. This follows since the stabilizer of $\cD(\vp)$ consists 
of all  permutations of  $\{1,\ldots,\,n\}$ which map $\cD(\vp)$ to itself. But
 any non-trivial permutation must clearly map some four point subset to a 
different four point subset.
\end{rem}\msk

\begin{rem}[{\bf Compactification of $\cM^{\,\sf un}_{0,n}$}]\label{R-DM}
Recall from the beginning of this section that 
$\cM^{\,\sf un}_{0,n}=\cM_{0,n}/\fS_n$ is defined to be  the moduli space for 
unordered\break
\hbox{$n$-point} subsets of the Riemann sphere. (Evidently
$\cM^{\,\sf un}_{0,n}$
can be identified with the quotient $\fD_n/{\bf G}$.)
It is interesting to compare
the Deligne-Mumford\footnote{By this we mean the compactification using
methods developed by Deligne, Mumford, and also Knudsen (based on ideas of
Grothendieck). As far as we know, Deligne and Mumford never studied this
particular family of examples. }
compactification  $\overline\cM^{\,\sf un}_{0,n}$ of this space with 
our moduli space  $\fM_n=\widehat\cD_n/{\bf G}$. In order to describe
 $\overline\cM^{\,\sf un}_{0,n}$ we will need the following.
Let $\F$ stand for either $\R$ or $\C$. 
\smallskip

\begin{definition}\label{D-TS} By an (unordered)
\textbf{\textit{tree-of-marked-spheres}} 
(or \textbf{\textit{circles}} in the real case), we will mean a space ${\bT}$ 
 which is the union $\bS_1\cup\cdots\cup\bS_k$ 
of one or more copies ${\bS}_j$ of the  projective line $\bP^1(\F)$,
together with  a finite subset of $\bT$ which we will call the set of 
\textbf{\textit{marked points}}. We require:

\begin{enumerate}
\item[{\bf(1)}] that each marked point belongs to only one of the $\bS_j$;

\item[{\bf (2)}] that each non-empty intersection $\bS_i\cap\bS_j$ with $i\ne j$
must consist of a single point, which will be called a \textbf{\textit{nodal
point}};
\item[{\bf(3)}] that each $\bS_j$ must contain at least three points 
which are either marked or nodal,\footnote{In the complex case, this 
means that $\bS_j$ with
these points removed must be a hyperbolic Riemann surface.} and that

\item[{\bf(4)}] the abstract graph with one vertex for each $\bS_j$ and one
 edge for each nodal point should be a tree; that is, it must be 
 connected and acyclic.
\end{enumerate}

\noindent Two such trees-of-marked-spheres
are \textbf{\textit{isomorphic}} if there is a homeomorphism between the
underlying spaces $\bT$ and ${\bT}'$ which preserves the marked 
points and which is fractional linear on each $\bS_j$.
By definition, the \textbf{\textit{degree}} $n\ge 3$ of a 
tree-of-marked-spheres is the total number of marked points.
%refer to these as the \textbf{\textit {free}} marked points.
\end{definition}\msk

\begin{figure}[h!]
\centerline{\includegraphics[width=3.1in]{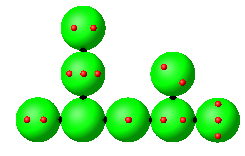}}

\caption{\sf Showing a tree-of-marked-spheres of degree 15. 
Each red dot represents a marked point.\label{F-ST} 
(For the count of 15, it
 is assumed that there are no such dots on the back sides of the spheres.)}
\end{figure}

\medskip

 By definition, each point of the \textbf{\textit{Deligne-Mumford 
compactification}} $\overline\cM^{\,\sf un}_{0,n}$ corresponds
to a unique isomorphism class of trees-of-marked-spheres of degree $n$.
(Compare \cite[\bf Ar2]{Ar1}.)   
In the complex case, we can think of this construction intuitively as follows. 
Starting with a tree-of-marked-spheres $\bT$, if
we remove a small round neighborhood of each intersection point and glue
the resulting boundary circles together, then we obtain a Riemann surface
of genus zero with $n$ marked points, corresponding to a nearby
element of  $\cM^{\sf un}_{0,n}$. Conversely, suppose that
 we start from a surface of 
genus zero with $n\ge 3$ punctures, provided with its natural hyperbolic
metric. If this surface has one or more very short closed geodesics, then we
can obtain a ``nearby'' tree-of-marked-surfaces of genus zero, by 
replacing each such geodesic by a single point. 
\msk

\begin{figure}[h!]
\begin{minipage}{0.33\textwidth}
$$\xymatrix{&\hbox to 0pt{\hspace{-.65in}\includegraphics[width=1.3in]{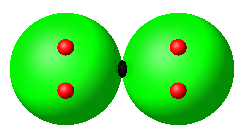}}\ar[ld]\ar[rd]\\&\relax&\\
&\hbox to 0pt{\hspace{-.75in}\includegraphics[width=1.5in]{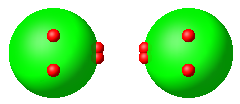}}&}$$
\end{minipage}\begin{minipage}{0.33\textwidth}
$$\xymatrix{&\hbox to 0pt{\hspace{-.65in}\includegraphics[width=1.3in]{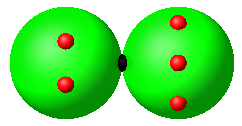}}\ar[ld]\ar[rd]\\ &\relax&\\
&\hbox to 0pt{\hspace{-.75in}\includegraphics[width=1.5in]{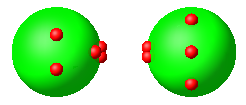}}&}$$
\end{minipage}
\begin{minipage}{0.33\textwidth}
$$\xymatrix{&\hbox to 0pt{\hspace{-.65in}\includegraphics[width=1.3in]{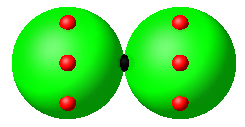}}\ar[ld]\ar[rd]\\ &\relax&\\
&\hbox to 0pt{\hspace{-.75in}\includegraphics[width=1.5in]{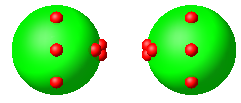}}&}$$
\end{minipage}
\caption{\sf \label{f-DMpic} The top row represents three examples of
 trees-of-marked-spheres,
corresponding to points in the compactification 
$\overline\cM^{\,\sf un}_{0,n}$ for $n$ equal to 
4, 5 and 6 respectively. Corresponding to each sphere ${\bS}_j$ in $\bT$,
there is a canonical
retraction map from $\bT$ to ${\bS}_j$, 
indicated by an arrow in the figure, which maps the $n$
marked points of $\bT$  to a divisor $\cD_j\in \wfD_n$.
Here each former intersection point is to be weighted by the number of
free marked points which map to it. 
These image divisors are shown in the bottom row. (Here a point of 
multiplicity two or three  is indicated schematically by a cluster
 of two or three overlapping dots.)}
\end{figure}
\medskip

There is a ``many-valued map'' from $\overline\cM^{\,\sf un}_{0,n}$ to 
$\fM_n$ defined as follows. (Compare Figure~\ref{f-DMpic}.) 
For each sphere ${\bS}_j\subset{\bT}$, 
there is a unique continuous 
retraction \hbox{${\bf r}_j:{\bT}\to {\bS}_j$} which maps every point of 
${\bS}_j$ to itself, and which maps each ${\bS}_i$ with $i\ne j$ to
a single intersection point in ${\bS}_j$. The collection of all free 
marked points of ${\bT}$ corresponds under ${\bf r}_j$
 to a divisor $\cD_j$ of degree $n$ on the copy 
$\bS_j$ of $\bP^1$, and hence on the standard $\bP^1$. \medskip

Note that the non-Hausdorff nature of $\wfD_n$ is closely related to 
this construction. In fact, whenever the spheres ${\bS}_j$ and ${\bS}_k$
have an intersection point, it is not hard to see that the corresponding
divisors $\cD_j$ and $\cD_k$ in $\wfD_n$ represent points of $\fM_n$ which
 do not have disjoint neighborhoods.
For example, our proof in Lemma \ref{L-D5}
that $\fM_5$ and $\fM_6$ are not Hausdorff makes
use precisely of the  divisors illustrated in Figure~\ref{f-DMpic}. However, 
in the degree four case, the
corresponding divisors $\cD_1$ and $\cD_2$ actually represent
the same point of $\fM_4$, which is a Hausdorff space.

It is not too difficult  to prove that $\overline\cM^{\,\sf un}_{0,n}$ can be 
identified with
$\fM_n$ for the cases $n=3$ and $n=4$, and with $\fM^\ha_n$ for $n=5$.
Similarly $\fM^\ha_6$ can be identified with an open subset of 
$\overline\cM^{\,\sf un}_{0,6}$. However there seems to be no 
such direct relationship when $n\ge 7$.
\end{rem}\ssk

\begin{rem}[{\bf The Ordered Moduli Space $\cM_{0,n}$}]\label{R-oms}
We can learn more about $\Mbar^{\sf un}_{0,n}$ by noting that it is 
equal to the
quotient of the compactification $\Mbar_{0, n}$ of the moduli space
for ordered $n$-tuples by the action 
of the symmetric group $\fS_n$.  
 This compactification $\overline\cM_{0,n}$
is a beautiful object which was introduced by Knudsen \cite{Knu}, based on ideas
of Grothendieck, Deligne, and Mumford. It has been studied by many
authors, and is well understood. 

We can work over either the real numbers or the complex numbers. The
construction of this compactification in terms of trees of marked spheres
(or marked circles in the real case) 
is just like the description of $\Mbar^{\sf un}_{0,n}$ as given above,
except that the $n$ marked points must now be given $n$ distinct labels,
using for example the integers between $1$ and $n$.

First consider the complex case.
(In Mumford's terminology, such a tree of labeled spheres is called
a ``stable curve of genus zero''.) Knudsen showed that the compactification
$\Mbar_{0,n}(\C)$
can be constructed out of a smooth variety by iterated blow-ups, and hence
that it is a smooth complex variety. (For an alternative proof,
using cross-ratios to embed $\Mbar_{0,n}(\C)$ smoothly  
in a product of many spheres, see \cite{MS1}.) It follows immediately that 
$\Mbar_{0,n}(\R)$ is also a smooth manifold, since it is just the fixed 
point set for complex conjugation on the complex manifold.

The topology of $\Mbar_{0,n}(\C)$ has been studied by Keel \cite{Ke}. He
showed for example that
these manifolds are simply-connected, with homology only in even dimensions, 
and with no torsion.
For the simplest cases: $\Mbar_{0,3}$ is a point; $\Mbar_{0,4}$ 
is a copy of the sphere $\bP^1$; and $\Mbar_{0,5}$ is the connected sum of 
one copy of $\bP^2$ with its standard orientation, together with four copies
with reversed orientation.

The topology of $\Mbar_{0,n}(\R)$ has been studied by Etingof, Henriques, 
Kamnitzer and Rains \cite{EHKR}, who showed
for example that there is an isomorphism of mod two
cohomology rings of the form
$$H^k\big(\Mbar_{0,n}(\R);~\Z/2\big)~\cong
~H^{2k}\big(\Mbar_{0,n}(\C);~\Z/2\big)~.$$
The manifold $\Mbar_{0,n}(\R)$ is non-orientable with a non-abelian fundamental
group for all  $n>4$.\ssk

\begin{figure}[h!]
\centerline{\includegraphics[width=4.1in]{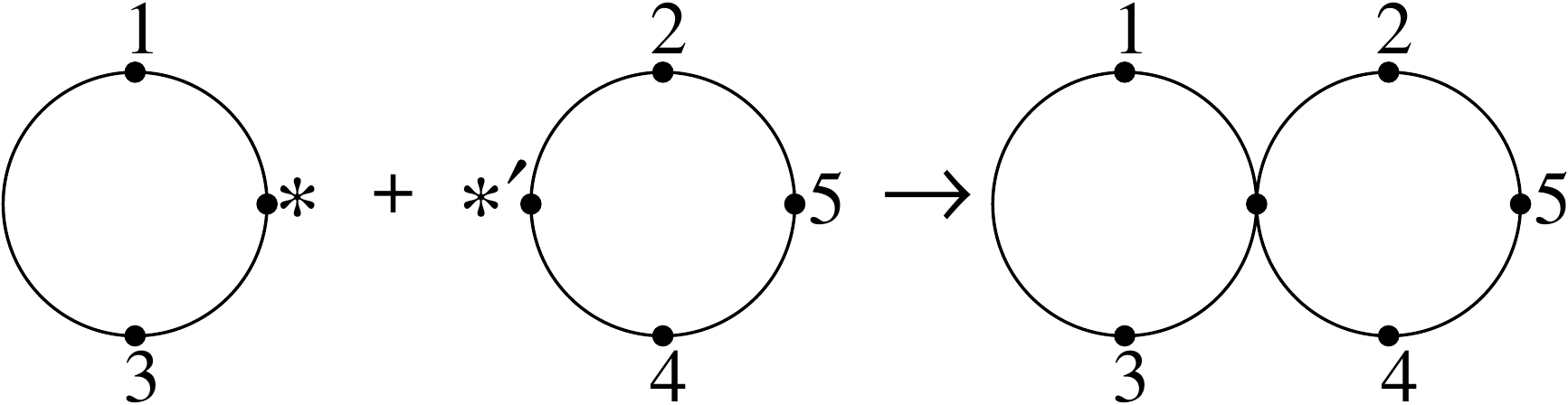}}

\caption{\sf  Illustrating the map (\ref{E-tree-emb}) on the subset
$\cM_{0,p+1}\times\cM_{0, q+1}$. \label{F-treesum}}
\end{figure}
\ssk

Consider a partition of the index set
$\{1,\,2,\,\ldots,\,n\}$ into two disjoint subsets $I$ and $J$, where $I$ has 
$p\ge 2$ elements and $J$ has $q\ge 2$ elements.
 In either the real or the complex case,
there is an associated embedding  
\begin{equation}\label{E-tree-emb}
\Mbar_{0,\,p+1}\times \Mbar_{0,\,q+1}~~~\hookrightarrow~~~\Mbar_{0,n}~.
\end{equation}
where $n=p+q$. If we restrict this map to the dense open subset
$\cM_{0,p+1}\times\cM_{0,q+1}$ then the associated trees of labeled spheres
can be described quite explicitly as illustrated in Figure \ref{F-treesum}.
Label the first sphere by the elements of $I$ together with one additional 
element $*$. Similarly, label the second sphere by the elements of $J$ together
with one additional element $*'$. Now construct the third tree by
gluing $*$ onto $*'$.
\ssk

Every element of the ideal boundary $\Mbar_{0,n}\ssm \cM_{0,n}$ is contained
in the image of at least one such embedding (\ref{E-tree-emb}). Furthermore
(using mod two coefficients in the real case), every homology class except 
in the top dimension is a sum of classes which come from these embedded
submanifolds.

\begin{figure} [t]
\centerline{\includegraphics[width=2.7in]{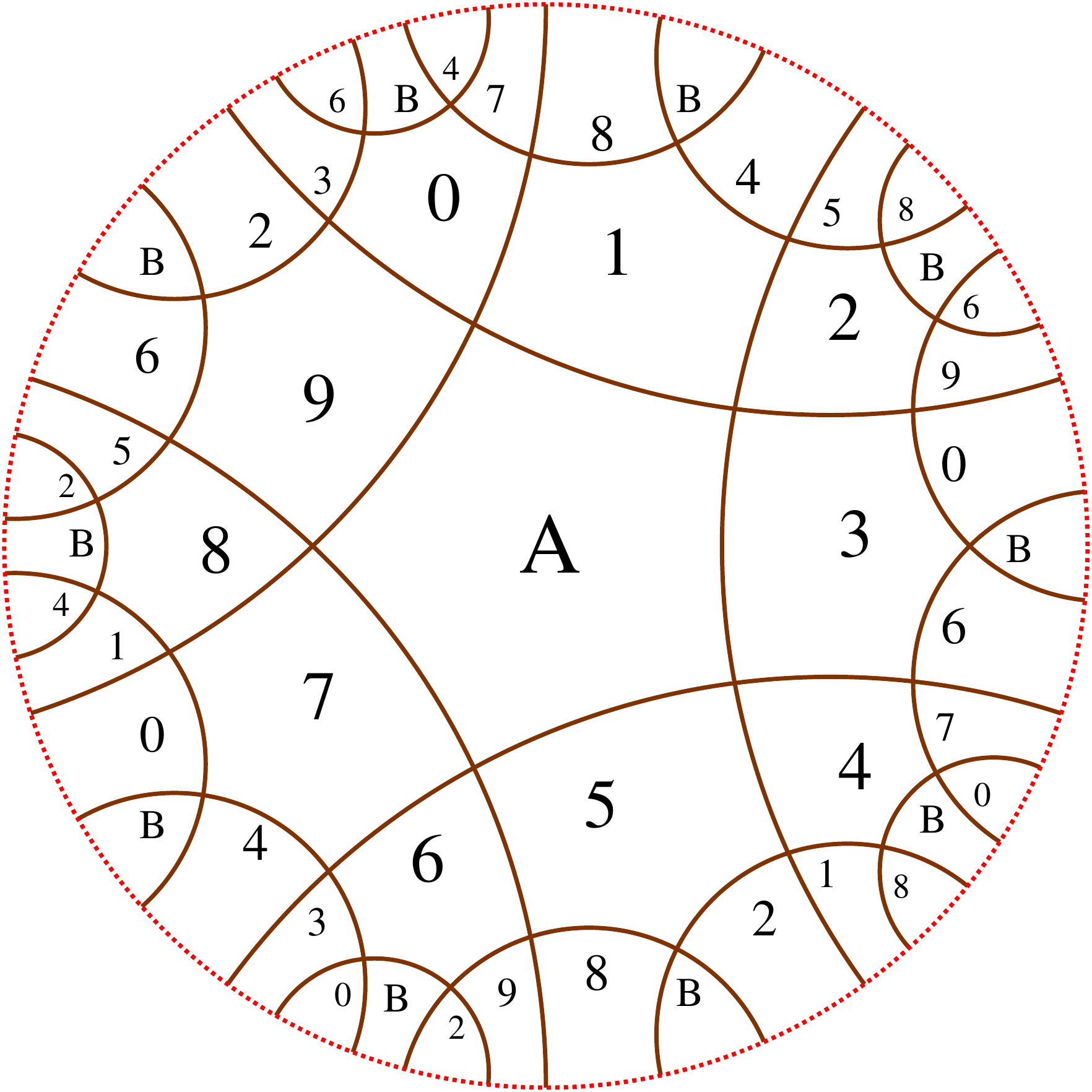}}

\caption{\sf Universal covering space of the ``hyperbolic dodecahedron''
$\Mbar_{0,5}(\R)$. Here regions with the same label correspond to a common
pentagon in $\Mbar_{0,5}(\R)$. \label{F-dodec}}
\end{figure}
\bigskip

{\bf Examples.\;} There are three distinct ways of partitioning $\{1,2,3,4\}$
into subsets with two elements. Correspondingly, there are three distinct
ways of embedding the point $\Mbar_{0,3}\times\Mbar_{0,3}$ into the circle or
2-sphere $\Mbar_{0,4}$. The complement of this set of three points is
the dense open subset $\cM_{0,4}$.

Similarly, there are ten ways of 
partitioning $\{1,2,3,4,5\}$ into subsets of order two and three, and 
correspondingly ten ways of embedding $\Mbar_{0,3}\times \Mbar_{0,4}
\cong \Mbar_{0,4}$ into $\Mbar_{0,5}$.
In the real case, the ten embedded circles divide this surface
into twelve pentagons, which
represent the twelve connected components of $\cM_{0,5}$. 
 (Compare Figure \ref{F-dodec}.) This surface 
can be given a hyperbolic metric, so that these circles are geodesics.
Like the standard dodecahedron in Euclidean 3-space, the surface admits a
group of 120 
isometries such that any isometry from one pentagon to another extends uniquely
to a global isometry. (However, the two isometry groups are not isomorphic.)
Like the standard dodecahedron, this surface has 12 faces and 30 edges; but it 
has only 15 vertices, so that the Euler characteristic is $12-30+15=-3$.
\ssk

\begin{figure}[h]
\centerline{\includegraphics[width=2.7in]{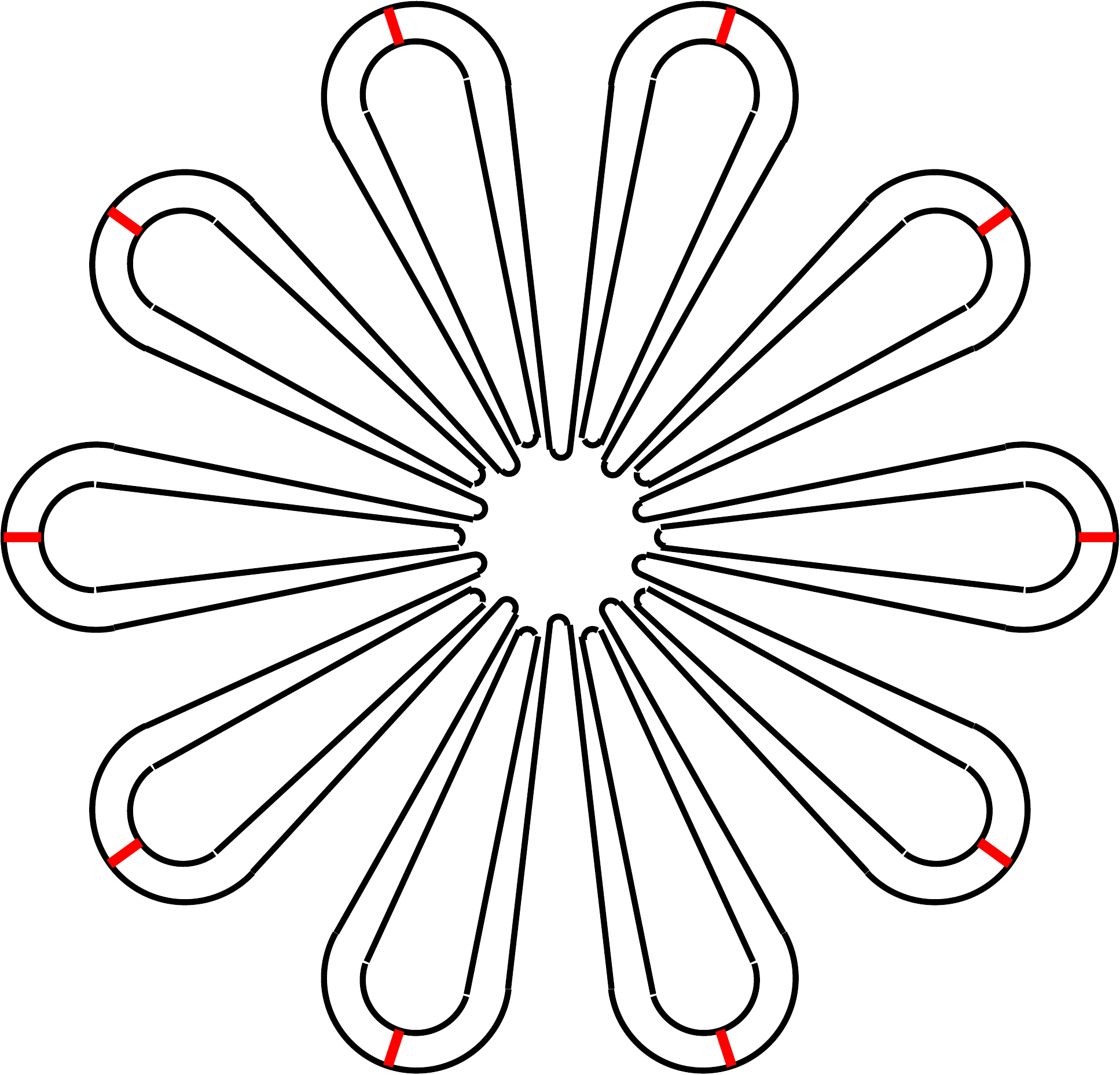}}

\caption{\sf A cartoon of the 3-manifold $\Mbar_{0,6}(\R)$. If we cut along the
ten tori (represented by short transverse line segments) then the remainder
can be given the structure of a complete hyperbolic manifold with 20 infinite
cusps. \label{F-M6}}
\end{figure}

The space $\Mbar_{0,6}(\R)$ is an interesting example for 3-manifold theory.
The ten partitions of $\{1,2,3,4,5,6\}$ into two subsets with three
elements each yield
ten embeddings of the torus $\Mbar_{0,4}\times\Mbar_{0,4}$ into $\Mbar_{0,6}$.
If we remove these ten tori, then the remainder can be given the structure of
a complete hyperbolic manifold of finite volume.\footnote{For further details,
see\; \url{http://math.stonybrook.edu/~jack/scgp18.pdf}.}
This is an example of a JSJ-decomposition, as first introduced by Jaco 
and Shalen \cite{JS} and by Johanssen \cite{J}. 
See Figure \ref{F-M6} for a cartoon of the resulting 3-manifold.
Using this decomposition,
it is not hard to check that the fundamental group 
$\pi_1\big(\Mbar_{0,6}(\R)\big)$ maps onto a free group on ten generators.
However, this fundamental group also contains copies of the
abelian group $\Z\oplus\Z$ corresponding to any one of the tori.

This manifold is highly symmetric, with a group of 720 automorphisms. 
In fact any $\Mbar_{0,n}$ with $n\ge 5$ has the full symmetric group on 
$n$ elements as a group of automorphisms.
\end{rem}\ssk

\bigskip

\setcounter{lem}{0}
\section{Moduli Space for Real or Complex
 Plane Curves.}\label{s-mod}

This section will be an outline of
basic definitions and notations for curves
or 1-cycles  of arbitrary degree $n$. %and then over the real numbers $\R$. 
(For more detailed 
 discussion of particular degrees, see \S\ref{s-deg3} and \S\ref{s-cc}.)
\ssk

Let $\F$ be either $\R$ or $\C$. 
It will be convenient to define an \textbf{\textit{irreducible curve 
of degree $n\ge 1$ over $\F$}}
as an equivalence class of irreducible homogeneous polynomials $\Phi(x,y,z)$
of degree $n$ with coefficients in $\F$, where two such polynomials are 
equivalent if one is a non-zero constant multiple of the other.  
Thus in the complex case two irreducible curves are equal if and only if 
they have the same zero locus
$$ \{(x:y:z)\,\in\,\bP^2(\C)~;~\Phi(x,y,z)=0\}~.$$
However, in the real case, the analogous zero locus in $\bP^2(\R)$ is no 
longer a complete invariant.
As an example, 
we will consider the equivalence class of $x^2+y^2+a\,z^2$ 
to be an irreducible real curve for each $a>0$, even though  the
corresponding zero locus
$$ x^2+y^2+a\,z^2~=~0$$ 
in $\bP^2(\R)$ is the empty set. In such cases, it is necessary to
look at the corresponding complex zero locus in order to distinguish
between two different irreducible real curves.
\ssk

By definition,
an \textbf{\textit{effective 1-cycle}} of degree $n$ over $\F$  is a formal sum
$$\cC~=~m_1\cdot\cC_1+\cdots+m_k\cdot\cC_k~, $$
where the $\cC_j$ are distinct irreducible curves defined over $\F$,
and where
the multiplicities $m_j$ are strictly positive integer coefficients with
$$n~=~\sum_{j=1}^k m_j\cdot{\sf degree}(\cC_j)~. $$
\ssk

The vector space consisting of all
polynomials $\Phi:\F^3\to \F$ which are homogeneous of degree $n$
has a basis consisting of the $n+2\choose 2$ monomials
$x^iy^jz^k$ with $i+j+k=n$. Each such $\Phi$
factors as a product of powers $\Phi_j^{m_j}$ of irreducible polynomials, 
which are uniquely defined up to a non-zero constant factor,
and hence corresponds to a unique effective 1-cycle over $\F$.
It follows that:

\begin{quote} \it The space $\widehat\fC_n(\F)$ consisting
of all effective 1-cycles 
of degree $n$ can be given the structure of a
 projective space of dimension 
$${n+2\choose 2}-1 ~=~ \frac{n(n+3)}{2}$$ over $\F$.
\end{quote}

\noindent
 This space $\widehat\fC_n$
 is known as the \textbf{\textit{Chow variety}} for 1-cycles
   of degree $n$. (Compare \cite[p.~272]{Harr}.)
The disjoint union
$$ \{0\}\,\sqcup\, \wfC_1\,\sqcup \,\wfC_2\,\sqcup\, \wfC_3\,\sqcup\,\cdots $$
of the $\wfC_n$ can be described as the free additive semigroup with one 
generator for each irreducible curve over $\F$.\ssk

For any 1-cycle $~\cC=\sum m_j\cdot\cC_j$ over $\F$, the zero set
$$ |\cC|~=~|\cC|_\F~=~\big\{(x:y:z)\in\bP^2(\F)~~;~ \Phi(x,y,z)=0\big\}~~=~
~|\cC_1|\cup\cdots\cup|\cC_k|~\subset~\bP^2(\F)$$
will be called the \textbf{\textit{support}} of $\cC$. Note that
 this definition  ignores multiplicities.
In the real case, it is also useful to consider the complex support 
 $|\cC|_\C\subset \bP^2(\C)$, which provides more information.
\ssk

The word \textbf{\textit{curve}} will be reserved for a 1-cycle
$$\cC~=~ \cC_1\,+\,\cdots\,+\,\cC_k~, $$
where the $\cC_j$ are distinct irreducible curves, and where of the
multiplicities are $+1$. In the complex case, a curve is uniquely
determined by its support. In both the real and complex cases, the space
$\widehat\fC_n$ consisting of all 1-cycles of degree $n$  
can be thought of as a compactification of the dense open subset 
$\fC_n$ consisting of degree $n$ curves. As one example, for each $a\ne 0$
the equation $x^2+a\,y^2=a\,z^2$ defines a smooth quadratic curve. But as 
$a$ tends to zero this curve converges to the 1-cycle $2\cdot{L}$,
where $L$ is the line $x=0$. \ssk

Let  $\bG=\PGL_3(\F)$ be the 8-dimensional Lie group consisting
 of all projective automorphisms 
of the projective plane $\bP^2=\bP^2(\F)$, as discussed in Remark
\ref{R-PGL}. 
Each such projective automorphism acts\footnote{Caution: If $\cC$ is 
defined by the equation $\Phi(x:y:z)=0$,
then $\g(\cC)$ is defined by  $$\Phi\circ\g^{-1}(x:y:z)=0~.$$}
on the space of curves in $\bP^2$,
and hence acts on the space $\wfC_n$ of effective 1-cycles of degree $n$. 
The notation $\((\cC\))\subset
\wfC_n$ will be used for the orbit of $\cC$ under the action of $\bG$.

\begin{definition}\label{D-stabi}
The subgroup $\bG_\cC\subset \bG$ consisting of all automorphisms
$\g\in \bG$ which map $\cC$ to itself,
is called the \textbf{\textit{stabilizer}} of $\cC$.
In the case of a smooth complex curve of degree $n>1$, it can be identified
with the group of all conformal automorphisms of $\cC$ which extend to 
projective automorphisms of $\bP^2$. (Here the condition $n>1$ is needed 
to guarantee that there is at most one such extension.)
\end{definition}
\smallskip

\begin{definition}\label{D-var}
An \textbf{\textit{algebraic set}} (or Zariski closed set)
 in a projective space (such as
in $\wfC_n$) over $\F$ will mean any subset
defined by finitely many homogeneous polynomial equations with 
coefficients in $\F$. Any algebraic set can be expressed uniquely as a 
union of maximal irreducible algebraic subsets.
\end{definition}

\begin{rem} \label{R-stab} If the stabilizer $\bG_\cC$ is  infinite, then it
cannot be described as an algebraic set in a projective space.
However it can be described
as the difference of two algebraic sets in the projective space $\bP^8$. 
First note that the group $\bG=\PGL_3$
can be identified with the complement $\bP^8\ssm V$, where $\bP^8$ is the 
projective space of lines through the origin in the space of $3\times 3$
matrices, and $V$ is the algebraic subset corresponding to singular $3\times 3$
matrices. Then,  
whether or not $\bG_\cC$ is finite,  it is easy to see that it can be 
described as an algebraic set intersected with this open variety $\bP^8\ssm V$.

It follows  that $\bG_c$
is either a finite group, or a Lie group with finitely many connected
components. The distinction between these two possibilities 
will be of fundamental importance in our discussion. 
Note that a 1-cycle $\cC$ has infinite stabilizer %belongs to $\fW_n$ 
if and only if its $\bG$-orbit  \hbox{$\((\cC\))\subset \wfC_n$} 
has dimension strictly less than $\dim(\bG)=8$. In fact there is a natural
fibration with projection map $\g\mapsto \g(\cC)$ from $\bG$ to the subset
\hbox{$\((\cC\))\subset \wfC_n$} with fiber $\bG_\cC$. It follows that
\begin{equation}\label{E-orbitdim}
\dim(\bG_\cC)~+~\dim\((\cC\))~=~\dim(\bG)=8~.
\end{equation}\end{rem}
\msk

\begin{definition}\label{D-W}
Let $\fW_n$ be the subset of $\wfC_n$ consisting of all 1-cycles 
with infinite stabilizer.  
We will be particularly concerned with the complementary set
$$ \wfC^\fs_n~=~\wfC_n\ssm \fW_n~,$$
consisting of 1-cycles with  \textbf{\textit{finite stabilizer}}. 
The quotient space 
$$\M_n~=~\M_n(\F)~=~\wfC^\fs_n/\bG$$ 
consisting of all projective equivalence classes of
 such 1-cycles will be called the \textbf{\textit{moduli space}} for
 1-cycles of degree $n$ over $\C$. Thus each element $\((\cC\))\in\M_n$ is an
 equivalence class of effective 1-cycles in $\wfC^\fs_n$, where two 1-cycles
 $\cC$ and  $\cC'$
are equivalent if and only if $\g(\cC)=\cC'$ for some $\g\in \bG$,
or in other words if and only if they belong to the same
\textbf{\textit{$\bG$-orbit}} in the space of 1-cycles.
\end{definition} \smallskip

To begin the discussion, we will show
 that $\fW_n$ is a closed subset of $\wfC_n$, or equivalently that its 
complement $\wfC^\fs_n$ is an open set. Choose some metric on $\wfC_n$.
 If $\cC$ has finite stabilizer, then we can choose a small
sphere centered at the identity element of $\bG$ and an $\vep>0$ so that
 for every $\g$ in this sphere the image $\g(\cC)$ has distance greater than 
$\vep$ from $\cC$. Since this is an open condition, it will also be satisfied 
for any $\cC'$ which is sufficiently close to $\cC$.\qed 
\bsk

See \S\ref{s-aut} for a detailed study of $\fW_n$, including a proof that
it is an algebraic set.\medskip

\def\fR{{\mathfrak R}}

Another important algebraic set, at least in the complex case, is the 
locus ${\mathfrak R}_n$ consisting of all reducible  1-cycles in $\wfC_n$.
By definition, a 1-cycle is  \textbf{\textit{reducible}}
if and only if it is  in the image of the smooth map
$$(\cC,\,\cC')\mapsto \cC+\cC'\quad{\rm from}\quad\wfC_k\times\wfC_{n-k}
\quad{\rm to}\quad\wfC_n $$
for some $0<k<n$. 
In the complex case, each such image is an irreducible variety,
and it follows that $\fR_n$ is an algebraic set. 
However, in the real case
the best one can say is that $\fR_n$ is a closed
{\it semi}-algebraic set, defined
by polynomial equalities {\it and inequalities}.\footnote{See \cite{BCR}.}
(As an example the 1-cycle $x^2+2bxy+y^2=0$ in $\bP^2(\R)$
is reducible if and only if $|b|\ge 1$, in which case it is
 equal to a union of two lines through the point $(0:0:1)$. In the case $|b|<1$
this 1-cycle is irreducible (with vacuous real zero locus $|\cC|_\R$).

 We will be particularly interested in the
complementary open set 
$$\fC^\irr_n=\wfC_n\ssm{\mathfrak R}_n~\subset~ \fC_n$$
consisting of irreducible 1-cycles of degree $n$. 
Evidently every irreducible 1-cycle has all multiplicities equal to one,
and hence is actually an irreducible curve. \ssk %, with $k=m_1=1$.

Note that the dimension function $~n\mapsto \dim(\wfC_n)=n(n+3)/2~$ 
is convex, in the sense that $~\dim(\wfC_n)>\dim(\wfC_k)+\dim(\wfC_{n-k})~$
for $0<k<n$. Here are a few values:
\begin{equation}\label{E-dims}
\begin{matrix} n& 1&2&3 &4&5&\cdots\\ 
\dim(\wfC_n)&2&5&9&14&20&\cdots&,\end{matrix}\end{equation}
where it follows by an easy computation that $\dim(\wfC_n)=\dim(\wfC_{n-1})+n+1$.
It also follows easily that the dimension of the space of reducible cycles is
$$\dim({\mathfrak R}_n)~=~\dim(\wfC_{n-1})+2~=~\dim(\wfC_n)-n+1~.$$
\smallskip

\setcounter{lem}{0}

\section{Curves of Degree Three.}\label{s-deg3}
This section will study the moduli spaces $\M_3(\R)$ and $\M_3(\C)$ 
for curves (or \hbox{1-cycles}) of degree three. At first
we will not distinguish between the real and complex cases, 
since the arguments are exactly the same for both.

Note that $n=3$ is the first case where $\M_n\ne\emptyset$. In fact,
for $n<3$ it follows easily from Equation
 (\ref{E-orbitdim}) and Table (\ref{E-dims}) that
 there are no \hbox{1-cycles} with finite stabilizer; so that
 the corresponding moduli space $\M_n$ is empty.
(More precisely, it follows that every stabilizer 
must have dimension at least 6  when $n=1$, and at least 3 when $n=2$.)

Similarly, since $\dim({\mathfrak R}_3)=7<8$, it follows that: 
\begin{quote}{\it Every 
reducible curve or 1-cycle of degree 3 has infinite stabilizer.
Thus, when studying $\M_3$, it suffices to work with the open subset 
$\fC_3^\irr\subset\fC_3$ consisting of irreducible curves.}\end{quote}

\noindent Note that any irreducible curve of degree three 
(or more generally of odd 
degree) is uniquely determined by its real support $|\cC|_\R$. 
In fact any curve of odd degree has many real points, since every real line 
intersects it in at least one point. (However, a reducible curve such as
$~x(y^2+z^2)=0~$ may not be determined by its real support.)
\smallskip

The following statement is surely
 well known, although we don't know any explicit reference in this generality.

\begin{prop}\label{P-snf}
If the field $\F$ is either $\C$ or $\R$, then a cubic curve
 \hbox{$\cC\subset\bP^2(\F)$} is irreducible if and only if 
it can be transformed, by an $\F$-projective change of coordinates,
to the standard normal form, which can be written in affine coordinates
$(x:y:1)$ as
\begin{equation}\label{E-snf}
y^2~=~x^3+a\,x+b~.
\end{equation}

Furthermore:
\begin{itemize}
\item[{\bf(a)}] This associated normal form is unique up to the transformation
\begin{equation}\label{E-rescale}
 (a,\,b)~~\mapsto ~~(t^4 a,\, t^6 b) \end{equation}
where $t$ can be any non-zero element of $\F$.

\item[{\bf(b)}] The curve defined by 
 $(\ref{E-snf})$ has finite stabilizer % projective automorphism  group 
if and only if $(a,\,b)\ne(0,0)$.

\item[{\bf(c)}] This reduction to normal form can be  carried out uniformly over
some neighborhood  ${\mathcal U}$ of any given $\cC_0$. That is, there
 is a smooth map $~\cC \mapsto {\bf g}_\cC~$ from 
${\mathcal U}$ to $\PGL_3(\F)$ so that for any $\cC\in {\mathcal U}$ the
 automorphism
${\bf g}_\cC :\bP^2\to\bP^2$ maps $\cC$ to a curve in standard normal form.
\end{itemize}
\end{prop}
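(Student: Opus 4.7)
The plan is to construct the Weierstrass form by the classical flex-and-tangent reduction, then read off uniqueness, the stabilizer count, and smooth dependence on $\cC$ directly from that construction.

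First I would prove existence of the normal form. Any irreducible cubic $\cC$ over $\F$ admits a \emph{flex}, meaning a smooth point $P\in \cC$ whose tangent line $L$ has contact of order three with $\cC$: in the smooth case this comes from the nine points of $\cC\cap\mathrm{Hess}(\cC)$, in the nodal case the Hessian still produces three flexes, and in the cuspidal case the cusp together with its cuspidal tangent can be used in place of $(P,L)$. Choose $\g_0\in\PGL_3(\F)$ sending $(P,L)$ to $([0{:}1{:}0],\{z=0\})$. Since $\{z=0\}$ meets $\cC$ only at $[0{:}1{:}0]$ with multiplicity three, $\Phi(x,y,0)=c\,x^3$ with $c\neq 0$ (otherwise $z\mid\Phi$, contradicting irreducibility), and smoothness of $\cC$ at $[0{:}1{:}0]$ forces the $y^2z$-coefficient to be nonzero, which I normalize to $1$. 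In the chart $z=1$ the equation reads $y^2+(\beta x+\gamma)y=-cx^3-\delta x^2-\epsilon x-\zeta$; completing the square in $y$ (using $\mathrm{char}\neq 2$), rescaling $(x,y)\mapsto(\mu x,\nu y)$ with $\nu^2=-c\mu^3$ (choosing $\mu$ with sign opposite to $c$ in the real case so that $\nu$ is real), and shifting $x$ to remove the $x^2$-term (using $\mathrm{char}\neq 3$), delivers $y^2=x^3+ax+b$.

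For the uniqueness statement in (a), suppose $\g\in\PGL_3$ carries $\cC=\{y^2z=x^3+axz^2+bz^3\}$ to $\cC'$ with parameters $(a',b')$. Using the classical fact that translations of the elliptic curve by three-torsion points preserve the class of the hyperplane divisor $3O$, and hence extend to elements of $\PGL_3$ acting on $\cC'$, I may precompose $\g$ with such a translation on $\cC'$ to arrange $\g([0{:}1{:}0])=[0{:}1{:}0]$ and $\g(\{z=0\})=\{z=0\}$. A direct matrix computation (comparing the coefficients of $y^2z$, $x^3$, $x^2z$, $xyz$, and $yz^2$ in $\g^*\Phi'$ versus $\Phi$) then pins $\g$ down to the diagonal form $\mathrm{diag}(u,v,1)$ with $v^2=u^3$. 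Writing $t^2=u$ (possible over $\C$ for any root, and over $\R$ because $u^3=v^2>0$ forces $u>0$), one reads $(a',b')=(t^4a,t^6b)$, which is precisely (\ref{E-rescale}).

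Part (b) follows directly from this description: for $(a,b)=(0,0)$ the cuspidal cubic $y^2=x^3$ carries the one-parameter group $(x,y,z)\mapsto(t^2x,t^3y,z)$ of projective automorphisms, while for $(a,b)\neq(0,0)$ the uniqueness description exhibits $\bG_\cC$ as an extension of the finite group $\{t\in\F^{*}:t^4a=a,\;t^6b=b\}$ of diagonal rescalings by a finite group of three-torsion translations (finite in the smooth case and also in the nodal case, where the group law on the smooth locus is $\mathbb{G}_m$, whose three-torsion has only three elements). For (c), the flex $P$ of $\cC$ can be chosen analytically in $\cC$ near $\cC_0$ via the implicit function theorem applied to the simultaneous vanishing of $\Phi$ and its Hessian at a transverse intersection point on $\cC_0$; the remaining operations (normalizing $(P,L)$ to the standard flag, completing the square, rescaling, depressing) are rational in the coefficients, so the composite $\g_\cC$ depends analytically on $\cC$.

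The main obstacle will be the reduction step in the uniqueness proof that arranges $\g$ to preserve the flex at infinity: this relies on translations by three-torsion extending to $\PGL_3$, which is clean for smooth $\cC'$ but needs an ad hoc argument in the nodal case and reduces to the explicit description of $\bG_\cC$ itself in the cuspidal case. A secondary nuisance is verifying, in the real setting, that a real flex actually exists on every irreducible real cubic, which one can do case-by-case via the topology of the real locus or by a direct Hessian computation.
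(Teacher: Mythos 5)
Your existence argument follows the same flex-and-tangent reduction that the paper uses, and your handling of parts (a)--(c) supplies detail that the paper's ``Outline Proof'' simply omits; however, there is one genuine error in the cuspidal case.

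You write that ``in the cuspidal case the cusp together with its cuspidal tangent can be used in place of $(P,L)$.'' This does not work: the step immediately afterward relies on smoothness of $\cC$ at $P$ to force the $y^2z$-coefficient to be nonzero, but the cusp is a singular point, so \emph{all} partials of $\Phi$ vanish there and the $y^2z$-coefficient is necessarily zero after the normalization you describe. In fact no substitute is needed --- the cuspidal cubic \emph{does} have a genuine flex. The curve and its Hessian meet in $9$ points counted with multiplicity; at the cusp the local intersection multiplicity is $8$, leaving exactly one honest flex (e.g., for $y^2z=x^3$ the flex is $(0{:}1{:}0)$). The paper's Lemma on flex points establishes this via the Pl\"ucker formulas, showing $f=1$ when $\kappa=1$, and then argues that a real flex exists because non-real flexes come in conjugate pairs and the total count is odd. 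Your proposed fallback of ``topology of the real locus or a direct Hessian computation'' for the real case is workable but less clean than that parity argument; more to the point, your treatment of the cuspidal existence step should simply use the actual flex rather than the cusp.

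Regarding parts (a)--(c): the paper states these without proof, so your arguments here constitute a genuine addition. Your 3-torsion-translation reduction for (a) is a standard and reasonable route for smooth cubics, and you correctly flag that it needs ad hoc treatment in the nodal case (where the group of the smooth locus is $\mathbb{G}_m$, with only $\mu_3$ as 3-torsion) and becomes vacuous in the cuspidal case (where the only normal-form representative is $(a,b)=(0,0)$, so (a) holds trivially there). Your argument for (c) via the implicit function theorem at a simple flex of $\cC_0$ is the right idea, but you should note explicitly why a \emph{simple} flex always exists for an irreducible cubic: the total flex-multiplicity is $9$, the cusp contributes $8$ and the node contributes $6$, so the remaining flexes have multiplicity $1$ each and the transversality hypothesis of the implicit function theorem is met.
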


 The proof will depend on the following.

\begin{lem}\label{L-1flex}
Every irreducible real or complex cubic curve contains at least one flex point.
\end{lem}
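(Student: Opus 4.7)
My plan is to invoke the classical Hessian criterion: a smooth point $p$ of a plane curve $\cC=V(F)$ is a flex if and only if $p\in V(H)$, where $H=\det\bigl(\partial^2 F/\partial x_i\partial x_j\bigr)$ is the Hessian determinant. For $F$ a cubic form, $H$ is again a cubic form. First I would verify $H\not\equiv 0$: by the classical theorem of Hesse, $H\equiv 0$ would force $V(F)$ to be a union of lines through a common point, contradicting irreducibility. Hence $V(F)$ and $V(H)$ share no common component.

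Over $\C$, Bezout then yields that $V(F)\cap V(H)$ consists of $9$ points counted with intersection multiplicity. The genus formula shows that an irreducible plane cubic has at most one singular point, necessarily a node or an ordinary cusp. A standard local computation gives intersection multiplicity $6$ for $\cC$ and its Hessian at a node and $8$ at a cusp. Subtracting, the number of complex flex points — that is, smooth points of $\cC$ in $V(F)\cap V(H)$, counted with multiplicity — is $9$, $3$, or $1$ respectively, always strictly positive. This settles the complex case.

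Over $\R$ I would argue by parity. Since $F$ and $H$ have real coefficients, the set $V(F)\cap V(H)$ is stable under complex conjugation, which preserves intersection multiplicities. Any pair of non-real conjugate points contributes an even total multiplicity, and any real singular point contributes the even number $6$ or $8$ (the singular locus is Galois-invariant, so a unique complex singular point of a real cubic is automatically real). Since the total multiplicity $9$ is odd, there must exist a real smooth intersection point, i.e.\ a real flex on $\cC$.

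The main obstacle is the two local intersection-multiplicity computations (multiplicity $6$ at a node, $8$ at a cusp). They are standard but require a genuine local calculation — for instance, working in analytic coordinates where $F$ becomes $y^2-x^2$ or $y^2-x^3$ one computes $\mathrm{ord}_p(H|_\cC)$ directly. A shorter alternative that avoids these computations is to handle the singular case separately: projection from the singular point gives a birational parametrization $\bP^1\to\cC$ defined over the ground field, and from the degree-$2$ and degree-$3$ terms of this rational map one can exhibit a flex explicitly without invoking the Hessian at all.
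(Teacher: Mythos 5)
Your proof is correct, and it takes a genuinely different classical route from the paper's. The paper's proof deduces the flex counts $9$, $3$, $1$ (for smooth, nodal, cuspidal cubics respectively) from the Pl\"ucker formulas relating $\cC$ to its dual curve $\cC^*$, whereas you get the same three numbers from B\'ezout applied to $\cC$ and its Hessian, together with the local intersection multiplicities $6$ at a node and $8$ at a cusp. The two approaches are essentially dual faces of the same classical theory, and your local-multiplicity computations are consistent with the paper's counts ($9-6=3$ and $9-8=1$). In the real case, your explicit parity argument (odd total $9$, conjugate pairs even, real singular contribution even) is in fact a little more careful than the paper's, which simply observes that non-real flexes come in conjugate pairs and that the number of flexes is odd in all cases. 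The Pl\"ucker route has the advantage of avoiding the two local Hessian computations and also the Hesse non-vanishing lemma; your route has the advantage of being self-contained within $\bP^2$ with no appeal to the dual curve.

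One small point worth making explicit in your write-up: you invoke Hesse's theorem and the bound ``at most one singular point'' over $\C$, but the hypothesis in the real case is only irreducibility over $\R$. You should note, as the paper does, that a real-irreducible cubic is automatically complex-irreducible, because a nontrivial factorization of a degree-$3$ form over $\C$ into a conjugate pair of factors is impossible (the factors would have to have equal degree). Without this remark your application of Hesse and of B\'ezout over $\C$ is not fully justified, although the gap is easy to fill.
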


\begin{proof}[Proof of Lemma \ref{L-1flex}] In the complex case, we will
make use of the classical Pl\"ucker formulas,
which compare an irreducible curve $\cC\subset\bP^2(\C)$ with its dual 
curve  $\cC^*\subset\Pdual$. 
(See for example \cite{Nam} or \cite[p.278]{GH}.) 
Here $\bP^{2\textstyle{*}}$ is the dual complex projective  plane consisting 
of all lines  in $\bP^2$; and $\cC^*\subset \bP^{2\textstyle{*}}$ is the 
subset consisting of lines which are tangent to $\cC$. 

Consider a curve $\cC$ of degree $n$ in $\bP^2=\bP^2(\C)$ 
with no singularities other than  simple double points 
and cusps, and with only simple flex points and bitangent lines (that is lines
which are tangent at two different points). These conditions are certainly
satisfied  in the cubic case. (In particular, a cubic curve can have no 
bitangents.)  Let:

\begin{description}
\item{ $f$} be the number of flex points, 

\item{ $\delta$} the number of double points,

\item{ $\kappa$} the number of cusp points, and 

\item{$b$} the number of bitangents; 
\end{description}

\noindent and let $f^*,\,\delta^*,\,~\kappa^*$, and $b^*$
be the corresponding numbers for the dual curve. Then
$$ f=\kappa^*~\Longleftrightarrow~ f^*=\kappa\qquad{\rm and}\qquad \delta=b^*
\Longleftrightarrow\quad \delta^*=b~.$$
Furthermore the degrees $n^*$ of the dual curve and $n$ are given by 
the formulas: 
\begin{eqnarray*}
 n^*&=&n(n-1)-2\delta-3\kappa ~,\\
 n&=&n^*(n^*-1)-2b-3f~.
\end{eqnarray*}
Now let us specialize to the case $n=3$.
Note that $\delta+\kappa\le 1$, since otherwise it would follow 
from the last equation and its dual that $n^*\le 2$ hence $n\le 2$. Thus
there are only three possible cases to consider.
\medskip

\noindent In the smooth case $\delta=\kappa=0$ these equations yield: 
$$n^*=6\qquad{\rm and}\qquad n=3=30-3f~;
\quad{\rm hence~there~are}\quad f=9\quad{\rm flex~points}~.$$
If there is a simple double point so that $\delta=1$ and $\kappa=0$, they yield:
$$n^*=6-2=4\quad{\rm and}\quad 3=12-3f\quad{\rm with}\quad f=3
\quad{\rm flex~points}. $$
In the case of a cusp point, with $\delta=0$ and $\kappa=1$, they yield:
$$n^*=6-3=3\quad{\rm and}\quad 3=6-3 f\quad{\rm with}\quad
f=1\quad{\rm flex~point}~.$$
Thus the number of flex points is odd in all cases.

It follows that every real irreducible cubic must have at least one flex 
point. In fact, the associated complex curve must also be irreducible.
(Otherwise it would have a real factor.) 
 Since the non-real flex points occur in
complex conjugate pairs, there must be at least one real flex point;
which proves Lemma \ref{L-1flex}.
\end{proof}
\medskip

\begin{proof}[Outline Proof of Proposition \ref{P-snf}]  (Compare
\cite[\S3]{BM}.)
Given a single flex point, choose a projective transformation which
moves this  point to $(0:1:0)$, and moves the tangent line to $\cC$
at this point to the 
line $z=0$.  The tangent line has a triple intersection point
with the curve $\cC$ at $(0:1:0)$; hence it can have
 no other intersection point.
This means that the defining equation for $\cC$ must now consist of an $x^3$
term, plus other terms which are all
 divisible by $z$. Furthermore, it must include an $y^2z$ term,
since otherwise all of its partial derivatives at $(0:1:0)$ would be zero.
After switching to affine coordinates $(x:y:1)$, and after multiplying
 $x$ and $y$ by suitable constants, the equation will have the form
$$ y^2~=~x^3 +a\,x+b\quad{\rm plus~terms~in}
\quad x^2,~xy\,,\quad{\rm and}\quad y~.$$
However, the last two terms can  be eliminated by adding a
suitable $p\,x+q$ to the $y$ coordinate, and the  $x^2$ term can then be 
eliminated by adding a suitable constant to the $x$ coordinate.
\end{proof}

\medskip

\begin{figure}[t]
\centerline{\includegraphics[width=2.5in]{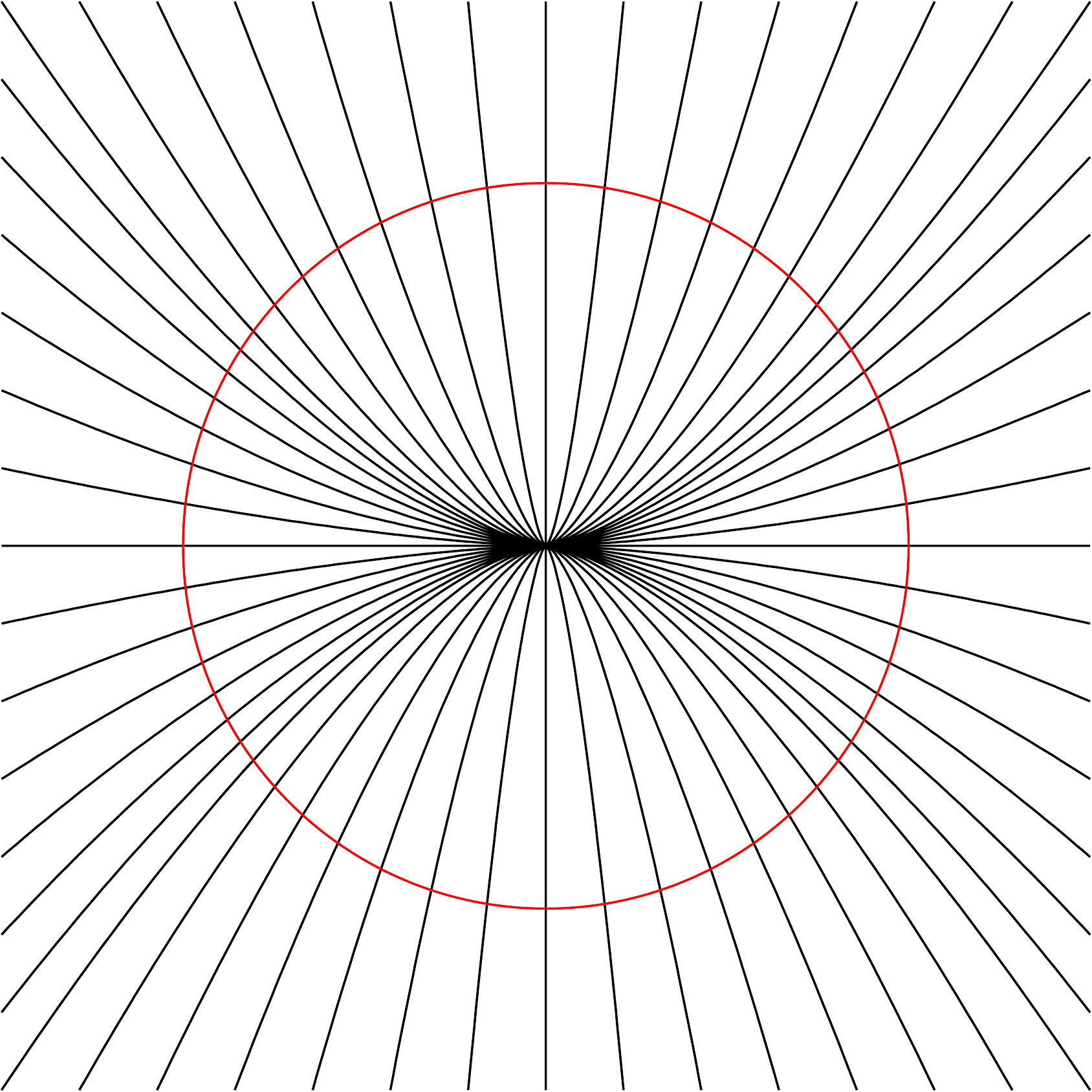}}
\caption{\label{F-abplane}\sf Foliation of the real $(a,b)$-plane,
with the origin removed, by the connected components of the
curves $(a^3:b^2) ={\rm constant}$. More explicitly, each curve can 
be parametrized as $a=a_0t^2$ and $b=b_0t^3$ with $t>0$, 
where $(a_0,\,b_0)$ can be any point on one of these curves.
The unit circle has a single transverse intersection
with each curve.}

\bigskip
\end{figure}
\bigskip 

Using Proposition~\ref{P-snf}, it follows easily that the quotient space 
$\fC_n^\irr/\bG$ of irreducible curves  modulo the action of $\bG$ 
can be identified with the quotient of the plane consisting of
 pairs $(a,\,b) \in\F^2$, by the equivalence relation
\begin{equation}\label{E-eq} 
(a,\,b)\sim(t^4a,\,t^6b)\qquad{\rm for~any}\quad t\ne 0~.\end{equation}
 (Compare  Figure~\ref{F-abplane}.) This entire quotient space has a rather
nasty topology, since the center point $a=b=0$, corresponding to the cusp curve
$$ y^2~=~x^3,$$
belongs to the closure of every other point. However, we eliminate this
problem by considering only curves with finite stabilizer,  and hence 
removing the center point.
\smallskip

\begin{theo}\label{T-M3}
If the field $\F$ is either $\R$ or $\C$, then the moduli space
$$\M_3(\F)~=~\fC^\fs_3(\F)/\bG(\F)   $$ 
can be identified with the quotient of the punctured $(a,\,b)$-plane
$\F^2\ssm\{(0,\,0)\}$ by the equivalence relation  $(\ref{E-eq})$.
In the real
case this moduli space is homeomorphic to the unit circle; while in the 
complex case it is homeomorphic to the topological 2-sphere consisting of all
 ratios $(a^3:b^2)\in\bP^1(\C)$.
\end{theo}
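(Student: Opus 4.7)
The plan is to combine Proposition~\ref{P-snf} with a direct topological analysis of the scaling action on $\F^2\setminus\{(0,0)\}$. To begin, I would note that $\dim(\fR_3)=7<8$ forces every reducible cubic to have infinite stabilizer, so $\M_3$ is represented entirely by irreducible curves. Proposition~\ref{P-snf}(a)-(b) then supplies a bijection between $\M_3(\F)$ and $(\F^2\setminus\{0\})/\!\sim$, where $\sim$ is the relation $(\ref{E-eq})$. The map $(a,b)\mapsto\cC_{a,b}$ arising from the normal form $(\ref{E-snf})$ is polynomial, hence descends to a continuous bijection $\phi$ out of the quotient, and part~(c) of Proposition~\ref{P-snf} supplies continuous local sections that assemble into a continuous inverse $\phi^{-1}$. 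So $\phi$ is a homeomorphism, and one is reduced to identifying the topology of $(\F^2\setminus\{0\})/\!\sim$.

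In the real case, $t^4$ and $t^6$ depend only on $|t|$, so setting $s=t^2$ the equivalence becomes $(a,b)\sim(s^2 a,s^3 b)$ with $s\in\R_{>0}$. I would then show that each orbit meets the unit circle $a^2+b^2=1$ in exactly one point: for fixed $(a_0,b_0)\neq(0,0)$, the function $s\mapsto s^4 a_0^{\,2}+s^6 b_0^{\,2}$ is strictly increasing from $0$ to $\infty$ on $(0,\infty)$, so it attains the value $1$ exactly once. The unit circle then realizes a compact global section of the quotient map, so $\M_3(\R)\cong S^1$.

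For the complex case, I would use the map $\Phi(a,b)=(a^3:b^2)\in\bP^1(\C)$, which is $\sim$-invariant since both coordinates rescale by $t^{12}$. Surjectivity is immediate. For injectivity on orbits, if $\Phi(a_1,b_1)=\Phi(a_2,b_2)$ with no vanishing coordinate, one seeks $t\in\C^*$ with $t^4=a_1/a_2$ and $t^6=b_1/b_2$. The four solutions of $t^4=a_1/a_2$ produce $t^6\in\{\pm(a_1/a_2)^{3/2}\}$, and the projective identity $(b_1/b_2)^2=(a_1/a_2)^3$ forces $b_1/b_2$ to coincide with one of these two signs, so at least one choice of $t$ works; the degenerate cases with a vanishing coordinate are handled directly. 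Thus $\Phi$ descends to a continuous bijection onto $\bP^1(\C)$; and since the unit $3$-sphere $|a|^2+|b|^2=1$ meets every orbit (by the same monotonicity argument as before, applied to $|t|$), the quotient is compact, so this continuous bijection from a compact space to a Hausdorff space is automatically a homeomorphism onto $\bP^1(\C)\cong S^2$.

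The main obstacle is the injectivity verification in the complex case, where the twelve roots of $t^{12}=\lambda$ must be matched against the simultaneous constraints $t^4=a_1/a_2$ and $t^6=b_1/b_2$; one must check that the four-to-one collapse $t\mapsto t^4$ and the six-to-one collapse $t\mapsto t^6$ are always jointly compatible when the projective constraint holds. Once this is in hand, everything else is routine application of the normal-form theorem together with elementary topological bookkeeping on the scaling action.
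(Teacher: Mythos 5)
Your proposal is correct and takes essentially the same route as the paper: use Proposition~\ref{P-snf} to reduce to the punctured $(a,b)$-plane modulo the rescaling~(\ref{E-eq}), then identify the quotient with the unit circle in the real case and with $(a^3:b^2)\in\bP^1(\C)$ in the complex case. You simply supply the details (the monotonicity argument for the circle section, the explicit injectivity check for $(a^3:b^2)$, and the appeal to part~(c) plus compactness to upgrade the bijection to a homeomorphism) that the paper dispatches with ``it is easy to check.''
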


\begin{proof} It is easy to check
 that every orbit in $\R^2\ssm\{(0,0)\}$ intersects the unit circle
 $a^2+b^2=1$ exactly once.
Therefore, the moduli space $\M_3(\R)$ is homeomorphic to the unit circle.
In the complex case, each orbit corresponds to a fixed ratio
$(a^3:b^2)\in\bP^1(\C)$. (Equivalently, this ratio is captured by the shape
invariant
$$ \bJ~=~ \frac{4a^3}{4a^3+27 b^2}~\in\widehat\C $$
of Remark \ref{R-J}.)
 Thus $\M_3(\C)$ is homeomorphic to the
Riemann sphere \hbox{$\bP^1(\C)\cong\widehat\C$}.
\end{proof}

In fact $\M_3(\R)$ has a natural real analytic structure, and is real 
analytic\-ally diffeomorphic to the unit circle. Similarly $\M_3(\C)$ is
naturally biholomorphic to $\bP^1(\C)$. (See Remark~\ref{R-QAS}.)
 However, in both cases we will see  that there is just one improper
(but weakly proper) point, corresponding to the equivalence class of curves
with a simple self-crossing point. In
the complex case there is also a non-trivial orbifold structure 
with two ramified points; but in the real case, there are no ramified points.
\bigskip 

\begin{lem}\label{L-M3C}
The moduli space $\M_3(\C)$ for complex cubic curves is canonically isomorphic
to the moduli space $\fM_4(\C)$ for divisors of degree four. In particular
both spaces have one point with ramification index two, and one point
with ramification index three, as well as one improper point, which is
non-the-less weakly proper.
\end{lem}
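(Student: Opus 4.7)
The plan is to construct a canonical bijection $\Psi\colon\M_3(\C)\to\fM_4(\C)$ by sending an irreducible cubic in Weierstrass normal form to the divisor cut out on $\bP^1$ by the branch points of the hyperelliptic projection to the $x$-axis, and then to verify that $\Psi$ respects the shape invariant $\bJ$ on both sides.

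First I would use Proposition~\ref{P-snf} to bring $\cC\in\fC_3^\fs(\C)$ to the form $y^2=x^3+ax+b$ with $(a,b)\ne(0,0)$. Letting $r_1,r_2,r_3$ be the roots of $x^3+ax+b$, I set
\[
\Psi(\((\cC\)))~=~\((\,\<r_1\>+\<r_2\>+\<r_3\>+\<\infty\>\,\))~\in~\fM_4(\C).
\]
The image lies in $\wfD_4^\fs/\bG$ because a triple root would force $a=b=0$, so the divisor always has at least three distinct points. For well-definedness, the only remaining ambiguity in the normal form is the rescaling $(a,b)\mapsto(t^4a,t^6b)$ of Proposition~\ref{P-snf}(a); under $(x,y)\mapsto(t^2x,t^3y)$ this scales the roots by $t^2$, so the two resulting degree-four divisors differ by the $\PGL_2(\C)$-action $x\mapsto t^2x$ on $\bP^1$. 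Continuity follows from Proposition~\ref{P-snf}(c).

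Next I would identify $\Psi$ as a homeomorphism by comparing $\bJ$-values. Since $r_1+r_2+r_3=0$, Remark~\ref{R-J} gives
\[
\bJ\bigl(\Psi(\((\cC\)))\bigr)~=~\frac{4a^3}{4a^3+27b^2},
\]
which by Theorem~\ref{T-M3} is precisely the coordinate identifying $\M_3(\C)$ with $\bP^1(\C)$. Since Proposition~\ref{P-D4} identifies $\fM_4(\C)$ with $\bP^1(\C)$ by the very same invariant $\bJ$, $\Psi$ must be a homeomorphism. I would then match the special points: the improper point $\bJ=\infty$ of $\fM_4(\C)$ pulls back via $4a^3+27b^2=0$ to cubics with a simple double point; the ramified point of index $2$ (namely $\bJ=1$) pulls back to $b=0$, the harmonic cubic $y^2=x^3+ax$; and the ramified point of index $3$ (namely $\bJ=0$) pulls back to $a=0$, the equianharmonic cubic $y^2=x^3+b$. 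The main obstacle I anticipate is showing that the improper point of $\M_3(\C)$ is nevertheless weakly proper in the sense of Definition~\ref{D-prop}; this I would handle either by transporting Remark~\ref{R-improp} through the homeomorphism $\Psi$, or by directly exhibiting a compact subset of $\PGL_3(\C)$ that witnesses weak properness of the $\bG$-orbit of a nodal cubic.
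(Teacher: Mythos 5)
Your construction goes in the opposite direction from the paper (cubic $\mapsto$ branch divisor rather than divisor $\mapsto$ double cover), but it is the same bijection, and using the shape invariant $\bJ$ to confirm the homeomorphism is a clean way to close that part of the argument. The well-definedness check via the rescaling $(a,b)\mapsto(t^4a,t^6b)$ acting on roots by $r_i\mapsto t^2r_i$ is correct, and your identification of the special points (nodal, harmonic, equianharmonic) matches the paper's.

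There are two gaps, though. First, a homeomorphism of quotient spaces does not by itself transfer ramification indices. The ramification index at a point of a complex weak orbifold is the order of a finite group $\cR$ acting linearly on $\C^d$, and the quotient $\C^d/\cR$ can be topologically Euclidean regardless of $|\cR|$ (already for $d=1$: $\C/(\Z/r)\cong\C$ topologically for every $r$). So knowing $\Psi$ is a homeomorphism and that the special points line up via $\bJ$ does not establish that the ramified points on the cubic side have indices $2$ and $3$; you need a genuine correspondence between the stabilizer actions. The paper supplies this by observing that each symmetry of the branch divisor fixing $\infty$ lifts to a projective symmetry of the cubic, giving a matching of ramification groups even though the full stabilizers are quite different ($\Z/2\oplus\Z/2$ generically for divisors versus a dihedral group of order $18$ for cubics).

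Second, and more seriously, your first alternative for weak properness cannot work. Weak properness is a statement about the group action on $\fC_3$ --- the existence of a neighborhood $U$ of the nodal cubic and a compact $K\subset\PGL_3(\C)$ such that any two curves in $U$ lying on a common $\bG$-orbit are related by some $\g\in K$. A homeomorphism between the quotient spaces $\M_3(\C)$ and $\fM_4(\C)$ carries no information about group elements, so there is nothing to transport from Remark~\ref{R-improp}. Your second alternative is the right one, and it is exactly what the paper does: by Proposition~\ref{P-snf}(c), any curve near the singular curve $y^2=x^3+2x-3$ can be brought to standard normal form by an automorphism close to the identity, and by Proposition~\ref{P-snf}(a) two nearby curves in the same $\bG$-orbit then differ by an automorphism close to the identity, which exhibits the needed compact set.
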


\begin{proof} Given four
distinct points on the Riemann sphere, there is a unique 2-fold covering curve, 
branched at these four points. In fact, if we place one of the four points at 
infinity, and let $f(x)$ be the monic cubic polynomial with roots at the three 
finite points, then the locus
$$ y^2=f(x)$$
in the affine plane extends to the required 2-fold covering; and it follows from
Proposition \ref{P-snf} that every smooth cubic curve can be put in this form.
Furthermore, as two of the four points come together, the corresponding cubic 
curve will tend to a curve with a simple double point. Any symmetry
fixing the point at infinity will give rise to a corresponding symmetry
of the cubic curve, so that there is a precise correspondence between
ramified points for divisors of degree four and for curves of degree three.

[However, this does not mean that the corresponding stabilizers are isomorphic.
We have seen that a generic divisor of degree four has stabilizer 
$\Z/2\oplus\Z/2$. On the other hand, the stabilizer of a generic cubic curve
is the dihedral group of order eighteen. (See the remarks following
Theorem~\ref{T-noaut}.) For 
 the two ramified points, with ramification index two (or three), the order of
 the stabilizer is 8 (or 12) for divisors, but 36 (or 54) for cubic curves.]

 Now consider the degenerate
 case where two points of the divisor come together, or where the
cubic curve acquires a self-crossing point. Then
 the stabilizer has order two in
the divisor case,  and order 6 in the cubic curve case. (Compare
\cite[Figure~10]{BM}, where the six symmetries of one real form of
this curve are generated by 120$^\circ$ rotations, and reflections
on the vertical axis.)
By Lemma~\ref{L-prop}, it follows that the class of the complex cubic
 curve with a self-crossing  point is not  even locally proper.

However, the action is weakly  locally proper. We can write the singular
 curve in standard normal form for example as $y^2=x^3+2\,x-3$.
It follows from Proposition~\ref{P-snf}(c) that any curve which is close to this
 singular curve can be reduced
to standard normal form  by an automorphism
close to the identity. Furthermore,
if two such curves belong to the same $\bG$-orbit, then
it follows easily from Proposition~\ref{P-snf}(a) that we can transform one
to the other by an automorphism close to the identity.
\end{proof}
\medskip

\subsection*{\bf The Circle of Real Cubic Curves}
Now consider the real case. We have shown that the space $\M_3(\R)$
of projective equivalence classes
of cusp-free irreducible real cubic curves is diffeomorphic to the unit circle.
In fact each such curve-class has a unique representative with equation of
 the form
\begin{equation}\label{E-cnf}
y^2=x^3+ax+b\qquad{\rm with}\qquad a^2+b^2=1~,\end{equation}
However, this ``unit circle'' normal form seems somewhat arbitrary. For
example, if we used
a circle of different radius, then we would get a quite different 
parametrization.\smallskip

Note that the natural map from $\M_3(\R)$ onto the real part of $\M_3(\C)$ is
definitely not one-to-one. In fact it is precisely two-to-one.
Two real curves of the form $y^2=x^3+ax+b$ and $y^2=x^3+ax-b$ with $b\ne 0$
are not real projectively equivalent; yet they have the same ratio
$(a^3:b^2)$ and hence are complex projectively equivalent.\smallskip

We know of two quite natural ways of mapping $\M_3(\R)$ homeomorphically
(but not diffeomorphically) 
onto the real projective line $\bP^1(\R)$. One is given by representing 
each curve-class by its unique Hesse normal form
\begin{equation}\label{E-He}
 x^3+y^3+z^3~= 3\,k\,x\,y\,z ~.\end{equation}
(Compare \cite[Theorem 6.3 and Figure 10]{BM}.)
 This works beautifully for $k$ finite and different from $+1$, and yields 
precisely the open subset of $\M_3(\R)$ 
consisting of smooth curve-classes. However,
the limits as $k$ tends to $+1$ or $\pm\infty$ are badly behaved, and yield
reducible curves.\footnote{We could get around this, and obtain the correct 
curve-class by taking the limit of carefully rescaled curves.}

Another possible choice can be described as follows.
\smallskip

\begin{theo}[{\bf Flex-Slope Normal Form}]\label{T-FS} For any irreducible 
real cubic curve
 $\cC\subset\bP^2(\R)$, the following three properties are
equivalent:
\begin{enumerate}
\item[{\bf(a)}] $\cC$ is either smooth, or else smooth except at one isolated
 point.

\item[{\bf(b)}] $\cC$ contains three flex points.

\item[{\bf(c)}] $\cC$ is equivalent,
under a real projective change of coordinates, to one and only
one curve ${\mathcal F}(s)$
in the ``flex-slope'' normal form:
\end{enumerate}
\begin{equation}\label{E-FS}
y^2~=~x^3+(s\,x+1)^2~,
\end{equation}
using affine coordinates,\footnote{In homogeneous coordinates, the
equation is $y^2z=x^3+x(s\,x+z)^2$.}
where  $s$ can be any real number.
\end{theo}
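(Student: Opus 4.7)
My plan is to prove the equivalences in the order (a)$\iff$(b), then (a)$\Longrightarrow$(c), then (c)$\Longrightarrow$(a), and finally uniqueness of the parameter $s$. Throughout I will use Proposition~\ref{P-snf} to reduce any irreducible real cubic to Weierstrass form $y^2=x^3+ax+b$; in this form the finite flex points have $x$-coordinates satisfying the Hessian quartic $3x^4+6ax^2+12bx-a^2=0$, and each real root $x_0$ contributes a real flex point precisely when $x_0^3+ax_0+b\ge 0$.

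For (a)$\iff$(b), I case-split on the $(a,b)$-type: smooth ($4a^3+27b^2\ne 0$), acnode ($4a^3+27b^2=0$ with $b<0$), crunode ($4a^3+27b^2=0$ with $b>0$), and cusp ($a=b=0$). Counting the real roots of the Hessian quartic together with the sign of $x_0^3+ax_0+b$ at each gives two real finite flex points (plus the one at infinity) in the smooth and acnode cases, and only the flex at infinity in the crunode and cusp cases. The delicate point is that crunode and acnode are distinguished precisely by the sign of $b$.

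For (a)$\Longrightarrow$(c), I pick one of the three real flex points and apply Proposition~\ref{P-snf}(c) to move it to $(0:1:0)$ with tangent $z=0$, putting $\cC$ in Weierstrass form; the other two real flex points then take the form $(x_0,\pm y_0)$ with $y_0^2=x_0^3+ax_0+b>0$, by the $y\mapsto -y$ symmetry. Translating $x\mapsto x-x_0$ brings this pair onto the $y$-axis, and rescaling $(x,y)\mapsto(t^2x,t^3y)$ with $t=y_0^{1/3}$ normalizes the constant term to $1$. The crucial algebraic identity is that the flex condition $3x_0^4+6ax_0^2+12bx_0-a^2=0$ is equivalent to $(3x_0^2+a)^2=12x_0y_0^2$; this forces the linear and quadratic coefficients of the rescaled cubic to satisfy the perfect-square relation, yielding $y^2=x^3+(sx+1)^2$ for a unique $s\in\R$ read off from the construction.

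For (c)$\Longrightarrow$(a) together with uniqueness, a direct normalization puts $\mathcal{F}(s)$ into Weierstrass form with discriminant $4a^3+27b^2=27-4s^3$, vanishing only at $s=(27/4)^{1/3}$, where $b=-1/8<0$ forces an acnode; so every $\mathcal{F}(s)$ is smooth or has a single isolated singular point. For uniqueness, suppose $\g\in\PGL_3(\R)$ carries $\mathcal{F}(s_1)$ onto $\mathcal{F}(s_2)$. Both curves share the flex locus $\{(0,\pm 1),(0:1:0)\}$, so $\g$ permutes these three points. Composing $\g$ with a translation by a real 3-torsion element of the (possibly generalized) Jacobian of $\mathcal{F}(s_1)$—which belongs to $\bG_{\mathcal{F}(s_1)}$ and acts transitively on the three real flexes—reduces to the case where $\g$ fixes each flex point pointwise; such a $\g$ must have the shape $(x,y,z)\mapsto(cx,dx+y,ex+z)$, and matching monomial coefficients in the transformed equation then forces $c=1$, $d=e=0$, and $s_1=s_2$. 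I expect this final uniqueness step to be the main obstacle: one must know that the stabilizer acts transitively on the three flexes via real 3-torsion translations (which extend to projective automorphisms in both the smooth and the acnode case), and then carry out the concrete coefficient-matching that rules out any nontrivial flex-fixing projective transformation between distinct $\mathcal{F}(s)$.
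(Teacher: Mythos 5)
Your proof is correct but takes a genuinely different route in two places. For (a)$\iff$(b) you count real roots of the Hessian quartic $3x^4+6ax^2+12bx-a^2=0$ and check the sign of $x_0^3+ax_0+b$ at each root, distinguishing acnode from crunode by the sign of $b$; the paper instead proves only (a)$\Rightarrow$(b) and does so more structurally, via the unique order-3 subgroup of the real group $\R/\Z$ or $\R/\Z\times\Z/2$ in the smooth case and a slope-minimum argument on the smooth branch in the acnodal case. Your route is more computational but delivers the full biconditional in one pass. Your derivation of the normal form from the flex identity $(3x_0^2+a)^2=12x_0y_0^2$ (equivalently $4A=B^2$ in the shifted cubic $X^3+AX^2+BX+1$) is an equivalent but slightly heavier-handed version of the paper's Taylor-expansion shortcut $Y=1+sX+O(X^3)\Rightarrow Y^2=(sX+1)^2+O(X^3)$, with monicity supplying the $X^3$ term. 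For (c)$\Rightarrow$(a) both proofs compute the discriminant $4s^3-27$ and identify the single singular $s$, where $b=-1/8<0$ gives the acnode. The real divergence is in the uniqueness of $s$: the paper simply asserts ``since the construction is uniquely specified,'' which quietly relies on the $\fS_3$-transitivity of the stabilizer on the three real flexes so that the choice of flex sent to infinity does not matter. Your argument makes this explicit by composing any $\g$ carrying $\mathcal{F}(s_1)$ to $\mathcal{F}(s_2)$ with a 3-torsion translation and then matching coefficients of a flex-fixing transformation; this is the more rigorous treatment, but you should verify (rather than merely allude to) that translation by a real 3-torsion point extends to $\PGL_3(\R)$ in the acnodal case, since the paper's remark following Theorem~\ref{T-noaut} addresses only smooth cubics.
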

\smallskip

Note that every curve in the form (\ref{E-FS}) has a flex point of  
slope $s$ at  $(0,\,1)$, as well as a flex point of slope $-s$ at $(0,\,-1)$.
In fact  it follows easily from (\ref{E-FS}) that 
$$y~=~\pm(1+s\,x)\,+\,O(x^3)\qquad{\rm as}\quad x\to 0~,$$
so that  the points $(0,\,\pm 1)$ satisfy $dy/dx=\pm s$ and $d^2y/dx^2=0$.\ssk

See Figure \ref{F1} for some typical examples. Note that the first four
curves in this figure are connected, while the last three have two components.
(The first and 
last curves would be much larger if drawn to scale: they have been shrunk
to fit in the picture. 
If we rescale
by setting $x=s^2X$ and $y=s^3Y$ so that $Y^2=X^3+(X+1/s^3)^2$,
 then the curve would converge to 
 $Y^2=X^3+X^2$ as $s\to \pm\infty$, with a self-crossing singular point at
 $X=Y=0$,  as shown in Figure \ref{F-alf}.)

\begin{figure}[t!]
\centerline{\includegraphics[height=1.5in]{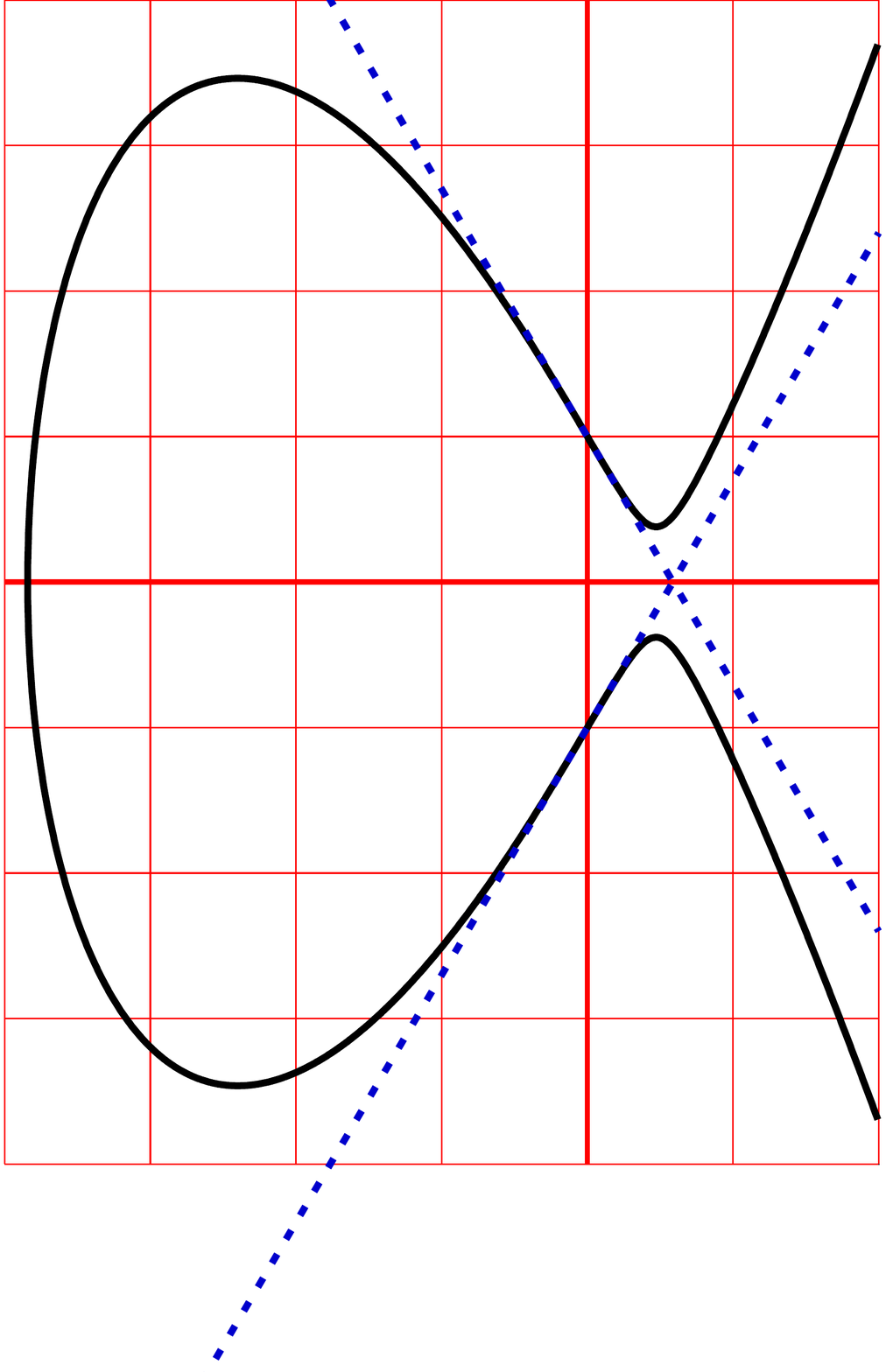} %{FS-1@1.png}~
\includegraphics[height=1.5in]{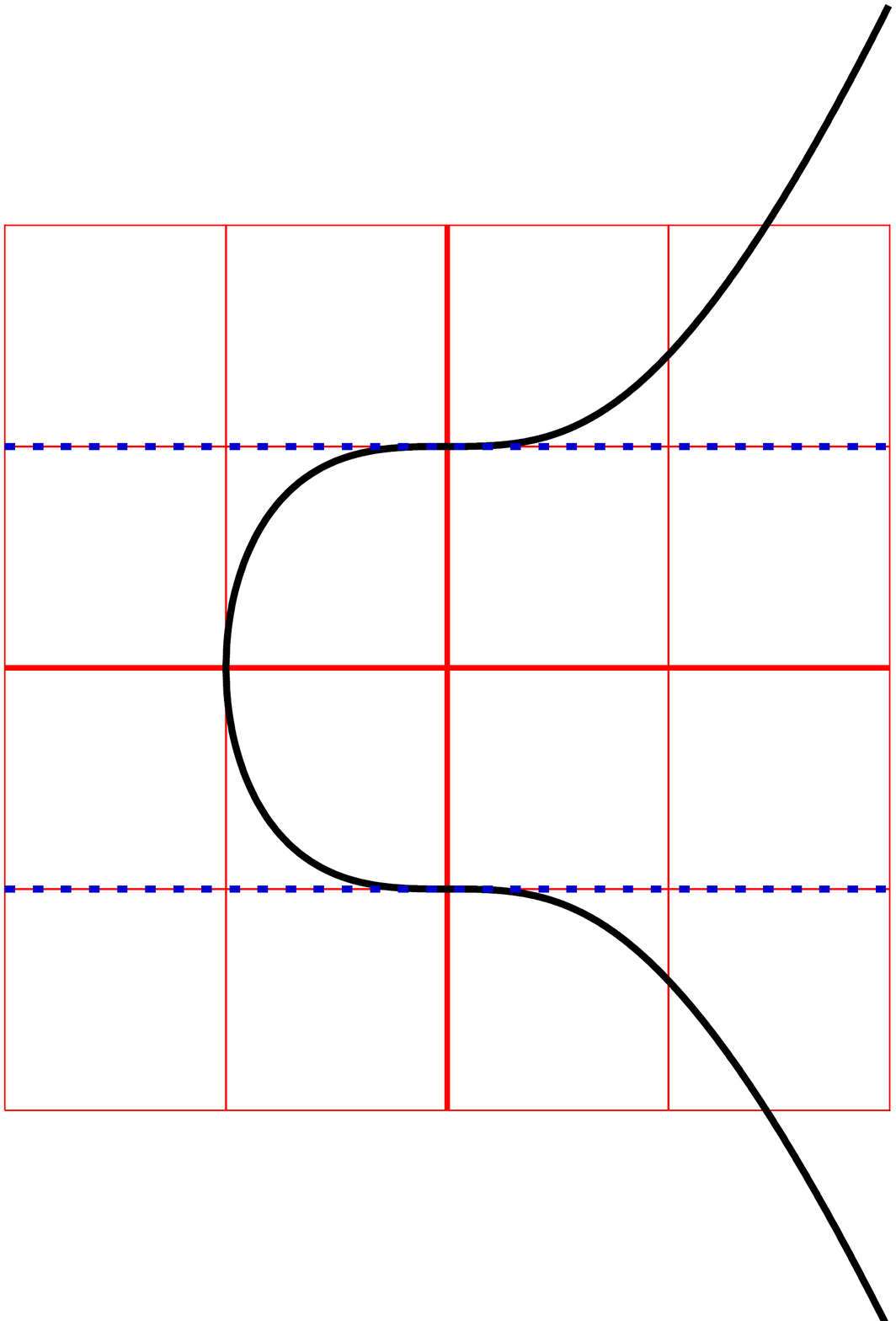} %  {FS0.png}~
\includegraphics[height=1.5in]{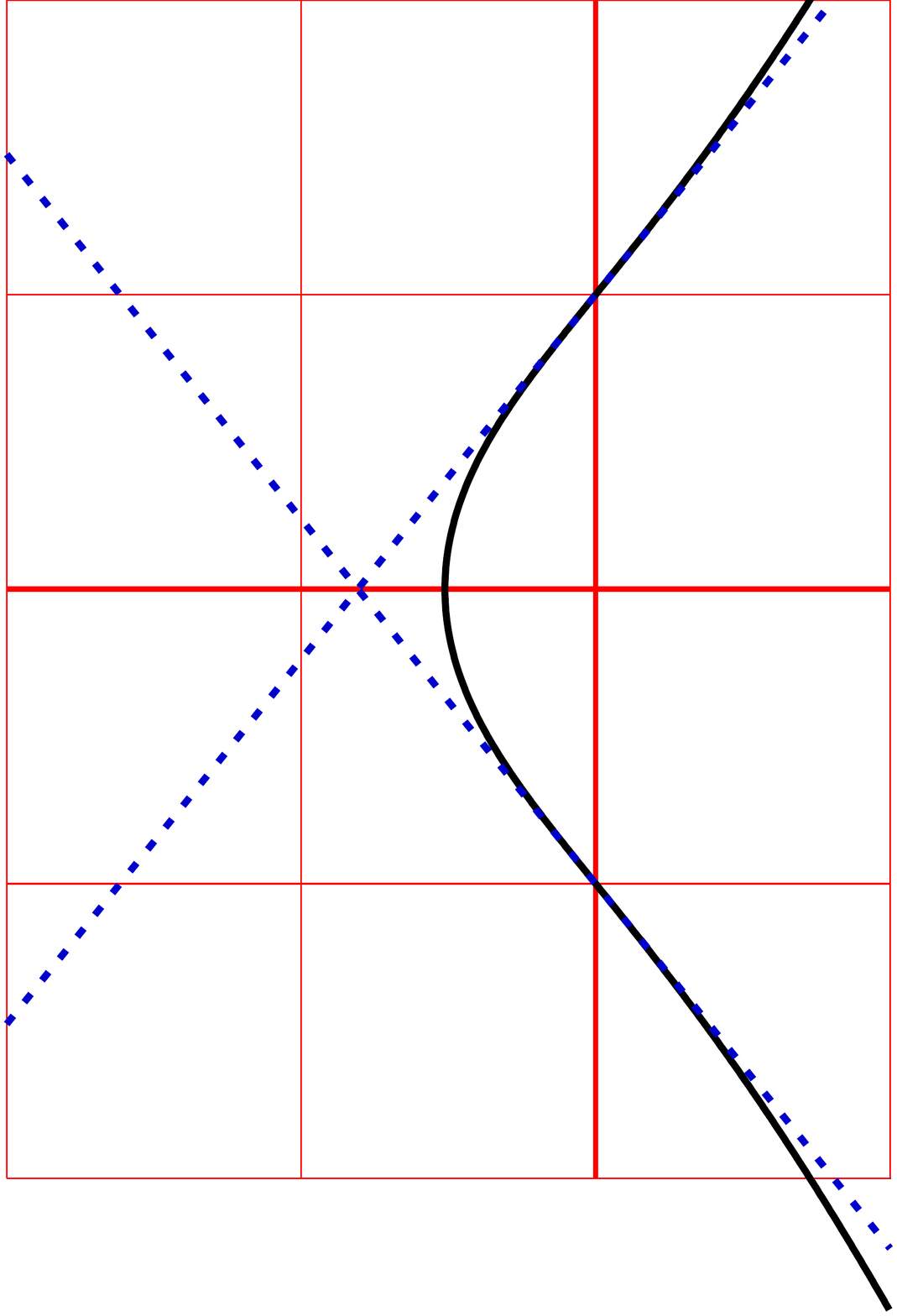} %{FS1@238.png}~
\includegraphics[height=1.5in]{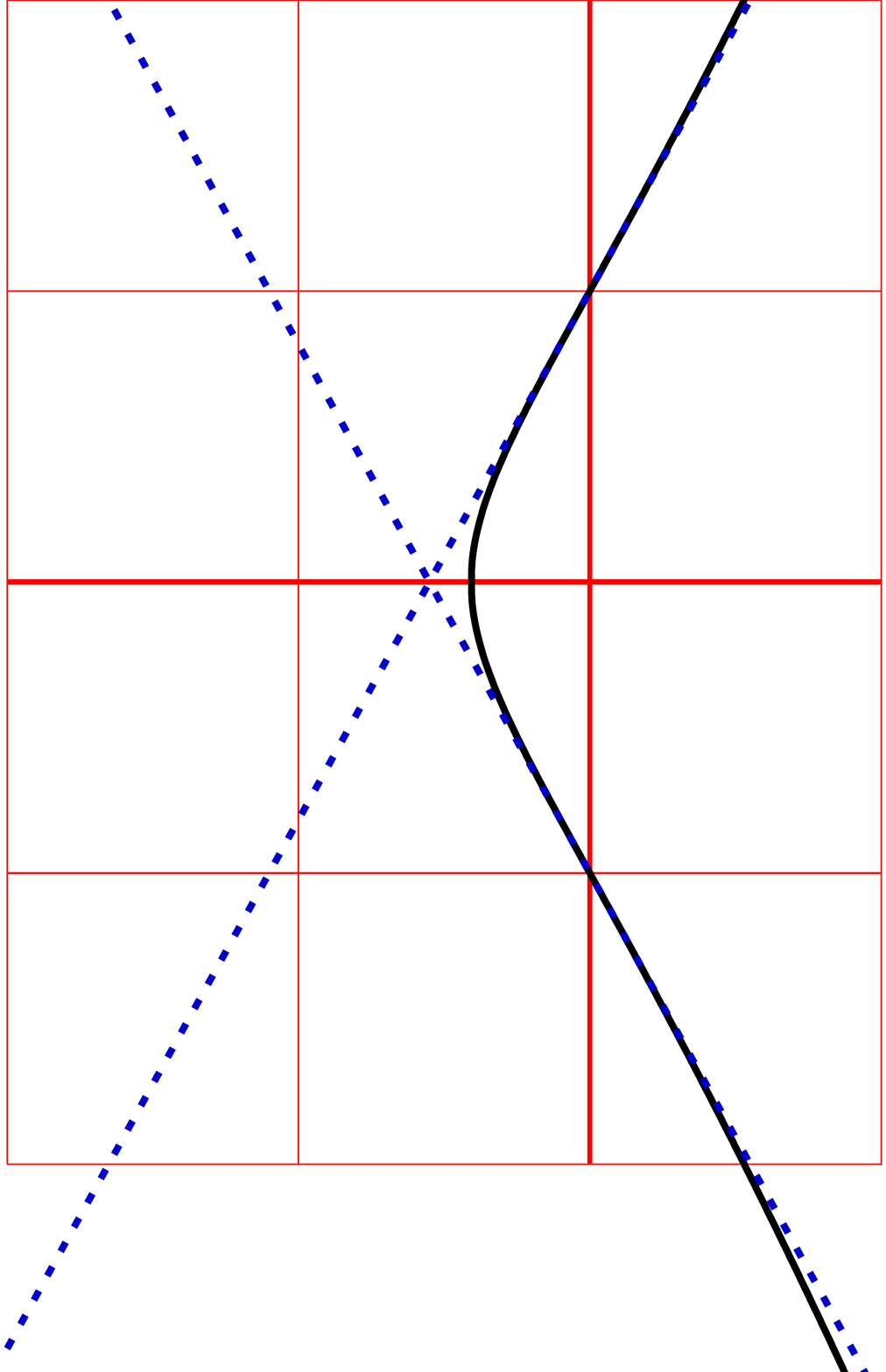}} %{FS1@817.png}}
\centerline{$s=-1.7$\qquad\qquad ($\bbet$) $s=0$\qquad\qquad 
($\bgam$) $s=1.238$\qquad\qquad ($\bdel$) $1.817$}
\medskip
\centerline{
\includegraphics[height=1.5in]{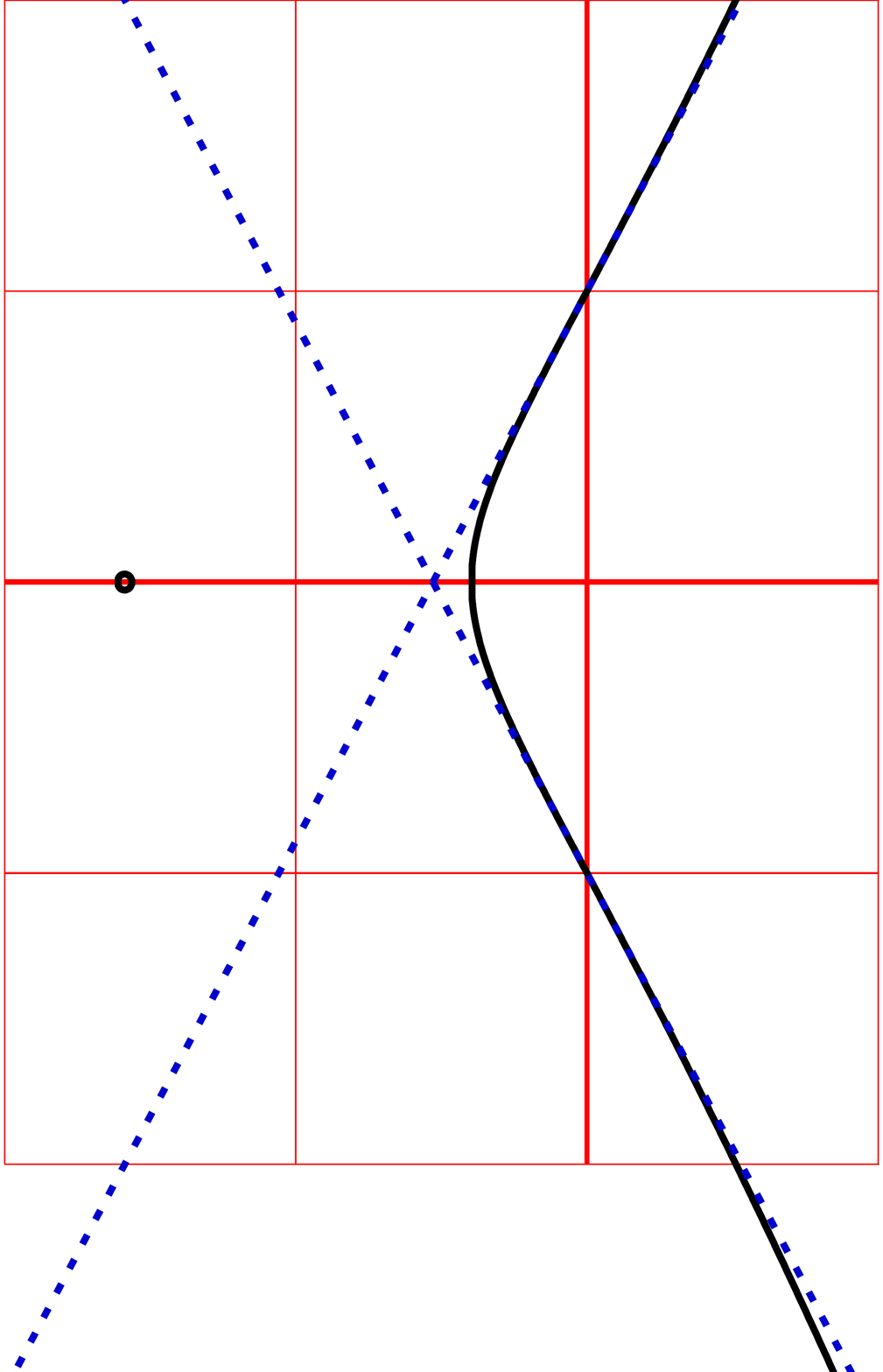} %{FS1@889.png}~~
\includegraphics[height=1.5in]{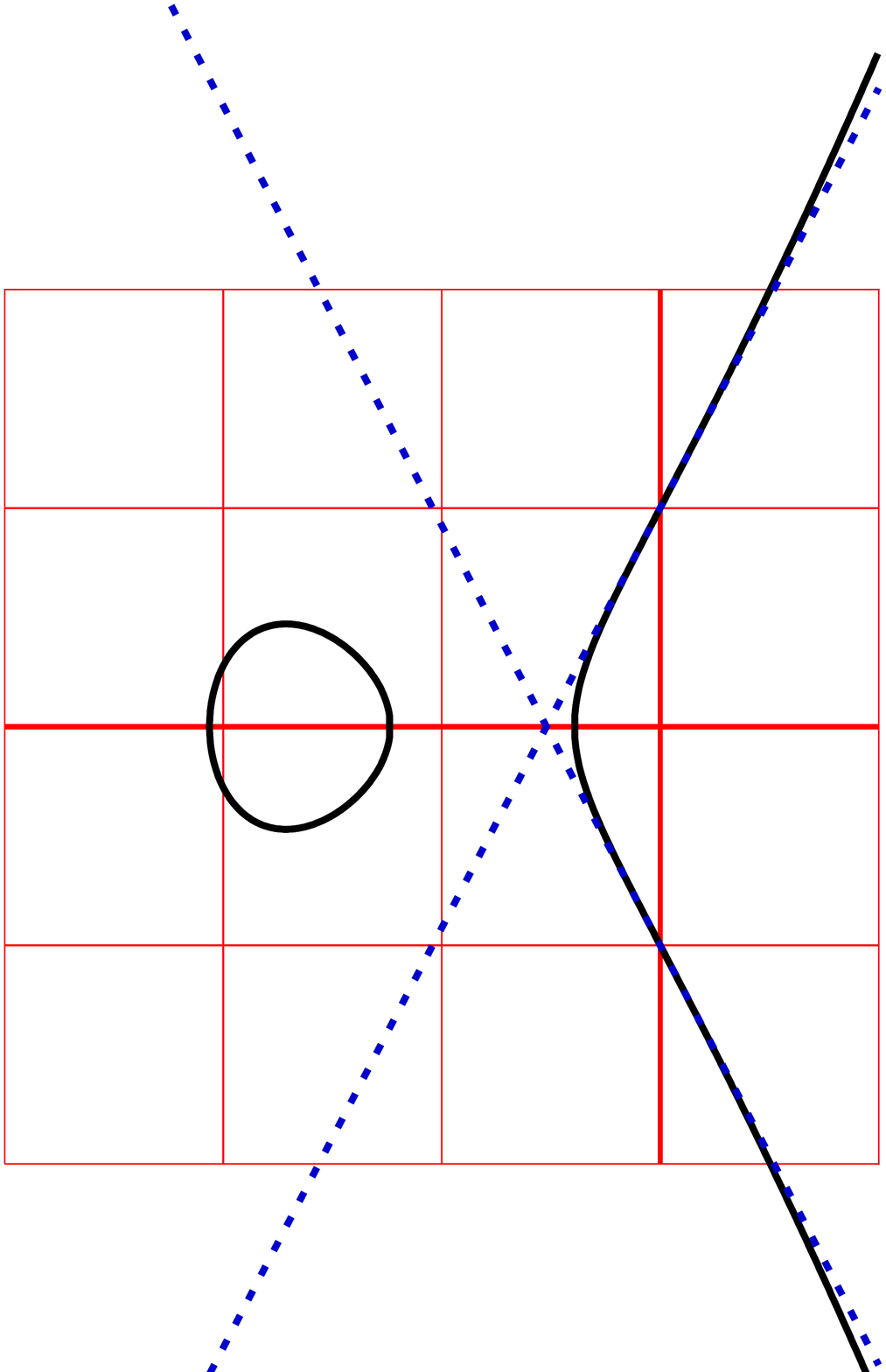} %{FS1@921a.png}~~
\includegraphics[height=1.5in]{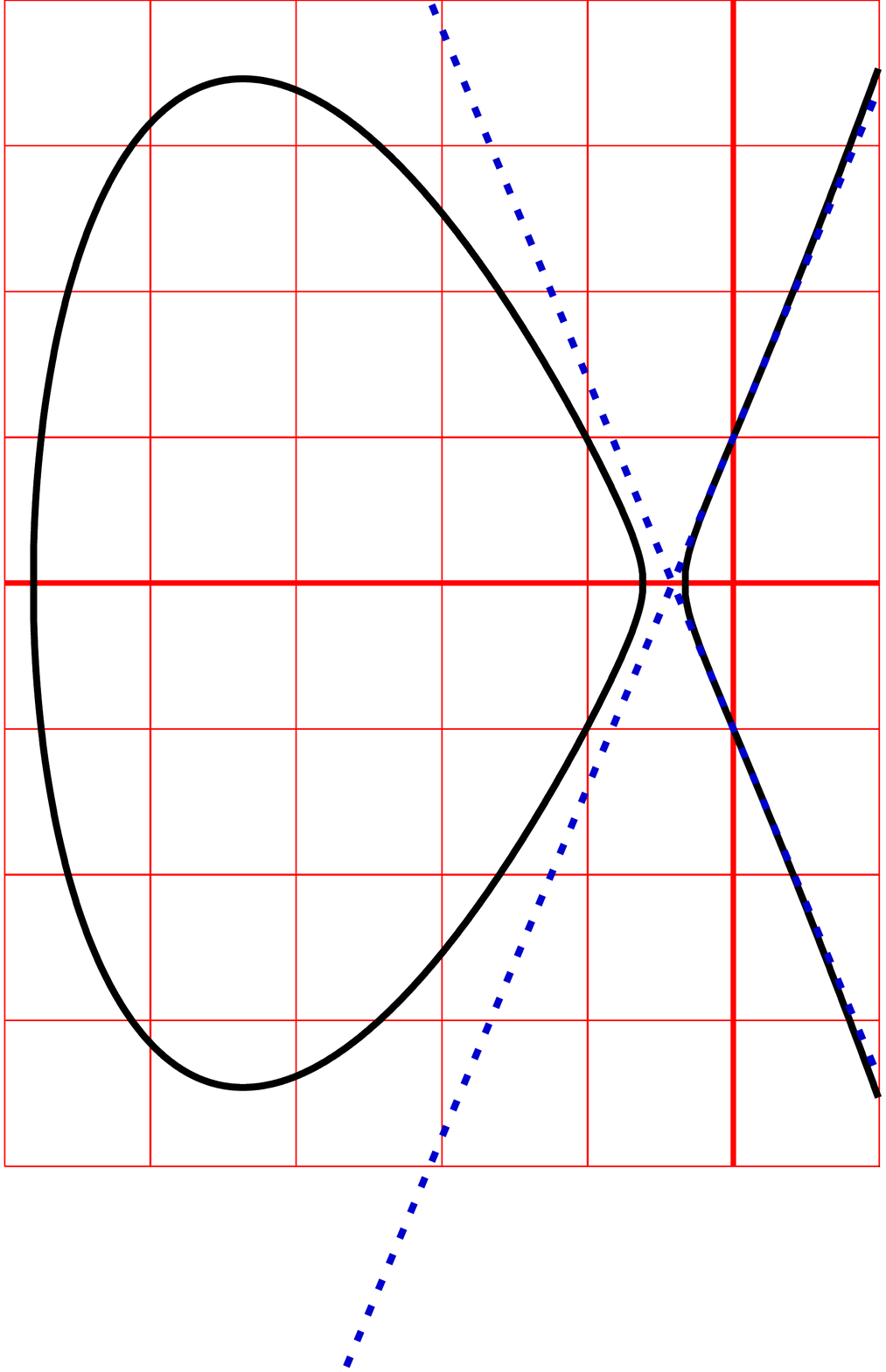}} %{FS2@1.png}}~~
\centerline{($\bep$) $s=1.89$\quad \qquad ($\bzet$) 
$s=1.921$\quad\qquad $s=2.4$\qquad}

\caption{\label{F1}\sf Graphs of seven curves ${\mathcal F}(s)$ 
in flex-slope normal form, so that the finite flex points are at $(0,\,\pm 1)$.
Here the slope $s$ ranges from $-1.7$ to
$+2.4$ The tangent lines at the flex points are indicated by dotted lines.
The
grid of points with an integer coordinate is also shown. Note the isolated
singular point which appears at $s=1.88988\ldots$ and immediately expands to a
circle. The figures blow up as $s\to\pm\infty$.\break (See Figure  \ref{F-alf}
for the limiting behavior.)
% as $s$ tends to $\pm\infty$, see Figure \ref{F-alf}.
The five middle curves, labeled as $(\bbet)$ through ($\bzet$), correspond
to the points with the same labels in Figure \ref{F-rcirc}. \bigskip}
\bigskip
\end{figure}

\begin{figure}[h!]
\centerline{\includegraphics[width=1.5in]{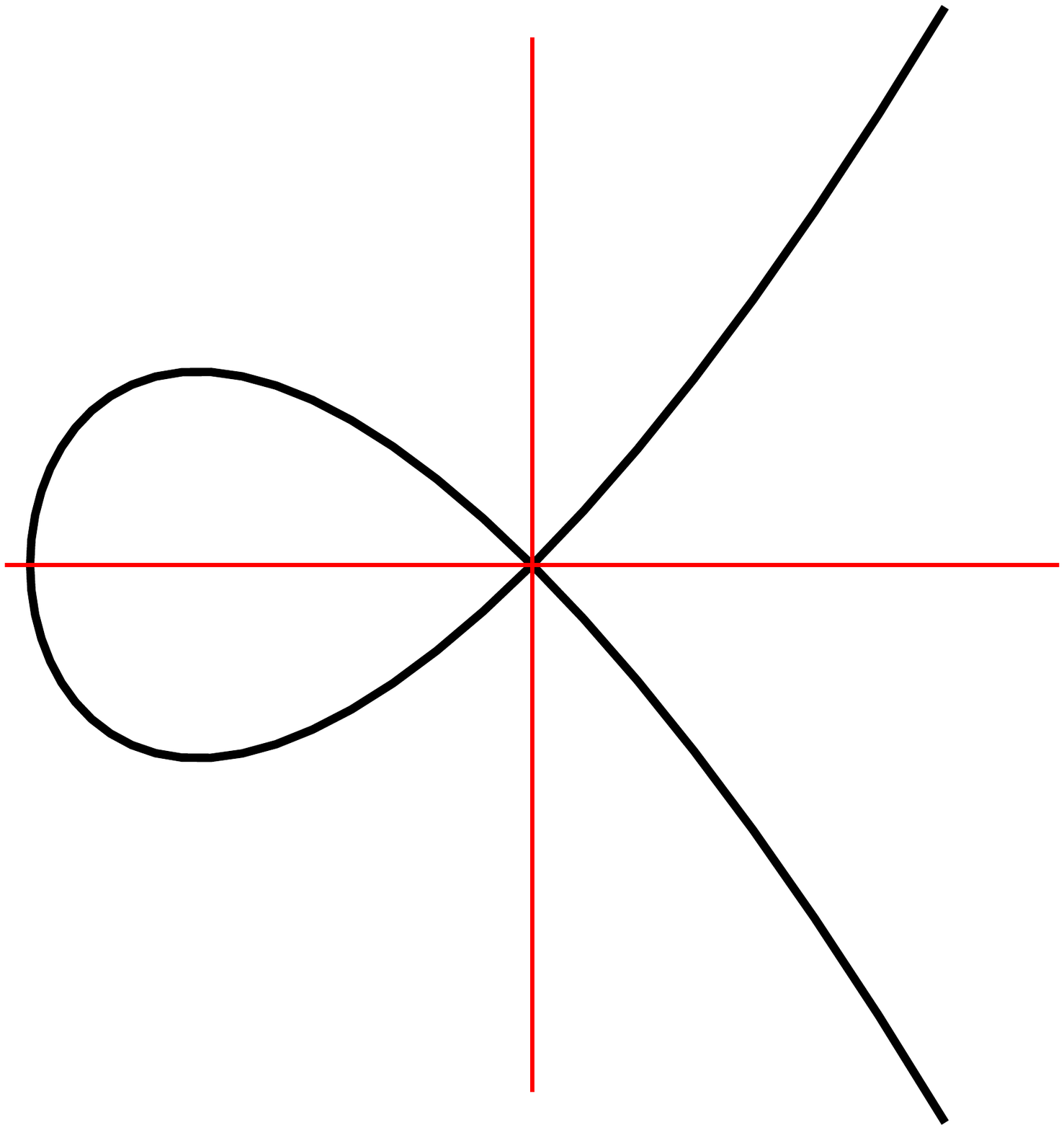}} \vspace{-.2cm}
\caption{\label{F-alf} \sf Although the flex-slope normal form blows up
as $s$ tends to $\pm\infty$, a carefully rescaled version, with
$Y^2=X^3+(X+1/s^3)^2$, tends to the illustrated curve, 
with a simple self-crossing at the limit of the two finite flex
points. This limit belongs to the leaf $\balph$ in Figure \ref{F-rcirc}.}
\bigskip
\end{figure}

\begin{proof}[Proof of Theorem \ref{T-FS}] The proof proceeds in three steps
as follows.\ssk

\noindent{\it Proof that ${\bf(a)}\Longrightarrow{\bf(b)}$.\/}
In the case of a smooth real or complex cubic curve, choosing one flex point as
base point, there is a classical additive
group structure, with the points of order
three as the remaining flex points. In the real case, this group is
isomorphic to either $\R/\Z$ or $\R/\Z\times(\Z/2)$. Thus it has a unique
subgroup of order three, and hence exactly three flex points.

There are just two real cubic curve-classes made up of singular curves.
(Compare Figure \ref{F1}$\bep$ and Figure \ref{F-alf}.) 
For the curve with an isolated singular point, 
there are still two finite flex points: Note that the slope $dy/dx$ of 
the upper branch $y>0$ of this curve tends to $+\infty$, both as $y\to 0$ 
and as $x,y\to+\infty$. Therefore the slope must take on a minimum value,
necessarily at a flex point, somewhere on this branch. It follows easily 
that there are three flex points altogether.\qed\ssk
\bigskip

(On the other hand, as we converge towards 
 the rescaled curve of Figure \ref{F-alf},
the equation $Y^2=X^3+(X+1/s^3)^2$ converges to $Y^2=X^3+X^2$, and
the two finite flex points  at $(0,\,\pm 1/s^3)$ converge to the 
singular point at the origin.)\medskip

\noindent{\it Proof that ${\bf(b)}\Longrightarrow{\bf(c)}$.\/}
Now assume that there are three flex points. According to Proposition
\ref{P-snf}, we can put the curve into standard normal form
$y^2=x^3+a\,x+b$, with one flex point on the line at infinity. Let
$(x_1,\,\pm y_1)$  be the two finite flex points. Translating the
$x$-coordinate appropriately, we can move these flex points to $(0,\,\pm y_1)$,
replacing the defining equation by $y^2=p(x)$, where the polynomial
$p(x)=f(x-x_1)$ is again monic.  Now replacing the coordinates 
$x, \,y$ by $X=c^2x$ and $Y=c^3y$ for some constant $c\ne 0$,
 the defining equation will be $$Y^2=P(X)=c^6p(X/c^2)~,$$ where the polynomial
$P(X)$ is again monic. Choose $c$ so that $c^3y_1=1$. Then the  flex points
will be at $(0,\,\pm 1)$. Let $s$ be the slope $dY/dX$ at the upper
flex point $(X,\,Y)=(0,\,1)$.
Then we can write $Y=1+s\,X+O(X^3)$ as $X\to 0$ with $Y>0$, hence 
$$Y^2=P(X)=(s\,X+1)^2+O(X^3)~.$$ 
 Since the polynomial $P(X)$ is monic of degree three, 
it follows that $P(X)$ has the required
form $P(X)=X^3 +(s\,X+1)^2$. Furthermore, since the construction 
is uniquely specified, it follows that the parameter $s$ is uniquely determined
by the curve-class.
\qed \bigskip

\noindent{\it Proof that ${\bf(c)}\Longrightarrow{\bf(a)}$.\/}
Finally, assuming that the curve is in flex-slope normal form~(\ref{E-FS}),
 we must show that it is either smooth everywhere, or else has just one 
isolated point as  singularity. It is not hard to show that any singularity
must lie on the $x$-axis, and correspond to a double or triple
root of the polynomial $x^3+(s\,x+1)^2$. Using the standard formula
for the discriminant of a cubic polynomial 
 (see for example \cite{BMac}), we can check that the discriminant
of this polynomial is given by
$\Delta~=~4s^3-27$. Therefore, the corresponding curve is singular
if and only if
 $$s~=~3/\root 3\of 4 = 1.88988\cdots .$$
For this value of $s$, it
 is not hard to check that the associated polynomial factors as
$$ x^3+(s\,x+1)^2~=(x+r)^2(x+r/4)\qquad{\rm where}\quad r=\root 3\of 4~,$$
with a double root at $-r$ and a simple root at $-r/4$. It follows easily
that the associated curve has an isolated point at $(-r,\,0)$.
Thus we have proved that \break
${\bf(a)}\Rightarrow{\bf(b)}\Rightarrow{\bf(c)}\Rightarrow{\bf(a)}$,
completing the proof of Theorem \ref{T-FS}.
\end{proof}

\bigskip

\begin{figure} [h!]
\centerline{\includegraphics[width=3.5in]{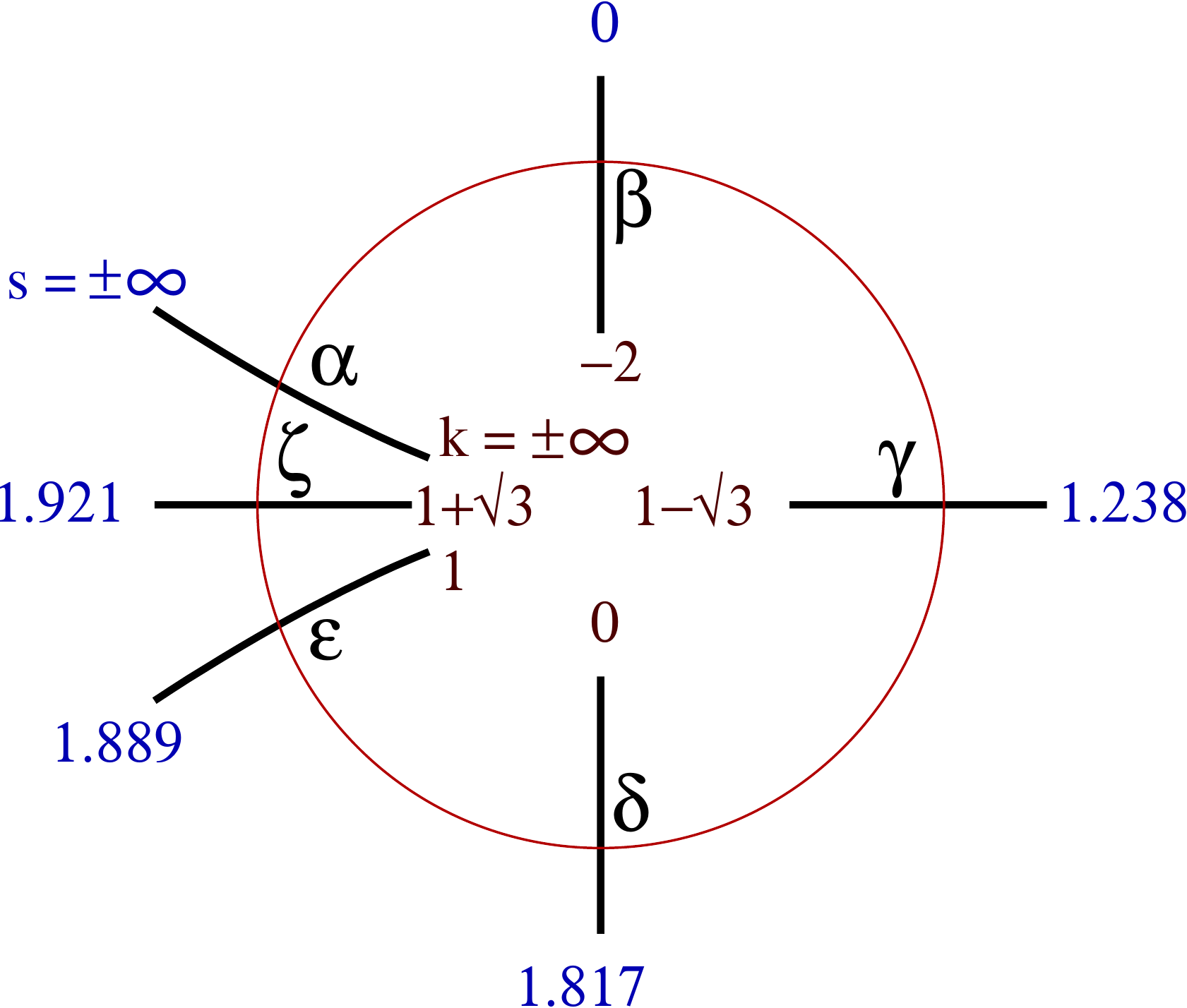}}
\caption{\label{F-rcirc} \sf Showing the unit circle in the $(a,b)$-plane
together with six leaves from the foliation of Figure \ref{F-abplane}.
The labels $\boldsymbol\alpha$ through $\boldsymbol\zeta$ on
 these leaves
correspond to the labels on the cubic curves of Figures
 \ref{F1} and \ref{F-alf}. The numbers outside the circle in this figure give
the flex-slope  invariant $s$ for the corresponding curves, and the numbers
 inside the circle  give
the Hesse invariant $k$. Note that both $s$ and $k$ increase monotonically 
from $-\infty$ to $+\infty$
as we follow the circle clockwise from $\boldsymbol\alpha$ back around to 
$\boldsymbol\alpha$. The curve associated with any point of this plane
 has two connected components if and only if\break
$\qquad\qquad k\ge 1~~\Longleftrightarrow~~s\ge 1.88988\ldots$.}
\end{figure}
\bigskip

\begin{rem}\label{R-s}
More generally, for any curve of the form
$$Y^2~=~X^3+AX^2+BX+C $$
the flex-slope invariant can be computed as  
$$s~=~\frac{dY/dX}{\root 3\of Y}~,$$
to be evaluated at either of the finite flex points $(X_0,\,\pm Y_0)$.
In fact, if we set $x=\lambda^2X$ and $y=\lambda^3Y$, then the slope will be
$dy/dx=\lambda\, dY/dX$. Choosing $\lambda=1/{\root 3\of Y_0}$, the $y$
 coordinate at the finite flex points with be $\pm 1$, and we can translate
 the $x$ coordinate so that it will be zero at these points.
\end{rem}
\medskip

Thus we have three different possible normal forms for real cubic curves:
the unit circle normal form~(\ref{E-cnf}), the Hesse normal form~(\ref{E-He}),
 and the flex-slope normal form~(\ref{E-FS}).
These are compared in Figure \ref{F-rcirc}. Two of the leaves in this figure
correspond to singular curves, and separate the connected cubic curves
from curves with two components: The $\balph$-leaf is the set
 of curves with a self-crossing point, giving rise to improper group action;
while the $\bep$-leaf is the set of curves with an isolated 
(necessarily singular) point. A cubic curve has two components if and only if 
$~~1\le k<\infty~~~\Longleftrightarrow~~~ 1.889\ldots\le s<\infty$

The $\bbet$ leaf also corresponds to cubic curves with a distinctive geometry. 
These are the only real cubics such that the tangent lines at the three flex
points all pass through a common point. Note that the $\bbet$ leaf and the
$\bdel$ leaf both lie in the coordinate line $a=0$, with  shape invariant 
$\bJ=0$. These correspond to complex curves with six-fold rotational symmetry.
Similarly both the $\bgam$-leaf and the $\bzet$-leaf lie in the coordinate line
$b=0$ with $\bJ=1$, corresponding to complex cubics with four-fold rotational 
symmetry.\ssk

It is noteworthy that the one singular curve-class in $\M_3(\C)$ splits
into two distinct singular curve-classes $\balph$ and $\bep$ in $\M_3(\R)$.
However, only the singularity of type $\balph$,
 corresponding to curves with a real self-crossing point, is improper. 
All other curves in $\fC^\fs_3$ have an
automorphism group $\fS_3$ of order six; but real curves of class $\balph$ 
have an automorphism group of order two. The proof of weakly proper action
in this case is completely analogous to the proof in the complex case.

\bigskip

\setcounter{lem}{0}
\section{Degree $n\ge 4$: the Complex Case. }\label{s-cc} 
Recall from Section \ref{s-mod} that the moduli space $\M_n=\M_n(\C)$
is defined to be the quotient space $\wfC_n^\fs/\bG$, where 
$\wfC_n^\fs=\wfC_n^\fs(\C)$
 is the open subset consisting of 1-cycles with finite
stabilizer in the  complex projective
 space $\wfC_n$ consisting of all 1-cycles of degree $n$, 
and where $\bG$ is the projective linear group $\PGL_3(\C)$.
Here is a preliminary statement.

\begin{prop}\label{P-nh}
  For $n\ge 7$ the moduli space $\M_n$ is not a Hausdorff space.
\end{prop}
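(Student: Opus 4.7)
My plan is to adapt the proof of Lemma~\ref{L-D5} to the 2-dimensional setting. The idea is to exhibit two cycles $\cC_1, \cC_2 \in \wfC^\fs_n$ in distinct $\bG$-orbits together with a family of cycles $\cC^{(r)} \in \wfC^\fs_n$ all lying on a common $\bG$-orbit, such that $\cC^{(r)} \to \cC_1$ and $\g_r(\cC^{(r)}) \to \cC_2$ for a suitable involution family $\g_r \in \bG$. Since $\cC^{(r)}$ and $\g_r(\cC^{(r)})$ lie on the same orbit, this forces $\bpi(\cC_1)$ and $\bpi(\cC_2)$ to share every pair of saturated neighborhoods, proving non-Hausdorffness.

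Fix the line $L = \{z = 0\} \subset \bP^2$ and set $k = \lceil n/2 \rceil$, so that $d := n - k$ satisfies $3 \le d \le k$ (this is where the hypothesis $n \ge 7$ enters); set also $m := 2k - n \in \{0,1\}$. Choose generic smooth curves $\cC'_1, \cC'_2$ of degree $d$ not passing through $(1:0:0)$ and lying in distinct $\bG$-orbits (possible since $\dim \M_d > 0$ for $d \ge 3$). The reducible cycles $\cC_j := \cC'_j + k L$ lie in $\wfC^\fs_n$ (their stabilizers sit inside the intersection $\mathrm{Stab}(\cC'_j) \cap \bG_L$, where $\bG_L$ is the 6-dimensional subgroup preserving $L$; this intersection is finite by genericity), and they represent distinct points of $\M_n$: any $\g \in \bG$ with $\g(\cC_1) = \cC_2$ must send the degree-$d$ component of $\cC_1$ to the degree-$d$ component of $\cC_2$, forcing $\g(\cC'_1) = \cC'_2$, contrary to construction.

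The involution family I propose is the explicit projective reflection
\[
\g_r := \begin{pmatrix} 1 & 0 & r \\ 0 & 1 & 0 \\ 0 & 0 & -1 \end{pmatrix} \in \PGL_3, \qquad r > 0,
\]
which satisfies $\g_r^2 = \mathrm{id}$, fixes every point of $L$ (so $\g_r(L) = L$ as a cycle), and tends to infinity in $\bG$ as $r \to \infty$. A direct monomial expansion shows that for any curve $\cC' = \{P = 0\}$ of degree $d$ not passing through $(1:0:0)$, the polynomial $(\g_r P)(x,y,z) = P(x + rz,\, y,\, -z)$ has dominant term $r^d \cdot a_{d,0,0} \cdot z^d$, where $a_{d,0,0}$ is the coefficient of $x^d$ in $P$; hence $\g_r(\cC') \to d \cdot L$ in $\wfC_d$. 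Setting
\[
\cC^{(r)} := \cC'_1 + \g_r(\cC'_2) + m \cdot L \;\in\; \wfC^\fs_n
\]
(finite stabilizer for large $r$, by the same component-degree argument used above once $\cC'_1 \ne \g_r(\cC'_2)$), the involution relation gives $\g_r(\cC^{(r)}) = \g_r(\cC'_1) + \cC'_2 + m \cdot L$, and passing to the limit yields
\[
\cC^{(r)} \to \cC'_1 + (d+m)L = \cC_1, \qquad \g_r(\cC^{(r)}) \to \cC'_2 + k L = \cC_2.
\]

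The main technical step is the limit $\g_r(\cC') \to d \cdot L$ in the Chow variety $\wfC_d$; this is a clean polynomial computation completely analogous to how $g_r(\cD')\to h\langle\infty\rangle$ was used in Lemma~\ref{L-D5}, with the 1-dimensional inversion $z \mapsto r^2/z$ replaced by the projective reflection $\g_r$ about the point $(-r/2, 0)$ of the affine chart $\bP^2 \setminus L$. I do not anticipate any serious obstacle, and the parity of $n$ is absorbed uniformly by the term $m \cdot L$, so no separate argument is needed for $n$ even versus $n$ odd.
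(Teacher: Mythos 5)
Your proof is correct, and it proves the statement by a route that is structurally parallel to the paper's but uses a different family of cycles. Both you and the paper adapt Lemma~\ref{L-D5}: push ``half'' the cycle toward a high-multiplicity limit via group elements $\g_r\to\infty$ in $\bG$, and observe that $\cC^{(r)}$ and $\g_r(\cC^{(r)})$, though always in the same orbit, converge to non-equivalent finite-stabilizer limits. The paper realizes this using 1-cycles that are unions of lines (equivalently, zero-cycles of points in the dual plane), taking $A_3$ and $A_4$ in general position together with a diagonal one-parameter family $\g_t$; as written, the published proof only spells out the split $3+4$, i.e.\ $n=7$. You instead take a generic smooth curve of degree $d=\lfloor n/2\rfloor$ plus a line with multiplicity $k=\lceil n/2\rceil$, pushed by an explicit projective involution $\g_r$; this handles every degree uniformly, and the finite-stabilizer and distinct-orbit checks are clean because a smooth irreducible curve of degree $\ge 3$ already has finite stabilizer and cannot be swapped with the line component. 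One small inaccuracy in your bookkeeping: the constraints $3\le d\le k$ already hold for $n\ge 6$ (where $d=k=3$, $m=0$), not only for $n\ge 7$, so the hypothesis $n\ge 7$ does not actually ``enter'' where you claim it does. In fact, taken at face value your construction establishes that $\M_6$ is not Hausdorff --- a case the paper explicitly leaves open --- since $\dim\M_3>0$ gives two smooth cubics in distinct $\bG$-orbits, and every step of the argument (the finite stabilizers of $\cC'_j+3L$ and of $\cC'_1+\g_r(\cC'_2)$, the limit $\g_r(\cC')\to 3L$, and the distinctness of the two limit orbits) runs without change.
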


(It seems likely that $\M_4,\,\M_5,\, \M_6$ are 
also non-Hausdorff; but we don't know.)

\begin{proof}[Proof of Proposition \ref{P-nh}] Consider the subspace
of $\M_n$ consisting of formal sums
$$ m_1\cdot L_1+\cdots+ m_k\cdot L_k\quad{\rm with}\quad n={\textstyle \sum}
 m_j~, $$
where the $L_j$ are lines. Each line $L_j$ in the plane $\bP^2$
is dual to a point $\p_j$ in the dual plane $\bP^{2*}$, yielding an
associated zero-cycle $~~m_1\<\p_1\>+\cdots+m_k\<\p_k\>~~$ in the dual plane.
The argument is now similar to the proof of Lemma \ref{L-D5}, but
with suitable modification since we are now working in $\bP^2$
rather than $\bP^1$.

By definition, four points of $\bP^2$ are in \textbf{\textit{general position}}
if no three are contained in a common line. Note that the action of
$\bG=\PGL_3$ on $\bP^2$ is simply transitive on 4-tuples
$(\p_1,\,\p_2,\,\p_3,\,\p_4)$ which are in general position. In fact
there is one and only one group element $\g$ such that 
$$ \g(\p_1)= (1:0:0)\,,~~\g(\p_2)=(0:1:0)\,,~~\g(\p_3)=(0:0:1)\,,
~~{\rm and}~~\g(\p_4)=(1:1:1)~.$$
It follows that any zero-cycle which includes four points in general
position will have finite stabilizer.

We will make use of automorphisms of the form $\g_t(x:y:z)=(t^{-1}x:y: tz)$
so that $\g_t(x:y:z)\to (0:0:1)$ as $t\to\infty$  if $z\ne 0$, 
and $\g_t^{-1}(x:y:z)\to(1:0:0)$ if $x\ne 0$.

Now let $A_3$ and $A_4$ be zero-cycles of degree three and four in general
position and with all points satisfying $xz\ne 0$. Then the cycles
$$ A_3+\g_t(A_4)\qquad{\rm and}\qquad \g_t^{-1}(A_3)+A_4 $$
belong to the same $\bG$-orbit. But as $t\to\infty$ the first tends to
$A_3+4\<(0:0:1)\>$ while the second tends to $3\<(1:0:0)\>+A_4$. Here the
second limit clearly has finite stabilizer, and we can choose $A_3$ so
that the first does also. Since these two limits evidently
 do not belong to the same $\bG$-orbit, it follows
that the quotient space is not Hausdorff.
\end{proof}
\bigskip

In this section and the next one
 we will describe moderately large open subsets of $\M_n$ which are Hausdorff. 
First note that $\M_n$ is a ${\rm T}_1$-space.
\smallskip

\begin{theo}[\bf Theorem of Ghizzetti; Aluffi, and Faber] For any
\hbox{$\bG$-orbit} 
$\((\cC\))\subset\wfC_n$, every limit point in the complement
$\overline{\((\cC\))}\ssm\((\cC\))$ has infinite stabilizer. 
\end{theo}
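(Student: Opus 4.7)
The plan is to realize $\cC'$ as the limit of the orbit of $\cC$ under a diverging one-parameter subgroup of $\bG$, and to observe that this one-parameter subgroup must then projectively fix $\cC'$, yielding an infinite stabilizer.  This is essentially the numerical criterion of Hilbert--Mumford applied to the Chow variety $\wfC_n \cong \bP^{n(n+3)/2}$.

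Write $\cC' = \lim_j \g_j(\cC)$ with $\g_j \in \bG$.  If $\{\g_j\}$ had a subsequence contained in a compact subset of $\bG$, passing to a convergent subsequence $\g_j \to \g \in \bG$ would yield $\cC' = \g(\cC) \in \((\cC\))$, contradicting the hypothesis.  Hence $\g_j$ escapes every compact subset of $\bG$.  Apply Lemma~\ref{L-linalg} to write $\g_j = \r_j \d_j \r_j'$ with $\r_j, \r_j' \in \mathrm{PU}_3$ unitary and $\d_j$ diagonal with positive entries.  By compactness of $\mathrm{PU}_3$, pass to a subsequence with $\r_j \to \r$ and $\r_j' \to \r'$.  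It then suffices to show that the limit $\tilde\cC' := \lim_j \d_j(\tilde\cC)$ of diagonal-only translates of $\tilde\cC := \r'(\cC)$ has infinite stabilizer, since $\cC' = \r(\tilde\cC')$ and $\bG_{\cC'} = \r\,\bG_{\tilde\cC'}\,\r^{-1}$.  The replacement of $\r_j',\r_j$ by the limits $\r',\r$ does not alter the projective limit, because the dominant-weight subspace identified below depends only on the direction of divergence of $\d_j$, and small perturbations of the input polynomial cannot change the projection onto that subspace in the limit.

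Write $\d_j = \mathrm{diag}(e^{\alpha_j}, e^{\beta_j}, e^{\gamma_j})$ normalized so $\alpha_j + \beta_j + \gamma_j = 0$; then $t_j := \|(\alpha_j, \beta_j, \gamma_j)\| \to \infty$.  Pass to a further subsequence so that $(\alpha_j, \beta_j, \gamma_j)/t_j \to (\alpha, \beta, \gamma)$, a nonzero traceless unit vector, and form the one-parameter subgroup $\bph: \R \to \bG$ defined by $\bph(t) = \mathrm{diag}(e^{t\alpha}, e^{t\beta}, e^{t\gamma})$.  Expanding the defining polynomial of $\tilde\cC$ as $\tilde\Phi = \sum_{i+j+k=n} c_{ijk}\, x^i y^j z^k$, the coefficient of $x^i y^j z^k$ in $\d_j(\tilde\Phi)$ is $c_{ijk}\, e^{-(i\alpha_j + j\beta_j + k\gamma_j)}$.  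Let $w(i,j,k) := i\alpha + j\beta + k\gamma$, let $w_{\min}$ be its minimum over the support of $\tilde\Phi$, and let $S$ be the (nonempty) set of monomials attaining this minimum.  After rescaling by $e^{t_j w_{\min}}$, coefficients with $(i,j,k) \in S$ tend to $c_{ijk}$, while coefficients with $w(i,j,k) > w_{\min}$ tend to $0$.  Thus projectively
\[
\tilde\cC' \;=\; \Bigl\{\,\tilde\Phi'(x,y,z)=0\,\Bigr\}, \qquad \tilde\Phi' \;=\; \sum_{(i,j,k) \in S} c_{ijk}\, x^i y^j z^k.
\]

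Every monomial appearing in $\tilde\Phi'$ carries the same $\bph$-weight $w_{\min}$, so $\bph(t)\cdot\tilde\Phi' = e^{-t w_{\min}}\, \tilde\Phi'$ for every $t \in \R$, meaning $\tilde\cC'$ is projectively fixed by the entire positive-dimensional subgroup $\bph(\R) \subset \bG$.  Hence $\bG_{\tilde\cC'}$, and therefore $\bG_{\cC'}$, is infinite.  The main obstacle is the justification that replacing $\r_j'$ by $\r'$ does not disturb the projective limit under the diverging $\d_j$; this is handled by noting that the weight decomposition of the ambient representation $\mathrm{Sym}^n(\C^3)^*$ depends only on $(\alpha,\beta,\gamma)$, so any input with nonzero component in the weight-$w_{\min}$ subspace (which $\tilde\Phi$ has by construction, and hence so do nearby polynomials $\r_j'(\cC)$ for large $j$) has the same projective limit under $\d_j$.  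The ancillary checks—that $(\alpha,\beta,\gamma)$ is nonzero (it is a unit vector) and that $\tilde\Phi'$ is nonzero (since $S\neq\emptyset$)—are elementary.
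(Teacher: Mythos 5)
The paper does not supply its own proof of this theorem; it cites Ghizzetti \cite{Ghi} and Aluffi--Faber \cite{AF2}, remarking that ``the proof involves a detailed case by case analysis.''  Your route via the Cartan decomposition (Lemma~\ref{L-linalg}) and weight spaces is therefore a genuinely different, Hilbert--Mumford-style argument.  Unfortunately it has a real gap at the central step.

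You wish to replace the sequence $\r_j'(\cC)$, which is what $\d_j$ actually acts on, by its limit $\tilde\cC=\r'(\cC)$, asserting that ``small perturbations of the input polynomial cannot change the projection onto that subspace in the limit.''  This is false: the diverging $\d_j$ amplify the error $\r_j'(\cC)-\r'(\cC)$ exponentially, and the amplification rate depends on the monomial.  For example take $\tilde\Phi=x^2y$ and $\d_j=\mathrm{diag}(e^j,1,e^{-j})$, so $(\alpha,\beta,\gamma)=(1,0,-1)/\sqrt2$ and $S=\{x^2y\}$.  If $\Phi_j=\r_j'(\cC)\to\tilde\Phi$ carries a component $\epsilon_j\,yz^2$ with $\epsilon_j\to0$, then in $\d_j(\Phi_j)$ the $yz^2$-coefficient is $\epsilon_j e^{2j}$, while the $x^2y$-coefficient is $e^{-2j}$.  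Whenever $\epsilon_j\gg e^{-4j}$ (for instance $\epsilon_j\asymp e^{-3j}$, which occurs for a suitable sequence $\r_j'\to\r'$ of unitaries), the projective limit is $yz^2$, not $x^2y$: the surviving monomial lies outside $S$ and has a different $w$-weight, so your formula for $\tilde\Phi'$ is wrong and the invariance under $\bph$ does not follow.  (In this toy example $yz^2$ happens to be a W-cycle, so the \emph{conclusion} survives, but your argument provides no mechanism forcing the surviving monomials to share a common weight once the perturbation is in play.)  A subsidiary issue: you also tacitly assume $\lim_j\d_j(\tilde\cC)$ exists, which requires yet another passage to a subsequence before it can be extracted at all.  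Closing this gap essentially amounts to redoing the Hilbert--Mumford argument properly, via an Iwahori-type decomposition of $\bG$ over a ring of Laurent series rather than the real/complex Cartan decomposition you use.

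It is worth noting that over $\C$ there is a short proof by pure dimension counting which bypasses all of this.  The orbit $\((\cC\))$ is constructible (Chevalley), and translating by $\bG$ shows it is \emph{open} in its Zariski closure; hence $\overline{\((\cC\))}\ssm\((\cC\))$ is a proper Zariski-closed subset of the irreducible variety $\overline{\((\cC\))}$ and has strictly smaller dimension.  Since Zariski and analytic closures of constructible sets agree, this also holds for the metric closure.  Any limit point $\cC'$ has its entire orbit $\((\cC'\))$ contained in this boundary, so $\dim\((\cC'\))<\dim\((\cC\))\le 8$, and Equation~(\ref{E-orbitdim}) forces $\dim\bG_{\cC'}>0$.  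This is more elementary than either your argument or the case analysis of the cited references, and makes no use of the special features of $\PGL_3$ or the Chow variety.
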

\smallskip

See \cite{Ghi} and \cite{AF2}. Although this statement is not emphasized in
these papers, it is clearly stated; see 
for example  \cite[p. 35]{AF2}. The proof
involves a detailed case by case analysis. \qed
\medskip

As an immediate Corollary, it follows that:

\begin{coro}
Every $\bG$-orbit which is contained in $\wfC_n^\fs$ is a closed subset of 
$\wfC_n^\fs$. In other words, every point
in the quotient space \hbox{$\M_n=\wfC_n^\fs/\bG$} is closed.
\end{coro}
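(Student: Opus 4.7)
The plan is to derive the corollary as an immediate consequence of the just-stated theorem of Ghizzetti, Aluffi, and Faber, together with the definition of $\wfC_n^\fs$ as the complement of $\fW_n$ in $\wfC_n$.

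First I would fix a $\bG$-orbit $\((\cC\))$ with $\cC \in \wfC_n^\fs$, and take the closure $\overline{\((\cC\))}$ inside the ambient projective space $\wfC_n$ (where $\wfC_n$ carries its natural projective topology, so that closure is unambiguous). By the theorem just quoted, every point of $\overline{\((\cC\))} \ssm \((\cC\))$ has infinite stabilizer, and thus lies in $\fW_n$. Since by definition $\wfC_n^\fs = \wfC_n \ssm \fW_n$, it follows that
$$ \overline{\((\cC\))} \cap \wfC_n^\fs ~=~ \((\cC\))~.$$
This is precisely the statement that $\((\cC\))$ is closed as a subset of $\wfC_n^\fs$ with respect to the subspace topology inherited from $\wfC_n$.

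For the second assertion, I would invoke Remark \ref{R-T1}: a quotient of the form $\bX/\!\eqv$ is a $\rm T_1$-space if and only if each equivalence class is closed in $\bX$. Applying this with $\bX = \wfC_n^\fs$ and with equivalence relation given by the $\bG$-action, the orbit-closedness established above translates directly into the statement that every point of $\M_n = \wfC_n^\fs/\bG$ is a closed subset of the quotient.

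There is essentially no obstacle here, since all the substantive work is absorbed into the Ghizzetti-Aluffi-Faber theorem. The only subtlety worth spelling out explicitly is the routine point that $\((\cC\))$ being closed in the subspace $\wfC_n^\fs$ is equivalent to $\overline{\((\cC\))} \cap \wfC_n^\fs = \((\cC\))$, where the closure is taken in the ambient $\wfC_n$; this is standard and requires no further argument.
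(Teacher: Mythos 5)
Your proof is correct and is exactly what the paper intends: the paper states the corollary as an "immediate" consequence of the Ghizzetti--Aluffi--Faber theorem and gives no further argument, while you have supplied the same routine deduction (boundary points of the orbit closure lie in $\fW_n$, hence the orbit is closed in $\wfC_n^\fs$, hence by Remark \ref{R-T1} the quotient is ${\rm T}_1$). No discrepancy with the paper's approach.
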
  
\medskip

The constructions in this section will be
 based on the concept of ``virtual flex point''.
\ssk

\begin{definition} \label{D-vfp}
Let $\U_n\subset\fC_n$ be the open set consisting of 
all \textbf{\textit{line-free curves}} $\cC$ of degree $n$. In other words,
for $\cC\in\U_n$ we  assume:
\begin{itemize}
\item[{\bf (1)}] that $\cC$ contains no line, and 

\item[{\bf (2)}] that every  
irreducible component of $\cC$ has multiplicity one.
\end{itemize}

\noindent For $\cC\in\U_n$, a point $\p\in|\cC|$ will be called a
\textbf{\textit{virtual flex point}} if it is either a singular point or
 a flex point.
\end{definition}

\begin{lem}\label{L-vfp} Every virtual flex point $\p$ for a curve in $\U_n$
can be assigned a \textbf{\textit{flex-multiplicity}} 
$~~\bph(\p)\ge 1~$ with the following properties:

\begin{itemize}
\item[{\bf(1)}] The sum of $\bph(\p)$ over all virtual flex points is equal
to $3n(n-2)$.

\item[{\bf(2)}] Under a generic small perturbation, each virtual flex point
 $\p$ splits into $\bph(\p)$ distinct nearby simple flex points.
\end{itemize}\end{lem}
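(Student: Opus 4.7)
The plan is to define $\bph(\p)$ as the local intersection multiplicity of $\cC$ with its Hessian curve at $\p$, and then to verify the two properties using Bezout's theorem and a deformation argument. Recall that if $\Phi$ is the defining polynomial of $\cC$ (degree $n$), its Hessian
$$H(\Phi) \;=\; \det\begin{pmatrix}\Phi_{xx}&\Phi_{xy}&\Phi_{xz}\\ \Phi_{xy}&\Phi_{yy}&\Phi_{yz}\\ \Phi_{xz}&\Phi_{yz}&\Phi_{zz}\end{pmatrix}$$
is a homogeneous polynomial of degree $3(n-2)$, defining a Hessian curve $\cH$. A classical local computation at a smooth point of $\cC$ (after diagonalizing the quadratic part of the tangent cone) shows that a smooth point $\p\in|\cC|$ lies on $\cH$ exactly when its tangent line has contact of order $\ge 3$, i.e.~$\p$ is a flex, while every singular point of $\cC$ automatically satisfies $H(\Phi)(\p)=0$. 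So $|\cC|\cap\cH$ is precisely the set of virtual flex points, and I would set $\bph(\p) := I_\p(\cC,\cH)$.

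Next I would check that under the hypotheses of Definition~\ref{D-vfp}, $\cH$ shares no component with $\cC$, which is the crucial place where both parts of the definition of $\U_n$ enter. Writing $\Phi=\Phi_1\cdots\Phi_k$ with distinct irreducible factors (the multiplicity-one condition ensures no repeated factor, which would automatically force $H(\Phi)$ to vanish along that component), I would compute $H(\Phi)$ at a generic point of a component $|\cC_j|$ where all other $\Phi_i$ are nonzero. There $H(\Phi)$ reduces, modulo a nonvanishing scalar and terms divisible by $\Phi_j$, to $\Phi_i(\p)^2 \cdot H(\Phi_j)$ for $i\ne j$, and Hesse's classical theorem asserts that $H(\Phi_j)$ vanishes identically on $V(\Phi_j)$ only when $\Phi_j$ is linear. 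The line-free hypothesis then guarantees $\cH\not\supset|\cC_j|$ for each $j$. With this in place, Bezout's theorem immediately yields
$$\sum_{\p\in|\cC|\cap\cH}\bph(\p)\;=\;\deg(\cC)\cdot\deg(\cH)\;=\;n\cdot 3(n-2)\;=\;3n(n-2),$$
establishing (1).

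For (2) I would use a deformation argument. The set $\U_n^{\rm gen}\subset\U_n$ of smooth curves all of whose flex points are simple (i.e.~have $I_\p(\cC,\cH)=1$) is the complement of a proper Zariski-closed subset of $\wfC_n$, since both the discriminant condition and the vanishing of the appropriate minor of the intersection scheme are algebraic. Given $\cC\in\U_n$ and a virtual flex point $\p$, choose a holomorphic one-parameter family $\cC_t\in\U_n$ with $\cC_0=\cC$ and $\cC_t\in\U_n^{\rm gen}$ for small $t\ne 0$; such a family exists because $\U_n^{\rm gen}$ is dense. The zero-cycle $\cC_t\cdot\cH_t$ varies continuously with $t$ in the Chow group of $\bP^2$, so the sum of $I_{\p'}(\cC_t,\cH_t)$ over points $\p'$ of $|\cC_t|\cap\cH_t$ lying in a small disk about $\p$ equals $\bph(\p)$ for all small $t$; since each such $\p'$ is a simple flex of $\cC_t$, this forces there to be exactly $\bph(\p)$ of them.

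The main obstacle is the component-freeness step: verifying cleanly that $H(\Phi)$ does not vanish identically on any $V(\Phi_j)$ when no $\Phi_j$ is linear. One does this by choosing affine coordinates so that a generic smooth point of $|\cC_j|$ is at the origin with the tangent line horizontal, computing $H(\Phi)$ symbolically in terms of the parametrization of $|\cC_j|$, and showing that the leading term involves the second derivative of the local parametrization of $|\cC_j|$, which cannot be identically zero unless $|\cC_j|$ is a line. The remaining book-keeping (continuity of intersection cycles, denseness of $\U_n^{\rm gen}$) is standard.
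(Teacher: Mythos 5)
Your proposal is essentially the same as the paper's: define $\bph(\p)$ as the local intersection multiplicity with the Hessian curve $\cH_\cC$, verify that the two hypotheses on $\U_n$ (line-free, multiplicity one) rule out a common component, and apply B\'ezout. You flesh out two points the paper leaves terse: the paper dispatches the shared-component check in one sentence ("such a sub-curve would have to be either a line or a component of multiplicity $\ge 2$; and both possibilities have been excluded"), whereas you give a local analysis along each $|\cC_j|$; and the paper does not spell out property (2) at all, whereas you supply the deformation/continuity-of-cycles argument. One small caveat on your presentation: what you call "Hesse's classical theorem" is usually stated for the Hessian form vanishing identically as a polynomial; the statement you actually need---that $\Phi_j\nmid H(\Phi_j)$ for irreducible $\Phi_j$ of degree $\ge 2$, equivalently that an irreducible non-linear curve is not contained in its own Hessian---is a closely related but distinct classical fact (every smooth point a flex forces a line). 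Worth phrasing carefully, but the argument is sound.
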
 \ssk

\begin{figure}[h!]
\centerline{\includegraphics[width=4.1in]{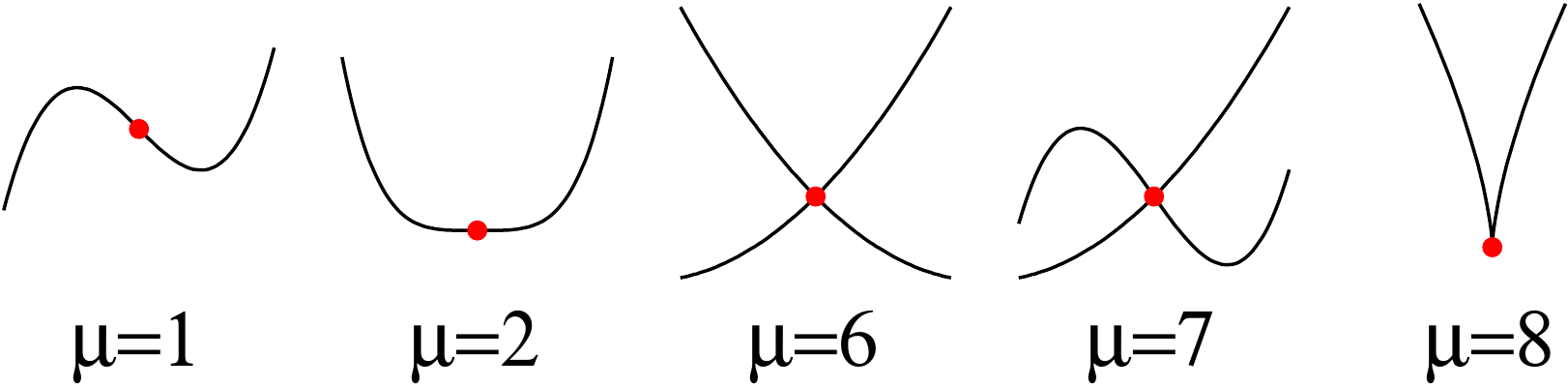}}
\caption{\label{F-vfp}
\sf Examples of virtual flex points with small flex-multiplicity. The first is
a simple flex point, the second is a double flex point\break $(y=x^4)$,
and the remaining three are singular points.
}\end{figure}

\begin{proof}[Proof of Lemma \ref{L-vfp}]
Define $\bph(\p)$ as the local intersection multiplicity
between the curve $\cC$ of degree $n$ and its associated Hessian curve 
$\cH_\cC$ of degree $3(n-2)$.  (See for example \cite{Gr}, \cite{Kir},
 \cite{Kun}, or \cite{Sha}.)
The statement then follows from B\'ezout's Theorem.
In order to apply B\'ezout, it is first necessary 
to check that $\cC$ and $\cH_\cC$ have no common sub-curve.
But such a sub-curve would have to be either a line
or a component of 
multiplicity $\ge 2$; and both possibilities have been excluded.
To prove that $\bph(\p)\ge 1$ at every singular point, we proceed as
follows. Taking the singular point of a curve of degree $n$ to be $(0:0:1)$
the defining equation must have the form
$$\Phi(x,y,z)~=~\sum_{j=2}^n~\Phi_j(x,y)\,z^{n-j}$$ where the $\Phi_j$ are 
homogeneous of degree $j$. It is then easy to check that the last row
$(\Phi_{x\,z},~\Phi_{y\,z},~\Phi_{z\,z})$ of the Hessian matrix is identically
zero at $(0,0,1)$; so that
 this point is a common zero of $\Phi$ and the Hessian determinant.
\end{proof}

\ssk

\begin{ex}\label{EX-mu} Suppose that there are $k$ smooth local branches
($=$ curve germs) 
$\cB_1,\,\ldots,\,\cB_k$ of the curve $\cC$ passing through $\p$.
(Compare the cases $\bph=6,\,7$ in Figure \ref{F-vfp}  and for other
 examples see  Figure~\ref{F-deg2x2}.)  Then
$$\bph_\cC(\p)~=~\sum_j \bph_{\cB_j}(\p)~+~ 6\,\sum_{i<j} \cB_i\cdot \cB_j~,$$
where $\cB_i\cdot \cB_j$ is the local intersection multiplicity. (Here 
$\bph_{\cB_j}(\p)$ makes sense, since $\bph$ can be 
defined as a local analytic invariant.) This equation can be proved
choosing small generic translations of the $\cB_j$ so that they intersect
transversally, and then noting that a simple intersection has 
flex-multiplicity six.\footnote{Consider for example a cubic curve with
a single double point, it follows from the proof of Lemma \ref{L-1flex}
that there are three flex points, which must certainly have multiplicity
$\bph=1$. Since $\sum\bph=9$, it follows that $\bph=6$ at the double point.
(Alternatively, see Figure \ref{F-deg2x2}{\bf(4)} for an example of degree
$n=4$ with four
simple double points and no flexes, and with $\sum\bph=3n(n-2)=24$.)}
\end{ex}\medskip

It will be convenient to consider the probability measure on $\bP^2$ defined by
$$ \widehat\bph(S)~=\frac{~\sum_{\p\in S}\, \bph(\p)}{3n(n-2)}~~\in~~[0,\,1]
\qquad{\rm for~every~set}\quad S\subset\bP^2~.$$
Here it is understood that $\bph(\p)=0$ unless $\p$ is a virtual flex point
 in $|\cC|$. 
\smallskip

\begin{definition}
To every $\cC\in\U_n$, we can assign the two  rational numbers
$$ {\bf p_{max}}={\bf p_{max}}(\cC)=\max_\p~ \widehat\bph_\cC(\p)\qquad{\rm and}
\qquad {\bf L_{max}}={\bf L_{max}}(\cC)=\max_L~\widehat\bph_\cC(L)~,$$
where $\p$ ranges over all points in $|\cC|\subset\bP^2$, and
where $L$ ranges over all lines in $\bP^2$. Evidently, since there are
 at most $n$ points of $\cC$ on $L$
\begin{equation}\label{E-M}
 0~<~ {\bf p_{max}}~\le~ {\bf L_{max}}~\le~ 1\,,\quad {\rm and}\quad 
{\bf L_{max}}\le n\;{\bf p_{max}}~.\end{equation}
(Compare the emphasized triangle in Figure \ref{F-Msquare}.)
\end{definition}

\begin{figure}[h!]
\centerline{\includegraphics[width=2.5in]{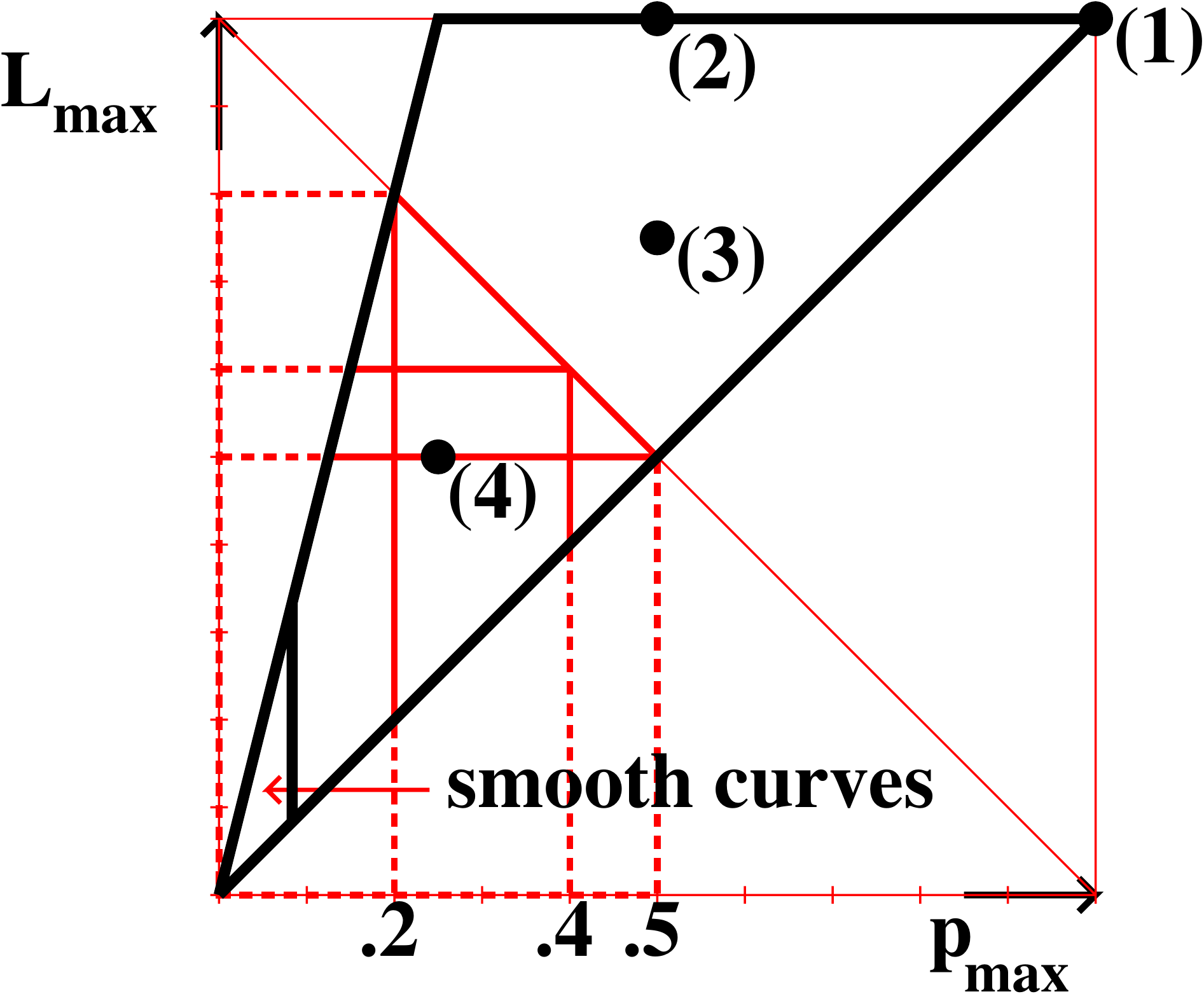}}
\caption{\sf The large black triangle encloses all possible pairs
 $\big({\bf p_{max}}(\cC),\, {\bf L_{max}}(\cC)\big)$ for curves of degree $n=4$,
 while the small  black triangle encloses the possible pairs for smooth degree
 four curves. By Theorem \ref{T-dis}, the $\bG$-action is locally proper for 
curves below the\break
diagonal line ${\bf p_{max}}+{\bf L_{max}}=1$.  As defined in 
Theorem~\ref{T-dis}, the boundaries of  three typical rectangles
 $\U_4(1/5)\,,~\U_4(2/5)$, and $\U_4(1/2)$ of provably proper action
are also shown. The heavy dots correspond to the four
examples shown in  Figure~\ref{F-deg2x2}.\label{F-Msquare}}
\end{figure}
\medskip

\begin{theo}\label{T-dis} \it If
  $~~{\bf p_{max}}(\cC_0)+{\bf L_{max}}(\cC_0)<1~~$
for a curve $\cC_0\in\U_n$, then the action
of $\bG$ on $\U_n$ is locally proper at $\cC_0$, hence the moduli space $\M_n$
is locally Hausdorff at the corresponding point $\bpi(\cC_0)\in\M_n$. 
In fact, choosing a real number $\kappa$ 
so that $~~{\bf p_{max}}(\cC_0)<\kappa\quad{\rm  and}\quad {\bf L_{max}}(\cC_0)<1-\kappa~,$
the action of $\bG$ is proper throughout the entire open subset 
$\U_n(\kappa)\subset\U_n$
consisting of curves which satisfy $~~{\bf p_{max}}(\cC)<\kappa$ and
${\bf L_{max}}(\cC)<1-\kappa$.
\end{theo}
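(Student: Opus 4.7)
The strategy generalizes the proof of Proposition \ref{P-div}, replacing divisors on $\bP^1$ by line-free curves on $\bP^2$, and the counting measure of a divisor by the probability measure $\widehat\bph_\cC$ supported at the virtual flex points of $\cC$. Since $\bph(\p)$ is defined intrinsically as the local intersection multiplicity of $\cC$ with its Hessian curve at $\p$, the measure $\widehat\bph$ is $\bG$-equivariant: $\g_*\widehat\bph_\cC = \widehat\bph_{\g(\cC)}$. Moreover, by Lemma \ref{L-vfp}(2) and the stability of intersection multiplicities under deformation, the flex-mass contained in a small neighborhood of a virtual flex point of $\cC$ is preserved under small perturbations of $\cC$.

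The first step is a distortion lemma for $\bG = \PGL_3$, analogous to Lemma \ref{L-dis1}. Writing $\g = \r\circ\d\circ\r'$ in singular-value form via Lemma \ref{L-linalg}, with $\d(x:y:z) = (a_1 x : a_2 y : a_3 z)$ and $a_1 \ge a_2 \ge a_3 > 0$, the condition that $\g$ lies outside a large compact subset of $\bG$ forces at least one of the ratios $a_1/a_2$, $a_2/a_3$ to be large. I would prove: for every $\vep > 0$ there exists a compact set $K_\vep \subset \bG$ such that every $\g \not\in K_\vep$ satisfies at least one of the following.
\begin{itemize}
\item[{\bf(A)}] (Occurring when $a_1/a_2$ is large.) There exist a ``repelling line'' $L^+ = L^+_\g$ and an ``attracting point'' $\p^- = \p^-_\g$ with $\g\big(\bP^2\ssm N_\vep(L^+)\big) \subset N_\vep(\p^-)$.
\item[{\bf(B)}] (Occurring when $a_2/a_3$ is large.) There exist a ``repelling point'' $\p^+ = \p^+_\g$ and an ``attracting line'' $L^- = L^-_\g$ with $\g\big(\bP^2\ssm N_\vep(\p^+)\big) \subset N_\vep(L^-)$.
\end{itemize}
Both assertions are direct computations once $\g$ is in diagonal form.

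Assuming the distortion lemma, the theorem follows by contradiction. If $\bG$ fails to act properly on $\U_n(\kappa)$ then one can choose $\cC_1,\cC_2\in\U_n(\kappa)$, sequences $\cC_1^{(j)}\to\cC_1$ and $\cC_2^{(j)}\to\cC_2$ in $\U_n(\kappa)$, and group elements $\g_j$ escaping every compact subset of $\bG$, with $\g_j(\cC_1^{(j)}) = \cC_2^{(j)}$. Pigeonholing between cases (A) and (B) and passing to a subsequence, I may assume all $\g_j$ satisfy (A), with $L^+_{\g_j}\to L^+$ and $\p^-_{\g_j}\to\p^-$. By $\bG$-equivariance of $\widehat\bph$ and the distortion inclusion in (A),
$$\widehat\bph_{\cC_2^{(j)}}\big(N_{2\vep}(\p^-)\big)\;\ge\;1-\widehat\bph_{\cC_1^{(j)}}\big(N_{2\vep}(L^+)\big)$$
for all $j$ large. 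Letting $j\to\infty$ and then $\vep\to 0$, the left side is bounded above by ${\bf p_{max}}(\cC_2) < \kappa$ (since in the limit $N_{2\vep}(\p^-)$ contains at most one virtual flex point of $\cC_2$), while the right side is bounded below by $1 - {\bf L_{max}}(\cC_1) > \kappa$, contradicting the displayed inequality. Case (B) is symmetric and gives the contradiction $1-\kappa > 1-\kappa$. Local properness at $\cC_0$ is then immediate from $\cC_0\in\U_n(\kappa)$, and the locally Hausdorff conclusion follows from Lemma \ref{L-Haus}.

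The main obstacle is the distortion lemma itself: one must arrange the statement so that the data $\p^\pm, L^\pm$ depend well enough on $\g$ to allow subsequential convergence, and ensure that $K_\vep$ can be chosen uniformly (independently of $\cC_1,\cC_2$). The remainder is quantitative flex-mass bookkeeping, directly modelled on the counting argument in Proposition \ref{P-div}.
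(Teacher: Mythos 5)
Your proposal is correct and follows essentially the same route as the paper. The $\bP^2$-distortion lemma you describe is exactly the paper's Lemma \ref{L-dis2} (already stated and proved just before this theorem), and the paper carries out the same flex-mass counting argument modelled on Proposition \ref{P-div}, differing only in that it verifies the definition of properness directly via explicit neighborhoods ${\mathfrak N}$, ${\mathfrak N}'$ of $\cC_0$ and $\cC'_0$ at a single fixed scale $\vep$ rather than arguing by contradiction with a double limit in $j$ and $\vep$.
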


\msk

\begin{coro}\label{C-dis} The action of $\bG$ is locally proper at every
smooth curve in $\fC_n$. In fact, every smooth curve belongs to $\U_n(\kappa)$
for every $\kappa$ between $1/(n+1)$ and $1/2$. Furthermore,
if $\cC$ is any curve in $\U_n$ which satisfies ${\bf p_{max}}+{\bf L_{max}}<1$, 
then $\cC$ belongs to $\U_n(\kappa)$ for some $\kappa$ between $1/(n+1)$
and $1/2$. Thus every 
curve-class with ${\bf p_{max}}+{\bf L_{max}}<1$ belongs to a Hausdorff
orbifold open subset of $\M_n$ 
which also contains every smooth curve-class.
\end{coro}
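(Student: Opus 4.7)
The plan is to apply Theorem~\ref{T-dis} in two steps: first to show that every smooth curve of degree $n\ge 4$ lies in $\U_n(\kappa)$ for every $\kappa\in[1/(n+1),\,1/2]$, and second to show that any $\cC\in\U_n$ satisfying ${\bf p_{max}}+{\bf L_{max}}<1$ lies in $\U_n(\kappa)$ for some $\kappa$ in that same range. The same $\kappa$ then works for both $\cC$ and every smooth curve, so the image of $\U_n(\kappa)$ in $\M_n$ supplies the desired common Hausdorff orbifold open subset.

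For the smooth case, every virtual flex point of a smooth $\cC$ is an ordinary flex with $\bph=1$, so Lemma~\ref{L-vfp} gives exactly $3n(n-2)$ of them, all equally weighted; hence ${\bf p_{max}}(\cC)=1/\bigl(3n(n-2)\bigr)$. Since a line meets $\cC$ in at most $n$ points by B\'ezout, ${\bf L_{max}}(\cC)\le 1/\bigl(3(n-2)\bigr)$. A routine check (reducing respectively to $3n^2-7n-1>0$ and $3(n-2)>2$) yields ${\bf p_{max}}<1/(n+1)$ and $1-{\bf L_{max}}>1/2$ for all $n\ge 4$. Consequently the conditions ${\bf p_{max}}<\kappa$ and ${\bf L_{max}}<1-\kappa$ of Theorem~\ref{T-dis} hold for every $\kappa\in[1/(n+1),\,1/2]$, so $\cC\in\U_n(\kappa)$.

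For the general case, let $\cC\in\U_n$ with ${\bf p_{max}}+{\bf L_{max}}<1$. The inequalities ${\bf p_{max}}\le{\bf L_{max}}\le n\,{\bf p_{max}}$ from (\ref{E-M}), combined with the hypothesis, force ${\bf p_{max}}<1/2$ (since $2\,{\bf p_{max}}\le{\bf p_{max}}+{\bf L_{max}}<1$) and $1-{\bf L_{max}}>1/(n+1)$ (since $(n+1)\,{\bf L_{max}}/n\le{\bf p_{max}}+{\bf L_{max}}<1$). Thus the open interval $\bigl({\bf p_{max}},\,1-{\bf L_{max}}\bigr)$ is nonempty, extends strictly above $1/(n+1)$, and extends strictly below $1/2$; a short case analysis on whether ${\bf p_{max}}$ lies above or below $1/(n+1)$, and whether $1-{\bf L_{max}}$ lies above or below $1/2$, shows in every case that this interval meets $[1/(n+1),\,1/2]$. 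Any $\kappa$ in that intersection places $\cC\in\U_n(\kappa)$.

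Finally, Theorem~\ref{T-dis} gives proper action of $\bG$ on $\U_n(\kappa)$, so by Lemma~\ref{L-Haus} the image in $\M_n$ is Hausdorff, and by Corollary~\ref{C-wot} it inherits an orbifold structure. By the first two steps this image contains $\bpi(\cC)$ together with every smooth curve-class, completing the proof. The only real content is the arithmetic book-keeping of the general case---verifying that the admissible interval of $\kappa$'s always meets $[1/(n+1),\,1/2]$---but the bounds ${\bf p_{max}}<1/2$ and $1-{\bf L_{max}}>1/(n+1)$ derived from (\ref{E-M}) make this automatic.
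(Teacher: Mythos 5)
Your overall strategy matches the paper's: apply Theorem~\ref{T-dis}, bound ${\bf p_{max}}$ and ${\bf L_{max}}$ for smooth curves, then show the interval of admissible $\kappa$ for an arbitrary $\cC$ with ${\bf p_{max}}+{\bf L_{max}}<1$ always meets $[1/(n+1),\,1/2]$. However, your smooth-curve step contains a genuine error: it is \emph{not} true that every flex point of a smooth curve has $\bph = 1$. Smooth plane curves can have hyperflexes, i.e., flex points where the tangent line meets the curve with contact order greater than $3$; the paper's own Figure~\ref{F-vfp} lists the (locally smooth) double flex $y=x^4$ with $\bph=2$, and for instance the Fermat quartic $x^4+y^4+z^4=0$ is smooth with hyperflexes. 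So the asserted equality ${\bf p_{max}}(\cC)=1/\bigl(3n(n-2)\bigr)$ is false; that value is only a lower bound for ${\bf p_{max}}$. The correct bound, which the paper uses, is that at a smooth point the tangent line meets a degree-$n$ curve with local intersection multiplicity at most $n$ by B\'ezout, hence $\bph\le n-2$ and $\widehat\bph\le (n-2)/\bigl(3n(n-2)\bigr)=1/(3n)$. This gives ${\bf p_{max}}\le 1/(3n)$ and ${\bf L_{max}}\le n\cdot 1/(3n)=1/3$, and since $1/(3n)<1/(n+1)$ (because $n+1<3n$ for $n\ge 1$) and $1/3<1/2$, your arithmetic still closes with these corrected bounds. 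The rest of your proposal---the inequalities ${\bf p_{max}}<1/2$ and $1-{\bf L_{max}}>1/(n+1)$ extracted from $(\ref{E-M})$, and the case analysis showing $({\bf p_{max}},\,1-{\bf L_{max}})$ always meets $[1/(n+1),\,1/2]$---is sound and gives a cleaner account than the paper's terse remark about the diagonal line and the triangle, though it is essentially the same observation.
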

\smallskip

\begin{proof}[Proof of Corollary \ref{C-dis}
$($assuming  Theorem \ref{T-dis}$)$] If $\cC$ is a smooth
 curve,  then every virtual flex point is an actual flex point, with
 flex-multiplicity
$$\bph\le n-2\,,\qquad{\rm hence}\qquad \widehat\bph\le 1/3n~.$$
Thus ${\bf p_{max}}\le 1/3n$; and therefore  ${\bf L_{max}}\le 1/3$ 
 since there are at most $n$ points of $\cC$ on any line. 
It follows easily that
\hbox{ $\cC\in
\U_n(\kappa)$} for every $\kappa$ between $1/(n+1)$ and $1/2$. 

Note that if we parametrize the line ${\bf p_{max}}+{\bf L_{max}}=1$ by setting
 $$({\bf p_{max}},\,{\bf L_{max}})
=(\kappa,\,1-\kappa)~,$$ then this line intersects the triangle defined by the
inequalities (\ref{E-M}) precisely in the interval $1/(n+1)\le \kappa\le 1/2$.
(Compare Figure \ref{F-Msquare}.)
Further details are easily supplied. 
\end{proof}
\msk

\begin{figure}[h!]
  \centerline{\includegraphics[width=3.8in]{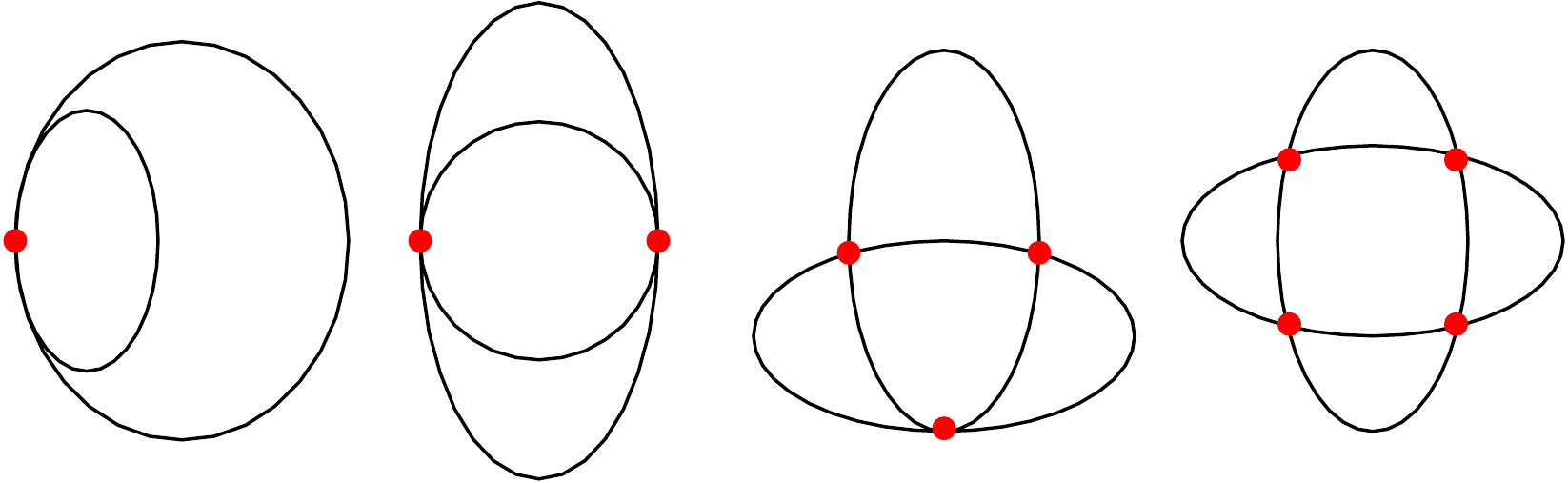}}
\centerline{$\bf(1)\qquad\qquad(2)\qquad\qquad\quad(3)\qquad\qquad\quad(4)$}
\caption{\label{F-deg2x2} \sf Representatives for the four equivalence
classes of curves which are unions of two smooth quadratic curves. 
  The first two are W-curves. (See \S\ref{s-aut}.) In
each case there are four intersection points, counted with multiplicity.}
\end{figure}\medskip

\begin{ex}\label{EX-deg2x2}
Let $\cC$ be a curve of degree four which is the union 
of two smooth curves of degree two. Then $\cC$ has
no flex points, but may have either one, two, three, or four singular points,
as illustrated in Figure~\ref{F-deg2x2}. The corresponding values of 
${\bf p_{max}}$ and ${\bf L_{max}}$ can be tabulated as follows. 
(Compare the four labeled points in Figure \ref{F-Msquare}.)
$$\begin{matrix}
{\rm \# singular~ points:}& (1)&(2)&(3)&(4)\\
{\bf p_{max}}+{\bf L_{max}}:& 1+1=2& .5+1=1.5& .5+.75=1.25& .25+.5=.75\\
\end{matrix}$$\ssk

\noindent
Thus Theorem \ref{T-dis} implies that  the last curve represents a
point in moduli space which is proper, and hence locally Hausdorff.
On the other hand,
the first two are \hbox{W-curves}, and hence do not represent any point of
 moduli space. (Compare Figure~\ref{F-W4} in Section~\ref{s-aut}.) 
The point of $\M_4$ corresponding to the third curve is more interesting:
\end{ex}\smallskip

\begin{prop}\label{P-nh  } Let $\cC_3$ be a curve in $\bP^2(\C)$
which is the union of two smooth quadratic curves which have three
intersection points $($as in Figure
$\ref{F-deg2x2}{\bf(3)})$. Then the action of $\bG$ is not even
weakly proper at $\cC_3$.
\end{prop}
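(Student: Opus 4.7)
\bigskip
\noindent\textit{Proof plan.}
The goal is to exhibit sequences $\cC_j^{(1)}, \cC_j^{(2)} \in \wfC^\fs_4$ both converging to $\cC_3$ in the Zariski projective space $\wfC_4$, lying in a common $\bG$-orbit, but with identifying elements $\g_j \in \bG$ leaving every compact subset of $\bG = \PGL_3(\C)$.

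First I will normalize. Using the 8-dimensional action of $\bG$, place the tangential intersection at $\p_0 = [0:0:1]$ with common tangent $L_0 = \{y=0\}$, and the two transverse intersections at $\p_1 = [1:1:1]$, $\p_2 = [-1:1:1]$. In these coordinates we may take
\[
Q_1 : yz = x^2 \qquad \text{and} \qquad Q_2 : y^2 + yz = 2x^2,
\]
so that $\cC_3 = Q_1 + Q_2$. The stabilizer in $\bG$ of the flag data $(\p_0, L_0, \p_1, \p_2)$ (three points plus one tangent direction imposes $2+1+2+2 = 7$ conditions on the 8-dim group) is a one-parameter subgroup, which a direct computation identifies with
\[
\bk_t \;=\; \begin{pmatrix} 1 & 0 & 0 \\ 0 & 1 & 0 \\ 0 & 1-t & t \end{pmatrix}, \qquad t\in\C^*.
\]
As $t \to 0$ or $t\to\infty$ the class of $\bk_t$ in $\PGL_3(\C)$ tends to a singular matrix, hence leaves every compact subset of $\bG$.

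The next step is to understand how $\bk_t$ moves $\cC_3$ and its deformations. Substituting the inverse, one checks that $\bk_t$ sends $Q_1,Q_2$ to the two smooth conics $(t-1)y^2 + yz - tx^2 = 0$ and $(2t-1)y^2 + yz - 2tx^2 = 0$. Both pass through $\p_0,\p_1,\p_2$ and are tangent to $L_0$ at $\p_0$, but the second-order contact data (curvatures) at $\p_0$ is rescaled from $(a_1,a_2) = (1,2)$ to $(ta_1,ta_2) = (t,2t)$. Consequently the $\bk_t$-orbit of $\cC_3$ is an unbounded curve in $\wfC_4$ whose two accumulation points (at $t\to 0$ and $t\to\infty$) are degenerate double pairs of lines $2L + 2L'$, lying outside $\wfC^\fs_4$ and different from $\cC_3$.

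I will now construct the counterexample sequences. Choose $t_j\to\infty$, and pick a small perturbation $\cC_j^{(1)}$ of $\cC_3$ whose displacement $\cC_j^{(1)} - \cC_3 \in T_{\cC_3}\wfC_4$ is positioned in a direction that the linearized $\bk_{t_j}$-action contracts at rate $1/t_j$. The key observation is that the infinitesimal drift $\frac{d}{dt}\bk_t(\cC_3)\big|_{t=1}$ (which is nonzero, as $\cC_3$ has finite stabilizer) grows linearly in $t-1$, while there is a transverse direction in $T_{\cC_3}\wfC_4$ on which $\bk_t$ acts by a contracting scalar. By choosing the perturbation in this transverse direction with magnitude tuned to $1/t_j$, the growth of $\bk_{t_j}(\cC_3) - \cC_3$ and the amplified perturbation cancel to leading order, so that
\[
\cC_j^{(2)} \;:=\; \bk_{t_j}(\cC_j^{(1)}) \;\longrightarrow\; \cC_3
\]
while simultaneously $\cC_j^{(1)} \to \cC_3$ and $\g_j = \bk_{t_j} \to \infty$ in $\bG$. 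By Definition~\ref{D-prop}, this precisely exhibits the failure of weakly proper action at $\cC_3$.

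The main technical obstacle will be the final step: identifying the contracting direction and verifying that the cancellation can be arranged while keeping the perturbed curves in $\wfC^\fs_4$ (i.e., preserving finiteness of the stabilizer). This amounts to a concrete linear algebra calculation of the eigenvalues of $d\bk_t$ acting on $T_{\cC_3}\wfC_4$ modulo the tangent space to the 8-dimensional $\bG$-orbit of $\cC_3$, together with the observation that perturbations generically split the tacnode into a pair of transverse nodes and thus remain in $\wfC^\fs_4$. Once this linear algebra is in hand, the construction above produces the desired sequences.
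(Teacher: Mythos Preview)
Your proposal conflates failure of \emph{local} properness with failure of \emph{weak} properness. Producing sequences $\cC_j^{(1)},\cC_j^{(2)}\to\cC_3$ together with diverging group elements $\g_j$ satisfying $\g_j(\cC_j^{(1)})=\cC_j^{(2)}$ only shows that the action is not locally proper. By Definition~\ref{D-prop}, weak properness would still hold if for each $j$ there were \emph{some other} group element $h_j$ lying in a fixed compact $K\subset\bG$ with $h_j(\cC_j^{(1)})=\cC_j^{(2)}$. You never rule this out. Concretely, your element $\bk_{t_j}$ might differ from such an $h_j$ by an element $h_j^{-1}\bk_{t_j}$ of the stabilizer $\bG_{\cC_j^{(1)}}$; knowing only that this stabilizer is finite does not prevent it from containing elements that drift to infinity as $j\to\infty$.

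The paper's argument is organized precisely around this point. It works instead with nearby unions of two conics having \emph{four} transverse intersection points, observes that such a curve $\cC$ carries a canonical involution $\g_\cC\in\bG_\cC$ (the four-group acting on the intersection points), and then perturbs to a generic degree-four curve $\cC_0$ with \emph{trivial} stabilizer. Trivial stabilizer guarantees that $\g_\cC$ is the \emph{unique} element of $\bG$ carrying $\cC_0$ to $\g_\cC(\cC_0)$. As two of the four intersection points coalesce, $\cC\to\cC_3$ while the involution $\g_\cC$ visibly leaves every compact set; uniqueness then forces the failure of weak properness. Your eigenvalue/cancellation scheme could perhaps be repaired by also arranging trivial stabilizer for $\cC_j^{(1)}$, but you would then need to check that the specific contracting eigendirection you want to use meets the generic locus---and you have not actually identified that direction or verified the cancellation, so this remains a second gap on top of the conceptual one.
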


\begin{proof}  We will make use of the statement that the automorphism
  group (= stabilizer) of a generic curve of degree four is trivial.
  (See Theorem \ref{T-noaut} below.)

Note that any 
curve $\cC$ which is the union of two smooth quadratic curves
with four distinct intersection points has a group of projective automorphisms
which is transitive on these four points.

To see this, note that the
 four points must be in general position, since a line can intersect
a smooth quadratic in at most two points. Thus, after a projective 
transformation, we can put the intersection points at $(\pm 1,\,\pm 1)$. 
The general quadratic equation in affine coordinates can be written
as $Q(x,y)+L(x,y)=c$; where $Q$ is homogeneous quadratic and $L$ is linear.
If the equation is to hold at all of the points $(\pm 1,\,\pm 1)$, then it is
easy to check that the linear term $L(x,y)$ must be zero, and that the
coefficient  of $xy$  in the quadratic term must be zero. Thus we are reduced
to an  equation of the form  $a\,x^2+b\,y^2=c$; where evidently $c$ must equal
$a+b$.

Any curve defined by an equation of the form $a\,x^2+b\,y^2=a+b$ 
 is clearly invariant under the four element group
$$(x,y)\mapsto (\pm x,\,\pm y)~.$$
It follows that any union $\cC$ of two such curves is also invariant
 under this four element group.
 Let $\g_\cC$ be the element of the stabilizer $\bG_\cC$ corresponding
 to the involution $(x,y)\leftrightarrow(-x,\,-y)$. We can choose a curve
 $\cC_0$ arbitrarily close to $\cC$ which has no non-trivial automorphism.
 It follows that the curves $\cC_0$ and $\g_\cC(\cC_0)$ represent the same
 point of $\M_4$; but that $\g_\cC$ is the only group element carrying
 one to the other.

 Now, as we move two of the intersection points together to obtain
 the third curve {\bf(3)} in Figure \ref{F-deg2x2}, the corresponding 
 sequence of involutions clearly cannot lie in any compact subset of $\bG$.
Thus the action of $\bG$ is not weakly proper at this curve.
\end{proof}

We don't know whether the moduli space $\M_4$ is locally Hausdorff near
the corresponding point.\medskip

The proof of Theorem \ref{T-dis} will be based on the following.
\smallskip

\begin{lem}[{\bf Distortion Lemma for $\bP^2$}]\label{L-dis2}
For any $\varepsilon>0$ there exists a compact set $K_\varepsilon\subset \bG
=\PGL_3(\C)$ such that, for any $\g\in \bG\ssm K_\varepsilon$, one or both 
of the following two conditions is satisfied. Either:

\begin{itemize}
\item[{\bf (1)}] there exists a line  $L^+\subset\bP^2$ and
 a point $\p^-\in\bP^2$ such that
$$ \g\big(N_\vep(L^+)\big)~\cup~ N_\vep(\p^-)~=~\bP^2\,,~{\it or}$$

\item[{\bf (2)}] there exists a point $\p^+\in\bP^2$ and a line 
$L^-\subset\bP^2$ such that
$$ \g\big(N_\vep(\p^+)\big)~\cup~ N_\vep(L^-)~=~\bP^2\,.$$
\end{itemize}
\end{lem}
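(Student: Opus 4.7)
The plan is to mimic the $\PGL_2$ argument of Lemma \ref{L-dis1} by using the polar decomposition to reduce to the diagonal case, and then handling the diagonal case by a dichotomy based on which ratio of consecutive singular values is large.

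Applying Lemma \ref{L-linalg}, I would first write $\g = \r\circ\d\circ\r'$ where $\r,\r'$ lie in the compact group ${\rm PU}_3$ of projective unitary isometries of $\bP^2$ and $\d(x:y:z) = (a_1 x : a_2 y : a_3 z)$ with $a_1 \geq a_2 \geq a_3 > 0$ uniquely determined by $\g$. After normalizing $a_3 = 1$, the condition that $\g$ escapes every compact subset of $\bG$ is equivalent to the ratio $a_1/a_3$ being arbitrarily large. Because isometries send $\vep$-neighborhoods of lines and points to $\vep$-neighborhoods of lines and points, it suffices to prove the conclusion for $\d$ alone, with $L^\pm$ and $\p^\pm$ drawn from the three coordinate lines and coordinate points of $\bP^2$. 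The conclusion for $\g$ then follows by setting $L^+ = (\r')^{-1}(L^+_{\rm diag})$ and $\p^- = \r(\p^-_{\rm diag})$ in case (1), and analogously in case (2).

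The key observation is the identity $a_1/a_3 = (a_1/a_2)(a_2/a_3)$, so if $a_1/a_3 \geq 1/\sin^4\vep$ then at least one of the factors $a_1/a_2$ or $a_2/a_3$ must exceed $1/\sin^2\vep$. In the first sub-case I expect $\d$ to act as a strong contraction toward the fixed point $(1:0:0)$ on the complement of a small neighborhood of the line $\{x=0\}$. A direct Fubini--Study estimate verifies this: if $[x:y:z]$ lies outside $N_\vep(\{x=0\})$ then $|x| \geq \sin\vep\cdot\|(x,y,z)\|$, from which the distance from $\d[x:y:z]$ to $(1:0:0)$ is bounded above by $\arcsin\bigl((a_2/a_1)/\sin\vep\bigr) < \vep$. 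This yields case~(1) with $L^+_{\rm diag} = \{x=0\}$ and $\p^-_{\rm diag} = (1:0:0)$. In the complementary sub-case the roles of attractor and repeller are reversed: $\d$ pushes the complement of an $\vep$-neighborhood of $(0:0:1)$ into an $\vep$-neighborhood of the line $\{z=0\}$, and a parallel Fubini--Study computation (using $a_1 \geq a_2$ in place of the previous ratio bound) delivers case~(2) with $\p^+_{\rm diag} = (0:0:1)$ and $L^-_{\rm diag} = \{z=0\}$.

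Finally, the compact set $K_\vep$ can be taken to consist of all $\g\in\bG$ whose polar decomposition satisfies $a_1/a_3 \leq 1/\sin^4\vep$; this is compact because the isometry factors range over the compact group ${\rm PU}_3$, and the normalized diagonals form a closed bounded region. The main technical obstacle is simply bookkeeping the Fubini--Study estimates so that the two sub-cases interlock correctly; once the singular-value dichotomy is identified as the governing mechanism, no deeper difficulty arises.
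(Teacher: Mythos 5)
Your proof is correct and follows essentially the same route as the paper's: both reduce to the diagonal case via Lemma \ref{L-linalg}, then split according to which of the two consecutive singular-value ratios $a_1/a_2$ or $a_2/a_3$ dominates, obtaining an attracting--repelling line/point pair among the coordinate lines and vertices. The only difference is in the bookkeeping --- the paper estimates in the barycentric-style coordinates $X,Y,Z$ with $X+Y+Z=1$, while you compute directly with the Fubini--Study metric via $\arcsin$ --- but the underlying dichotomy and the identification of $\{x=0\}\leadsto(1:0:0)$ (resp.\ $(0:0:1)\leadsto\{z=0\}$) are identical.
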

\smallskip

\noindent(Note that we can interchange the two cases simply by replacing $\g$ by
 $\g^{-1}$.) 
\smallskip

\begin{proof}[Proof of Lemma \ref{L-dis2}]
In order to prove 
 this Lemma, we will discuss first  the special situation in
which the group action is diagonalizable of the form
$$  {\bf d}(x:y:z)~=~(x':y':z')=(rx:sy:tz)\quad{\rm with}\quad 
|r|\ge|s|\ge|t|>0~.$$
The ratio $\delta=|r/t|\ge 1$ can be thought of as a measure of the distortion
of $\bf d$. Note that  either 
$$ |r/s|~\ge~\sqrt{\delta}\qquad {\rm or} \qquad |s/t|~\ge~\sqrt{\delta}~.$$
To fix our ideas, suppose that 
$|r/s|\ge\sqrt{\delta}$, and set $k=\root 4\of{\delta}$. Then 
$$|r/t|~=~k^4~\ge~|r/s|~\ge~ k^2~\ge 1~.$$
 We are interested in estimates when $k$ is large.
It will be convenient to set
$$ X=\frac{|x|}{|x|+|y|+|z|}\,,~~Y=\frac{|y|}{|x|+|y|+|z|}\,,
~~Z=\frac{|z|}{|x|+|y|+|z|}\,,$$
so that $X+Y+Z=1$, with $X',~Y',~Z'$ defined similarly. Then
$$ X'/Z'=|r/t|X/Z\ge k^2X/Z\quad{\rm and}\quad X'/Y'=|r/s|X/Y\ge k^2X/Y~.$$
In particular, if $X/Z>1/k$ then $X'/Z'>k$,
 and similarly if $X/Y>1/k$ then $X'/Y'>k$.
It then follows easily, as illustrated in Figure \ref{F-dis2}, that
$$ X\ge 1/k \qquad\Longrightarrow\qquad X'>1-2/k~.$$
If $k$ is large, this means that everything out of a small neighborhood
of the line $X=0$ is mapped into a small neighborhood of the point $X=1$,
where $Y=Z=0$. This proves the first case of Lemma~\ref{L-dis2} in the 
special case of diagonal action.\msk

\begin{figure}[h!]
\centerline{\includegraphics[width=2.9in]{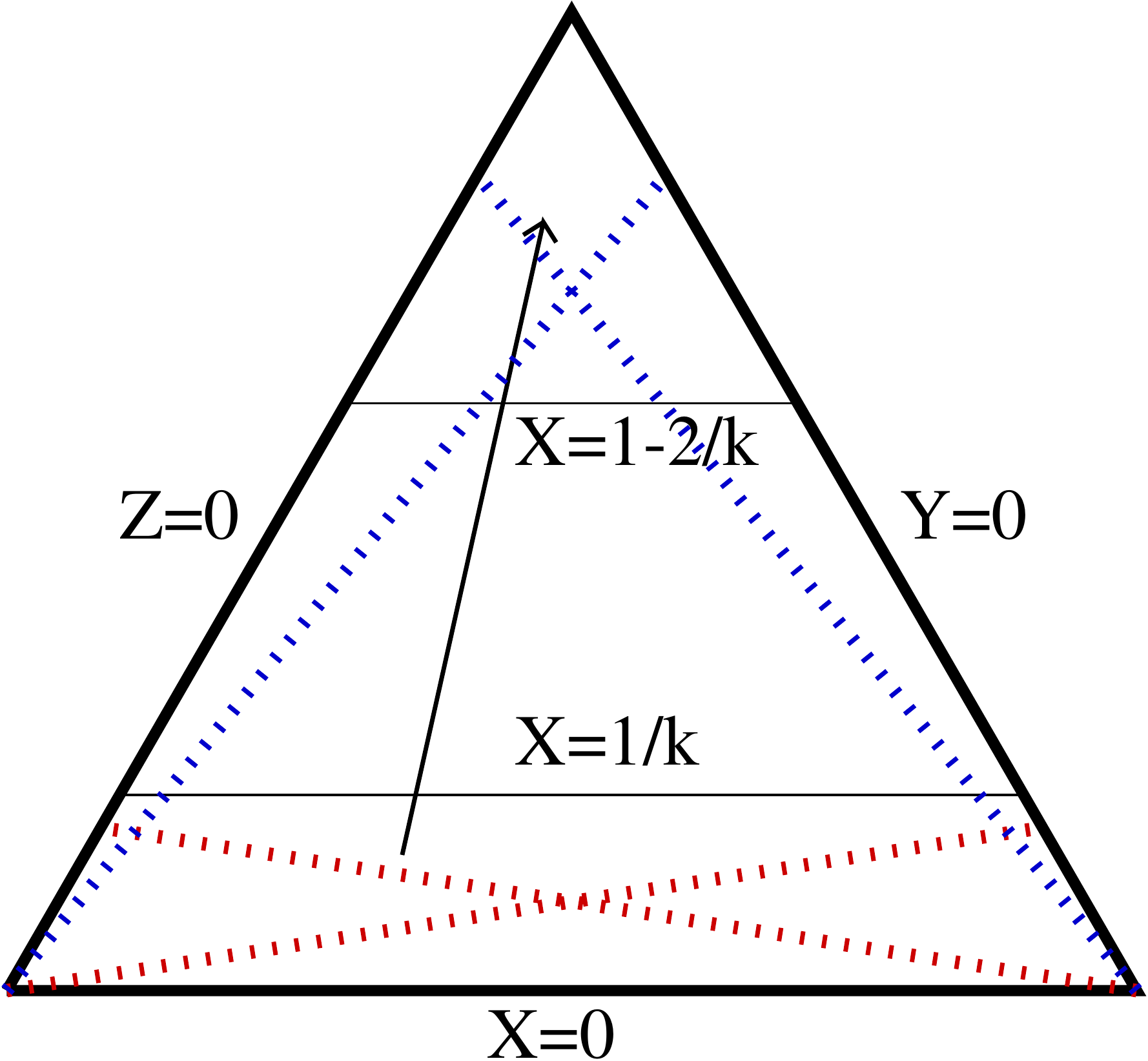}}
\caption{\label{F-dis2} \sf Showing the triangle of real numbers $X,\,Y,\,Z$
with \hbox{$X+Y+Z=1$}. The lower dotted lines indicate the loci
$X/Z=1/k$ and $X/Y=1/k$ for the case $k=5$, while the upper
dotted lines indicate the loci $X/Z=k$ and  $X/Y=k$.
Everything above
the lower dotted lines is pushed above the upper dotted lines by $\g$;
hence the region $X\ge 1/k$ is pushed into the region $X'>1-2/k$.}
\end{figure}
\medskip

But according to Lemma \ref{L-linalg}, any element of $\bG=\PGL_3$ can be 
written uniquely as a product ${\bf r}\circ{\bf d}\circ{\bf r'}$ where $\bf r$
and $\bf r'$ are unitary rotations, and where $\bf d$ is diagonal.
Suppose for example that for the diagonal  transformation
$\bf d$, everything outside of a small neighborhood of the line $x=0$ is 
pushed into a small neighborhood of the point $y=z=0$. Then setting
$$L^+={\bf r'}^{-1}\!\big(\{x=0\}\big)\qquad{\rm and}\qquad 
\p^-={\bf r}\big(\{y=z=0\}\big)~,$$
we obtain the required line and point, with

$$\xymatrixrowsep{.01in}
\xymatrix{\bP^2 \ar[r]^{\bf r'}& \bP^2 \ar[r]^{\bf d}& \bP^2 \ar[r]^{\bf r}& \bP^2\\ 
L^+\ar[r]^\cong& \{x=0\}& \{y=z=0\}\ar[r]^\cong&\p^-~.}$$

 This completes the proof of the
first case of Lemma \ref{L-dis2}. The second case is completely
analogous (or follows by replacing $\g$ by $\g^{-1}$).
\end{proof}\ssk

\begin{proof}[Proof of Theorem~$\ref{T-dis}$] 
Recall that $\U_n(\kappa)$ is the set of 
line-free curves \hbox{$\cC\in\fC_n$}
satisfying ${\bf\p_{max}}(\cC)<\kappa$ and ${\bf L_{max}}(\cC)<1-\kappa$.
 Given two curves $\cC_0$ and $\cC'_0$ in $\U_n(\kappa)$, 
first choose $\varepsilon$ small enough so that
the $\varepsilon$-neighborhoods $N_\varepsilon(\p)$ of the various
virtual flex points of $\cC_0$ are disjoint, and so that 
there exists a line intersecting any three of the 
neighborhoods $N_{2\varepsilon}(\p_j)$ only if the center points 
$\p_j$ are collinear. (Compare Figure~\ref{F-dis}. Such an $\varepsilon$ must 
exist since, if there were such a line for arbitrarily small
$\varepsilon$, then the center points would have to be collinear.) Furthermore, 
we also require the corresponding conditions for~$\cC'_0$.

Now let $\mathfrak N$ be the neighborhood of $\cC_0$ consisting of all curves
 $\cC\in \U_n$ such that, for every virtual flex point 
$\p$ of $\cC_0$,  the number of virtual flex points of $\cC$ in
$N_\varepsilon(\p)$, counted with flex-multiplicity,
is exactly the flex-multiplicity 
of $\p\in\cC_0$. Construct the neighborhood
 ${\mathfrak N}'$ of $\cC'_0$ in the  analogous way.
\medskip

\begin{figure}[h!]
\centerline{\includegraphics[width=2.2in]{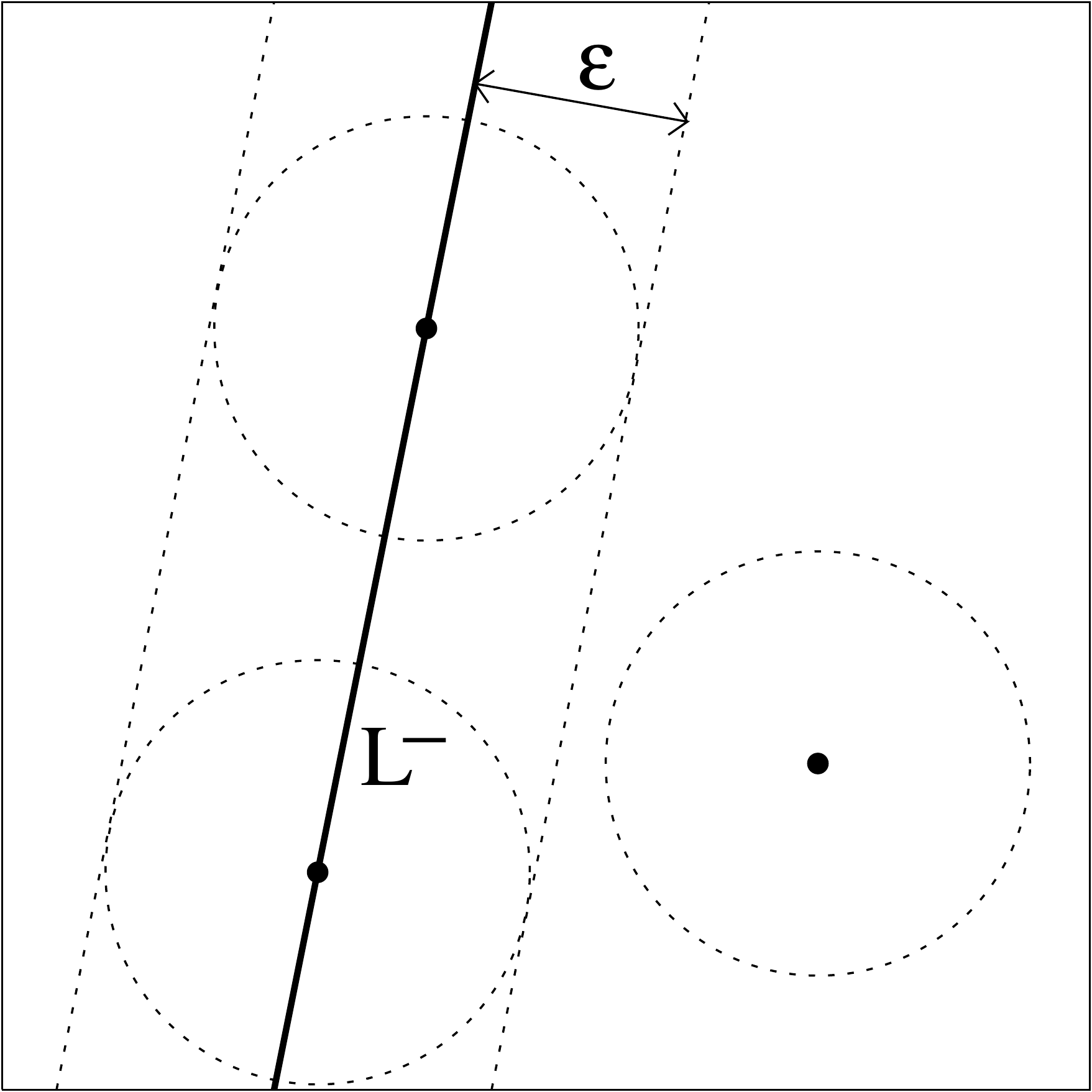}}
\caption{\label{F-dis} \sf Illustrating the proof of Theorem \ref{T-dis}.
For $\cC'\in{\mathfrak N}'$,
every virtual flex point of $\cC'$ must have distance
less than $\varepsilon$ from the corresponding virtual fixed point of $\cC'_0$.}
\end{figure}
\medskip

Choosing $K_\varepsilon$ as in Lemma \ref{L-dis2}, 
we must show that there cannot be any $\cC\in{\mathfrak N}$ and any 
$\g\not\in K_\varepsilon$ such that $\g(\cC)=\cC'$
belongs to ${\mathfrak N}'$. Replace  $\g$ by $\g^{-1}$ 
if necessary, so that we are in Case (2) of the Distortion 
 Lemma \ref{L-dis2}. Then 
$\widehat\bph_\cC\big(N_\varepsilon(\p^+)\big)<\kappa$, and it follows
from the Distortion Lemma that 
the $\varepsilon$-neighborhood of $L^-$ contains more that $1-\kappa$
virtual flex points of $\g(\cC)=\cC'$. This contradiction completes the 
proof of Theorem \ref{T-dis}.
\end{proof}
\bigskip

\begin{rem}[{\bf The Classical Moduli Space $\cM_\fg$}]
Since a smooth curve of degree $n$ in $\bP^2(\C)$ has genus
$\fg(n)={n-1\choose 2}$,
it is natural to compare the moduli space $\M^\sm_n(\C)$ for smooth curves
in $\bP^2$ with the classical moduli 
space $\cM_{\fg(n)}$, consisting\footnote{For $\fg\ge 2$ the moduli space
 $\cM_\fg$ can be considered as a quotient space ${\mathcal T}_\fg
/{\rm MCG}_\fg$, with the associated orbifold structure.
 Here ${\mathcal T}_\fg$ is the 
 $(3\fg-3)$-dimensional Teichm\"uller space, and ${\rm MCG}_\fg$, the
mapping class group, is a discrete group which acts on ${\mathcal T}_\fg$. 
See for example \cite{Hub}.}
 of conformal isomorphism
classes of closed Riemann surfaces of genus $\fg(n)$.
The dimension of this classical moduli space is given by
$$ {\rm dim}(\cM_\fg)~=~3\fg-3\quad{\rm for}\quad \fg\ge 2\,;\quad{\rm but}\qquad {\rm dim}(\cM_1)=1~.$$
(Compare \cite[p.28]{ACGH} or \cite[Ch. 5]{Mu}.) 
For every $n\ge 3$ there is a natural map $\M^\sm_n(\C)\to \cM_{\fg(n)}$. The
case $n=3$ is exceptional.  In this case, we obtain an isomorphism
$$ \M^\sm_3(\C)~\stackrel{\cong}{\longrightarrow}~\cM_1~,$$
where both spaces are isomorphic to $\C$, using the shape
invariant $\bJ$ of \S\ref{s-deg3} (which is just the classical
$j$-invariant, up to a multiplicative constant). Compare
 \cite{BM}. \medskip

Now let us assume that $n\ge 4$. For $n=4$ the map
$$ \M_4^{\sm}~\to~\cM_{3}~. $$
is fairly well understood: Any closed Riemann surface 
$\cC$ of genus $\fg$ has $\fg$ linearly independent holomorphic 1-forms, 
say $\omega_1,~\cdots,~\omega_\fg$, for any point $\p\in\cC$, the ratio
$$\big(\omega_1(\p):\cdots:\omega_\fg(\p)\big)$$ can be interpreted as a point
in the projective space $\bP^{\,\fg-1}(\C)$. Thus there is a canonical map 
from any Riemann surface of genus $\fg>1$ into $\bP^{\,\fg-1}(\C)$, well
defined  up to automorphisms of $\bP^{\,\fg-1}(\C)$. Furthermore any conformal
automorphism of $\cC$ corresponds to a change of basis 
for the vector space of 1-forms, and hence to a projective automorphism
of its image in the projective $(\fg-1)$-space.

By definition, a smooth complex curve $\cC$ of genus $\fg>1$ is called 
\textbf{\textit{hyperelliptic}} if it admits a meromorphic function
$\cC\to\bP^1(\C)$ of degree two. The following is proved for example
in \cite{Be} or \cite{Gr}. 
\medskip

\begin{prop} \label{P-hyper}
\it If $\cS$ 
is a Riemann surface of genus three which is not hyperelliptic,
then the canonical map $\cS\to \,\bP^2(\C)$ is a smooth embedding.
 Furthermore, every embedding of a Riemann surface of genus 3 into $\bP^2(\C)$
 can be obtained by this construction; and every conformal automorphism of
the Riemann surface corresponds to a projective automorphism of the
embedded curve. On the other hand, a hyperelliptic Riemann surface of genus
three cannot be embedded in $\bP^2(\C)$.
\end{prop}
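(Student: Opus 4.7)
My plan is to prove the four assertions in sequence, using Riemann--Roch as the main engine. Write $K$ for the canonical divisor class on $\cS$, so $\deg K = 2\fg - 2 = 4$ and $h^0(K) = \fg = 3$. The canonical map $\varphi_K : \cS \to \bP^2(\C) = \bP(H^0(K)^*)$ is then well defined.

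First I would show that $\varphi_K$ separates points and tangent vectors whenever $\cS$ is non-hyperelliptic. For any effective divisor $D = p+q$ of degree 2, Riemann--Roch gives
$$h^0(D) - h^0(K-D) = \deg D - \fg + 1 = 0,$$
so $h^0(K-p-q) = h^0(p+q)$. The map $\varphi_K$ fails to separate the pair $p,q$ (or, in the limit $q \to p$, fails to separate tangent directions) precisely when $h^0(K-p-q) \geq \fg - 1 = 2$, which by the above equality means $h^0(p+q) \geq 2$, i.e.\ $\cS$ admits a $g^1_2$ and is hyperelliptic. Thus in the non-hyperelliptic case $\varphi_K$ is an immersion and injective, hence a smooth embedding. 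Since $\deg K = 4$, the image is a smooth curve of degree $4$ in $\bP^2(\C)$.

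For the converse, suppose $\cS \hookrightarrow \bP^2(\C)$ is a smooth embedding of a genus 3 Riemann surface as a curve of degree $d$. The genus--degree formula $\fg = \binom{d-1}{2}$ forces $d = 4$. The adjunction formula on $\bP^2$ gives
$$K_\cS = \bigl(K_{\bP^2} + \cS\bigr)\big|_\cS = \bigl(-3H + 4H\bigr)\big|_\cS = H\big|_\cS,$$
so the hyperplane class equals the canonical class; consequently the given embedding coincides (up to a projective change of coordinates) with the canonical embedding $\varphi_K$. In particular, any smooth plane quartic is non-hyperelliptic, and any hyperelliptic genus 3 surface cannot be embedded in $\bP^2(\C)$ at all (the only possible degree, $d=4$, is ruled out by the argument just given).

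For the automorphism statement, any conformal automorphism $\sigma: \cS \to \cS$ acts by pullback as a linear automorphism $\sigma^*$ of the 3-dimensional space $H^0(K)$, and hence determines a projective automorphism of $\bP^2(\C) = \bP(H^0(K)^*)$ which preserves the image curve. Conversely, any projective automorphism of $\bP^2(\C)$ preserving the canonical image restricts to a biholomorphism of the embedded curve, i.e.\ to a conformal automorphism of $\cS$; since $\cS$ is canonically embedded, nothing extraneous appears and the correspondence is bijective. The main obstacle is the Riemann--Roch calculation that characterizes failure of the canonical map to embed in terms of the existence of a $g^1_2$; once that is in hand, the remaining pieces (genus--degree, adjunction, and linearization of automorphisms) are essentially formal.
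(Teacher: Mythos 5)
Your proof is correct and follows the standard textbook argument that the paper simply cites from \cite{Be} or \cite{Gr} rather than reproducing. The key ingredients you use --- Riemann--Roch on degree-two divisors to reduce failure of the embedding criterion to the existence of a $g^1_2$, the genus--degree formula plus adjunction to force $d=4$ and identify $H|_\cS$ with $K_\cS$, and linearization of the pullback action on $H^0(K)$ for the automorphism correspondence --- are precisely those of the cited sources.
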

\medskip

\begin{coro}\label{C-hyper}
The moduli space $\M^{\sm}_4(\C)$ for smooth projective curves of
degree four maps bijectively onto the open subset of $\cM_3$ consisting
of conformal equivalence classes of non-hyperelliptic Riemann surfaces
 of genus three.
Furthermore,  any conformal  automorphism of a smooth
projective curve of degree four extends to a projective automorphism of 
$\bP^2(\C)$.
\end{coro}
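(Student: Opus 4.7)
The plan is to combine the degree--genus formula with the three assertions of Proposition \ref{P-hyper}. Since a smooth plane curve of degree $n$ has genus ${n-1\choose 2}$, a smooth quartic is intrinsically a compact Riemann surface of genus three, and projectively equivalent quartics are biholomorphic; this gives a well-defined forgetful map $\Phi:\M^{\sm}_4(\C)\to\cM_3$. I will show $\Phi$ is injective with image equal to the non-hyperelliptic locus, and that every biholomorphism between smooth quartics extends to a projective automorphism.

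For the location of the image, the last sentence of Proposition~\ref{P-hyper} forbids any hyperelliptic Riemann surface of genus three from embedding in $\bP^2(\C)$ at all, so the image of $\Phi$ is contained in the non-hyperelliptic locus of $\cM_3$. For surjectivity onto that locus, take an arbitrary non-hyperelliptic $\cS$ representing a point of $\cM_3$ and invoke the first assertion of Proposition~\ref{P-hyper}: the canonical map embeds $\cS$ smoothly into $\bP^2(\C)$. The image has degree equal to that of the canonical divisor, namely $2\fg-2=4$, hence is a smooth quartic representing a point of $\M^{\sm}_4(\C)$ that $\Phi$ sends back to $[\cS]$.

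For injectivity, suppose $\cC_1,\cC_2\subset\bP^2(\C)$ are smooth quartics with $\Phi([\cC_1])=\Phi([\cC_2])$, that is, there is a biholomorphism $\phi:\cC_1\stackrel{\cong}{\to}\cC_2$. By the second assertion of Proposition~\ref{P-hyper}, both embeddings arise from the canonical construction, and canonical embeddings are unique up to a choice of basis for the three-dimensional space of holomorphic $1$-forms. Pulling back $1$-forms along $\phi$ gives a linear isomorphism between these two spaces, which descends to an element $\g\in\PGL_3(\C)$ carrying $\cC_1$ onto $\cC_2$ and inducing $\phi$ there; hence $[\cC_1]=[\cC_2]$ in $\M^{\sm}_4(\C)$. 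The final statement about extension of conformal automorphisms to projective automorphisms is the special case $\cC_1=\cC_2$ of this argument, and is also just the third assertion of Proposition~\ref{P-hyper} applied to the canonical embedding.

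The main obstacle is entirely one of bookkeeping: one must verify that the identifications supplied by Proposition~\ref{P-hyper} are natural enough that a biholomorphism between two canonical images really is induced by an element of $\PGL_3(\C)$, and that the degree of a canonically embedded non-hyperelliptic genus three curve is exactly four. Both are standard consequences of Riemann--Roch together with the definition of the canonical map, so once Proposition~\ref{P-hyper} is in hand, the corollary should follow essentially formally.
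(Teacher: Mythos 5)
Your proposal is correct and follows essentially the same approach as the paper: both rest on Proposition \ref{P-hyper} together with the degree--genus formula, and both observe that conformal automorphisms correspond to projective automorphisms of the canonical image. The paper's own proof is terser (two sentences), whereas you spell out surjectivity, injectivity, and the automorphism statement separately, but the ingredients and the logical structure are the same.
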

\smallskip

\begin{proof} This follows since a  smooth curve of genus 
three in $\bP^2(\C)$ necessarily has degree four;
and since any conformal automorphism of a Riemann surface  corresponds to 
a projective automorphism of its image in $\bP^{\fg-1}(\C)$.\end{proof}

For degrees $n\ge 5$ and hence $\fg(n)\ge 6$, we don't know whether the
map $\M_n\to\cM_{\fg(n)}$ is always injective. 
However, the dimension 
$$\dim\,\cM_{\fg(n)}~=~3\fg(n)-3~=~3(n^2-3n)/2$$
is larger than 
$$\dim\,\M_n=\dim\,\wfC_n-8~=~(n^2+3n-16)/2$$
in this case.  Therefore, only a very thin
set of curves of genus $\fg\ge 6$ can be embedded in $\bP^2(\C)$. And of course 
if $\fg$ is not of the form $n-1\choose 2$, then no curve of genus $\fg$ can be
 embedded as a smooth algebraic curve in $\bP^2(\C)$.  (However every 
Riemann surface can be immersed into $\bP^2(\C)$ with only simple double 
points. See for example \cite[Corollaries 3.6 and 3.11, Chapter~IV]{Ha}.)
\end{rem}
\bigskip 

\setcounter{lem}{0}
\section{Singularity Genus and Proper Action.}\label{s-G}\bsk

This Section will describe another criterion
for proper action, based on the genus invariant for a singular point.
However, we must first understand the genus of an arbitrary 
surface.\ssk

\subsection*{\bf The Genus of a Surface.}
 It will be convenient to work with the homology group
$H_1=H_1(\cS;~\Q)$, using rational coefficients.\footnote{One could equally
well use coefficients in any field. The field $\Z/2$ is particularly convenient
since then it is no  longer necessary to restrict to orientable surfaces. 
 However the resulting genus may be a half-integer. For example, the real
projective plane $\bP^2(\R)$, has genus 1/2 in this sense; 
and the hyperbolic dodecahedron of  Figure \ref{F-dodec} has genus 5/2.}
Choosing an orientation for $\cS$,
any two homology classes $\alpha,\,\beta\,\in\,H_1$ have a
well defined intersection number, yielding a skew-symmetric bilinear pairing
$$(\alpha,\,\beta)~\mapsto \alpha\cdot\beta~=~-\beta\cdot\alpha~\in~\Q~.$$

By a \textbf{\textit{surface}} $\cS$ we will mean a $C^1$-smooth,\footnote{
Smoothness is not really necessary, but makes proofs easier.}
oriented 2-dimensional Hausdorff manifold, possibly with $C^1$-smooth boundary. 
(In particular, any Riemann surface is also a surface in this sense.) The genus
$\fg(\cS)\ge 0$ of a surface is a fundamental topological invariant,
taking integer values (or  the value $+\infty$ in some non-compact cases).
It is defined as follows.
\msk

\begin{definition} The \textbf{\textit{genus}} $\fg(\cS)$ is defined to be the 
rank of this intersection pairing, divided by two. In other words, $2\,\fg$
is the dimension of the ``\textbf{\textit{reduced homology group}}''
$H_1/N$, where $N$  is the null space, consisting  of all
$\alpha$ such that $\alpha\cdot\beta=0$ for all $\beta$. As an example,
if $\cS$ is a compact surface with boundary, then $\fg$ is finite, and
 $N$ is generated by
the homology classes of the boundary circles.\footnote{By  
abuse of language, in this section
 we will use the word ``circle'' for any manifold
which is homeomorphic to the standard circle.}
Whenever $\fg$ is finite, 
it is not difficult to choose a basis for $\HG$ so that the matrix for this
 pairing consists of $\fg$ blocks of 
$\left(\begin{matrix}0&1\\-1&0\end{matrix}\right)$ along the diagonal, with
zeros elsewhere.
\end{definition}

As a classical basic example, if $\cC\subset\bP^2$ is a 
smooth complex curve of degree $n$, then the genus is given by 
\begin{equation}\label{E1}
\fg(\cC)~=~{n-1\choose 2}~.
\end{equation}
\bigskip

\begin{theo}\label{T-gen}
 Here are six basic properties of the genus.\msk

 \begin{itemize}
\item[\bf (1)] {\bf Additivity.} If $\cS$ is the disjoint union of two open
subsurfaces\footnote{It is customary to define genus only for connected 
surfaces, but this extension to non-connected cases is often convenient.}
$\cS_1$ and  $\cS_2$, then $\qquad \fg(\cS)~=~\fg(\cS_1)+\fg(\cS_2)$.\msk

\item[\bf (2)] {\bf Monotonicity.} If $~\cS\subset\cS'$, then 
$\quad \fg(\cS)~\le~\fg(\cS')~.$\msk

\item[\bf (3)] {\bf Puncture Tolerance.} The genus of $\cS$
is unchanged if we remove any finite subset from $\cS$. Similarly, it
is unchanged if we remove  the interior of a closed disk which is embedded
in the interior of $\cS$.\msk

\item[\bf (4)] {\bf Compact versus Non-Compact.} If $\cS$ is non-compact, then
$\fg(\cS)$ is equal to the supremum of the genera of compact
sub-surfaces. On the other hand, 
if $\cS$ is compact with boundary $\partial\cS$, then the genus of $\cS$ 
is equal to the genus of the interior $\cS\ssm\partial\cS$.
\msk

\item[\bf (5)] {\bf The Closed Surface Case.} For a compact surface $\cS$
with empty
boundary  $($or more generally for one such that each connected component 
has at most one boundary component$)$, 
the doubled genus  is equal to the first Betti number:
\begin{equation}\label{E2}
2\,\fg(\cS)~=~{\rm dim}\,H_1(\cS)~.
\end{equation}

\item[\bf (6)] {\bf Cutting and Pasting.} Let $\cS$ be a compact surface 
of genus $\fg$ with $\ell$ connected components  and with $b$ boundary 
circles. Then the Euler characteristic can be computed as
\begin{equation}\label{E-chi}
\chi(\cS)~=~ 2\ell~ -~2\,\fg~-~b ~.\end{equation}
If we form a new surface $\cS'$ by pasting together two  boundary 
circles of $\cS$, then $\chi(\cS)=\chi(\cS')$. $(\!$More precisely, 
the number of boundary circles decreases by two, but either the genus 
increases by one or the number of components decreases by one.$)$
\end{itemize}
\end{theo}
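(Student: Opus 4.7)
The plan is to establish these six properties by leveraging the classification of compact oriented surfaces together with Poincar\'e--Lefschetz duality, proving (1), (5), (6) as a foundational package, then deducing (2) from locality of intersection, and finally handling (3), (4) via support arguments.

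Property (1) is immediate: $H_1(\cS_1 \sqcup \cS_2;\Q) = H_1(\cS_1;\Q) \oplus H_1(\cS_2;\Q)$ with intersection pairing block-diagonal, so $N(\cS) = N(\cS_1) \oplus N(\cS_2)$ and the reduced ranks add. For (5), I would reduce to a connected component via (1) and use the classification to realize $\cS$ as a closed genus-$\fg$ surface with $b$ open disks removed. Lefschetz duality then identifies $N \subseteq H_1(\cS;\Q)$ with the image of $H_1(\partial\cS;\Q) \to H_1(\cS;\Q)$; the oriented boundary circles sum to zero in $H_1(\cS)$ but are otherwise independent, so this image has dimension $\max(b-1,0)$. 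Since $\dim H_1(\cS;\Q) = 2\fg + \max(b-1,0)$, we obtain $\dim(H_1/N) = 2\fg$, and when $b\le 1$ in each component the image is trivial, giving $\dim H_1 = 2\fg$ as asserted. For (6), the Betti numbers from (5), combined with $\dim H_0 = \ell$ and the appropriate values of $\dim H_2$, yield $\chi = 2\ell - 2\fg - b$. Pasting invariance of $\chi$ is evident from the CW description, and a case split on whether the two glued circles lie in a common component or in distinct components distinguishes the alternative $\fg \mapsto \fg+1$ from $\ell \mapsto \ell-1$.

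Property (2) follows by a short linear-algebra argument: for the inclusion $i : \cS \hookrightarrow \cS'$, intersections are local, so $i_*\alpha \cdot i_*\beta = \alpha \cdot \beta$ in $H_1(\cS')$. The pairing on $H_1(\cS)$ is thus the pullback of the pairing on $H_1(\cS')$ via $i_*$, and the rank of a pulled-back bilinear form equals the rank of the form restricted to the image, which is at most the rank of the original; hence $2\fg(\cS) \le 2\fg(\cS')$. For (3), a Mayer--Vietoris computation shows that removing a finite interior set or the interior of an embedded closed disk does not change $\dim(H_1/N)$: each new homology class is represented by a small loop around a removed point or disk, and every such loop lies in the null space since it bounds a collared cylinder or punctured region in the complement. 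For (4), the inequality $\fg(\cS) \ge \sup_{K\Subset \cS} \fg(K)$ is just (2) applied to compact subsurfaces, while the reverse inequality uses that every singular cycle, and every transverse intersecting pair of cycles, is supported in some compact subsurface, so the rank of the pairing on $H_1(\cS)$ is attained on a compact piece. The identity $\fg(\cS) = \fg(\cS \setminus \partial\cS)$ for compact $\cS$ with boundary follows from the deformation retraction of $\cS$ onto a slightly shrunken copy sitting in its interior---a homotopy equivalence preserving $H_1$ and the intersection pairing.

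The main obstacle lies in (5)--(6): first, carefully proving via Lefschetz duality that $N$ is exactly the image of $H_1(\partial\cS;\Q) \to H_1(\cS;\Q)$ (not merely contained in it), which requires combining the Lefschetz pairing $H_1(\cS) \otimes H_1(\cS,\partial\cS) \to \Q$ with the long exact sequence of the pair; and second, for the pasting analysis in (6), verifying that the ``extra'' cycle created by gluing two boundary circles contributes an independent new generator to $H_1$ in the same-component case, versus merely identifying two preexisting classes in the distinct-component case. Both can be made rigorous by a Mayer--Vietoris computation on collared neighborhoods of the two glued circles, or alternatively by appeal to the classification theorem.
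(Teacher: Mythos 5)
Your proof is correct and covers the same ground, but by a genuinely different route in the two most substantive items. The biggest difference is in {\bf (5)}: the paper proves only the closed case, appealing directly to non-degeneracy of the Poincar\'e duality pairing, whereas you invoke Lefschetz duality and the long exact sequence of $(\cS,\partial\cS)$ to identify the null space $N$ exactly with the image of $H_1(\partial\cS;\Q)\to H_1(\cS;\Q)$, and thereby compute $\dim H_1 = 2\fg + \max(b-1,0)$ for a connected compact surface with $b$ boundary circles. You then derive {\bf (6)} as a Betti-number consequence of this formula. The paper instead obtains {\bf (6)} more elementarily, from additivity of the Euler characteristic after filling in boundary circles by disks, avoiding boundary duality altogether at the price of a less unified argument. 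Your treatment of {\bf (2)} is also more careful than the paper's: the paper asserts that $H_1(\cS)/N(\cS)$ maps injectively into $H_1(\cS')/N(\cS')$, but the induced map need not be well-defined, since a class in $N(\cS)$ --- for example the core of an annulus neighborhood of a non-separating circle on a torus --- can map to a class outside $N(\cS')$. Your argument via the rank of the pulled-back bilinear form sidesteps this and gives monotonicity unconditionally. Items {\bf (1)}, {\bf (3)}, {\bf (4)} are handled in essentially the same way in both proofs.
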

\msk

\begin{rem}
Felix Klein defined the genus
as the maximal number of disjoint non-separating circles which can be placed
on the surface. (Compare \cite[p. 230]{Gra}. In particular, a surface has
genus zero if and only if has the Jordan property of being separated
by any embedded circle.) 
It is an non-trivial exercise, using properties {\bf(4)} and {\bf(6)}, to
 check that this agrees with our definition of genus.
\end{rem}
\msk

\begin{proof}[Proof of Theorem~\ref{T-gen}]
The first five statements follow easily from corresponding
statements for  the ``reduced homology group'' $\HG$;
which are not difficult to check.\ssk

\noindent 
For {\bf(1)}: If $\cS=\cS_1\uplus\cS_2$,  then evidently  
$$ H_1(\cS)/N(\cS)~=~H_1(\cS_1)/N(\cS_1)~\oplus~ H_1(\cS_2)/N(\cS_2)~.$$

\noindent For {\bf(2)}: If $\cS\subset\cS'$, then $H_1(\cS)/N(\cS)$ maps 
injectively into $H_1(\cS')/N(\cS')$.\ssk

\noindent For {\bf(3)}: If $\cS'=\cS\ssm\{\p\}$, then $H_1(\cS')/N(\cS')$
maps isomorphically onto  $H_1(\cS)/N(\cS)$.\ssk

\noindent For {\bf(4)}: This follows since  $H_1(\cS)$ is the direct limit
of the homology groups of the compact subsets of $\cS$.\ssk

\noindent For {\bf(5)}, the Poincar\'e duality theorem for a compact oriented
$n$-manifold without boundary implies that the intersection pairing
$(\alpha,\,\beta)\mapsto \alpha\cdot\beta$ from $H_j\times H_{n-j}$ to $\Q$
is non-singular,\footnote{See for example \cite{GH}. The intersection
pairing in homology corresponds to the cup product pairing in the 
dual cohomology groups.}
 giving rise to an isomorphism from $H_j$ to
${\rm Hom}(H_{n-j},~\Q)\cong H^{n-j}$. In particular, for the special case 
$j=n-j=1$, it follows that the null space $N$ is trivial, so that  
$\HG=H_1$.\ssk

The proof of  {\bf (6)} will make use of properties of the Euler characteristic.
Note first the  additive property
\begin{equation}\label{E-chisum}
 \chi(X\cup Y)~=~\chi(X)+\chi(Y)-\chi(X\cap Y)~,\end{equation}
which clearly holds whenever $X$ and $Y$ are finite complexes with
$X\cap Y$ as a subcomplex. In the special case where $X\cap Y$ is a finite
union of circles, since the Euler characteristic of a circle is zero,
this simplifies to $\chi(X\cup Y)~=~\chi(X)+\chi(Y)~.$
As an example, suppose that $X$ is a compact connected surface of genus $\fg$
bounded by $b$ circles. Then we can choose $Y$ to be a union of $b$ closed
disks so that $X\cap Y$ is the union of these circles, and so that $X\cup Y$ 
is a closed surface of genus $\fg$. Then
$$\chi(X\cup Y)=2-2\fg~=~\chi(X)+\chi(Y)=\chi(X)+b~,$$
yielding the standard formula $\chi(X)=2-2\fg-b~.$ Now if we take the disjoint
union of $\ell$ such manifolds, since both $\fg$ and the number of boundary 
circles are additive, we obtain the required identity
$$ \chi(\cS)~=~ 2\,\ell(\cS)~-~2\,\fg(\cS)~-~b(\cS) $$
for any compact surface.  (Where $\ell(\cS)$ is the number of components
of $\cS$.)
\end{proof}

\begin{rem}\label{R-(6)} Here is a convenient consequence of property {\bf(6)}.
Suppose that a compact connected surface $\cS$ of genus $\fg$ can be obtained
 from a disjoint union of $\ell$ connected 
surfaces $\cS_j$  of genera $\fg_1,\,\ldots,\,\fg_\ell$ by pasting
together $k$ pairs of boundary circles. Then 
\begin{equation}\label{E-6'}
\fg~~=~~k\,+1-\ell +\,\sum_{j=1}^\ell \fg_j~.\end{equation}
In fact it follows from (\ref{E-chi}) that 
$$\chi(\cS_j)=2-2\fg_j-b_j~,\qquad{\rm and~that}
\qquad \chi(\cS)=2-2\fg-\Big(\sum b_j-2k\Big)~;$$
where the expression in parentheses is the number of boundary circles of $\cS$.
Equation (\ref{E-6'}) then follows easily 
since $\quad\chi(\cS)=\sum\chi(\cS_j)$.
\end{rem}
\msk

\subsection*{\bf The Genus of a Singularity.}
Let $\p$ be a (necessarily isolated) singular point of a complex  
curve $\cC\subset\bP^2$. If $N_\p$ is the $\vep$-ball centered at $\p$,
using the standard Study-Fubini metric, and if $\vep$ is small enough,
then for every smooth curve $\cC'$ of the same degree
which approximates $\cC$ closely enough (depending on $\vep$),
the intersection $\cS_\p=\cC'\cap\overline N_\p$ is a smooth compact
connected surface with $b$ boundary components, where $b\ge 1$ is the number
of local branches of $\cC$ through the point $\p$; and where the genus
of $\cS_\p$ is independent of the choice of $\cC'$. This is proved
\footnote{To see that
such an $\vep$ exists, note that the set of $\vep'$ for which the intersection
is not transverse has measure zero by Sard's Theorem. In fact, it is a
semi-algebraic set, and hence must be finite.} for example in \cite{Mi1} or
\cite{Wa}.
\msk

\break

\begin{figure} [t!]
\centerline{\includegraphics[width=4.5in]{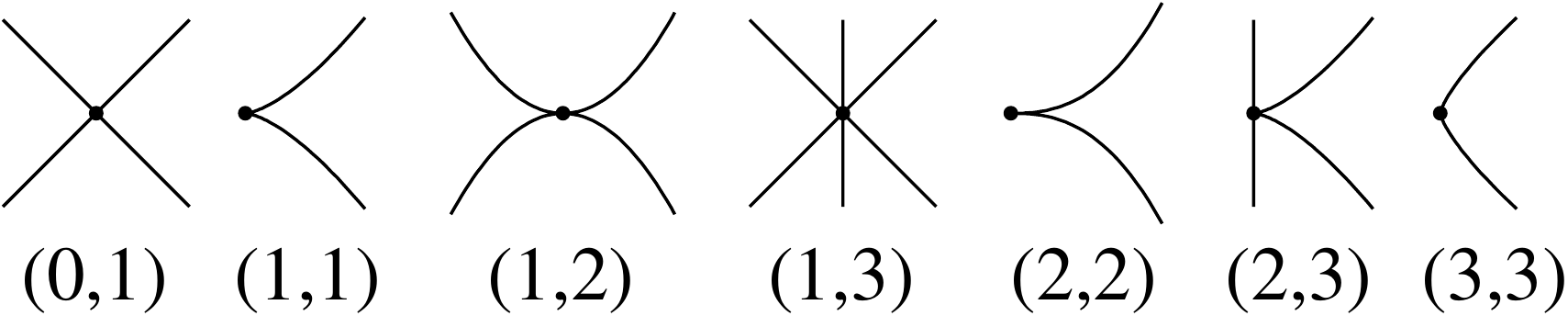}}
\caption{Showing the pair of invariants $(\fg,\,\fg^+)$ for seven examples
of singular points.\protect\footnotemark[29]
\label{F-7s}}
\end{figure}

\footnotetext[29]{These singularities are respectively a simple node, 
a $(2,3)$-cusp, a tacnode, a triple crossing, a $(2,5)$-cusp, 
a $(2,3)$-cusp together with a non-tangent line, and a $(3,4)$-cusp. For the
computation of these numbers, see Example \ref{ex-cuspcomp}.}
\setcounter{footnote}{29}
\def\gmg{{{\fg}_{\sf geom}}}

\begin{definition} By the \textbf{\textit{genus of the  singularity}}, 
denoted by $\fg_\cC(\p)$, we will mean the genus of this surface  $\cS_\p$. 
It will  also be useful to consider the \textbf{\textit{augmented genus}}
(or $\boldsymbol\delta$-\textbf{\textit{invariant}}\footnote{
See for example
\cite[pp.59-65]{Ser} or \cite[p.126]{Nam}. We have preferred
 the $\fg^+$ notation since
it makes the connection with genus clearer.}) 
$$ \fg^+_\cC(\p)~=~ \fg_\cC(\p) + b-1~.$$
\end{definition}
\ssk

\begin{definition} Let $\cC$ be a singular projective curve 
with  singular points $\p_1,\,\ldots\p_m$. The
 genus of the non-singular open subset 
$$ \cC\ssm\{\p_1,\,.\ldots,\, \p_m\}~.$$
is called\footnote{This is the usual definition in the case of an irreducible
  curve. Our $\gmg(\cC)$ is just the sum of the geometric genera of the
  irreducible components of $\cC$.}
the \textbf{\textit{geometric genus}} $\gmg(\cC)$. 
\end{definition}

Here is an example to illustrate these definitions.

\begin{lem}[\bf Degree-Genus Formula]\label{L-dg} With $\cC$ as above, we have
\begin{equation}\label{E-dg}
 {\gmg}(\cC)~+~\sum_{j=1}^m\fg^+_\cC(\p_j)~=~{n-1\choose 2}~+~r-1~,
\end{equation}
where $n$ is the degree of $\cC$ and $r\ge 1$  is its number of 
irreducible components.\end{lem}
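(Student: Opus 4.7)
The strategy is to use a smoothing $\cC'$ of $\cC$ and compare the topology of $\cC'$ (whose genus we know) with the genera of its natural pieces near and far from the singularities of $\cC$. Since the generic element of $\wfC_n$ is a smooth irreducible plane curve, we can pick $\cC'$ smooth, irreducible, and arbitrarily close to $\cC$; any smooth plane curve of degree $n$ is automatically irreducible and connected (two curves of positive degree always meet by B\'ezout), so by (\ref{E1}) its genus equals $\binom{n-1}{2}$. Choose $\cC'$ close enough to $\cC$ so that for every singular point $\p_j$ the local piece $\cS_j := \cC'\cap \overline N_{\p_j}$ is a connected compact smooth surface with $b_j$ boundary circles and genus $\fg_\cC(\p_j)$, as recalled before the definition of singularity genus.

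Let $\cS_0:=\cC'\setminus\bigcup_j\text{int}(N_{\p_j})$ be the complementary ``large'' piece. Away from $\{\p_1,\dots,\p_m\}$ the curve $\cC$ is smooth, and $\cC'$ is a $C^1$-small perturbation of $\cC$ there, so $\cS_0$ is diffeomorphic to $\cC\setminus\bigcup_j\text{int}(N_{\p_j})$. Passing to the normalization $\widetilde\cC=\bigsqcup_{i=1}^r\widetilde\cC_i$ (a homeomorphism off the singular set), this surface has exactly $r$ connected components, one for each irreducible component of $\cC$; and by the puncture tolerance property (Theorem \ref{T-gen}\,{\bf(3)}) together with property {\bf(4)}, its genus equals $\fg(\cC\setminus\{\p_1,\dots,\p_m\})=\gmg(\cC)$. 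By construction $\cS_0$ has $\sum_j b_j$ boundary circles in total, matching the boundaries of the $\cS_j$.

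Write $\cS_0$ as the disjoint union of its $r$ connected components $\cS_{0,1},\dots,\cS_{0,r}$; together with $\cS_1,\dots,\cS_m$ this gives $\ell=r+m$ connected pieces whose genera sum to $\gmg(\cC)+\sum_j\fg_\cC(\p_j)$. The curve $\cC'$ is obtained from their disjoint union by pasting, for each $j$, the $b_j$ boundary circles of $\cS_j$ to the corresponding $b_j$ boundary circles of $\cS_0$, a total of $k=\sum_j b_j$ paired circles. Applying Remark \ref{R-(6)} (equation (\ref{E-6'})) to the connected surface $\cC'$ gives
\begin{equation*}
\binom{n-1}{2}=\fg(\cC')=k+1-\ell+\gmg(\cC)+\sum_j\fg_\cC(\p_j)=\sum_j b_j+1-(r+m)+\gmg(\cC)+\sum_j\fg_\cC(\p_j).
\end{equation*}
Collecting the terms $b_j-1$ into the definition $\fg^+_\cC(\p_j)=\fg_\cC(\p_j)+b_j-1$ and rearranging yields exactly the degree-genus formula (\ref{E-dg}).

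The only real obstacle is verifying that $\cS_0$ has the claimed component count, genus, and boundary behavior; everything else is bookkeeping. This reduces to the standard fact that the normalization map $\widetilde\cC\to\cC$ is a homeomorphism over the smooth locus and separates the $b_j$ branches at each singular point, combined with the puncture tolerance of genus. Once that is in place, the formula follows mechanically from the cutting-and-pasting identity (\ref{E-6'}) and the smooth-curve genus formula (\ref{E1}).
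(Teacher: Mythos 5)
Your proof is correct and takes essentially the same route as the paper's: approximate $\cC$ by a smooth (hence irreducible, connected, genus $\binom{n-1}{2}$) curve $\cC'$, split $\cC'$ into the local pieces $\cS_j$ near the singularities and the complementary piece, identify the complement's $r$ components and geometric genus via the normalization and puncture tolerance, and then invoke the cutting-and-pasting identity (\ref{E-6'}). You supply a bit more explicit justification for the component count and genus of $\cS_0$ than the paper does, but the decomposition and the final bookkeeping are the same.
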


\begin{proof}
Choose a small closed ball around each $\p_j$ and let $\cC'$ be a 
smooth degree $n$ curve which closely approximates $\cC$. Let $\cS_j$ be the 
intersection of $\cC'$ with the ball around $\p_j$ and let $\cS'$ 
be the closure
of $\cC'\ssm\big(\cS_1\cup\cdots\cup\cS_m\big)$. If the balls are small enough
and the approximation is close enough, then each $\cS_j$ will have genus 
$\fg(\p_j)$, and will have $b_j$ boundary circles, where $b_j$ is the number of
local branches of $\cC$ at $\p_j$. Furthermore, $\cS'$ will be a smooth curve
with $\sum_j b_j$ boundary circles, and with $r$ connected components $\cS'_k$,
where $r$ is the number of irreducible components of $\cC$; and with
$\fg(\cS')$ equal to the geometric genus ${\gmg}(\cC)$. Now applying Equation
(\ref{E-6'}) to  the surface $\cC'$, which is the union of the $\cS_j$
together with the $r$ components $\cS'_k$of $\cS'$, pasted together along
$\sum b_j$ boundary circles, we see that
$$ \fg(\cC')~=~\sum_{j=1}^m b_j \,+\,1\,-\, (m+r) \,+\,
\Big(\sum_{j=1}^m \fg(\cS_j) +\sum_{k=1}^r \fg(\cS'_k)\Big)~. $$
Here the left side is equal to $n-1\choose2$, while the right side 
can be rearranged as
$$ \sum_1^m\Big(\fg(\cS_j)+ b_j-1\Big)~+~\gmg(\cC) +1-r~~=~~
\sum_1^m\fg^+(\p_j)~+~\gmg(\cC) +1-r~.$$
The required equation (\ref{E-dg}) now follows easily.
\end{proof}
\msk

\begin{rem} \label{R-mn}
 The numbers $\fg_\cC$ and $\fg^+_\cC$ are closely related
  to the ``Milnor number'' $\bmu$. (See for example \cite{Mi1},
  \cite{Wa}, \cite{Ghy}, \cite{Sea}.) Using affine coordinates $(x,\,y)$,  
this number $\bmu$ for the curve $F(x,\,y)=0$ at a point $\p$ can be defined 
as the intersection multiplicity between the curves $F_x=0$ and $F_y=0$ at
$\p$, where the subscripts indicate partial derivatives. 

If $\p=(0,\,0)$,
then $\bmu$ can be computed as the dimension of the quotient algebra 
$\C[[x,\,y]]/(F_x,\,F_y)$, where $\C[[x,\,y]]$ is the ring of formal 
power series in two variables and $(F_x,\,F_y)$ stands for the ideal 
generated by these two partial derivatives.\footnote{Compare \cite[p.9]{Fu}.}
It follows easily that $\bmu>0$ if and only if $\p$ is a singular point of
$\cC$.
\end{rem}
\ssk

\begin{ex}\label{ex-cuspcomp} For a cusp curve with
  $~F(x,\,y)=x^p-y^q=0~,$
the  quotient algebra $\C[[x,\,y]]/(F_x,\,F_y)$ has an additive 
basis consisting of the $(p-1)(q-1)$ monomials $x^jy^k$
with $0\le j<p-1$ and $0\le k<q-1$. Therefore $\bmu=(p-1)(q-1)$.
\end{ex}
\bigskip

\begin{lem}\label{l-Mil-num}
  The Milnor number $\bmu$ is the sum of the genus $\fg$, and
  the augmented genus $\fg^+$. That is,
  \begin{equation}\label{E6} \bmu~=~\fg\,+\,\fg^+~.\end{equation}
  
\end{lem}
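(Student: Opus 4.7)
The plan is to identify the smoothing surface $\cS_\p$ with the Milnor fiber of the singularity at $\p$, and then combine Milnor's description of its homotopy type with the Euler-characteristic formula from Theorem \ref{T-gen}.

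First I would invoke Milnor's Fibration Theorem (\cite{Mi1}) for the holomorphic function germ $F\colon(\C^2,\p)\to(\C,0)$ defining $\cC$ near $\p$: since the singularity is isolated, for sufficiently small $0<\delta\ll\vep$ the restriction of $F$ to $B_\vep\cap F^{-1}(\partial D_\delta)$ is a locally trivial fibration over $\partial D_\delta$, and the Milnor fiber $\bF=\{F=t\}\cap\overline B_\vep$ (for any small $t\ne 0$) has the homotopy type of a bouquet of $\bmu$ circles. In particular $\bF$ is connected, and $\dim H_1(\bF;\Q)=\bmu$, so
$$\chi(\bF)~=~1-\bmu~.$$
The definition of $\bmu$ used by Milnor via the degree of the Gauss map $F_x,F_y$ agrees with the algebraic definition in Remark \ref{R-mn}; this identification is classical (see \cite{Mi1}).

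Next I would identify $\bF$ with the surface $\cS_\p$ appearing in the definition of $\fg_\cC(\p)$ and $\fg^+_\cC(\p)$. This is just the observation that a generic smooth curve $\cC'$ of the same degree sufficiently close to $\cC$ is locally defined near $\p$ by $F-t=0$ for some small $t$; then $\cS_\p=\cC'\cap\overline N_\p$ is, up to diffeomorphism, precisely the Milnor fiber $\bF$ (with $b$ boundary circles, one for each local branch of $\cC$ at $\p$). Thus $\cS_\p$ is a connected compact oriented surface of genus $\fg=\fg_\cC(\p)$ with $b$ boundary components, and $\chi(\cS_\p)=1-\bmu$.

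Finally, I would apply property \textbf{(6)} of Theorem \ref{T-gen} to $\cS_\p$: since $\cS_\p$ is compact, connected, of genus $\fg$, with $b$ boundary circles, its Euler characteristic equals $2-2\fg-b$. Combining with the previous step,
$$1-\bmu~=~2-2\fg-b\,,\qquad\text{hence}\qquad\bmu~=~2\fg+b-1~=~\fg+(\fg+b-1)~=~\fg+\fg^+\,,$$
which is the required identity \eqref{E6}.

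The only non-routine step is the identification of $\cS_\p$ with Milnor's fiber $\bF$ and the invocation of Milnor's bouquet theorem; once these standard facts are granted, the result is a one-line Euler-characteristic computation. The main obstacle (or, rather, the only subtlety worth addressing carefully) is checking that the parameters $\vep$ used in defining $\cS_\p$ and in the Milnor fibration can be chosen compatibly, so that the smoothing $\cC'$ really does meet $\overline N_\p$ transversally in a copy of the Milnor fiber; this is the standard "conical structure" argument, and is already implicit in the paper's choice of $\vep$ above.
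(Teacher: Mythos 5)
Your argument is correct and takes essentially the same route as the paper: both reduce the claim to Milnor's result that $\bmu=\dim H_1(\cS_\p)$ (the paper cites \cite[Theorem 7.2]{Mi1} directly, whereas you re-derive it via the bouquet description of the Milnor fiber) and then combine this with the Euler-characteristic formula $\chi=2-2\fg-b$ from Theorem~\ref{T-gen}{\bf(6)} to get $\bmu=2\fg+b-1=\fg+\fg^+$. The extra care you take in identifying $\cS_\p$ with the Milnor fiber is a worthwhile remark but does not change the logical structure.
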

\ssk

\begin{proof} According to  \cite[Theorem 7.2]{Mi1}, $\bmu$ is equal to
  the first Betti number ${\rm dim}\big(H_1(\cS_\p)\big)$ of the surface $
  \cS_\p$. We must show that the sum
$$\fg+\fg^+~=~2\,\fg+ b-1$$
is equal to this Betti number ${\rm dim}\big(H_1(\cS_\p)\big)$.
Recall from Theorem \ref{T-gen}(6) that the Euler characteristic
of a connected surface of genus $\fg$ with $b$ boundary components
is $2-2\fg-b$. Comparing this with the standard expression
$$ \dim(H_0)-\dim(H_1)+\dim(H_2) $$
for the Euler characteristic, we obtain
$$ 2-2\fg-b~=~ 1-\dim(H_1)+0~,$$
and hence $2\fg+b-1=\dim(H_1)$. The equation (\ref{E6}) follows.
\end{proof}
\medskip

Since $0\le\fg^+\le\bmu\le 2\,\fg^+$, it also follows that $\fg^+>0$
if and only if $\p$ is a singular point. \bigskip

Consider again the cusp curve $x^p=y^q$ of Example~\ref{ex-cuspcomp}.  If $p$
and $q$  are relatively prime so that
the number of branches is $b=1$, 
then $\fg^+=\fg$, and it follows that $$\fg=\fg^+=(p-1)(q-1)/2~.$$
On the other hand, if $p$ and $q$ have greatest common divisor $\delta>1$,
then  there are $\delta$ branches, and a similar argument shows that
$$\fg~=~\frac{(p-1)(q-1) +1-\delta}{2}\quad{\rm and}
\quad\fg^+~=~\frac{(p-1)(q-1) +\delta-1}{2}~.$$
For the simplest case $p=q=\delta=2$, the curve $~x^2-y^2=(x+y)(x-y)=0~$
has a simple crossing point at the origin, and we obtain 
$(\fg,\,\fg^+)=(0,1)$ as listed in Figure \ref{F-7s}. Similarly for $p=2,~
q=4$, the equation $(x^2-y^4)=(x+y^2)(x-y^2)=0$ defines a tacnode, as shown
in the figure. This takes care of five of the examples in the figure, and
the remaining two can be checked by similar computations.
\bsk

\begin{rem}[{\bf Erratum}] In \cite[p.~60]{Mi1}, it was stated incorrectly
that the invariant $\bmu$ is equal to the classical
multiplicity of the singularity. In fact the \break
\textbf{\textit{multiplicity}} $\bf m$  of a singular point  $\p\in\cC$ 
is defined to be the intersection multiplicity at $\p$ between $\cC$
and a generic line through $\p$. The following examples show that neither
of these two invariants at the point $x=y=0$ determines
 the other.
$$\begin{matrix} {\bf curve} &&{\bf m}&& {\bmu}\\
x^3=y^5 && 3 && 8\\
x^3=y^7&& 3 && 12\\x^4=y^5&& 4 && 12\\
\end{matrix}$$
Note that the multiplicity $\bf m$ for a singular point of a curve 
of degree $n$  satisfies $$2\le {\bf m}\le n~.$$
The set of all curves in ${\mathfrak C}_n$ which have a singularity of 
multiplicity $\bf m$ or larger forms a closed algebraic subset of codimension 
${{\bf m}+1\choose 2}-2$ in ${\mathfrak C}_n$. %where $2\le m\le n$. 
The proof is similar to the proof of Proposition \ref{p-ass} in 
\S\ref{s-genR}. 
\end{rem}

\msk

\begin{ex}\label{exa1-ap3} Let $\cC$ be a curve of degree $n=4$ consisting
of a smooth cubic curve together with its tangent line at a flex point $\p$.
Since \hbox{$\gmg(\cC)=1+0$} and $r=2$, it follows from Equation (\ref{E-dg})
that
$$ \fg^+_\cC(\p)~=~{3\choose 2} +r-1-\gmg(\cC)~=~3~,$$
and hence that $\fg_\cC(\p)=2$. We can check this statement 
by a different argument as follows.
Let $F(x,y)=y(x^3-y)$, so that the locus $F=0$  is locally the
union of a smooth cubic curve and the tangent line at a flex point.
Then $F_x=3\,x^2y$ and $F_y=x^3-2\,y$. Therefore, modulo the ideal 
$(F_x,\,F_y)$ we have $x^2y\equiv 0$ and $x^3\equiv 2y$. 
It follows easily that
the quotient algebra is generated by $x$, with $x^5\equiv 0$, so that the 
dimension is $\bmu=5$. Since the number of local branches is $b=2$, it 
follows again that $\fg=2$ and $\fg^+=3$.
\end{ex}\msk

\begin{lem}[{\bf Multi-Branch Lemma}]\label{L-mb} \it The augmented
 genus of a singularity with 
$k$ local branches $\cB_1\,~\ldots\,,~\cB_k$ is given by the formula
$$ \fg^+_\cC(\p)~=~\sum_j\fg_{\cB_j}(\p)
~+~\sum_{i<j} \cB_i\cdot\cB_j~,$$
where $ \cB_i\cdot\cB_j$ is the intersection number between the two
branches. As an example, if there are $k$ smooth branches intersecting 
pairwise transversally, then  $\fg^+_\cS(\p)={k\choose 2}$.
\end{lem}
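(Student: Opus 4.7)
The plan is to compute $\fg^+_\cC(\p)$ via Euler characteristic, using a two-stage smoothing. Let $\cS_\p = \cC'\cap\overline N_\p$ be the Milnor fiber of $\cC$ at $\p$: a smooth compact connected oriented surface with $b=k$ boundary circles and genus $\fg_\cC(\p)$. Using Theorem~\ref{T-gen}(6) together with the identity $\fg^+_\cC(\p) = \fg_\cC(\p) + k - 1$, we have
$$\chi(\cS_\p)~=~2-2\,\fg_\cC(\p)-k~=~k-2\,\fg^+_\cC(\p),$$
so it suffices to compute $\chi(\cS_\p)$ from the branch data.

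First I would factor the smoothing through an intermediate nodal curve. Perturb each branch $\cB_j$ independently to a smoothing $\cB_j^*$, keeping the branches essentially in place. For sufficiently small generic perturbations, the resulting curve $\cC^* = \cB_1^*\cup\cdots\cup\cB_k^*$ is smooth in $N_\p$ except at transverse double points (simple nodes) where distinct $\cB_i^*$ and $\cB_j^*$ meet; and since local intersection numbers are preserved under deformation, there are exactly $m_{ij}=\cB_i\cdot\cB_j$ such nodes between $\cB_i^*$ and $\cB_j^*$. Write $m=\sum_{i<j}m_{ij}$ for the total number of nodes. Then $\cS_\p$ is recovered, up to diffeomorphism, by smoothing each of these $m$ nodes.

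Next I would compute Euler characteristics. Let $\Sigma_j = \cB_j^*\cap\overline N_\p$, a compact connected surface with one boundary circle and genus $\fg_{\cB_j}(\p)$, so $\chi(\Sigma_j)=1-2\,\fg_{\cB_j}(\p)$. Setting $\cS^* = \cC^*\cap\overline N_\p = \bigcup_j\Sigma_j$, the pairwise intersections of the $\Sigma_j$ are exactly the $m$ nodes, so inclusion--exclusion gives
$$\chi(\cS^*)~=~\sum_{j=1}^k\chi(\Sigma_j)-m~=~k-2\sum_{j=1}^k\fg_{\cB_j}(\p)-m.$$
Locally, smoothing a transverse node replaces a wedge of two disks ($\chi=1$) by an annulus ($\chi=0$), so each of the $m$ smoothings decreases $\chi$ by exactly one, yielding
$$\chi(\cS_\p)~=~\chi(\cS^*)-m~=~k-2\sum_j\fg_{\cB_j}(\p)-2m.$$
Equating this with $\chi(\cS_\p)=k-2\,\fg^+_\cC(\p)$ and dividing by $2$ gives $\fg^+_\cC(\p)=\sum_j\fg_{\cB_j}(\p)+\sum_{i<j}\cB_i\cdot\cB_j$, as required. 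For the example, $k$ smooth pairwise transverse branches have $\fg_{\cB_j}=0$ and $\cB_i\cdot\cB_j=1$, giving $\fg^+_\cC(\p)=\binom{k}{2}$.

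The main obstacle is topologically justifying that the Milnor fiber $\cS_\p$ is diffeomorphic to the result of smoothing the $m$ nodes of $\cS^*$. This rests on standard facts from singularity theory: the diffeomorphism type of the Milnor fiber depends only on the analytic type of the singularity; a sufficiently small generic perturbation of each branch realizes the local intersection multiplicity $\cB_i\cdot\cB_j$ as that many transverse double points; the local Milnor fiber of a node ($xy=0$ deformed to $xy=\vep$) is an annulus; and the Milnor fiber of a reduced curve singularity is connected. Given these ingredients (see e.g.\ \cite{Mi1}, \cite{Wa}), the two-stage deformation legitimately realizes $\cS_\p$ up to diffeomorphism and the Euler-characteristic bookkeeping above goes through.
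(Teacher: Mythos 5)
Your argument is essentially the paper's: both proofs perturb the branches so that each is individually smooth and they meet pairwise transversally, then recover the genus of the Milnor fiber $\cS_\p$ by smoothing the resulting nodes, using connectedness of $\cS_\p$ and the fact that the number of nodes between the perturbed $\cB_i$ and $\cB_j$ is $\cB_i\cdot\cB_j$. The only difference is bookkeeping: you track the Euler characteristic directly (inclusion--exclusion for $\chi(\cS^*)$, plus the local fact that smoothing a node drops $\chi$ by one), whereas the paper invokes its cutting-and-pasting rule, Theorem~\ref{T-gen}(6), which is the same Euler-characteristic computation repackaged in terms of genus, boundary, and component counts.
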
\msk

(Compare the analogous formula for flex-multiplicity
in Example~\ref{EX-mu}.)
\ssk

\begin{proof}[{\bf Outline Proof}]$\!\!$\footnote{For a detailed proof of
an equivalent statement, see \cite[Th.~6.5.1]{Wa}.} 
First choose a fixed small round neighborhood $N$ of $\p$, and choose
generic small 
translations $\cB_j+{\bf v}_j$ of the various branches so that each
one still intersects $\partial N$ transversally, and so that any two
translated branches intersect transversally in $\cB_i\cdot\cB_j\ge 1$
distinct points. 
 Then approximate each translated branch very closely by
 a smooth curve. Thus we are reduced to the case of smooth curves 
intersecting transversally.   The disjoint union of the resulting smooth 
curves will have $k$ components, each with one boundary curve, and will
have genus $\sum \fg_{\cB_i}(\p)$. A smooth curve which is close to the 
actual union of these transversally intersecting curves will be homeomorphic 
to the object obtained by removing a small round neighborhood
of each transverse intersection point, and then gluing the 
$2\sum\cB_i\cdot\cB_j$ resulting boundary circles together in pairs.
By Theorem \ref{T-gen}{\bf(6)}, each such pasting must either increase the
genus by one or decrease the number of components by one. Since the total 
effect is to decrease the number of components from $k$ to one, the final
genus must be
$$ \fg_\cC(\p) ~=~1-k~+~\sum_i\fg_{\cB_i}(\p) ~+~
\sum_{i<j} \cB_i\cdot\cB_j~.$$
Adding $k-1$ to both sides, the conclusion follows.
\end{proof}
\medskip

\subsection*{\bf Proper Action.} 

First, as in Section \ref{s-cc},
 consider only line-free curves. Let $U\subset\fC_n$ be some $\bG$-invariant
open set consisting of curves which contain no lines. Let 
$\max_U\fg\le\max_U\fg^+$ be the maximum values of $\fg_\cC(\p)$ and
$\fg^+_\cC(\p)$ as $\cC$ ranges over $U$ and $\p$ ranges over $\cC$.

\begin{prop}\label{P-pa1}
Suppose that the following two conditions are satisfied:\ssk

{\bf(1)} $\qquad
\max_U \fg\,+\,\max_U\fg^+~<~{n-1\choose 2}~.$\ssk

{\bf(2)} No curve in $U$ is separated by a single point.\ssk

\noindent Then the action of $\bG$ on
$U$ is proper, and hence the open set $U/\bG\subset\M_n$ is 
a Hausdorff orbifold.
\end{prop}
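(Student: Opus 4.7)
My plan is to argue by contradiction: suppose $\bG$ does not act properly on $U$, so that there exist curves $\cC_0,\cC_0'\in U$ and sequences $\cC_j\to \cC_0$, $\cC_j'\to \cC_0'$ in $U$ together with $\g_j\in\bG$ escaping every compact subset of $\bG$ and satisfying $\g_j(\cC_j)=\cC_j'$. Apply the Distortion Lemma~\ref{L-dis2}; after passing to a subsequence, and replacing $\g_j$ by $\g_j^{-1}$ together with swapping the roles of $\cC$ and $\cC'$ if we are in Case~2, I may assume Case~1 holds: there are lines $L_j^+\to L_\infty^+$ and points $\p_j^-\to \p_\infty^-$ with $\g_j\bigl(N_\vep(L_j^+)^c\bigr)\subset N_\vep(\p_j^-)$ for every small $\vep>0$ and all large $j$. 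Compatibly smooth $\cC_j$ and $\cC_j'$ near their singular points to obtain smooth degree-$n$ curves $\tilde\cC_j,\tilde\cC_j'$ of genus $\binom{n-1}{2}$. Note that under condition~(2) each $\cC_j$ is connected as a topological space (two components meeting in a single point would provide a separating point).

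Next I would fix $\vep>0$ small enough that $\overline{N_{2\vep}(\p_\infty^-)}$ contains at most one singular point of $\cC_0'$ and the intersections of $\tilde\cC_j,\tilde\cC_j'$ with the relevant boundary spheres are transverse. Set $A_j=\tilde\cC_j\cap\overline{N_\vep(L_j^+)}$ and $B_j$ the closure of its complement in $\tilde\cC_j$, and analogously $A_j',B_j'$ using $\overline{N_\vep(\p_j^-)}$. Since $B_j\subset N_\vep(L_j^+)^c$, the diffeomorphism $\g_j$ embeds $B_j$ into $A_j'$, so monotonicity of genus (Theorem~\ref{T-gen}(2)) yields $\fg(B_j)\le \fg(A_j')\le \max_U\fg$, where the last bound uses that $A_j'$ is essentially the smoothing around at most one singular point of $\cC_j'$. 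Writing $b_j$ for the number of boundary circles shared by $A_j$ and $B_j$ and $\ell$ for component counts, and using the augmented invariant $\fg^+(\Sigma)=\fg(\Sigma)+b-\ell$ for a bordered surface, the cutting-pasting identity~\eqref{E-6'} applied to the connected surface $\tilde\cC_j$ rearranges to
\[
\fg^+(A_j)+\fg^+(B_j) \;=\; \binom{n-1}{2}+b_j-1.
\]
Since $\fg^+(B_j)=\fg(B_j)+b_j-\ell_{B_j}\le \max_U\fg+b_j-1$, this gives $\fg^+(A_j)\ge \binom{n-1}{2}-\max_U\fg$.

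The main obstacle, and the place where condition~(2) is indispensable, is the matching upper bound $\fg^+(A_j)\le \max_U\fg^+$. Decomposing $A_j$ into its local pieces near the intersection points of $L_j^+\cap\cC_j$, each smooth intersection contributes $0$ and each singular $\p\in L_j^+\cap\cC_j$ contributes $\fg^+_{\cC_j}(\p)$, so $\fg^+(A_j)=\sum_{\p}\fg^+_{\cC_j}(\p)$ summed over singular points of $\cC_j$ lying on $L_j^+$. I would show that, for large $j$, at most one term in this sum is nonzero: a Bezout-type count bounds how many singularities of given multiplicities a line can carry on a degree-$n$ curve, and the remaining degenerate configuration---in which $L_\infty^+$ concentrates two or more singular points of $\cC_0$---is precisely excluded by condition~(2), since such a configuration produces a single point of $\cC_0$ whose removal disconnects it. Combining $\fg^+(A_j)\le \max_U\fg^+$ with the lower bound from the preceding paragraph yields $\max_U\fg+\max_U\fg^+\ge \binom{n-1}{2}$, contradicting condition~(1). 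Properness of the action then gives a Hausdorff quotient by Lemma~\ref{L-Haus}, and the orbifold structure follows from Corollary~\ref{C-wot}.
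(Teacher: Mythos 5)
Your overall framework --- contradiction via the Distortion Lemma followed by a cut-and-paste estimate on the genus of a nearby smooth curve --- is the same as the paper's, and your identity $\fg^+(A_j)+\fg^+(B_j)={n-1\choose 2}+b_j-1$ together with the bound $\fg^+(B_j)\le\max_U\fg+b_j-1$ is correct. The difficulty is that you cut on the wrong side, and the matching bound $\fg^+(A_j)\le\max_U\fg^+$ does not hold. The piece $A_j$ lies near the \emph{line} $L_j^+$, and a line can meet several singular points of a line-free degree-$n$ curve: by B\'ezout it can carry up to $\lfloor n/2\rfloor$ nodes, say, and $\fg^+(A_j)$ is then the sum of $\fg^+_{\cC_j}$ over \emph{all} of them. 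Your attempt to rule this out using condition~(2) is mistaken: two simple nodes lying on a common line do not produce a separating point of the curve (an irreducible nodal quintic with two nodes on a line has normalization of genus $4$, and no single point disconnects it). So the crucial upper bound on $\fg^+(A_j)$ is unjustified, and in fact condition~(2) does no real work in your argument --- the connectedness of $\tilde\cC_j$ that you attribute to it is automatic for any smooth degree-$n$ curve in $\bP^2(\C)$.

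The correct argument cuts on the other side. Working with $A_j'=\tilde\cC_j'\cap\overline{N_\vep(\p_j^-)}$ and its complement $B_j'$, the bound $\fg^+(A_j')\le\max_U\fg^+$ is automatic, since only one singular point of $\cC_j'$ lies near the \emph{point} $\p_j^-$. This is where condition~(2) genuinely enters: it forces $B_j'$ to be \emph{connected} (if $B_j'$ split into two pieces, the limit configuration would exhibit a single point of some curve in $U$ whose removal disconnects it). Since $B_j'$ is connected and $\g_j^{-1}(B_j')$ lands inside $N_\vep(L_j^+)$, it embeds into a single connected component of $\tilde\cC_j\cap N_\vep(L_j^+)$, and by Lemma~\ref{L-nl} each such component contains at most one singular point --- hence $\fg(B_j')\le\max_U\fg$. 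Applying your cut-and-paste identity to $A_j',B_j'$ then gives ${n-1\choose 2}\le\max_U\fg^++\max_U\fg$, contradicting~(1). So the decomposition must be anchored at the point-side, not the line-side, and condition~(2) is what licenses the single-component embedding of the complement, not a count of singular points on a line.
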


As an example, since $\fg\le\fg^+$, Condition {\bf(1)} will be satisfied
if the maximum value of $\fg^+_\cC(\p)$ satisfies the following,
$$\begin{matrix} n=& 3&4&5&6&7&8&9&10\\
\max\fg^+\le & 0&1&2&4&7&10&13&17
\end{matrix}$$
(with somewhat sharper results when the maximum value of $\fg$ is less
than the maximum value of $\fg^+$). Thus as the degree increases, we can 
allow more and more complicated singularities.\msk

\begin{rem} 
The conditions {\bf(1)} and {\bf(2)} are independent of each other when
$n$ is large enough. To see this,
note that a curve $\cC$ is disconnected by a single
point $\p$ only if it can be described as a union $\cC=\cC_1\cup\cC_2$, where
 the curves $\cC_1$ and $\cC_2$ intersect only at $\p$, necessarily with
 intersection
multiplicity equal to the product $n_1n_2$ of degrees. As an example,
let $\cC_1$ be the smooth curve 
$$ y^{n-2} ~=~(y\,z-x^2)f(x,y,z) $$
of degree $n-2$, where $f(x,y,z)$ is a homogeneous function of degree $n-4$
 with $f(0,0,1)\ne 0$;  and let $\cC_2$ be the curve $y\,z=x^2$ 
 of degree 2. Then it is easy 
to check that the intersection $\cC_1\cap\cC_2$ consists of the single 
 point $\p$ with coordinates
$(x:y:z)=(0:0:1)$, and that both $\cC_1$ and $\cC_2$ are 
smooth near this point. Thus there are just two local branches of $\cC$ at 
$\p$.  
By B\'ezout's Theorem  the total intersection number of $\cC_1$ and $\cC_2$ is
 the product of degrees $2(n-2)$. Since there is only one intersection point, 
the local intersection number at $\p$ is precisely $2(n-2)$. 
Thus it  follows from Lemma~\ref{L-mb} that $\fg^+_\cC(\p)=2(n-2)$. 
Since there are two local branches, it follows that $\fg_\cC(\p)=2(n-2)-1$.
It is then easy to check 
 that $\fg+\fg^+ < {n-1\choose 2}$ whenever $n\ge 9$; so that
 we obtain curves which satisfy {\bf(1)} but not {\bf(2)}. On the other hand,
it is not hard to find curves which satisfy {\bf(2)} but not {\bf(1)}.
\end{rem}\msk

The proof of Proposition \ref{P-pa1} will make use of the Distortion 
Lemma~\ref{L-dis2},
which involves not only points but also lines. In order to apply it,
 we will need the following.
\msk

\begin{definition}
Given a line $L\subset\bP^2$ and given $\vep>0$, let $N_\vep(L)\subset\bP^2$
be the open $\vep$-neighborhood, using the standard Study-Fubini metric.
Then for any smooth curve $\cC\in\fC_n$,
the intersection $\cC\cap N_\vep(L)$ has a well defined genus $0\le \fg\le
{n-1\choose 2}$.
Similarly, given a singular curve $\cC_0\subset\bP^2$, there is a well
defined number $$ \limsup_{\cC\to \cC_0}~ \fg\big(\cC\cap N_\vep(L)\big)~,$$
where $\cC$ varies over smooth curves converging to $\cC_0$ within the space
$\fC_n$. Therefore the monotone limit
\begin{equation}\label{E-mlim}
  \fg_{\cC_0}(L)~=~\lim_{\vep\to 0}\big(
\limsup_{\cC\to \cC_0}\, \fg(\cC\cap N_\vep(L)\big) \end{equation}
is also well defined. In fact for  a generic choice of $\vep$, the boundary
of $N_\vep(L)$ is transverse to $\cC_0$, so that it doesn't matter whether 
we take the lim sup or the \hbox{lim inf} in equation (\ref{E-mlim}).
\end{definition}

\begin{lem}\label{L-Lmax}
Let $\max_L\, \fg_{\cC_0}(L)$ be the maximum of the expression
$(\ref{E-mlim})$ over all lines
$L\subset \bP^2$. Then there exists a number $\vep_0=\vep_0(\cC_0)>0$ such that
$$\fg\big(\cC\cap N_{\vep_0}(L_0)\big) ~\le~ \max_L \fg_{\cC_0}(L)$$
for every line $L_0$ and every smooth $\cC$ which is sufficiently close 
to $\cC_0$.
\end{lem}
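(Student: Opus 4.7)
The plan is to reduce everything to a compactness argument on the dual projective plane $\bP^{2\textstyle{*}}$ of lines, so that a uniform $\vep_0$ can be extracted from locally valid $\vep(L)$'s. First I would produce for each individual $L$ a pair $(\vep(L),\,U(L))$, where $\vep(L)>0$ and $U(L)\subset\fC_n$ is a neighborhood of $\cC_0$, such that $\fg\big(\cC\cap N_{\vep(L)}(L)\big)\le\max_{L'}\fg_{\cC_0}(L')$ for every smooth $\cC\in U(L)$. Then I would use continuity of $L\mapsto L$ in the Hausdorff metric and compactness of $\bP^{2\textstyle{*}}$ to bootstrap this pointwise statement into a uniform one.

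For the first step, set $h_\vep(L):=\limsup_{\cC\to\cC_0}\fg\big(\cC\cap N_\vep(L)\big)$. Monotonicity of genus (Theorem \ref{T-gen}(2)) applied to the inclusion $N_\vep(L)\subset N_{\vep'}(L)$ shows that $h_\vep(L)$ is a non-decreasing function of $\vep$, so the limit $\fg_{\cC_0}(L)=\lim_{\vep\to 0}h_\vep(L)$ is actually an infimum. Since $h_\vep(L)$ takes only non-negative integer values bounded by ${n-1\choose 2}$, this infimum is attained: there is $\vep(L)>0$ with $h_{\vep(L)}(L)=\fg_{\cC_0}(L)$. The same integrality forces the defining limsup to be attained on a neighborhood: some open $U(L)\ni\cC_0$ satisfies $\fg\big(\cC\cap N_{\vep(L)}(L)\big)\le\fg_{\cC_0}(L)$ for every smooth $\cC\in U(L)$. (A minor technical caveat: $\fg\big(\cC\cap N_\vep(L)\big)$ is strictly defined only when $\partial N_\vep(L)$ is transverse to $\cC$, but the non-transverse $\vep$'s form a set of measure zero by Sard, so we may always take our $\vep(L)$ to be generic and the bound is unaffected under small perturbation by Theorem~\ref{T-gen}(3).)

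For the second step, the Hausdorff distance between lines is a continuous function on $\bP^{2\textstyle{*}}\times\bP^{2\textstyle{*}}$, so I can choose a neighborhood $V(L)\subset\bP^{2\textstyle{*}}$ of $L$ small enough that $d_H(L,L')<\vep(L)/2$ for every $L'\in V(L)$, which by the triangle inequality gives the inclusion $N_{\vep(L)/2}(L')\subset N_{\vep(L)}(L)$. Monotonicity of genus then yields
$$\fg\big(\cC\cap N_{\vep(L)/2}(L')\big)~\le~\fg\big(\cC\cap N_{\vep(L)}(L)\big)~\le~\max_{L''}\fg_{\cC_0}(L'')$$
for every smooth $\cC\in U(L)$ and every $L'\in V(L)$. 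Now I would invoke compactness of $\bP^{2\textstyle{*}}$ to extract a finite subcover $V(L_1),\ldots,V(L_N)$, set $\vep_0:=\min_i\vep(L_i)/2$, and take $U:=\bigcap_i U(L_i)$, still an open neighborhood of $\cC_0$. For any line $L_0$ and any smooth $\cC\in U$, pick $i$ with $L_0\in V(L_i)$; then $N_{\vep_0}(L_0)\subset N_{\vep(L_i)/2}(L_0)\subset N_{\vep(L_i)}(L_i)$, and one more application of monotonicity of genus gives the required inequality.

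The main obstacle I expect is the interplay between the two limits in the definition \eqref{E-mlim}: \emph{a priori} one only controls the limsup after the $\vep\to 0$ limit, so it is not automatic that there is a single $\vep$ at which the limsup already equals $\fg_{\cC_0}(L)$ and for which the limsup is realized on a whole neighborhood of $\cC_0$. Both of these ``achievement'' claims are what makes the integrality of genus essential; once this pointwise uniformity is in hand, the covering argument is routine.
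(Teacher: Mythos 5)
Your proof is correct and uses the same essential idea as the paper's — compactness of the space of lines, monotonicity of genus, and the integrality/stability of the genus — merely recast as a direct finite-open-cover argument in $\bP^{2\textstyle{*}}$ rather than the paper's shorter argument by contradiction via sequential compactness. (One minor point: since $N_{\vep_0}(L_0)$ is open, $\cC\cap N_{\vep_0}(L_0)$ is automatically an open subsurface with a well-defined genus, so the transversality caveat you raise is not actually needed.)
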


\begin{proof}
Otherwise for every $\vep>0$ there would be a line $L_\vep$ and curves $\cC$
arbitrarily close to $\cC_0$ for which $\fg\big(\cC\cap N_\vep(L_\vep)\big)>
 \max_L \fg_{\cC_0}(L)$. Choose a sequence $\{\vep_j\}$ converging to zero 
so that the corresponding lines $L_{\vep_j}$ converge to some limit $L_0$.
Then $N_{\vep_j}(L_{\vep_j})\subset N_{\vep_0}(L_0)$ for large $j$; and
we obtain a contradiction, using monotonicity of the surface genus.
\end{proof}
\medskip

If the curve $\cC_0$ contains no line, then we can sharpen this statement
as follows.

\begin{lem} \label{L-nl}
If $\cC_0$ is line-free, then for every $\vep>0$ there exists  $\delta>0$
with the following property. For any line $L\subset\bP^2,$ each connected 
component of the intersection $~\cC_0\cap N_\delta(L)~$ has diameter less 
than $\vep$. In practice, we will choose $\vep$ less than the smallest distance 
between two singular
points of $\cC_0$. It then follows that each such connected component 
contains at most one singular point. Hence it follows
that $\fg_{\cC_0}(L)$ is just the sum of $\fg_{\cC_0}(\p)$ as $\p$ ranges over
all singular points of $\cC_0$ in $L$.
\end{lem}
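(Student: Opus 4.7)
The argument splits naturally into two parts: the main diameter estimate, which is proved by a compactness-contradiction argument, and the consequences for $\fg_{\cC_0}(L)$, which follow by additivity of genus.

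My plan for the diameter estimate is to argue by contradiction. Suppose it fails; then there exists $\vep>0$ and, for each $j$, a line $L_j$ and a connected component $K_j$ of $\cC_0\cap N_{1/j}(L_j)$ with $\mathrm{diam}(K_j)\ge\vep$. The space of lines in $\bP^2$ is the compact dual projective plane, so after extracting a subsequence we may assume $L_j\to L_0$. Each $K_j$ is a compact subset of the compact set $\cC_0$, so by Blaschke's selection theorem we may pass to a further subsequence so that $K_j$ converges in Hausdorff metric to some compact $K_0\subset\bP^2$. I then check the four properties: (i) $K_0\subset\cC_0$ because $\cC_0$ is closed and $K_j\subset\cC_0$; (ii) $K_0\subset L_0$ because $K_j\subset N_{1/j}(L_j)$ with $L_j\to L_0$ and $1/j\to 0$; (iii) $K_0$ is connected, since a Hausdorff limit of compact connected sets in a compact metric space is connected; and (iv) $\mathrm{diam}(K_0)\ge\vep$, obtained by choosing $p_j,q_j\in K_j$ with $d(p_j,q_j)\ge\vep$ and passing to convergent subsequences. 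The contradiction then comes from the hypothesis that $\cC_0$ is line-free: $L_0$ is not contained in $\cC_0$, so B\'ezout's theorem forces $\cC_0\cap L_0$ to be a finite set; hence the connected subset $K_0$ must be a single point, violating (iv).

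For the application, I choose $\vep$ to be less than the minimum pairwise distance between distinct singular points of $\cC_0$, and less than the distance from each singular point to any other singular point — the lemma then gives a $\delta>0$ for which every component of $\cC_0\cap N_\delta(L)$ has diameter less than $\vep$ and so contains at most one singular point of $\cC_0$. After possibly shrinking $\delta$ further, I can also ensure that if a component contains a singular point $\p$, then $\p$ lies in the $\delta$-neighborhood of $L$, and that in the limit $\delta\to 0$ only singular points actually lying on $L$ contribute.

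Then the identity $\fg_{\cC_0}(L)=\sum_{\p\in L\cap\mathrm{Sing}(\cC_0)}\fg_{\cC_0}(\p)$ follows from additivity of genus (Theorem \ref{T-gen}(1)) applied to the smooth approximation $\cC\cap N_\delta(L)$: each component either contains a unique singular point $\p$ of $\cC_0$, in which case for $\cC$ close enough to $\cC_0$ that component is diffeomorphic to the local smoothing $\cS_\p$ (of genus $\fg_{\cC_0}(\p)$), or else it contains no singular point, in which case it is a smoothly embedded compact piece of a smooth curve, hence topologically a disk and of genus zero. Summing over components and then letting $\delta\to 0$ (so that only singular points $\p\in L$ survive) yields the stated formula.

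The main obstacle is the first part: ensuring that the Hausdorff limit $K_0$ is both connected and of positive diameter while simultaneously being forced into $L_0$. The key is that $\bP^{2*}$ is compact (so the lines have a limit) and that the Hausdorff topology on compact subsets of $\bP^2$ is itself compact (so the components have a limit), together with the standard fact that connectedness and a lower bound on diameter are preserved under Hausdorff convergence. Once $K_0\subset \cC_0\cap L_0$ is established, the line-free hypothesis combined with B\'ezout yields an immediate contradiction.
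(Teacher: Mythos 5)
Your proof is correct and follows essentially the same route as the paper's. The paper also argues by contradiction via compactness: after extracting a convergent subsequence $L_j\to L'$, it invokes ``any nested intersection of compact connected sets is again connected'' to produce a connected subset of $\cC_0\cap L'$ of diameter $\ge\vep_0$, then concludes this is impossible since $\cC_0$ is line-free. Your use of Blaschke selection and Hausdorff convergence of the components $K_j$ is a slightly cleaner way to carry out the same compactness step, and your explicit appeal to B\'ezout to show $\cC_0\cap L_0$ is finite fills in a detail the paper leaves implicit. The genus consequence is handled informally in both cases.
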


\begin{figure}[t]
\centerline{\includegraphics[width=2.3in]{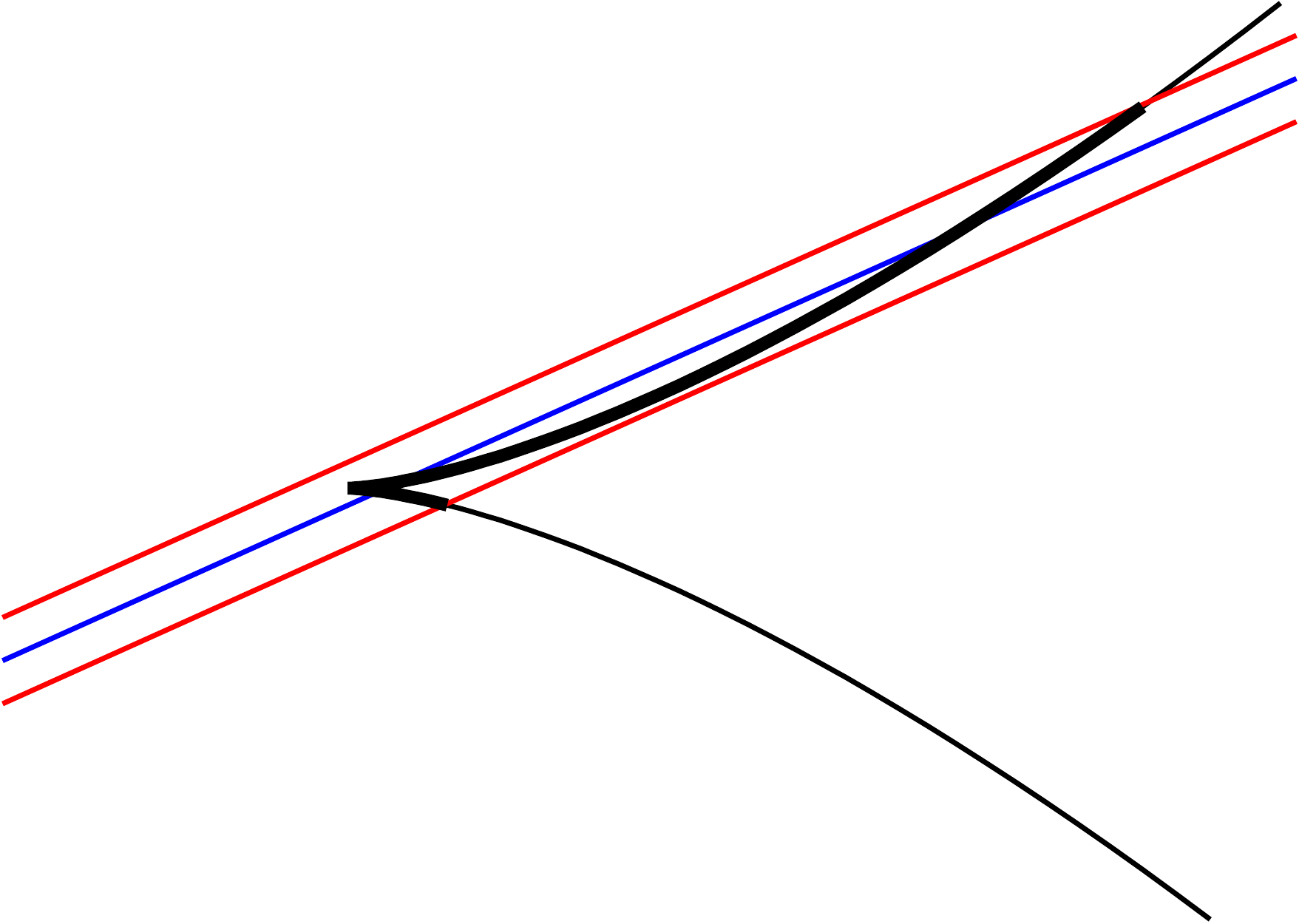}}
\caption{Illustrating Lemma \ref{L-nl}}
\end{figure}

\begin{proof}[Proof of Lemma \ref{L-nl}] Otherwise, for some fixed
 $\vep_0>0$, we could choose a sequence $\{\delta_j\}$ converging to zero,
and an associated sequence of lines $L_j$,
such that for each $j$ some component of $~\cC_0\cap N_{\delta_j}(L_j)~$
has  diameter $\ge\vep_0$. After passing to an infinite subsequence, we
may assume that $\{L_j\}$ converges to a limit line $L'$.
It then follows that the  
intersection of any neighborhood of $L'$ with $\cC_0$ has one or more 
components of diameter $\ge\vep_0$.
Since any nested intersection of compact connected sets is again connected,
it would follow that $L'\cap\cC_0$ has a component of length $\ge\vep_0$,
which is impossible since $\cC_0$ is line-free.
\end{proof}
\bigskip

Next we must look at the intersection of $\cC_0$ with a small round ball.

\begin{lem} \label{L-CcapB}
Given any singular curve $\cC_0\in\fC_n$ there exist numbers\break
$\vep_1>\vep_2>0$ such that %\begin{itemize}\item[{\bf(a)}]
the $\vep$-sphere centered at any singular point
of $\cC_0$ intersects $\cC_0$ transversally whenever $0<\vep\le \vep_1$;
and furthermore such that any open ball of radius $<\vep_2$ either:
 \ssk
 
 \begin{itemize}
 \item[{\bf(a)}] is contained in the $\vep_1$-ball about some singular point; or
   else

\item[{\bf(b)}] intersects $\cC_0$ in  a topological disk or in the empty set.
\end{itemize}
\end{lem}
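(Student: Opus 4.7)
The plan is to produce $\vep_1$ first, using only the singular points, and then to extract $\vep_2$ from a compactness argument on the complementary smooth part.

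For the first conclusion, let $\p_1,\ldots,\p_m$ be the singular points of $\cC_0$. I would first shrink candidate radii so that the closed balls $\overline B_{2\vep_1}(\p_i)$ are pairwise disjoint. The sphere-transversality claim then reduces, for each fixed $i$, to showing that $0$ is the only accumulation point of the critical values of the distance-squared function $f_i(q)=d(q,\p_i)^2$ restricted to $\cC_0$. Since $\cC_0$ is a real algebraic set and $f_i$ is a polynomial in real coordinates, the set of critical values in $\R$ is semi-algebraic, hence a finite union of points and intervals. An interval $(0,a)$ of critical values is impossible: near $\p_i$ the function $f_i$ has an isolated zero, and the classical argument of Milnor (\cite{Mi1}, used to set up the Milnor fibration) shows that for all sufficiently small $\vep>0$ the sphere $S_\vep(\p_i)$ meets $\cC_0$ transversally. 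Taking $\vep_1$ to be smaller than the Milnor radius for each $\p_i$, and also smaller than half the pairwise distance between singular points, gives a $\vep_1$ with the two required properties.

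For the second conclusion, set $A=\cC_0\smallsetminus\bigcup_i B_{\vep_1/2}(\p_i)$. This is a compact subset of the smooth locus of $\cC_0$, hence a compact real-analytic $2$-submanifold (possibly with boundary) of $\bP^2$. By the tubular neighborhood theorem and compactness, there exists $\vep_2'>0$ such that for every $c\in\bP^2$ the intersection $B_{\vep_2'}(c)\cap A$ is either empty or a topological $2$-disk: in local adapted coordinates near any point of $A$ the manifold looks like $\{w=z=0\}\subset\R^4$, and a small ball meets it in a Euclidean disk, so compactness gives the uniform $\vep_2'$. Now define
\[
\vep_2~=~\min\bigl(\vep_1/4,\;\vep_2'\bigr).
\]

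The verification is then a triangle-inequality exercise. Let $B=B_r(c)$ with $r<\vep_2$. If $d(c,\p_i)\le 3\vep_1/4$ for some $i$, then $B\subset B_{3\vep_1/4+r}(\p_i)\subset B_{\vep_1}(\p_i)$, giving alternative (a). Otherwise $d(c,\p_i)>3\vep_1/4$ for every $i$, so every $x\in B$ satisfies $d(x,\p_i)>3\vep_1/4-r>\vep_1/2$; hence $B\cap\cC_0\subset A$, and by the choice of $\vep_2'$ the intersection $B\cap\cC_0=B\cap A$ is either empty or a topological disk, giving alternative (b). The main obstacle is the uniform sphere-transversality for \emph{all} small radii; the tubular-neighborhood step is standard, and the verification is formal, but the semi-algebraic/Milnor input is essential to avoid a Cantor-like accumulation of critical radii near $0$ that Sard's theorem alone would not rule out.
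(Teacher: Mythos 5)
The paper gives no proof of this lemma (it simply remarks ``The proof is not difficult'' and points to the figure), so there is nothing to compare against; but your argument is correct and is the natural way to do it. One small slip: you state the compactness conclusion only for balls of radius exactly $\vep_2'$, whereas what you use in the verification is that $B_r(c)\cap A$ is empty or a disk for \emph{every} $r\le\vep_2'$ and every $c$ --- since an open subset of a disk need not be a disk, the literal statement does not imply the one you apply. The tubular-neighborhood/compactness argument you sketch (via a uniform positive reach, or uniformly bounded curvature, of the compact surface $A$) does deliver the stronger statement; you should simply assert it in that form. The rest --- shrinking $\vep_1$ below both the Milnor transversality radius at each $\p_i$ and half the minimal pairwise distance between singular points, then the $\vep_1/4$ triangle-inequality split --- is sound.
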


\begin{figure} [t]
\centerline{\includegraphics[width=2in]{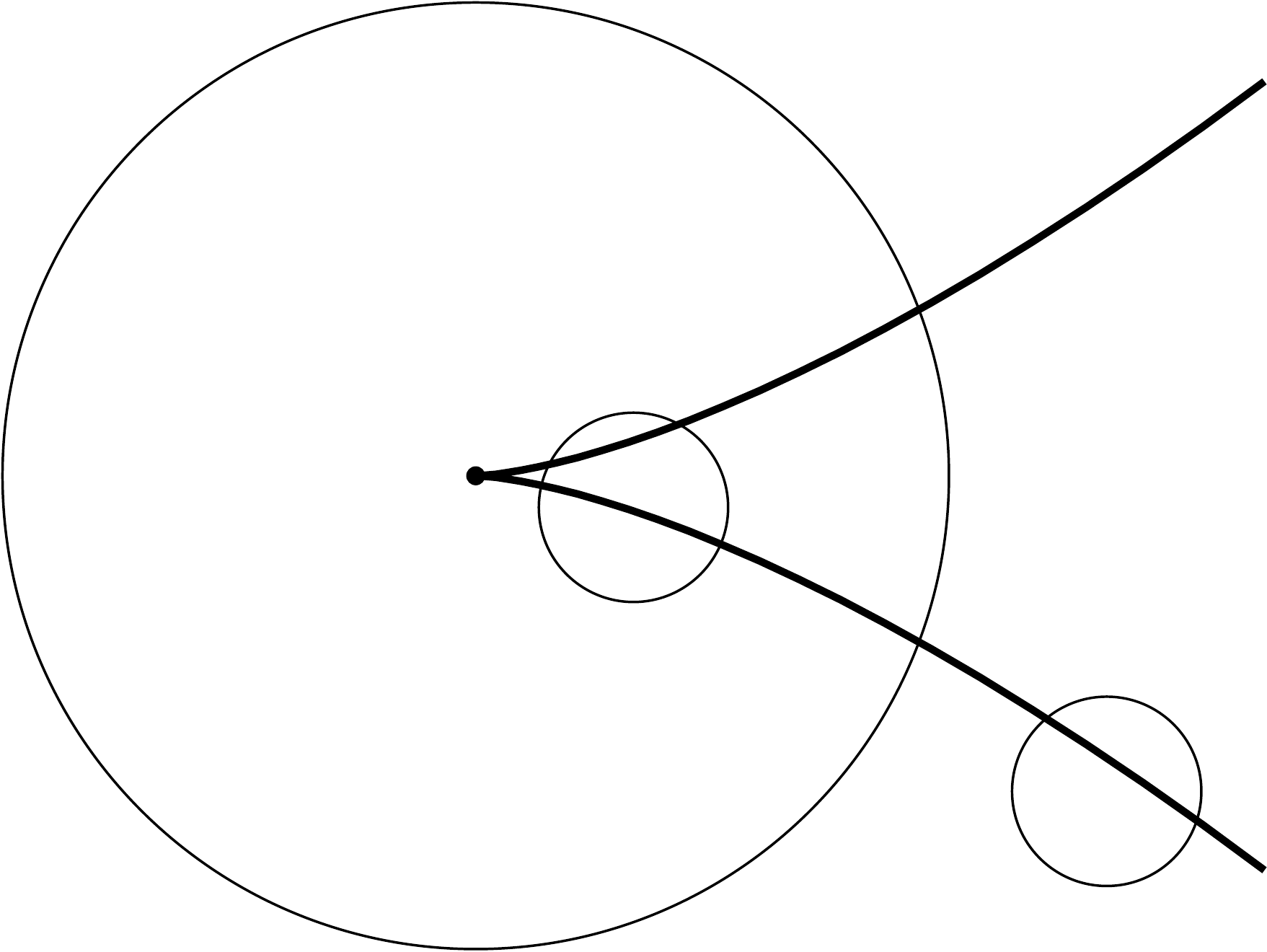}}
\caption{Illustration for Lemma \ref{L-CcapB}, showing a (larger) $\vep_1$-ball
and two (smaller) $\vep_2$-balls.\label{F-ball-fig}}
\end{figure}

The proof is not difficult. (Compare Figure \ref{F-ball-fig}.) \qed 

\begin{proof}[Proof of Proposition \ref{P-pa1}]
  If the action of $\bG$ on the open set $U\subset\fC_n$ were not proper, then
we could find curves $\cC_1(k)$ converging to some $\cC_1\in U$, and curves
$\cC_2(k)$ converging to some $\cC_2\in U$, and group elements $\g_k$ diverging
to infinity in $\bG$ so that $\g_k\big(\cC_1(k)\big)=\cC_2(k)$.
Since the number $\vep_1$ of Lemma \ref{L-CcapB} can be arbitrarily small,
we can assume without loss of
generality that $\vep_1$ is smaller than the number $\vep_0$ 
%Let $\vep_3=\min\{\vep_0,\,\vep_2\}$ be the smaller of the numbers $\vep_0$ 
of Lemma \ref{L-Lmax} and the number $\delta$ of Lemma \ref{L-nl} both for
$\cC_1$ and for $\cC_2$. Then we can choose
$k$ large enough so that $\g_k\not\in K_{\vep_2}$. According to the Distortion
Lemma~\ref{L-dis2} (interchanging the roles of $\cC_1$ and $\cC_2$ if
necessary), we can find a ``repelling'' neighborhood $N_{\vep_2}(\p^+)$ and an
``attracting'' neighborhood $N_{\vep_2}(L^-)$ so that
every point outside of $N_{\vep_2}(\p^+)$ maps into 
$N_{\vep_2}(L^-)$ under the action of $\g_k$. 

First suppose that we are in Case {\bf(a)} of Lemma \ref{L-CcapB} . Then we can
replace the disk $N_{\vep_2}(\p^+)$ by a larger disk $N_{\vep_1}(\p')$,
where $\p'$ is a singular point of $\cC_1$. Note that $\cC_1$
 intersects this larger disk transversally, hence the same is true for any
$\cC$ which is sufficiently closed to $\cC_1$. Then the boundary of
$N_{\vep_1}(\p')$ cuts $\cC$ into:

\begin{quote}
\begin{enumerate}
\item a part $\cC_{\sf in}$ inside this disk which is connected, with
  $$ \fg(\cC_{\sf in})+b(\cC_{\sf in})-1$$
(genus plus number of boundary curves minus one)
equal to the augmented genus $\fg^+_{\cC_1}(\p')$; and\ssk

\item a part $\cC_{\sf out}$ outside
of $N_{\vep_1}(\p')$ which is connected and can be embedded into
 $\g_k(\cC)\cap N_{\vep_2}(L^-)$.
\end{enumerate}
\end{quote}

\noindent It follows from Lemma \ref{L-Lmax} that this second part 
 has genus at most equal to the
 maximum genus of the singular points of $\cC_2$ within $N_{\vep_2}(L^-)$.
Since both parts are
 connected, it follows  from 
 the cutting and pasting formula~(\ref{E-6'}) that
 $$ \fg(\cC)~\le~\fg^+_{\cC_1}(\p')+ \max_\p\fg_{\cC_2}(\p)~.$$
 where $\p$ ranges over singular points of $\cC_2$ within $N_{\vep_2}(L^-)$.
 % at most the sum of these two numbers.
 Since $\cC$ is a smooth curve of 
degree $n$, this contradicts Hypothesis {\bf(1)} of Proposition \ref{P-pa1}.
This completes the proof of this Proposition in Case {\bf(a)}.
The proof in Case {\bf(b)} is similar but easier.
\end{proof}
\medskip

\begin{rem}
If we allow curves which contain lines, then the following slightly weaker 
statement still holds: If

\begin{equation}\label{E-pa2}
\fg_{\cC_1}^+(\p)+\fg_{\cC_2}(L)~<~{n-1\choose 2} \end{equation}
for every $\cC_1$ and $\cC_2$ in $U$  and every $\p\in\cC_1$ and 
$L\subset\bP^2$, then the action of $\bG$ on $U$ is proper. 
The proof is similar to the argument above.
\end{rem}
\medskip

Here is an easy consequence.
\medskip

\begin{coro}\label{C-simpsing} \it Let $U'\subset\fC_n$ be the open set
 consisting of curves with no singularities other than simple double points 
and cubic cusps $($or equivalently with $\fg^+(\p)\le 1$ for all singular
points$)$. If $n\ge 4$, then the action of $\bG$ on $U'$ is proper, 
and hence the open set
$U'/\bG\subset\M_n$ is a Hausdorff orbifold. 
\end{coro}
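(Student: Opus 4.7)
The plan is to apply Proposition~\ref{P-pa1} to $U=U'$; both hypotheses follow quickly from the bound $\fg^+\le 1$, with the Multi-Branch Lemma~\ref{L-mb} and Bezout doing the real work.

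\textbf{Hypothesis (1).}  Since $\fg_\cC(\p)\le\fg^+_\cC(\p)\le 1$ for every $\cC\in U'$ and every singular point $\p$, we obtain $\max_{U'}\fg+\max_{U'}\fg^+\le 2$.  For $n\ge 4$ we have ${n-1\choose 2}\ge 3>2$, so (1) is satisfied.

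\textbf{Hypothesis (2).}  Suppose, for contradiction, that some $\cC\in U'$ is disconnected by removing a single point $\p$.  By the remark preceding Proposition~\ref{P-pa1}, $\cC=\cC_1\cup\cC_2$ where $\cC_1\cap\cC_2=\{\p\}$, $\deg(\cC_i)=n_i\ge 1$, and $n_1+n_2=n$.  By Bezout, the total local intersection multiplicity at $\p$ equals $n_1n_2$.  Grouping the local branches of $\cC$ at $\p$ into those belonging to $\cC_1$ and those belonging to $\cC_2$, Lemma~\ref{L-mb} gives
$$\fg^+_\cC(\p)~\ge~\sum_{\cB_i\subset\cC_1,\ \cB_j\subset\cC_2}\cB_i\cdot\cB_j~=~n_1n_2~\ge~1\cdot(n-1)~\ge~3,$$
contradicting $\fg^+_\cC(\p)\le 1$.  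Hence no curve in $U'$ is separated by a single point, and Proposition~\ref{P-pa1} yields proper action on the line-free part of $U'$.  Lemma~\ref{L-Haus} together with Corollary~\ref{C-wot} then upgrades this to a Hausdorff orbifold structure on the corresponding quotient.

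The main obstacle is that Proposition~\ref{P-pa1} as stated requires $U$ to consist of line-free curves, while the definition of $U'$ does not exclude curves having a line as a component (for instance a line meeting a smooth curve of degree $n-1$ transversally in $n-1$ simple double points).  For these I would invoke the strengthened criterion~(\ref{E-pa2}): it suffices to bound $\fg_{\cC_2}(L)$.  Since every node or cusp of $\cC_2$ on $L$ absorbs at least two units of the intersection of $L$ with the complementary (non-$L$) part of $\cC_2$, there are at most $\lfloor n/2\rfloor$ such singularities on $L$, giving $\fg_{\cC_2}(L)\le\lfloor n/2\rfloor$ via the adaptation of Lemma~\ref{L-nl}.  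Together with $\fg^+_{\cC_1}(\p)\le 1$ this beats ${n-1\choose 2}$ strictly for all $n\ge 5$; the borderline case $n=4$ is the most delicate point of the argument and would need a direct ad hoc analysis of how a line component can sit inside a quartic in $U'$ (where by the initial argument above the only admissible reducible configurations are already quite constrained).
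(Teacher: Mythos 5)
Your verification of hypothesis (1) of Proposition~\ref{P-pa1} matches the paper, and your verification of hypothesis (2) via B\'ezout together with Lemma~\ref{L-mb} is a nice explicit argument that the paper leaves implicit (it simply says the conclusion ``follows easily''). Where you diverge is in the treatment of line components, and this is exactly where your gap lies.

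You apply the criterion~(\ref{E-pa2}) with the crude bound $\fg_{\cC_2}(L)\le\lfloor n/2\rfloor$ obtained by counting singular points on $L$. This sum-over-singular-points bound is simply too weak at $n=4$, as you notice: $1+2={3\choose 2}$ fails the strict inequality. The paper's proof avoids this by distinguishing two cases sharply. For a line $L$ \emph{contained} in $\cC_0$, the only singularities that can lie on $L$ are simple nodes (a cusp on $L$ would force $L$ to be a tangent line at the cusp, contradicting $\fg^+\le 1$); consequently the nearby smooth surface $\cS_L=\cC\cap N_\vep(L)$ is homeomorphic to $L\cong\bP^1$ with the singular points removed, which is a punctured sphere of genus $\mathbf{0}$, not $\lfloor n/2\rfloor$. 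For a line $L$ \emph{not} contained in $\cC_0$, the argument of Proposition~\ref{P-pa1} already works unchanged: the relevant piece $\cC_{\sf out}$ is connected, hence lands in a single connected component of $\cC\cap N_\delta(L^-)$, and by (the adaptation of) Lemma~\ref{L-nl} that component contains at most one singular point, giving the bound $\max_\p\fg(\p)\le 1$ rather than a sum. Thus the bound never exceeds $1$ in either case, and $1+1=2<3\le{n-1\choose 2}$ covers all $n\ge 4$ including the boundary case that stopped you. The lesson is that~(\ref{E-pa2}) as literally stated is a cruder criterion than what the underlying argument of Proposition~\ref{P-pa1} actually delivers; the connectivity of $\cC_{\sf out}$ is what lets one replace the sum $\fg_{\cC_2}(L)$ by the genus of a single connected piece.
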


\begin{proof} First consider the open subset of $U'$ consisting of 
curves $\cC_0$ which contain no lines. Since we have assumed that 
$\fg(\p)\le\fg^+(\p)\le 1$ for all singular points,
and since ${n-1\choose 2}\ge 3$ for $n\ge 4$, 
the conclusion in this special case 
follows easily from Proposition \ref{P-pa1}. 

To prove the full Corollary,
we must also show that $\fg_{\cC_0}(L)\le 1$ for every line
which is contained in $\cC_0$. In fact 
it follows from the hypothesis that the only
 singularities which can be contained in $L$ are simple double points.
Therefore, it is not hard to check that the 
surface $\cS_L=\cC\cap N_\vep(L)$, with $\vep$ small and 
\hbox{$\cC\approx \cC_0$},
is homeomorphic to the line $L$ itself with each singular point removed. Thus 
\hbox{$\fg_{\cC_0}(L)=0$}, and the conclusion follows.
\end{proof}
\bigskip

Note that the condition $n\ge 4$ is essential.  
For a cubic curve with a cusp point, the quotient is not even a $T_1$-space;
while for a cubic curve with a double point, the action is not proper 
(although the quotient is Hausdorff). Note also that the Corollary applies to a 
union of four lines in general position; but not to a union of four lines 
where they pass through a common point. (Compare  Figure \ref{F-W4}.)
\msk

\begin{rem}[{\bf 1-cycles}]\label{R-cyc} 
If we consider 1-cycles rather than curves, with
multiplicities  allowed, then the arguments become more difficult
 since every point of a curve of
multiplicity two or more is singular. As a consequence,  in the definition
of the genus associated with a point of $\cC$ or a line through $\cC$ we must
 take the lim-sup over all possible smooth approximating curves.

 As a simplest example, consider a 1-cycle
of the form $\cC=\cC_{n-2}+2\cdot L$ where $\cC_{n-2}$ is a generic
smooth curve of degree $n-2$ and $L$ is a generic line counted twice.
Then one can check that the largest value 
of $\fg^+$ at a point is $\fg^+(\p)=3$, corresponding to an intersection
point in $\cC_{n-2}\cap L$. Similarly, the largest value of $\fg$ on a line
(when $n\ge 4$)  
is $~\fg(L)=n-2~$ for the doubled line $L$.
The required inequality
$$\max\fg^+({\rm point})~+~\max\fg({\rm line})~<~{n-1\choose 2}$$
then reduces to $~~3+(n-2)<(n-1)(n-2)/2\,$; which is satisfied if and only if
$n\ge 6$. Thus we can conclude that $\M_n$ is locally Hausdorff at
$\((\cC_{n-2}+2\cdot L\))$ whenever $n\ge 6$. Details of the argument will be 
omitted.
\end{rem}
\bigskip

\setcounter{lem}{0}
\section{Automorphisms and W-curves.} \label{s-aut}
Following Klein and Lie, a complex curve\footnote{Klein and Lie \cite{KL}  
also considered transcendental curves (such as the logarithmic spiral) which
are invariant under a one-parameter group; but we consider only
algebraic curves.}
 $\cC\in\fC_n$ is called a W\textbf{\textit{-curve}} if it is
invariant under a one-parameter group of projective transformations
(or equivalently, if it has infinite stabilizer\footnote{Recall from
Remark \ref{R-stab} that every infinite stabilizer is a Lie group, and
hence contains a one-parameter Lie group.}).
We will use the notation $\fW_n\subset\wfC_n$ for the
algebraic set consisting of all curves or cycles
in $\wfC_n$ which have infinite stabilizer. 
%(Our ``algebraic varieties'' are not necessarily irreducible. 
This algebraic set $\fW_n$ is reducible for all $n\ge 2$. (It is equal to the
entire space $\wfC_n$ for $n\le 2$.)

Recall from Remark~\ref{R-stab} that a curve or cycle
$\cC$ has finite  stabilizer whenever its orbit
(= projective equivalence class)
$$  \((\cC\))~=~\{g(\cC)~;~g\in \bG=\PGL_3\}~\subset~\wfC_n $$
has dimension equal to $\dim(\bG)=8$, and has infinite stabilizer whenever
its orbit has dimension strictly less than $8$. It will be enough
to study curves, since it is easy to check that a cycle $\cC$ has finite
stabilizer if and only if its support $|\cC|$ has finite stabilizer. A detailed 
 classification of curves with infinite stabilizer has been provided by 
\cite{AF1}. (See also \cite {Ghi}, \cite{Pop}.)

Since many different W-curves may be invariant under the same group,
it is convenient to first list the possible connected Lie groups
which can serve as the identity component $\bG_\cC^0$ of some
stabilizer. The largest groups, with dimension two or more, are relatively
 easy to describe.
\ssk

\begin{figure}[h!]
\labellist
\pinlabel $\dim(\bG_\cC)~=$ [l] at -115 -20
\pinlabel $6$ [l] at   -10 -20
\pinlabel $4$ [l] at    80 -20
\pinlabel $3$ [l] at   175 -20
\pinlabel $3$ [l] at   240 -20
\pinlabel $2$ [l] at   340 -20
\pinlabel $2$ [l] at   440 -20

\pinlabel $n=$ [l] at -50 -50
\pinlabel $1$ [l] at  -10 -50
\pinlabel $2$ [l] at   80 -50
\pinlabel $\geq 3$ [l] at 160 -50
\pinlabel $2$ [l] at  240 -50
\pinlabel $3$ [l] at  340 -50
\pinlabel $3$ [l] at  440 -50
\endlabellist
\centerline{\qquad\qquad\includegraphics[width=4.2in]{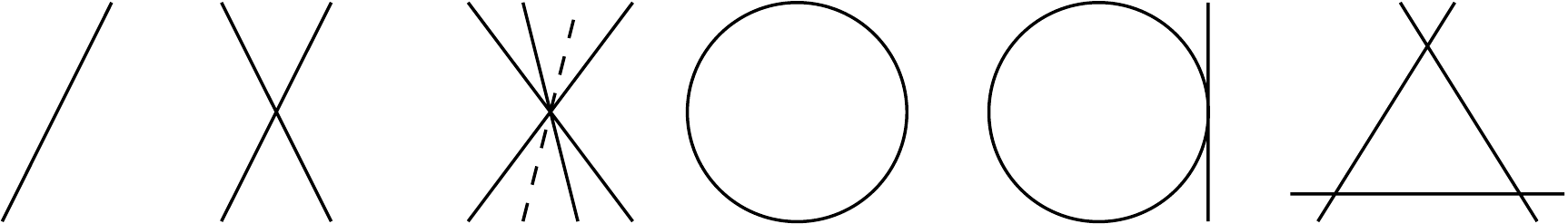}\vspace{1.2cm}}
\caption{\sf Six highly symmetric curves.\label{F-hsym}}
\end{figure}
\medskip

\begin{theo}\label{T-Big}
 There are only six connected Lie groups of dimension two or more
which can occur as the component of the identity  $\bG^{\,0}_\cC$ for some
curve in $\bP^2$. The
corresponding curves can be listed as follows. $($ {\rm Compare 
Figure~\ref{F-hsym}}.$)$
 \ssk

{\bf One Line.} If $\cC$ is a line, the 
stabilizer $\bG_\cC$ has dimension six.\footnote{This is
the unique example for which the action of the stabilizer
 $\bG_\cC$ on $|\cC|$ is not effective.
The group of automorphisms of the line (counted with any multiplicity)
is the 3-dimensional group $\PGL_2$, which is a quotient group of
the stabilizer $\bG_\cC$.}
Putting this line at infinity, $\bG_\cC=\bG^{\,0}_\cC$ can be identified with the
 group consisting of all non-singular affine transformations
\begin{equation}\label{E-aff-iso}
 (x,\,y)~\mapsto \big(\alpha x+\beta y +\sigma,\;\; \gamma x+\delta y+\tau\big)
\qquad{\rm with} \qquad \alpha\delta-\beta\gamma\ne 0~.\end{equation}
\smallskip

{\bf Two Lines.} If $\cC$ is the union of two distinct lines, the group 
$\bG_\cC$ is  four-dimensional, and $\bG^0_\cC$ can be identified
with the solvable  subgroup of $(\ref{E-aff-iso})$ consisting of
 transformations $(x,y)\mapsto (\alpha x+\beta y+\sigma,\;\;\delta y)$ 
 $($preserving the line $y=0$, as well as the line at infinity$)$.\ssk
\medskip

{\bf Concurrent Lines.} If $\cC$ is the union of three {\textit or more} lines
passing through a common point, the group $\bG_{\cC}$
is three-dimensional, and $\bG^0_\cC$can be identified with the subgroup of
$(\ref{E-aff-iso})$ consisting of transformations
$$(x,y)\mapsto (\alpha x+\beta y+\sigma,\;\; y)$$ which preserve every line 
$y={\it constant}$. $($This is the only case which includes curves of every 
degree $n\ge 3$. For $n\ge 4$ note that it includes infinitely many 
\hbox{$\bG$-equivalence} classes, since
any four lines through a point have a $\bG$-invariant cross-ratio.$)$
\msk

{\bf Conic Section.} If $\cC$ is a smooth degree two curve, the group $\bG_\cC$
 is a three-dimensional simple group, isomorphic to $\PGL_2$.\ssk
\medskip

{\bf Conic plus Tangent Line.} For the union of a smooth degree two
 curve with a tangent line,
the stabilizer has dimension two, isomorphic to the group of affine 
automorphisms $z\mapsto \alpha z+\beta$ of $\C$.\msk

{\bf Three Non-concurrent Lines.} For a triple of lines in general position, 
the group $\bG_\cC$ has dimension two, and $\bG^0_\cC$ can be 
identified with the abelian group consisting of non-singular diagonal 
transformations  $$(x,y)\mapsto(\alpha x,\;\beta y)~. $$
\end{theo}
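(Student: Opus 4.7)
The plan is to pass to the Lie algebra level: setting $\mathfrak{h}=\mathrm{Lie}(\bG_\cC^{\,0})\subset\mathfrak{sl}_3(\F)$, the group $\bG_\cC^{\,0}$ preserves $\cC$ if and only if every $\xi\in\mathfrak{h}$, regarded as a holomorphic vector field on $\bP^2$, is tangent to $\cC$ at every smooth point; equivalently, every irreducible component of $\cC$ is the Zariski closure of a $1$-dimensional $\bG_\cC^{\,0}$-orbit. The theorem therefore reduces to (i) classifying the connected subgroups $H \subset \PGL_3$ of dimension $\geq 2$ up to conjugation, and (ii) reading off their $1$-dimensional orbit closures. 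Since $\dim\PGL_3 = 8$ and an elementary check using the root decomposition of $\mathfrak{sl}_3$ shows that no proper connected subgroup has dimension $7$ or $5$, only the dimensions $6, 4, 3, 2$ need to be treated.

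For step (i), I would use the Levi decomposition $\mathfrak{h}=\mathfrak{s}\ltimes\mathfrak{r}$ with $\mathfrak{s}$ semisimple and $\mathfrak{r}$ the solvable radical. The only nonzero semisimple subalgebras of $\mathfrak{sl}_3$ are copies of $\mathfrak{sl}_2$, which up to conjugation come in two flavors: the ``reducible'' embedding as the stabilizer of an incident point-line pair, and the irreducible symmetric-square/Veronese embedding, which is the stabilizer of a smooth conic. When $\mathfrak{h}$ is solvable, Lie's theorem places it inside the Borel of upper-triangular matrices, and the weight decomposition under the diagonal torus reduces the remaining classification to a short enumeration. In step (ii) I then match each subgroup to its invariant curves: in dimension $6$, $H$ is the stabilizer either of a point (transitive on the complement, so no invariant curve) or of a line, giving the ``One Line'' case with the stated normal form; the unique dimension $4$ class is the stabilizer of an incident point-line pair, yielding ``Two Lines''; the two dimension $3$ classes are the Veronese $\PGL_2$ (whose only $1$-dimensional orbit closure is a smooth conic) and the group fixing a point and every line through it (whose invariant curves are unions of three or more concurrent lines); and the remaining dimension $2$ classes, lying inside a dimension $3$ case, are the affine subgroup of the Veronese $\PGL_2$ fixing a point of the conic (``Conic plus Tangent Line'') and the diagonal torus (``Three Non-concurrent Lines''). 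Matching these intrinsic descriptions to the explicit affine normal forms in the statement is then a direct computation.

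The hard part will be ensuring completeness of the classification in step (i), in particular ruling out stray $2$-dimensional abelian subalgebras outside the two listed cases and verifying that the two $3$-dimensional classes exhaust all possibilities. The cleanest route is to fix a maximal torus $T\subset\mathfrak{h}$ and inspect the $T$-weights appearing in $\mathfrak{h}\subset\mathfrak{sl}_3$: once $\dim T$ and its weight set are pinned down, $\mathfrak{h}$ is determined up to a finite ambiguity that is resolved by the Levi/Lie-theorem step. A subsidiary observation, needed to justify the parenthetical remark in the ``Concurrent Lines'' case, is that for $n\geq 4$ the stabilizer necessarily fixes the common point and hence acts projectively on the pencil of lines through it; the cross-ratio of any four of the concurrent lines is thus a $\PGL_3$-invariant taking arbitrary values in $\bP^1\setminus\{0,1,\infty\}$, which gives the claimed infinitude of $\bG$-equivalence classes.
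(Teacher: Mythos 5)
Your route---classify connected subgroups $H\subset\PGL_3$ of dimension $\ge 2$ up to conjugacy, then read off $1$-dimensional orbit closures---is genuinely different from the paper's. The paper instead leans on Theorem~\ref{T-inf}: it first notes that if $\dim\bG_\cC\ge 2$ then $\bigcap_j\bG_{\cC_j}$ has finite index in $\bG_\cC$, so every irreducible component $\cC_j$ also has stabilizer of dimension $\ge 2$; since any irreducible curve of degree $\ge 3$ is either a cusp curve (one-dimensional stabilizer) or has finite stabilizer, every component must be a line or a smooth conic. It then simply runs through the types $D(p,q,r)$ and ND from Theorem~\ref{T-inf} and discards those with only a one-dimensional stabilizer. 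That has the advantage of never requiring a full subgroup census. Your plan is viable in principle, and would give a more self-contained proof not resting on Theorem~\ref{T-inf}, but the census step as sketched contains a concrete error.

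The claim that ``no proper connected subgroup has dimension $7$ or $5$'' is false at $5$: the Borel subgroup (upper-triangular matrices, equivalently the stabilizer of an incident flag $p\in L$) has dimension $5$. What is true, and what your argument actually needs, is that the Borel is never the identity component of a curve's stabilizer: its orbits are $\{p\}$, $L\ssm\{p\}$, and $\bP^2\ssm L$, so its only invariant curves are multiples of $L$, whose stabilizer is the full six-dimensional affine group. This illustrates a structural gap in step (ii) as stated: you cannot merely ``read off $1$-dimensional orbit closures'' of $H$, you must check that the resulting curve has stabilizer with identity component \emph{exactly} $H$, which fails for the Borel (and also for the reducible $\GL_2$, so ``the unique dimension $4$ class'' is likewise not unique---$\GL_2$ is a second conjugacy class there, but it too stabilizes only $L$). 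Relatedly, the dimension-$4$ subgroup relevant to ``Two Lines'' is the stabilizer of two distinct lines (equivalently two points in the dual plane), not of an incident point-line pair---that is the Borel again and has dimension $5$. With those corrections, and with the completeness check you flag for dimensions $2$ and $3$ actually carried out, the strategy does go through, but it is noticeably heavier than the paper's reduction via Theorem~\ref{T-inf}.
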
\msk

The proof will depend on the following catalog of one-dimensional stabilizers.

\msk

\begin{theo}\label{T-inf} 
A curve $\cC$ has infinite stabilizer if and only if, after a projective
change of coordinates, it is invariant under one of the following two
kinds of one-parameter subgroup of $\bG$:\ssk

{\bf(1) Diagonalizable of type $D(p,\,q,\,r)$:} Here the integers $$p\ge q\ge r
\ge 0$$ should be pairwise relatively prime with $p=q+r$.
The automorphism takes the form
\begin{equation}\label{E-spq}
 (x:y:z)~\mapsto ~(t^qx:t^py:z)~, \end{equation}
where $t$ varies
over all non-zero complex numbers. In this case, the invariant curve $\cC$
can be any union of finitely many irreducible
curves of the form $x=0$ or $y=0$ or $z=0$ or 
\begin{equation}\label{E-Dpq}
 x^p ~=~ a\,y^qz^r~,\qquad{\rm with}~~~ a\ne 0~.\end{equation}
\smallskip

{\bf(2) Non-Diagonalizable,\footnote{There is also a simpler non-diagonalizable
    family $(x:y:z)\mapsto(x+ty:y:z)$; but we will ignore this one
    since it occurs only as a subgroup of the  
3-dimensional group $\bG_\cC$ where $\cC$ is a union of concurrent lines.
This $\cC$ is included under type $D(1,\,1,\,0)$.}
%as described above.}
of type ND,} with automorphism
$$ (x:y:z)~\mapsto~(x+ty+(t^2/2) z:y+tz: z)$$
where $t$ varies over all complex numbers. In this case $\cC$ can be any
union of curves of the form $z=0$ or
\begin{equation}\label{E-ND}
 x\,z~=~y^2/2 +a\,z^2~,\qquad{\rm with}\quad a\quad{\rm constant}~.\end{equation}
\end{theo}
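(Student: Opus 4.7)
The plan is to reduce to classifying one-parameter subgroups of $\bG = \PGL_3(\C)$ up to conjugacy, and then, for each conjugacy class, to enumerate the curves they preserve.

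First, since $\cC$ has infinite stabilizer, Remark \ref{R-stab} says that $\bG_\cC^{\,0}$ is a positive-dimensional connected Lie subgroup, so it contains a one-parameter subgroup $\g_t = \exp(tA)$ for some $A$ in the Lie algebra $\mathfrak{sl}_3(\C) \cong \mathfrak{pgl}_3(\C)$. Up to conjugation by $\bG$ (equivalently, a projective change of coordinates) and subtraction of a scalar, Jordan normal form reduces $A$ to one of three types: (a) $A$ is diagonal; (b) $A$ has one $2{\times}2$ Jordan block and a separate eigenvalue; or (c) $A$ is a single $3{\times}3$ nilpotent Jordan block. This structural step is the foundation of the argument.

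In case (a), write $A = \mathrm{diag}(\alpha,\beta,0)$. If $\alpha/\beta$ is irrational, the orbit of a generic point is Zariski-dense in $\bP^2$ and no proper algebraic subvariety can be invariant; so $\alpha/\beta \in \Q$ and we may rescale $t$ to make the one-parameter group act as $(x{:}y{:}z) \mapsto (t^qx : t^py : z)$ with nonnegative integers $p,q$. A homogeneous polynomial $\Phi = \sum c_{ijk} x^iy^jz^k$ is then $\g_t$-invariant up to a scalar factor exactly when the linear form $qi+pj$ is constant on monomials with $c_{ijk}\ne 0$; the irreducible invariant factors are therefore the coordinate lines $x$, $y$, $z$ together with binomials $x^p - ay^qz^r$ with $r = p-q \ge 0$. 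Checking that such a binomial is irreducible precisely when $(p,q,r)$ are pairwise coprime (for example, any common factor $d>1$ of two of the exponents lets one extract a $d$-th root to factor the polynomial) yields the list $D(p,q,r)$ in the statement. In case (c), direct exponentiation gives the stated action $(x{:}y{:}z)\mapsto(x+ty+t^2z/2 : y+tz : z)$, and substitution shows that $z$ and $xz - y^2/2$ are both invariant. Decomposing $\C[x,y,z]$ into irreducible modules for the $\mathfrak{sl}_2$-triple containing the nilpotent $A$ shows that $\mathrm{Sym}^n V_1 = V_n \oplus V_{n-2}\oplus\cdots$, so the invariants are generated by these two polynomials, giving exactly the curves $\{z=0\}$ and $\{xz - y^2/2 = az^2\}$ listed for type ND. In the remaining case (b), with $A = \mathrm{diag}(\alpha,0,0) + E_{23}$, the shear $E_{23}$ forces invariant polynomials to lie in $\C[x,z]$, and the diagonal part then forces each to be a monomial $x^iz^k$; the invariant curves are unions of the coordinate lines $x=0$ and $z=0$ and are already of type $D(1,1,0)$, justifying its omission from the statement.

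The main obstacle will be the coprimality analysis at the end of case (a): I need to verify that $x^p - ay^qz^r$ is irreducible exactly when $(p,q,r)$ are pairwise coprime, and more subtly that no invariant polynomial with three or more monomials produces a new irreducible factor beyond those listed. The second point reduces to showing that, once the primitive ratio $(p:q)$ is fixed, the set of non-negative integer solutions to $qi + pj = m$ (for the appropriate weight $m$) does not yield additional primitive binomials as irreducible factors. This is a short lattice-point argument, but is the most delicate piece of the classification.
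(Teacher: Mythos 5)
Your proposal follows the same overall strategy as the paper: reduce $A$ to Jordan normal form (after subtracting a scalar and rescaling), then enumerate the invariant curves case by case. The differences are stylistic but worth noting. In the diagonalizable case the paper rules out an irrational or non-real ratio by observing that fixing $t$ so that $a't \in 2\pi i\Z$ makes the orbit of a point hit infinitely many $y$-values with $x$ fixed, which is impossible for an algebraic curve; your appeal to Zariski-density of the one-parameter group in the torus is a clean equivalent. For the paper's version of your case (a), they work with orbit closures $(y/y_0)^m = (x/x_0)^{\pm\ell}$ directly, with $\ell/m$ in lowest terms, and the pairwise-coprimality and the relation $p=q+r$ fall out immediately from $\gcd(\ell,m)=1$; your ``lattice-point'' worry is therefore overblown --- given $p=q+r$, pairwise coprimality of $(p,q,r)$ is equivalent to $\gcd(\ell,m)=1$, and an invariant polynomial supported on a weight-$w$ line factors into binomials after substituting $u=x^p/(y^qz^r)$. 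Your treatment of the ND case via an $\mathfrak{sl}_2$-triple and the decomposition of symmetric powers is genuinely slicker than the paper's direct exponentiation-and-substitution, but you should write $\mathrm{Sym}^nV_2$ (the three-dimensional irreducible, which is what $\C^3$ is for a regular nilpotent), not $\mathrm{Sym}^nV_1$; the resulting decomposition is $V_{2n}\oplus V_{2n-4}\oplus\cdots$, and the ring of highest-weight vectors is $\C[z,\,2xz-y^2]$ as you assert. Finally, your merging of the paper's Cases 3 and 4 into a single $2\times2$-Jordan-block case, arguing that the shear forces invariants into $\C[x,z]$ (since $z\partial_y$ is nilpotent) and the extra diagonal eigenvalue then forces monomials, is a nice simplification of the paper's somewhat awkward ``$y_0=0$ or $y_0\ne 0$'' dichotomy.
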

\smallskip

\begin{rem}[{\bf Catalog of curves in $\fW_n$}]\label{R-infs}
Before proving Theorem~\ref{T-inf}, we will describe these curves in
 more detail. 
\smallskip

$\bullet$ {\bf Type $D(1,1,0)$.\;} The curves of type $D(1,1,0)$ are the easiest
to describe.
To be invariant under the action $(x,y,z)\mapsto (tx,\,ty,\,z)$ a curve
must be a union of lines $\quad(x:y)={\rm constant}\quad$  through the point
$(0:0:1)$, possibly together with the ``line at infinity'' $z=0$.
In other words, a curve $\cC$ of degree $n$ has type $D(1,1,0)$
if and only if it is a union of $n$ lines, at least $n-1$ of which pass through
a common point. To compute the dimension of the corresponding subset
of $\fW_n$, note that we need
two parameters in order to specify the intersection point, two parameters
to specify the free line, and then one-parameter for each additional line.
Hence the dimension of the corresponding irreducible subset
of $\fW_n$ is  $n+3$ (provided that $n\ge 3$). If $n\ge 4$ then this component
contains infinitely many different projective equivalence classes. In fact, for
$n>4$ there are $n-4$ invariant cross-ratios; while for $n=4$ the algebraic
 subset  consisting of lines through a common point has one invariant 
cross-ratio.
\msk

$\bullet$ {\bf Type $D(2,1,1)$.}\; By definition
each irreducible non-linear curve of type $D(2,1,1)$ can be put in the form
 $x^2=a\,y\,z$ with $a\ne 0$. Any two  curves in this form intersect
in the two points $(0:0:1)$ and $(0:1:0)$. For example, in the region $z\ne 0$
we can use affine coordinates with $z=1$. The curves are then
parabolas $x^2=a\,y$ which are tangent to each other at the origin. Thus any
automorphism which maps each curve to itself and fixes the origin must also map
the tangent line $y=0$ to itself. Similarly the tangent line $z=0$ 
at the point $(0:1:0)$ must map to itself,
and the line $x=0$ joining the two
intersection points must map to itself. A union of $k$ such curves,
 with $k\ge 2$, can be determined by $k+6$ independent parameters: namely
6 parameters to determine the three coordinate lines, 
and one more for each curve. Thus the corresponding irreducible
variety in $\fW_{2\,k}$ has dimension $6+k$. Note that we can obtain
varieties of higher degree, but the same dimension, by adjoining one or more
of the three coordinate lines to the curve.

It is interesting to note that a union of concentric circles
$u^2+v^2=\rho^2w^2$ looks superficially different, but is also of type 
$D(2,1,1)$. In fact it can be put in the required form  $x^2=a\,y\,z$
 by setting
$$x=w,\;\;\; y=u+iv,\;\;\; z=u-iv, \;\;\;\;{\rm and}\;\;\;\; a=1/\rho^2~.$$
\ssk

$\bullet$ {\bf Type $D(p,q,r)$ with $q\ge 2$.} \; For a curve of the form 
$$x^p~=~a\,y^qz^r ,\qquad{\rm with}\quad q\ge 2~,$$
the point $(x:y:z)=(0:0:1)$ is  a cusp-point of the form $x^p=a\,y^q$, 
using affine coordinates with $z=1$. On the other hand, using
affine coordinates with $y=1$, the point  $(0:1:0)$ is either a cusp point
of the form $x^p=az^r$ if $r>1$, or a flex-point of the form $x^p=az$ if
$r=1$. In either case these two points are distinguished. Hence, as in 
 Case~$D(2, 1, 1)$ it follows that one, two, or all three of the
 coordinate  lines $x=0,~y=0$ and $z=0$ can be adjoined to the curve without
 increasing the  dimension of  the associated irreducible components.
(Compare the last three curves on the top line of Figure \ref{F-W4}.) 
 As in Case~$D(2, 1, 1)$, this dimension is 
$k+6$ where $k$ is the number of non-linear components; but now we need only 
require that $k\ge 1$.\msk

$\bullet$ {\bf Non-Diagonalizable Type.} \;
The most transparent example in this case 
is the family of parallel parabolas $y=x^2+k$, 
each invariant under the automorphism
$$(x,\,y)~\mapsto(x+c\,,\;\;2\,c\,x+y+c^2)~.$$
Writing the defining equation in homogeneous form as $yz=x^2+k\,z^2$,
 with the line $z=0$ as a common tangent line,
note that two such parabolas intersect only at the point $(x:y:z)=(0:1:0)$.
More generally, it is not hard to check that
a union of $k\ge 2$  smooth curves of degree two can be put 
simultaneously  into the non-diagonalizable
normal form (\ref{E-ND}) if and only if these curves all are mutually tangent
at a common point of intersection and have no other intersection. (Thus the
pairwise intersection multiplicity at this point must be four.) Equivalently,
these curves must belong to the pencil consisting of all sums
$$\{\alpha\Phi_1+\beta\Phi_2\}$$  where
$\Phi_1=0$ defines a smooth degree two curve
and $\Phi_2=0$ is one of its tangent lines, counted with multiplicity two.
The corresponding irreducible component of $\fW_{2k}$
 has dimension $5+k$, assuming that $k\ge 2$. (It takes six parameters to
 specify a quadratic curve plus distinguished point, and one more for each
additional curve.)  We can also adjoin the common tangent line  without
increasing the dimension of the locus in the appropriate space $\fC_n$.
\bsk

\begin{figure}[h!]
\centerline{\includegraphics[width=4in]{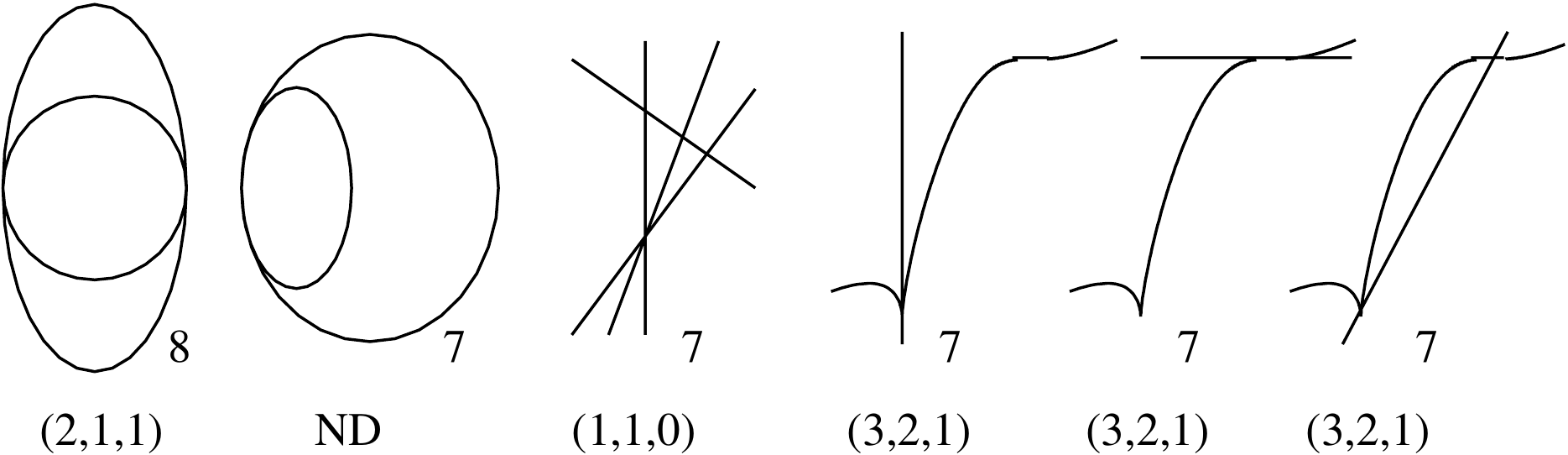}}\smallskip
\centerline{\includegraphics[width=2in]{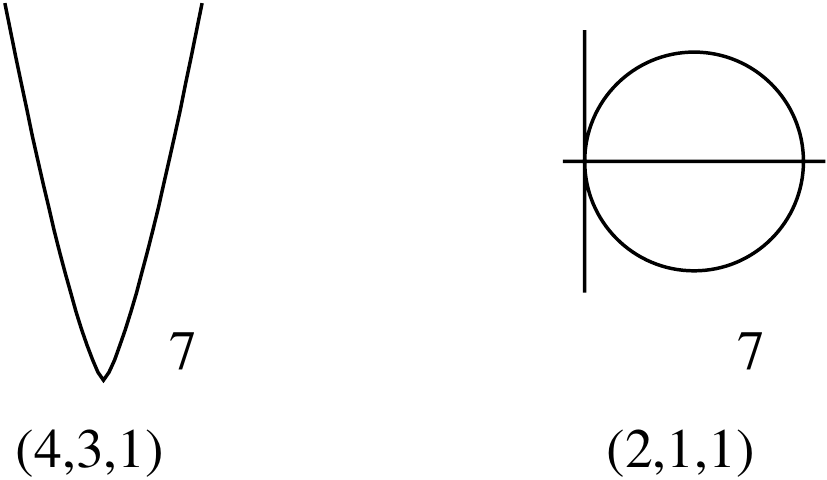}\qquad\qquad}
\caption{\label{F-W4}\sf The algebraic set $\fW_4\subset\widehat\fC_4$ 
consisting of curves or cycles of degree four 
with infinite stabilizer is the union of eight maximal
irreducible subvarieties. Representative generic curves from each of 
these subvarieties are shown.
In each case, the dimension of the algebraic subset is listed,
as well as the automorphism type indicated  
by the appropriate indices $p\ge q\ge r$ or by ND (for non-diagonalizable).}
\end{figure} 
\medskip

{\bf Examples.} 
For $n=3$, it is not hard to check that the algebraic set 
\hbox{$\fW_3\subset\C_3$}
is the union of two maximal irreducible subvarieties, both of dimension seven
and codimension two. One consists of the $\bG$-equivalence class of the 
cusp curve \hbox{$x^3=y^2z$}, together 
with the classes of $x^3=0$ and $y^2z=0$.  A generic curve in the other
is a smooth quadratic curve together with a line which intersects it 
transversally. There are five  subvarieties having the following as generic
elements: (1) a smooth degree two curve plus
tangent line, (2) three lines in general position, (3) three distinct
lines through a common point, (4) two lines, one with multiplicity two,
and (5) one line with multiplicity three.

For $n=4$, there are eight different maximal  irreducible subvarieties,
as illustrated in Figure \ref{F-W4}, and again there are many subvarieties.
(Each of these maximal subvarieties has a curve as generic element; but for 
higher degrees, a maximal irreducible subvariety may consist entirely of
cycles.) 
It follows from this figure that the dimension of
the algebraic set $\fW_4$ is $8$. 
\bigskip

{\bf Caution.} Of course not every curve in a maximal irreducible subvariety
is generic, so that there are many other curves in $\fW_4$ which are not shown.
As examples, in the $(2,1,1)$ example, the outer ellipse can expand and
converge to a union of two vertical
tangent lines. Similarly the inner ellipse can shrink and converge to a
horizontal line counted with multiplicity two.
\bigskip

For $n>4$ the dimension of $\fW_n$ is $n+3$,
with the irreducible component of type  $D(1,1,0)$ as one component
 of dimension $n+3$.
\end{rem}
\smallskip

\begin{proof}[Proof of Theorem \ref{T-inf}]
We know that every infinite stabilizer must contain a one-parameter Lie group.
Every one parameter subgroup of ${\rm PGL}_3(\C)$  can be parametrized as
$$ t~\mapsto ~\exp(tA)~=~I+tA+(tA)^2/2!+(tA)^3/3!+\cdots~,$$
where $A$ is a $3\times 3$ matrix. We can simplify this matrix in three
different ways:
\begin{itemize}
\item[$\bullet$] We can put $A$ into  Jordan normal form by a linear
change of coordinates.

\item[$\bullet$] We can add a constant multiple of the identity matrix to $A$,
or in other words multiply $\exp(tA)$ by a non-zero constant, since this will
not affect the image in $\PGL_3$.

\item[$\bullet$] We can multiply the matrix $A$ itself by a non-zero constant;
  this is just equivalent to multiplying the parameter $t$ by a constant.
\end{itemize}
\medskip

We will first show, using these three transformations, that the matrix $A$
can be reduced to one of the following, which we will refer to as Cases 1
through 4.

$$\left(\begin{matrix}a&0&0\\0&b&0\\0&0&c\end{matrix}\right),\qquad
\left(\begin{matrix}0&1&0\\0&0&1\\0&0&0\end{matrix}\right),\qquad
\left(\begin{matrix}0&1&0\\0&0&0\\0&0&0\end{matrix}\right),\quad{\rm and}\quad
\left(\begin{matrix}0&1&0\\0&0&0\\0&0&1\end{matrix}\right)~. $$
\smallskip

In fact if $A$ has three linearly  independent eigenvectors, then
we are certainly in Case 1. In particular, if 
the eigenvalues $a,\,b,\,c$ are all distinct, then we are
in Case 1. At the opposite extreme, if the eigenvalues are all equal
then subtracting a multiple of the identity matrix we may assume that they
are all zero. The Jordan normal form will then correspond to either Case 2 or 3.
(Evidently $A$
 cannot be the zero matrix.) Finally, suppose that just two of the eigenvalues
are equal. Then we can assume that two are zero and the third is one,
so that the Jordan normal form is either a diagonal matrix, so that we are in
Case 1, or corresponds to the matrix of Case 4.
The four corresponding matrices $\exp(tA)$  can now be listed as follows.

$$\left(\begin{matrix}e^{ta}&0&0\\0&e^{tb}&0\\0&0&e^{tc}\end{matrix}\right),
\qquad\left(\begin{matrix}1&t&t^2/2\\0&1&t\\0&0&1\end{matrix}\right),\qquad
\left(\begin{matrix}1&t&0\\0&1&0\\0&0&1\end{matrix}\right),\quad
{\rm and}\quad
\left(\begin{matrix}1&t&0\\0&1&0\\0&0&e^t\end{matrix}\right)~.  $$
\medskip

{\bf Case 1.} Suppose that a curve is invariant under the transformation
$$(x:y:z)\mapsto (e^{at}x:e^{bt}y:e^{ct}z)~.$$
Clearly this maps each of the three coordinate axes to itself. 
In affine coordinates with $z=1$, we can write this as
$$~~(x_0,\,y_0)~\mapsto~(x,y)~=~(e^{a't}x_0,~e^{b't}y_0)
\qquad{\rm where}\quad a'=a-c\,,\quad b'=b-c~.$$ 
In other words, if $x_0$ and $y_0$ are non-zero, we can write
\begin{equation}\label{E-x/x0}
 x/x_0\,=\,e^{a't}\,,\quad y/y_0\,=\,e^{b't}~.\end{equation}
Since $a,~b,~c$ cannot all be equal, we may assume (after
permuting the coordinates if necessary) that $a'$ and $b'$ are non-zero.

First suppose that the ratio $b'/a'$ is a rational number, which we can write
as a fraction in lowest terms as  $\pm \ell/m$ with $\ell,\,m>0$. 
Then \hbox{$m\,b'=\pm \ell\,a'$,} so that
$$ (y/y_0)^m~=~e^{mb't}~=~e^{\pm \ell a' t}~=~ (x/x_0)^{\pm\ell}~.$$
In other words, we have an equation of the form either 
$$ y^m~=~ax^\ell\qquad{\rm or}\qquad y^mx^\ell=a $$
for a suitable constant $a$. After permuting the coordinates appropriately,
this takes the required form (\ref{E-Dpq}).

On the other hand, if $b'/a'$ is irrational or imaginary, then the invariant
curve cannot be algebraic. Choosing $t$ so that $a't$ is an integral
multiple of $2\pi i$ in the equation (\ref{E-x/x0}), we see that $x=x_0$
but that $y$ takes a countably infinite  number of distinct values, 
which is impossible for any algebraic curve.
\medskip

{\bf Case 2.} Using affine coordinates $(x:y:1)$, the automorphism will
take the form
$$ (x_0,\,y_0)~\mapsto ~(x,y)~=~(x_0+ty_0+t^2/2, \;\;y_0+t)~.$$
Eliminating $t=y-y_0 $ from this equation, the curve through $(x_0,~y_0)$ takes
the form $x=y^2/2 +(x_0-y_0^{\,2}/2)$,
which agrees with the required normal form (\ref{E-ND}) 
except for the factor of $1/2$
which is easily eliminated by a change of variables.\medskip

{\bf Case 3.} In this case the transformation takes the simpler form
$$(x_0:y_0:z_0)~\mapsto (x_0+ty_0: y_0:z_0)~, $$
so that the invariant curves are just parallel lines $y=y_0$, $z=z_0$,
 or in other words
lines which pass through the point $(1:0:0)$ on the line at infinity.
These can easily be put in the Case 1 normal form, of type $D(1,1,0)$.
\medskip

{\bf Case 4.} Here the transformation takes the more ominous form 
$$ (x_0,\,y_0,\,z_0)~\mapsto~ (x,y,z)~=~(x_0+ty_0,\;y_0,\; e^tz_0)~.$$
Thus $y=y_0$ is constant. If $y_0=0$, then the invariant lines are
just parallel lines with $x=x_0$, again of type $D(1,1,0)$. But if $y_0\ne
0$, then we can solve for $t=(x-x_0)/y_0$, so that
the invariant curves have the form
$$ z~=z_0\,\exp\big((x-x_0)/y_0\big)~.$$
Since this cannot be the equation of any algebraic curve, this completes the
proof of Theorem \ref{T-inf}
\end{proof}
\smallskip

\begin{proof}[Proof of Theorem \ref{T-Big}]
  We must show that every curve with stabilizer of\break
  dimension two or more is
  contained in the list which is illustrated in Figure \ref{F-hsym}.

Let $\cC=\bigcup_j\cC_j$ be a curve with irreducible components $\cC_j$.
Then the intersection $\bigcap_j \bG_{\cC_j}$ is a subgroup of finite index
in $\bG_\cC$. 
If $\dim\,\bG_\cC\ge 2$, then  it follows that every irreducible component
must have $\dim\, \bG_{\cC_j}\ge 2$. Therefore every irreducible component
must have degree one or two. In fact, any irreducible curve of degree three or
more is either a cusp curve, with $\dim\,\bG_\cC=1$, or else has finite
stabilizer.

Thus we need only consider unions of lines and smooth quadratic curves.
Similarly, we can ignore curves of type $D(p,q,r)$ with $p\ge 2$, since they
either contain a cusp curve, or consist of at most three lines. (Note that any
union of at most three lines is already included in the list represented by
Figure \ref{F-hsym}.) We can also ignore curves of type ND with two or more
components, since it is easy to check that they have a one-dimensional
stabilizer. Thus we only need to consider curves of type $D(2,1,1)$ or
$D(1,1,0)$.

It is easy to check that a curve of type $D(1,1,0)$ has stabilizer of
dimension two or more if and only if it either consists of concurrent lines,
or consists of exactly three lines.
Thus we are left with curves of type $D(2,1,1)$. Note that the group of
automorphisms of a quadratic curve which fix two points is one-dimensional.
For example, any automorphism of $\bP^2$ which fixes the two points $(1:0:0)$ and
$(0;1:0)$ must take the form
$$(x,y)~~\mapsto~~(ax+b,~~cy+d)$$
in affine coordinates. Such an automorphism maps the hyperbola $xy=1$ to
itself only  if $ac=1$ and $b=d=0$, yielding a one dimensional group.
Thus, if a curve $\cC$ with a quadratic component $\cC_1$ has
$\dim\,\bG_\cC\ge 2$, then there must be at most one singular (or intersection)
point on $\cC_1$. There can be one tangent line intersecting $\cC_1$ in one
point; but nothing more. (Another quadratic curve intersecting at one point
would yield a curve of type ND, which has already been excluded.
\end{proof}
\bigskip

\subsection*{\bf Automorphism Groups of Smooth Curves.}
This subsection will first answer 
 the following question. 

\begin{quote} \it For which
degrees $n$ and which primes $p$ does there exist a smooth curve of degree $n$
which admits a projective
 automorphism of period $p$?\end{quote}
\noindent(It also contains a discussion of the
corresponding question for conformal automorphisms of more general
Riemann surfaces.) Using this result, we show that a generic curve of degree 
$\ge 4$ has trivial stabilizer. Finally, we show that every finite 
subgroup of $\PGL_3(\C)$ can occur as the stabilizer for some smooth curve.
\smallskip

\begin{theo}\label{T-aut1}
Given a degree $n$ and a prime $p$ there exists a smooth curve of degree
$n$ in $\bP^2(\C)$ with a projective automorphism of period $p$ if and only
if $n$ is congruent to either $0,~1,$ or $2$
modulo $p$. If   $n\ge 3$, an equivalent condition is that $p$ must be a
divisor of either  $n,\,n-1$ or $n-2$. 
\end{theo}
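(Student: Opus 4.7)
My approach splits the theorem into its two implications, with quite different flavors. For sufficiency I would exhibit, in each of the three residue classes $n\bmod p\in\{0,1,2\}$, an explicit smooth curve of degree $n$ invariant under a diagonal element of $\PGL_3(\C)$ of exact order $p$; for necessity I would diagonalize a given $\sigma$ of order $p$ acting on a smooth $\cC$ and extract the congruence via Riemann--Hurwitz combined with a careful analysis of the $\sigma$-fixed locus on $\bP^2$.

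For sufficiency, fix $\zeta=e^{2\pi i/p}$. When $p\mid n$ the Fermat curve $x^n+y^n+z^n=0$ is smooth and is preserved by $\sigma=\mathrm{diag}(\zeta,1,1)$. When $p\mid n-1$ I would take $\sigma=\mathrm{diag}(1,1,\zeta)$: its weight-zero eigenspace in degree $n$ consists of monomials $x^iy^jz^k$ with $p\mid k$, and crucially it contains both $xz^{n-1}$ and $yz^{n-1}$ (since $n-1\equiv 0\pmod p$), which provide smoothness at the isolated fixed point $(0{:}0{:}1)$ that necessarily lies on $\cC$. When $p\mid n-2$ I switch to the three-distinct-eigenvalue automorphism $\sigma=\mathrm{diag}(1,\zeta,\zeta^{-1})$ and work in the $\lambda=0$ space spanned by $x^iy^jz^k$ with $j\equiv k\pmod p$; this contains $x^n$ (ensuring $(1{:}0{:}0)\notin\cC$) together with $y^{n-1}z$ and $yz^{n-1}$ (forced to be permitted because $n-2\equiv 0\pmod p$), securing smoothness at the two fixed points that do lie on $\cC$. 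In each case a Bertini argument applied to the linear system of $\sigma$-invariant polynomials of the chosen weight shows a generic member is smooth.

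For necessity, let $\cC\subset\bP^2(\C)$ be smooth of degree $n$ and $\sigma\in\PGL_3(\C)$ of prime order $p$ preserving $\cC$. Diagonalizing, write $\sigma=\mathrm{diag}(\zeta^a,\zeta^b,\zeta^c)$. The fixed locus of $\sigma$ on $\bP^2$ is either (A) three isolated vertices $P_1,P_2,P_3$, when $a,b,c$ are pairwise distinct mod $p$, or (B) the union of a line $L$ and one isolated point, when exactly two of $a,b,c$ coincide. Writing $r$ for the number of $\sigma$-fixed points lying on $\cC$, Riemann--Hurwitz applied to $\cC\to\cC/\langle\sigma\rangle$, together with $g(\cC)=\binom{n-1}{2}$, gives
$$n(n-3)\;\equiv\;-r\pmod p.$$
In Case B, smoothness of $\cC$ forces $L\not\subset\cC$ (else $\cC$ would be reducible of degree $\ge 2$), so $L\cap\cC$ consists of $n$ transverse points, each fixed by $\sigma$, and the isolated fixed point is or is not on $\cC$; thus $r\in\{n,n+1\}$ and the congruence becomes $n(n-2)\equiv 0$ or $(n-1)^2\equiv 0\pmod p$, yielding $n\equiv 0,2$ or $n\equiv 1\pmod p$. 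In Case A one exploits that the defining polynomial $\Phi$ is a $\sigma$-eigenvector of some weight $\lambda$, so each monomial $x^iy^jz^k$ in $\Phi$ satisfies $ai+bj+ck\equiv\lambda\pmod p$; then $P_i\in\cC$ is equivalent to the absence of the corresponding pure power from $\Phi$, which translates directly into an arithmetic condition on $(a,b,c,n,\lambda)$. For instance if $P_2,P_3\notin\cC$ then both $y^n$ and $z^n$ must appear, forcing $(b-c)n\equiv 0\pmod p$ and hence $p\mid n$.

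The main obstacle is precisely the combinatorial bookkeeping in Case A. The Riemann--Hurwitz congruence alone allows $r\in\{0,1,2,3\}$ with a priori many residues for $n\bmod p$; the goal is to exclude the values of $r$ producing residues outside $\{0,1,2\}$. The argument must simultaneously exploit (i) the pairwise distinctness of $a,b,c$ mod $p$, (ii) the vanishing conditions for $\Phi$ at those vertices that lie on $\cC$, and (iii) the non-vanishing of an appropriate ``near-vertex'' partial derivative at each such vertex, each of which forces a particular monomial of prescribed $\sigma$-weight to be present in $\Phi$. Balancing these three families of constraints against one another --- in particular to rule out $r=1$ and to pin down the remaining subcases to $p\mid n(n-1)(n-2)$ --- is the most delicate step, and it is where the real work of the proof lies.
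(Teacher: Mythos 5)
Your sufficiency argument is essentially the paper's (the paper writes down explicit smooth invariant curves $x^n+y^n+z^n$, $x^{n-1}y+y^n+z^n$, $x^{n-1}y+xy^{n-1}+z^n$ and checks smoothness directly rather than invoking Bertini, but the idea is the same and your version works). The necessity argument, however, has a genuine gap that you acknowledge yourself: your Case A is not completed. The Riemann--Hurwitz congruence $n(n-3)\equiv -r\pmod p$ with $r\in\{0,1,2,3\}$ allows, for example, $r=0$ with $n\equiv 3\pmod p$, and $r=1$ or $r=3$ give quadratic congruences $n^2-3n+1\equiv 0$, $n^2-3n+3\equiv 0$ whose solutions do not a priori lie in $\{0,1,2\}$. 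Ruling these out is exactly the missing content, and Riemann--Hurwitz by itself cannot do it.

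The crucial point is that your ingredient (iii) --- smoothness of $\cC$ near each coordinate vertex forces the presence in $\Phi$ of a monomial with one exponent $>n-2$ --- is not merely one of several constraints to be ``balanced''; it is the whole proof. The paper normalizes the diagonalized automorphism so that $\gamma=1$ and $\alpha,\beta$ are primitive $p$-th roots of unity (which absorbs both your Cases A and B into a single analysis), observes that smoothness at $(1{:}0{:}0)$ forces one of $x^n,x^{n-1}y,x^{n-1}z$ to appear, and symmetrically for the other two vertices, and then reads off the congruence directly from the eigenvalue conditions on those forced monomials: $x^n$ or $y^n$ present gives $n\equiv 0$; $x^{n-1}z$ or $y^{n-1}z$ present gives $n\equiv 1$; otherwise $x^{n-1}y$ and $xy^{n-1}$ both occur, giving $\alpha^{n-2}=\beta^{n-2}$ and hence $n\equiv 2$ or $\alpha=\beta$ (whence $n\equiv 0$). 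Once you have these monomial constraints, Riemann--Hurwitz contributes nothing, and completing your Case A would in practice reproduce the monomial argument anyway. Drop the fixed-point/Riemann--Hurwitz detour and run the monomial argument directly.
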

\smallskip

As examples, smooth curves of degree $n\le 2$ have automorphisms of all prime
orders.  
For an arbitrary degree $n$,
the primes 2 and 3 can occur; and for $n=3$ or $4$,
these are the only possible primes. For any odd prime $p$, the smallest
$n\ge 3$ for which $\fC^\sm_n$ contains a curve with a period $p$ 
orbit is $n=p$ \ssk

For a much more detailed study of finite stabilizers, see  \cite{Harui}.
\smallskip

\begin{proof}[Proof of Theorem $\ref{T-aut1}$]
For each of the three cases, an appropriate curve $\Phi(x,y,z)=0$, and a
 corresponding automorphism of period $p$, can be listed as follows, where 
$\alpha$ is a primitive $p$-th root of unity, and $\alpha\beta=1$. 
\begin{eqnarray*}
 n\equiv 0~({\rm mod}~p)\,,&\quad \Phi=x^n+y^n+z^n\,,\quad&
(x:y:z)\mapsto(\alpha\,x:y:z)~;\\
 n\equiv 1~({\rm mod}~p) \,,&\quad \Phi=x^{n-1}y+y^n+z^n\,,\quad&
(x:y:z)\mapsto(\alpha\,x:y:z)~;\\
 n\equiv 2~({\rm mod}~p)\,,&\quad \Phi=x^{n-1}y+ x\,y^{n-1}+z^n\,,\quad&
(x:y:z)\mapsto(\alpha\,x:\beta\,y:z)~.
\end{eqnarray*}Each of these three curves is smooth, since in each case
it is not difficult to check that the only solution to
the equations $\Phi_x=\Phi_y=\Phi_z=0$ is $x=y=z=0$. Furthermore, 
 the indicated mappings are clearly period $p$ automorphisms of $\bP^2(\C)$, 
and it is not 
difficult to check that each one maps the corresponding
 locus $\Phi=0$ to itself. (For example in the last case, 
since $n-1\equiv 1 ~({\rm mod}~p)$, the monomials $x^{n-1}y$ 
and $xy^{n-1}$ are both multiplied by $\alpha\beta=1$.)

Conversely, let $\cC\subset \bP^2(\C)$ be a smooth curve of arbitrary
degree $n\ge 3$ which has a projective automorphism of prime order $p$.
The corresponding linear automorphism of $\C^3$ necessarily\footnote
{In fact, using the Jordan normal form, one sees easily that an automorphism
which does not have three independent eigenvectors, 
can never have finite order.}
has three linearly
independent eigenvectors,  which we can place so that the automorphism has
the form $(x,y,z)\mapsto (\alpha x, \,\beta y, \gamma z)$. Here the three
eigenvalues cannot all be equal, since our map of projective space is not the 
identity. Therefore, permuting the coordinates if necessary, we may assume
that $\gamma\ne\alpha$ and $\gamma\ne\beta$. Hence after dividing by
a common constant, we may assume that $\gamma=1$, and that both $\alpha$ and
$\beta$ are primitive $p$-th roots of unity.

The defining equation for any curve which is invariant under this 
transformation must be a linear
combination of monomials of the form $x^iy^jz^k$ with $i+j+k=n$ and with
$\alpha^i\beta^j=1$. Since $\cC$ is smooth, there
must be at least one such monomial with $i>n-2$ (or in other words of the
form $x^n$ or $x^{n-1}y$ or $x^{n-1}z$). For otherwise, it is not hard to check
that $(1:0:0)$ would be a singular point. Similarly there must be at least one
with $j>n-2$ and at least one with $k>n-2$.

If one of these monomials is $x^n$, then $\alpha^n=1$ hence $n\equiv 0~
({\rm mod}~p)$, and the same conclusion follows if one of the monomials is
$y^n$. Similarly, if one of the monomials is $x^{n-1}z$ or $y^{n-1}z$,
then $n\equiv 1~({\rm mod}~p)$. The only other possibility is that the two
monomials  $x^{n-1}y$ and $y^{n-1}x$ both occur, so that
$$ \alpha^{n-1}\beta~=~\beta^{n-1}\alpha~=~1~.$$
Dividing by $\alpha\beta$, it follows that $\alpha^{n-2}=\beta^{n-2}$.
There are then two possibilities:
Either $\alpha^{n-2}=\beta^{n-2}=1$ hence $n\equiv 2~~{\rm mod}~p)$,
or else $\alpha=\beta$ hence $n\equiv 0~({\rm mod}~p)$. This completes 
the proof.\end{proof}
\medskip

The analogous question for conformal automorphisms of arbitrary
Riemann surfaces has an explicit but more complicated answer:
\smallskip

\begin{theo}\label{T-RSaut} Given a prime $p$ and an integer $\fg\ge 2$, 
there exists a closed Riemann surface $\cS$ of genus $\fg$ with a
conformal automorphism of period $p$ if and only if, for some
\hbox{$0\le \fg'< \fg$,} the ratio 
\begin{equation}\label{E-confaut}
k~=~\frac{2\fg-2\,-\,(2\fg'-2)p}{p-1}~.
\end{equation}
is an integer, with $k\ge 0$ and $k\ne 1$. It follows from this condition
 that $p\le 2\fg+1$.
\end{theo}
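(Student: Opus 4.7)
The plan is to recognize formula (\ref{E-confaut}) as the Riemann--Hurwitz formula for the quotient map $\cS \to \cS/\langle\sigma\rangle$, where $\sigma$ is the automorphism of prime order $p$. The proof will split into a necessity direction (which is essentially a computation) and a sufficiency direction (which requires constructing a branched cyclic cover).

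For necessity, suppose $\cS$ has genus $\fg$ and admits a conformal automorphism $\sigma$ of prime order $p$. The quotient $\cS' = \cS/\langle\sigma\rangle$ is again a closed Riemann surface, of some genus $\fg' \ge 0$, and the projection $\pi : \cS \to \cS'$ is a degree $p$ branched cover. Since $p$ is prime, at each branch point the local stabilizer must equal all of $\langle\sigma\rangle$, so each of the $k$ branch points on $\cS'$ has ramification index exactly $p$. The Riemann--Hurwitz formula then reads
\[
2\fg - 2 \;=\; p(2\fg'-2) \;+\; k(p-1),
\]
which rearranges to (\ref{E-confaut}) and forces $k$ to be a non-negative integer. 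To exclude $k=1$, I would pass to the unramified cover $\cS\setminus\pi^{-1}(\text{branch pts}) \to \cS'\setminus\{\text{branch pts}\}$, which is classified by a surjection $\rho:\pi_1(\cS'\setminus\{\text{branch pts}\})\twoheadrightarrow \Z/p$. The product of the $k$ meridian loops is trivial in $\pi_1$ after abelianizing modulo the handle generators, so the sum of the $k$ local monodromies $\rho(\mu_i)\in\Z/p$ is zero; but each $\rho(\mu_i)$ must be nonzero (otherwise the $i$-th point would not actually be branched). With $k=1$ we would need a single nonzero element of $\Z/p$ to sum to zero, impossible.

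For sufficiency, given $\fg'\ge 0$, $k\ge 0$ with $k\ne 1$, and $p$ prime satisfying (\ref{E-confaut}), I would build a branched cyclic $p$-cover. Choose any closed Riemann surface $\cS_0'$ of genus $\fg'$ and $k$ distinct points $\q_1,\ldots,\q_k\in\cS_0'$. The fundamental group of $\cS_0'\setminus\{\q_1,\ldots,\q_k\}$ has a presentation with $2\fg'$ handle generators $\alpha_j,\beta_j$ and $k$ meridian generators $\mu_i$ subject to the single relation $\prod_j[\alpha_j,\beta_j]\prod_i\mu_i=1$. I need a surjection to $\Z/p$ sending each $\mu_i$ to a nonzero element. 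If $k\ge 2$, choose $\rho(\mu_1)=1$, $\rho(\mu_2)=-1$, $\rho(\mu_i)=0$ for $i>2$ if desired (but then those aren't branched) — better: choose $\rho(\mu_i)$ nonzero for every $i$ with $\sum_i\rho(\mu_i)\equiv 0\pmod p$, which is easy since for $k\ge 2$ we have $p-1\ge 1$ free nonzero choices after forcing the sum (e.g.\ take $\rho(\mu_1)=\cdots=\rho(\mu_{k-1})=1$ and $\rho(\mu_k)=-(k-1)$, nonzero unless $p\mid k-1$, in which case perturb one earlier choice). If $k=0$, then (\ref{E-confaut}) forces $\fg' \ge 1$ (since $\fg\ge 2$ implies the numerator is positive), and we use a surjection $\pi_1(\cS_0')\to\Z/p$ coming from any handle generator. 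Let $\pi:\cS\to\cS_0'$ be the connected branched cover determined by $\rho$. The deck transformation of order $p$ is the desired automorphism, and Riemann--Hurwitz gives $\cS$ genus exactly $\fg$.

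The bound $p \le 2\fg+1$ will then fall out of the formula: with $k\ne 1$ and $\fg'\ge 0$, the smallest numerator permitting a positive solution is achieved at $\fg'=0$, $k=3$, giving $3(p-1)=2\fg-2+2p$, i.e. $p=2\fg+1$; and when $\fg'\ge 1$ the inequality $p\le 2\fg+1$ is even easier. The main obstacle I anticipate is checking the $k=0$ case carefully — verifying that a surjection to $\Z/p$ exists (needing $\fg'\ge 1$) and that (\ref{E-confaut}) is automatically consistent with $\fg\ge 2$ in this case — together with the fiddly verification in the $k\ge 2$ case that one can always arrange $\rho(\mu_i)\ne 0$ for every $i$ while meeting the sum-zero constraint, which requires a small separate argument when $p\mid k$.
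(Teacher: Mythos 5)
Your proof is correct and follows essentially the same route as the paper: Riemann--Hurwitz for the quotient map $\cS\to\cS/\langle\sigma\rangle$ gives the formula and forces $k\ge 0$ an integer; the exclusion of $k=1$ and the converse construction both go through the monodromy homomorphism $\rho:\pi_1(\cS'\ssm K)\to\Z/p$ and the constraint that the sum of the $k$ meridian images vanishes mod $p$ while each is nonzero; the bound $p\le 2\fg+1$ follows by maximizing $p$ in the rearranged formula. You are somewhat more explicit than the paper about choosing the $\rho(\mu_i)$ in the sufficiency direction (including the parity issue when $p=2$, which the formula makes automatic), but the underlying argument is the same.
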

\smallskip

\begin{proof}[Proof Outline]
Let $\bG$ be a group of automorphisms of $\cS$ of order $p$ with $k$
fixed points, and let $\cS'=\cS/\bG$ be the quotient surface, with genus
$\fg'$. Then the Riemann-Hurwitz formula can be written as 
\begin{equation}\label{E-RH}
 2\fg-2~=~(2\fg'-2)p\,+\, (p-1)k~.\end{equation}
 (See for example \cite[Theorem 7.2, Pg. 70]{Mi2}.) 
Solving for $k$, we obtain the formula~(\ref{E-confaut}).

For the converse construction, choose a surface $\cS'$ of genus $\fg'$,
and choose a finite subset $K\subset\cS'$ consisting of $k$ points.
A $p$-fold cyclic covering of $\cS'\ssm K$ is determined by a homomorphism
from the fundamental group $\pi_1(\cS'\ssm K)$, or equivalently from
the abelianized fundamental group $H_1(\cS'\ssm K)$, onto the cyclic group
of order $p$. Such a homomorphism always  exists if $\fg'>0$, but in the
 case $\fg'=0$ it exists only if $k\ge 2$. However, for this cyclic covering to
 extend to a branched covering, branched over each point of $K$, we need the
 extra condition that a small loop around each point 
of $K$ maps to a generator
of the cyclic group. This condition is easily satisfied if $k\ge 2$.
However, it can never be satisfied when $k=1$ since a small loop around a
single puncture point represents the zero element of $H_1(\cS'\ssm K)$.

We can also solve the equation (\ref{E-RH}) for
$$ p~=~\frac{2\fg-2+k}{2\fg'-2+k}~,$$
where $k$ must be large enough so that the denominator is positive.
This ratio is monotone decreasing as a function of $k$, so for fixed
$\fg>\fg'$ it takes the largest value when the denominator $2\fg'-2+k$ is $+1$, 
so that $p=2(\fg-\fg')+1\le 2\fg+1$. (Of course
 the largest {\it prime} solution will often be smaller than this.)
This proves Theorem~\ref{T-RSaut}.
\end{proof}\medskip

As an example, 
for a conformal automorphism of a Riemann surface of genus $\fg=3$,
we see from Theorem \ref{T-RSaut} that the possible 
 primes are  $2,\,3$ and \hbox{$7~(=~2\fg+1)$.}
On the other hand, for a  projective automorphism of a 
smooth curve of genus 3 (and hence degree~4)
in $\bP^2(\C)$, by Theorem \ref{T-aut1}
the only possible primes are $2$ and $3$.
Since we know by Proposition  \ref{P-hyper} 
that every conformal automorphism of a curve of degree 4 
is actually projective, it follows that a genus 3 curve with a period 7
automorphism cannot be embedded in $\bP^2(\C)$, and hence (again by
Proposition~\ref{P-hyper}) must be hyperelliptic.

\smallskip

\begin{theo}\label{T-noaut} For $n\ge 4$, a generic real or complex
curve of degree $n$ in $\bP^2$
has no projective\footnote{It seems likely that it also has no 
conformal automorphisms; but we don't know how to settle this question.}
 automorphisms other than the identity map.
\end{theo}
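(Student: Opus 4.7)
The strategy is to analyze the incidence variety
\[
I~=~\{(g,\cC)\in\bG\times\wfC_n~:~g(\cC)=\cC\},
\]
which is Zariski closed in $\bG\times\wfC_n$ and therefore has only finitely many irreducible components. The ``trivial'' component $\Delta=\{\mathbf{e}\}\times\wfC_n$ has dimension $N:=\dim\wfC_n=n(n+3)/2$. The plan is to show that, for $n\ge 4$, every other irreducible component of $I$ has dimension strictly less than $N$: by Chevalley's theorem the projection of $I\setminus\Delta$ to $\wfC_n$ is then constructible with Zariski closure a proper subvariety, and its complement is exactly the (Zariski open and dense) locus of curves with trivial stabilizer. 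In the real case the same scheme of argument applies after intersecting with the real locus, using semi-algebraic dimension estimates.

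First I would verify that $\Delta$ is a full irreducible component. Since the set $\fW_n$ of curves with infinite stabilizer is a proper algebraic subset (Section~\ref{s-aut}), the generic fiber of $\pi_{\wfC_n}\colon I\to\wfC_n$ is zero-dimensional; so any component $I'\supseteq\Delta$ must satisfy $\dim I'=N$ and hence coincide with $\Delta$ by irreducibility. For any other irreducible component $I'$, set $V=\pi_{\bG}(I')$ and pick a generic point $g_0\in V$; then
\[
\dim I'~\le~\dim V+\dim\wfC_n^{g_0}.
\]
If $g_0$ has infinite order, Jordan decomposition shows that the Zariski closure of $\langle g_0\rangle$ in $\bG$ is a positive-dimensional algebraic subgroup (either through a non-trivial unipotent part, or through the closure of a cyclic subgroup of a torus generated by a non-torsion semisimple element), so every curve in $\wfC_n^{g_0}$ has infinite stabilizer and lies in $\fW_n$; hence $\pi_{\wfC_n}(I')\subseteq\fW_n$ has dimension strictly less than $N$ for $n\ge 4$. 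If $g_0$ has finite order, then since torsion elements of $\PGL_3(\C)$ are diagonalizable, $g_0$ is conjugate to $\mathrm{diag}(1,\alpha,\beta)$ for roots of unity $\alpha,\beta$ not both~$1$; its centralizer contains the diagonal torus, so $\dim V\le 6$, and $\dim\wfC_n^{g_0}$ equals the dimension of the largest $g_0$-eigenspace on degree-$n$ polynomials, minus one.

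The main obstacle is the sharp dimension bound in the torsion case. A direct monomial count shows the largest possible eigenspace arises for an involution $g_0$ (eigenvalues $\{1,1,-1\}$), whose centralizer has dimension~$4$ and whose $(+1)$-eigenspace on degree-$n$ polynomials (monomials of even $z$-degree) has dimension $(m+1)^2$ when $n=2m$ and $(m+1)(m+2)$ when $n=2m+1$. This yields $\dim I'\le(m+1)^2+3$ (respectively $(m+1)(m+2)+3$), and an arithmetic check gives the strict inequalities $(m+1)^2+3<2m^2+3m$ and $(m+1)(m+2)+3<(2m+1)(m+2)$ whenever $m\ge 2$, i.e.\ whenever $n\ge 4$. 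For any other torsion $g_0$, the action on polynomials splits into more, and hence smaller, eigenspaces (in particular, for a ``pseudo-reflection'' $\mathrm{diag}(1,1,\omega)$ with $\omega$ a primitive $p$-th root of unity and $p\ge 3$, the invariant monomial count is $\sum_\ell(n-\ell p+1)<(m+1)^2$), so the bound holds \emph{a fortiori}. Combining these bounds across the finitely many non-$\Delta$ irreducible components of $I$, the union of their projections together with $\fW_n$ is contained in a proper Zariski closed subset of $\wfC_n$, and the proof is complete.
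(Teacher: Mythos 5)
Your incidence-variety strategy is a legitimate reframing of the paper's moduli-space dimension count (the paper works directly in $\M^\sm_n$, reducing to automorphisms of \emph{prime} order and tabulating $\dim\M^\sm_n(p)$), and your monomial count for the involution $\mathrm{diag}(1,1,-1)$ agrees exactly with the paper's $s(0,2)=\floor((n+2)^2/4)$. However there are two genuine gaps.

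First, the dichotomy on whether ``the generic point $g_0$ of $V=\pi_\bG(I')$ has finite or infinite order'' is not well posed as stated. The torsion locus in $\PGL_3(\C)$ is a Zariski-dense \emph{countable} union of closed conjugacy classes, so a very general point of an irreducible $V$ can have infinite order even when $V$ contains a dense set of torsion elements (e.g.\ $V=\bG$). To make the split rigorous you need to invoke the fact that an irreducible complex variety is not a countable union of proper closed subsets: since $I'\cap(\bG\times\wfC_n^\fs)\subseteq\bigcup_m(\bG^{[m]}\times\wfC_n)$ with each $\bG^{[m]}$ closed, either $I'\subseteq\bG\times\fW_n$, or $I'\subseteq\bG^{[m]}\times\wfC_n$ for a single $m$, and only in the latter case may you assert $V$ lies in a single conjugacy class and conclude $\dim V\le 6$. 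The paper sidesteps this entirely by reducing to cyclic groups of prime order at the outset.

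Second, and more substantively, the distinct-eigenvalue case is dismissed with ``the bound holds a fortiori'' on the grounds that the eigenspaces are smaller. But for torsion $g_0$ with three distinct eigenvalues the centralizer is the $2$-dimensional diagonal torus, so $\dim V$ can be as large as $6$, not $4$. Your bound $\dim I'\le\dim V+\dim\wfC_n^{g_0}$ therefore trades a $+2$ in the conjugacy-class term against the decrease in the eigenspace term, and it is not automatic which wins. Indeed, using the straightforward upper bound $(n+1)+\sum_{\ell=0}^{n}\floor(\ell/3)$ on the largest eigenspace (this is what the paper's Case~2 uses), the resulting estimate on $\dim I'$ at $n=4$ is $6+7-1=12$, exactly equal to your involution bound $(m+1)^2+3=12$; it is not smaller. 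Both are $<N=14$, so the theorem is fine, but ``a fortiori'' is simply not true at that level of crudeness, and a separate numerical verification (as in the paper's explicit Case~2 table) is required. Note also that the naive combination ``$\dim V\le 6$ always, together with the involution eigenspace count'' gives $(m+1)^2+5$, which at $n=4$ \emph{equals} $N=14$ rather than being strictly less; so you genuinely need the sharper $\dim V\le 4$ in the pseudo-reflection case \emph{and} a sharper eigenspace count in the distinct-eigenvalue case, run as two separate estimates.
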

\smallskip

  On the other hand, for degree $n=3$
  the projective automorphism group of a generic curve has order six 
  in the real case, and eighteen in the complex case.  (See \cite[\S3]{BM}.)
  In terms of the additive group structure on an elliptic curve, taking a flex
  point as zero element, the automorphisms have the form 
  $\p\mapsto \pm \p +\p_0$, where $\p_0$ can be any one of the flex points
(three in the real case or nine in the complex case.)
\smallskip

\begin{rem}[{\bf Conformal Automporphisms of Riemann Surfaces}]\label{R-RSaut1}
The corresponding statement for arbitrary Riemann surfaces 
is that a generic Riemann surface of genus $\fg\ge 3$
has no non-trivial conformal automorphism. (See \cite{Ba} 
as well as \cite{Po}.) However, 
every Riemann surface of genus two is hyperelliptic, and hence has
an automorphism of period two.
The group of all conformal automorphisms of a Riemann surface of genus
$\fg\ge 2$ has been much studied 
since the time of Hurwitz \cite{Hur}, who proved that such a group 
has at most $84\,(\fg-1)$ elements. In particular, there are now many
examples of groups which realize this Hurwitz maximum. 
As an extreme example, Wilson \cite{Wi} has shown that the ``monster group'' 
of order roughly $8\times 10^{53}$ is one such group.
\end{rem}
\medskip

\begin{proof}[Proof of Theorem \ref{T-noaut}.] 
Clearly it suffices to consider the complex case. Since a generic curve
is smooth, it will suffice to work in the moduli space\break
$\M^{\sm}_n=\fC^{\sm}_n/\bG~~$ for smooth curves.
Furthermore, since all such curves have finite stabilizer, we need only
consider possible finite groups.

For each prime $p$, let
$$ \M^\sm_n(p)~\subset~\M^\sm_n$$
be the subset consisting of all smooth curve-classes which have
 an automorphism of period $p$. 
The proof will simply require counting dimensions. The dimension of the
 moduli space $\M^\sm_n$ can be computed as 
$${\rm dim}(\M^\sm_n)~=~\left(n+2\atop 2\right)-9~=~(n^2+3n-16)/2~\qquad
{\rm for}\quad n\ge 3~.$$
We will show that the subspace $\M^\sm_n(p)$ 
has strictly smaller dimension, provided that $n\ge 4$.
(Of course this subspace may be empty, in which case we assign it the
dimension $-1$. For  example by Theorem \ref{T-aut1}, this is the case
for all primes $p> n$.)
\ssk

Any projective automorphism of $\bP^2(\C)$ lifts to
a linear automorphism of $\C^3$, with three eigenvalues. However, we can
multiply these eigenvalues by any common non-zero constant, so only their
ratios have an invariant meaning. For an automorphism of finite
order, the linear transformation is necessarily diagonalizable, as noted
in the proof of Theorem \ref{T-aut1}.
 Thus we can choose coordinates so that the automorphism is given by
 $$(x:y:z)~ \mapsto~(\alpha x:\beta y:\gamma z)~,$$
 where $\alpha,\,\beta,\,\gamma$ are roots of unity.
There are now two possibilities. Either only two of these three eigenvalues
are distinct, or all three are distinct. The corresponding subsets of
$\M^\sm_n$ will be denoted by  $\M'_n(p)$ and $\M''_n(p)$ respectively.
\smallskip

{\bf Case 1.} Suppose that only two of the eigenvalues are distinct.
(In the special case $p=2$,  this condition is always satisfied.)
After permuting coordinates and multiplying by a constant, we may assume that
$\alpha=\beta=1$ and that $\gamma$ is a primitive $p$-th
root of unity, so that $$(x:y:z)\mapsto(x:y:\gamma\, z)~.$$
If $\Phi=0$ is the defining equation for the invariant curve, then $\Phi$
must be a linear combination of monomials $x^iy^jz^k$
with $i+j+k=n$ and with $\gamma^k$ equal to some constant,
or in other words with $k$ congruent to some fixed $k_0$
modulo $p$. For each choice of $k$, there are $~n+1-k~$ possible
choices for $i$ and $j$. Thus the number of such monomials
is equal to the sum
$$ s(k_0,\,p)~=~\sum_{0\le k\le n\,;~~~ k\equiv k_0~~({\rm mod}~p)} n+1-k ~.$$
For each $p$, it is not hard to check that this sum $s(k_0,\,p)$ will take its
 largest value if we choose $k_0$ to be zero. That is:
\begin{equation}\label{E-s0}
  s(k_0,\,p)~\le~s(0,\,p)~=~\sum_{0\le m\le n/p} (n+1-mp)\end{equation}
for every $k_0$ mod $p$. Similarly, it is even easier to check that
\begin{equation}\label{E-s0p}
    s(0,\,p)~\le ~s(0,\,2)\qquad{\rm for~ every}~~~p~.\end{equation}
Thus it will suffice to concentrate on the case $p=2$.\ssk

For $p=2$, the number $s(0, \,2)$
of such monomials is
$$ 1+3+5+\cdots+(n+1) %~=~(n+2)^2/4
\quad {\rm for} \quad
n\quad{\rm ~even,~~and}$$
$$ 2+4+6+\cdots+(n+1) % ~=~(n+1)(n+3)/4<(n+2)^2/4
\qquad {\rm for} \quad n\quad{\rm ~odd~.}$$
A brief computation shows that
$$~s(0,2)~=~\floor\left((n+2)^2/4\right)~$$
in both cases;
where $\floor(\xi)$ denotes the largest integer $\le \,\xi$.\ssk
  
In order to find the corresponding dimension in moduli space, we must first
subtract one, since all of the coefficients of $\Phi$ may be multiplied by
a constant. Then we subtract another four since the general linear group
${\rm GL}_2(\C)$ acts by the transformation
$$(x,\,y,\,z)~\mapsto~(ax+by,\;cx+dy,\;z) ~,$$
mapping each eigenspace to itself. Thus the set of all curve-classes of
degree $n$ with an automorphism of period 2 has dimension equal to
$~\floor\big((n+2)^2/4-5\big)~$ for $n\ge 3$.
Here is a table. 
\smallskip

\begin{center}
\begin{tabular}{rccccc}
$ n=$& 3&4&5&6&7\\
${\rm dim}(\M^\sm_n)=$ & 1&6&12&19&27\\
${\rm dim}\big(\M'_n(2)\big)=$& 1& 4 &7& 11&15\\
\end{tabular}
\end{center}
In general, as we pass from $n$ to $n+1$ the dimension of $\M^\sm_n$ increases
by $n+2$, while the dimension of $\M'_n(2)$ increases by 
$$ \dim\big(\M^\sm_{n+1}(2))- \dim\big(\M'_{n}(2))
~=~\floor((n+1)/2)+1~<~ n+2~; $$
It follows easily that 
$$~\dim(\M_n)>\dim\big(\M_n(2)\big)\quad{\rm for~ all}\quad n\ge 4~.$$
The analogous inequality for odd primes follows easily from the inequality
(\ref{E-s0p}). In fact $~\dim\big(\M'_n(p)\big)<\dim\big(\M'_n(2)\big)~$
for $p>2$.
\medskip

{\bf Case 2.} 
Now suppose there are three distinct eigenvalues,
so that the transformation can be put the form
$$(x:y:z)~\mapsto (x:\beta y: \gamma z)$$
where $\beta$ and $\gamma$ are distinct primitive $p$-th roots of unity.
(This case can only occur if $p\ge 3$.)
Thus we can set $\gamma=\beta^m$ for some $1<m<p$, so that the
transformation will multiply each monomial $x^iy^jz^k$ by
$\beta^{j+km}$. We must now estimate the number of monomials for which
$~j+km~$ is congruent to some constant $j_0$ modulo $p$. The estimate
 will be based on the following remark.

\begin{quote} \it Given a sequence of $\ell+1$ consecutive integers, the number
of these integers which are congruent to some constant modulo $p$ is at most
$~~~\floor(\ell/p)+1$.
\end{quote}

\noindent
For example, if $\ell+1\le p$ (so that $\floor(\ell/p)=0$)
there is at most one solution; but if $\ell+1> p$ there
 may be two solutions. The proof will be left to the reader.\smallskip

It will be convenient to set $\ell=n-k$, with $0\le j\le \ell \le n$.
It follows that the number of monomials satisfying the required conditions
that $i+j+k=n$ and $j+km\equiv j_0~~({\rm mod}~p)$
is at most
$$\sum_{\ell=0}^n\floor\big(1+\ell/p\big)~=
~(n+1)+\sum_{\ell=0}^n\floor\big(\ell/p\big)~.$$
In order to find the dimension of the corresponding subset $\M''_n(p)$
of moduli space, we must subtract two from this sum, since we can multiply
$x$ by an arbitrary non-zero constant and also multiply $(y,z)$ by an
arbitrary non-zero constant without changing the projective equivalence class.
Therefore
$$\dim\big(\M''_n(p)\big)~\le 
~(n-1)+\sum_{\ell=0}^n\floor\big(\ell/p\big)~.$$
(This is a rather crude upper bound, but will suffice for the proof.)
Clearly the expression on the right is monotone decreasing as a function
of $p$, so it suffices to consider the case $p=3$.
Here is a table. \smallskip

\begin{center}
\begin{tabular}{rcccc}
$ n~=$& 4&5&6&7\\
${\rm dim}(\M^\sm_n)~=$ & 6&12&19&27\\
${\rm dim}\big(\M''_n(3)\big)\le$ &  5 &7& 10&13\\
\end{tabular}
\end{center}
Since the difference between the  dimension bounds for $\M''_{n-1}(3)$ and 
$\M''_{n}(3)$ is\break $~1+\floor(n/3)<n+1$,  it follows easily that
$~\dim\big(\M''_n(3)\big)<\dim(\M^\sm_n)$ for all $n\ge 4$.
This completes the proof of Theorem \ref{T-noaut}.
\end{proof}
\medskip

\begin{prop} \label{P-allstab}
For any subgroup 
$\Gamma\subset \bG=\PGL_3(\C)$ with finite order $m$, 
 there exists a smooth curve $\cC\in\fC_{4m}(\C)$ 
with stabilizer  $\bG_\cC$ equal to $\Gamma$.
\end{prop}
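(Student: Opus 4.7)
The plan is to construct $\cC$ by Bertini-type smoothing of a $\Gamma$-invariant singular divisor whose stabilizer is verifiably exactly $\Gamma$. By Theorem~\ref{T-noaut}, I may choose a smooth quartic $\cC_0\in\fC_4(\C)$ with trivial projective stabilizer $\bG_{\cC_0}=\{{\bf e}\}$. Form the $\Gamma$-orbit sum
$$\cD~=~\sum_{\g\in\Gamma}\g(\cC_0)\,\in\,\fC_{4m}(\C).$$
The $m$ translates $\g(\cC_0)$ are pairwise distinct as irreducible curves (otherwise $\bG_{\cC_0}$ would be nontrivial), so $\cD$ is a curve in the sense of \S\ref{s-mod}: a sum of $m$ distinct irreducible components, each with multiplicity one. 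For any $\g'\in\bG_\cD$ and $\g\in\Gamma$, the image $\g'(\g(\cC_0))$ is one of the components $\g''(\cC_0)$; then $(\g'')^{-1}\g'\g\in\bG_{\cC_0}=\{{\bf e}\}$ forces $\g'=\g''\g^{-1}\in\Gamma$. Combined with the obvious inclusion $\Gamma\subseteq\bG_\cD$, this yields $\bG_\cD=\Gamma$ exactly.

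Fix a lift $\tilde\Gamma\subset{\rm SL}_3(\C)$ of $\Gamma$, and let $V\subset\C[x,y,z]_{4m}$ be the eigenspace of polynomials transforming under $\tilde\Gamma$ by the same character $\chi$ as the defining polynomial $\Phi_\cD=\prod_{\g\in\Gamma}\Phi_0\circ\tilde\g^{-1}$; equivalently, $V$ is the component of the $\Gamma$-invariant linear system containing $\cD$. I claim $|V|$ has no positive-dimensional base locus. A line $L$ could lie in the base locus only if $L$ is pointwise fixed by some $\tilde\g\in\tilde\Gamma$ with $\chi(\tilde\g)\ne 1$, in which case $L$ would be contained in every member of $|V|$, hence in $\cD$ itself; but $\cC_0$ is smooth and so contains no line, ruling this out. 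For base-point-freeness at a general $\p\in\bP^2$, averaging a generic $\Psi\in\C[x,y,z]_{4m}$ with $\Psi(\p)\ne 0$ via
$$\widetilde\Psi~=~\sum_{\tilde\g\in\tilde\Gamma}\chi(\tilde\g)^{-1}\,\Psi\circ\tilde\g^{-1}~\in~V$$
gives $\widetilde\Psi(\p)\ne 0$, using that the $\tilde\Gamma$-orbit of $\p$ has at most $|\tilde\Gamma|$ points and that for $n=4m$ any such finite set imposes independent linear conditions on $\C[x,y,z]_n$. Bertini's theorem in characteristic zero then supplies a Zariski-open dense subset $V^\sm\subset V$ whose members are smooth (and automatically irreducible) plane curves of degree $4m$.

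To pin down the stabilizer, consider the incidence variety
$$I~=~\big\{(\cC,\g)\in V\times\bG\;:\;\g(\cC)=\cC\big\},$$
which is closed in $V\times\bG$ and of finite type, hence has finitely many irreducible components. For each $\gamma\in\Gamma$, the subvariety $V\times\{\gamma\}\subset I$ is a ``trivial'' component, projecting onto all of $V$. Every other irreducible component $I_j$ consists of pairs $(\cC,\g)$ with $\g\notin\Gamma$; since $\bG_\cD=\Gamma$ contains no such $\g$, the point $\cD$ does not lie in the image $\pi(I_j)\subseteq V$, so $\overline{\pi(I_j)}\subsetneq V$ is a proper closed subset. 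The finite union $Z=\bigcup_j\overline{\pi(I_j)}$ is proper closed, and on the Zariski-open dense complement $V^\circ=V\ssm Z$ one has $\bG_\cC\subseteq\Gamma$, hence $\bG_\cC=\Gamma$ exactly. Since $V$ is irreducible, the intersection $V^\sm\cap V^\circ$ is nonempty; any member defines the required smooth curve $\cC\in\fC_{4m}(\C)$ with $\bG_\cC=\Gamma$.

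The main obstacle is the last step: without the explicit singular witness $\cD$ having stabilizer exactly $\Gamma$, one would have no control on the ``extra'' components of $I$ and could not rule out that every smooth $\Gamma$-invariant curve has a strictly larger stabilizer. The transparent orbit-permutation argument for $\cD$ is what converts pointwise upper semicontinuity of the stabilizer into the required generic statement. A secondary technical difficulty is verifying base-point-freeness of $|V|$ in the presence of a possibly nontrivial character $\chi$ at $\Gamma$-fixed loci; this is precisely where the line-freeness of the smooth quartic $\cC_0$ plays an essential role.
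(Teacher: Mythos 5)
Your proof takes the same basic route as the paper's: start from a smooth quartic $\cC_0$ with trivial stabilizer (Theorem~\ref{T-noaut}), form the orbit union $\cD=\sum_{\g\in\Gamma}\g(\cC_0)$, and then smooth by Bertini while keeping $\Gamma$-invariance. Your orbit-permutation argument that $\bG_\cD=\Gamma$ exactly is correct and spells out what the paper only asserts. Where the two arguments diverge is in how the Bertini system is set up and how the generic stabilizer is pinned down, and both of your added steps have gaps.

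On base-point-freeness: the paper avoids any analysis of the isotypic subspace $V$ by choosing a generic triple $(\cC_1,\cC_2,\cC_3)\in\fC_4^{\times 3}$ and using the three forms $\Phi_j$ cutting out $\cC_j^{\,\Gamma}$, chosen so that $\cC_1^{\,\Gamma}\cap\cC_2^{\,\Gamma}\cap\cC_3^{\,\Gamma}=\emptyset$. That makes the base locus of the sub-system empty by construction. Your argument rules out a \emph{line} in the base locus of $|V|$ and rules out base points at a \emph{general} $\p$, but neither of these suffices: the base locus is a $\Gamma$-invariant closed subset of $\cD$ and could a priori be an isolated $\Gamma$-orbit of points, or a higher-degree $\Gamma$-invariant sub-curve of $\cD$ (i.e.\ a union of some of the quartic translates). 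The averaging argument, as written, only works when the $\tilde\Gamma$-orbit of $\p$ is free; at $\tilde\Gamma$-fixed points (precisely where isolated base points would sit) the sum $\sum_{\tilde\g}\chi(\tilde\g)^{-1}\Psi(\tilde\g^{-1}\p)$ collapses by conjugation and your conclusion does not follow. You need to close this off, e.g.\ by explicitly exhibiting several members of $|V|$ with disjoint zero loci, which is essentially what the paper does.

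On the incidence variety: the assertion ``every other irreducible component $I_j$ consists of pairs $(\cC,\g)$ with $\g\notin\Gamma$'' is not justified, and in fact can fail. A component $I_j$ of $I$ not equal to any $V\times\{\gamma\}$ can still meet $V\times\Gamma$: a positive-dimensional family $(\cC_t,\g_t)$ with $\g_t\notin\Gamma$ for generic $t$ may degenerate as $t\to t_0$ to $(\cC_{t_0},\gamma_0)$ with $\gamma_0\in\Gamma$, and such a limit could equal $(\cD,\gamma_0)$. When that happens $\cD\in\pi(I_j)$, and your deduction that $\overline{\pi(I_j)}\subsetneq V$ collapses. You also tacitly assume that each $V\times\{\gamma\}$ is a full irreducible component of $I$, which requires an argument (over $V\ssm V^\fs$ the fibers of $\pi$ may be positive-dimensional). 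To be fair, the paper's own treatment of this final step is a one-line ``it follows easily,'' so this is a shared soft spot; but the fix you need is closer to a local-properness or upper-semicontinuity argument for the stabilizer at $\cD$ (in the spirit of Lemma~\ref{L-prop}) rather than a global dominance count on $I$.
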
 
\smallskip

\begin{rem}\label{R-allsub}
A catalog of all possible finite subgroups of $\bG=\PGL_3(\C)$
 has been provided by
Miller, Blichfeld, and Dickson \cite[Part II]{MBD}. (See also Hambleton and Lee 
\cite{HL}.) Without giving the complete list, here are some examples:
The group can have arbitrary order, since any abelian group with two generators
is contained in the stabilizer for three lines in general position, as
described at the beginning of this section.
Any finite subgroup of  the rotation group ${\rm SO}_3$ can occur, since
\hbox{${\rm SO}_3\subset \PGL_2\subset\PGL_3$}.  
This includes the icosahedral group, which is isomorphic to the 
alternating group ${\mathfrak A}_5$. 
Two other simple groups also occur: namely ${\mathfrak A}_6$
of order 360, and ${\rm PSL}_2(\F_7)$ of order 168. One other noteworthy
 example is the automorphism group of the Hesse configuration, which has order
216. This can be realized as a stabilizer $G_\cC$ where $\cC$ is a curve 
consisting of twelve lines, which intersect in the nine flex points of an
elliptic curve.
\end{rem}
\smallskip

\begin{proof}[Proof of Proposition \ref{P-allstab}] 
Let $\Gamma$ be a finite subgroup of $\PGL_3(\C)$ with $m>1$ elements. It
is easy to construct a singular curve in $\fC_{4m}$ which has
$\Gamma$ as stabilizer: According to Theorem~\ref{T-noaut},  a generic curve
$\cC_1\in\fC_4$ has trivial stabilizer. Let $\cC_1^{\,\Gamma}\in \fC_{4m}$
be the union of the translates $\g(\cC_1)$ by the elements $\g\in\Gamma$.
Then it is not hard to see that the stabilizer of $\cC_1^{\,\Gamma}$
is precisely the group $\Gamma$.
\ssk

In order to find a smooth example, we will use Bertini's Theorem,\footnote{We
  thank Robert Lazarsfeld for suggesting this argument.} which asserts that a
locus of the form
\begin{equation}\label{E*}
  \alpha_1\Phi_1+\cdots+\alpha_k\Phi_k=0
\end{equation}
(where the $\Phi_j$ are homogeneous polynomials of the same degree) is
non-singular for a generic choice of the coefficients $\alpha_j$, provided
that the common zero locus
$$\Phi_1=\cdots=\Phi_k=0$$
is empty. (See for example \cite{Harr} or \cite{Nam}.) To apply this Theorem,
choose three curves $\cC_i\in\fC_4$ which are generic in the sense that
the triple $(\cC_1,\,\cC_2,\,\cC_3)$  is a generic point of
$\fC_4\times\fC_4\times\fC_4$. Then each pair $\cC^{\,\Gamma}_i$ and
$\cC^{\,\Gamma}_j$ will intersect transversally in $(4m)^2$ distinct points,
but the $3$-fold intersection
$\cC_1^{\,\Gamma}\cap\cC_2^{\,\Gamma}\cap\cC_3^{\,\Gamma}$
will be empty. Now let $\Phi_j=0$ be the equation of $\cC^{\,\Gamma}_j$. Then
for a generic choice of coefficients $\alpha_j$ the locus~(\ref{E*}) will be
a smooth $\Gamma$-invariant curve. If we assume more explicitly that
$(\alpha_1,\alpha_2,\alpha_3,\cC_1,\cC_2,\cC_3)$ is a generic point of
$\C^3\times\fC_4\times\fC_4\times\fC_4$, then it follows
easily  that this curve will have stabilizer precisely equal to $\Gamma$.
\end{proof}

\bigskip 

\begin{figure}[ht!]
\centerline{\includegraphics[width=1.5in]{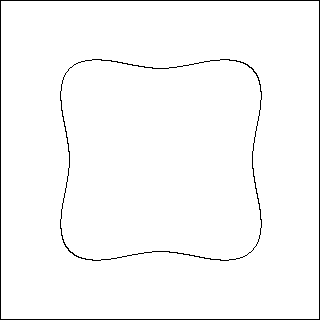}\quad
\includegraphics[width=1.5in]{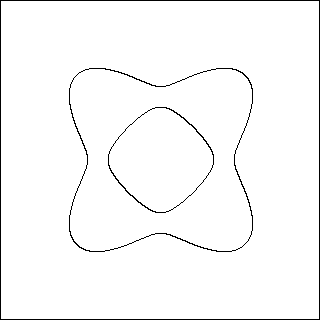}\quad
\includegraphics[width=1.5in]{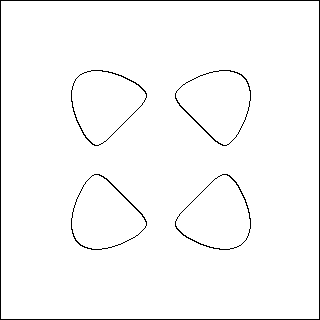}}
\centerline{\includegraphics[width=1.5in]{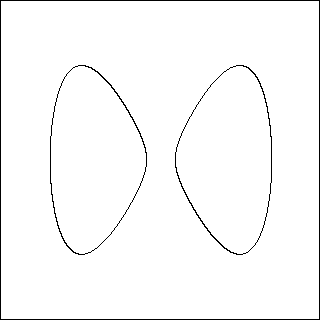}\quad
\includegraphics[width=1.5in]{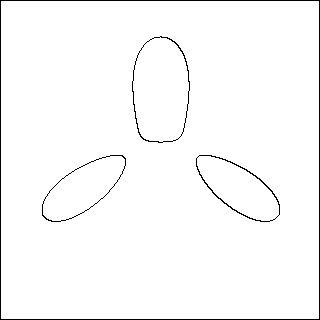}\quad
\includegraphics[width=1.5in]{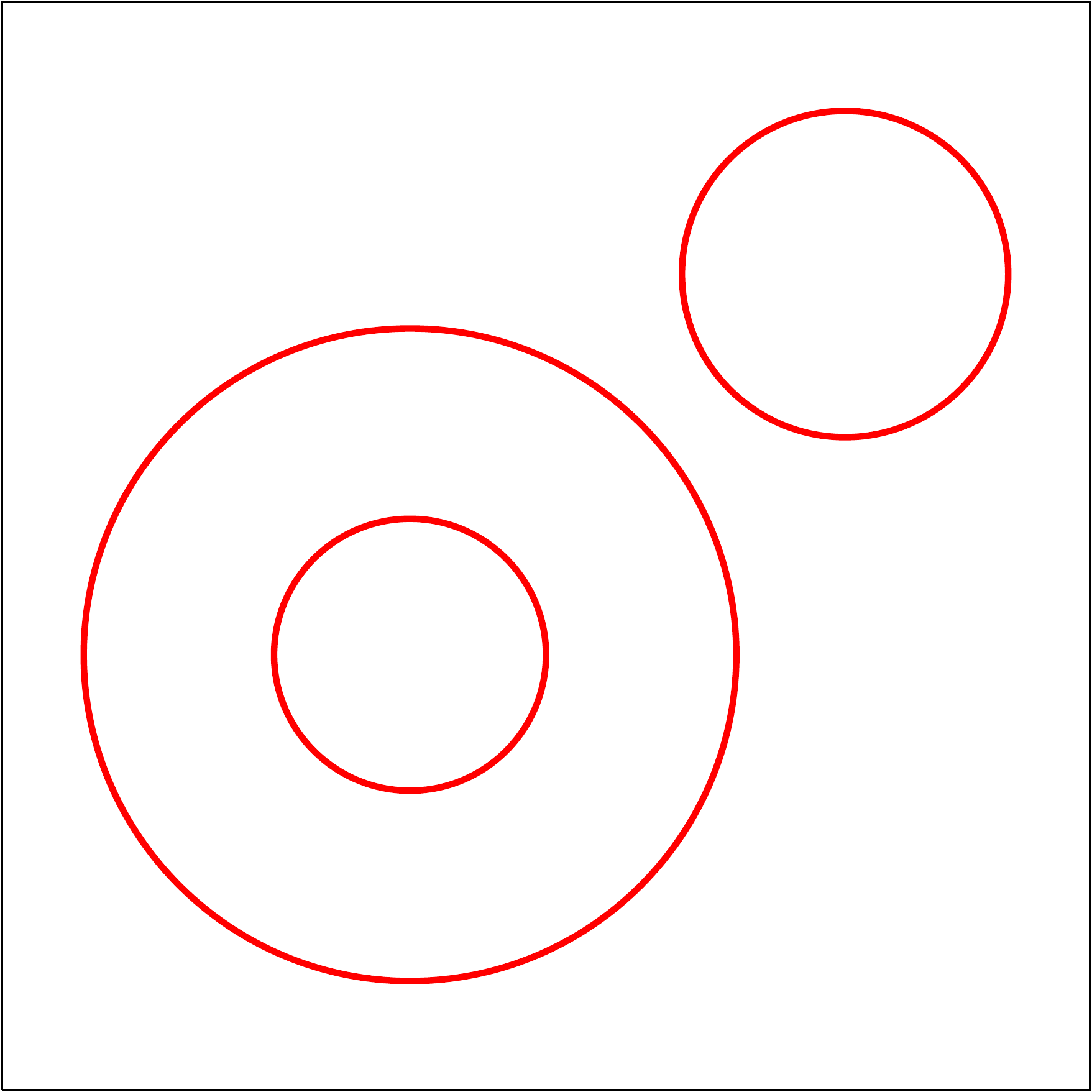}}

\caption{\label{F-deg4}
\sf Six examples. The first five panels above show representative curves for
the five connected components \protect\footnotemark[41]
of $\fC^\rs_4(\R)$ for which the real locus $|\cC|_\R$ is non-empty. 
Note that the various 
components of $|\cC|_\R$ always arrange themselves so that no line intersects
more  than two of them. The sixth panel shows a
configuration of three components which cannot occur for any curve of degree
less than six, since a line through the two smaller circles
crosses all three circles, and hence has six
intersection points.}
\end{figure}
\msk

\setcounter{lem}{0}
\section{Real Curves: The Harnack-Hilbert Problem.}
\label{s-genR}

This section will be a digression, discussing a different kind of problem.
Harnack \cite{Harn} in 1876 proved that:

\footnotetext[41]{The classification depends on Georg Zeuthen's proof
  that  smooth curves of degree four are isotopic through a smooth
one-parameter family of projective curves, and hence belong to the same 
connected component of $\fC^\rs_4$, if and only if they are 
  topologically isotopic. (See \cite[p.~197]{Vi3}.)}

\setcounter{footnote}{41}

\begin{quote} \sf The number of connected components of a smooth
curve of degree $n$
in the real projective plane is at most ${n-1\choose 2}+1$.
\end{quote}

\noindent
As examples, for degree three the curve $|\cC|_\R$ has most two components,
and for degree four at most four. (Compare Figure~\ref{F-deg4}.)

The most famous question about such curves is Hilbert's $16$-th Problem
\cite{Hi}:

\begin{quote} \sf
``The maximum number of closed and separate branches which a plane algebraic 
curve of the $n$th order can have has been determined by Harnack. There arises
the further question as to the relative position of the branches in the 
plane. $\ldots$''
\end{quote}
For modern expositions, as well as much further information, see \cite{Vi1},
\cite{Vi2}, \cite{Vi3}. 

\subsection*{\bf Real-Smooth and Complex-Smooth Curves}\ssk

A curve $\cC$ defined over $\R$ will be called \textbf{\textit{real-smooth}} if
 there are  no singularities in the real zero-locus $|\cC|_\R$, and 
\textbf{\textit{complex-smooth}} 
if there are no singularities in the complex zero locus $|\cC|_\C$.
Thus there are open subsets 
$$ \fC^\cs_n~\subset~\fC^\rs_n~\subset~\fC_n(\R) $$
consisting of complex-smooth and real-smooth curves.
It is somewhat easier to construct examples if we work in the larger
space $\fC^\rs_n$.  For example, any
union of two or more disjoint circles or ellipses in $\bP^2(\R)$ is real-smooth
but not complex-smooth.  (Of course the number of components in such
trivial examples is very much smaller than Harnack's upper bound.)

On the other hand, in the complex-smooth case we can obtain
 extra information by considering the way that the real locus $|\cC|_\R$
is embedded in the Riemann surface $|\cC|_\C$. However,
for Hilbert's problem it doesn't matter whether we work with real-smooth
or complex-smooth curves: 
\smallskip

\begin{prop}\label{p-ass} \it Every real-smooth curve can be approximated
arbitrarily closely by a complex-smooth curve. Furthermore,
in the space ${\mathfrak C}_n^{\sf rs}$ 
of real-smooth curves of degree \hbox{$n\ge 3$,}
 the subvariety consisting of curves $\cC$ such that the complex zero-set 
$|\cC|_\C$ is singular has codimension two. Therefore no
 connected component in  ${\mathfrak C}_n^{\sf rs}$  is disconnected
by this subvariety. In other words, every connected component in
  ${\mathfrak C}_n^{\sf rs}$ determines a unique connected component in the 
smaller set ${\mathfrak C}^{\sf cs}_n$. % consisting of complex-smooth curves.
%Furthermore, every curve in $\fC^\rs_n$ can be approximated arbitrarily
%closely by a curve in $\fC^\cs_n$. \change
\end{prop}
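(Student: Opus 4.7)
The plan is to place $\fC_n^\rs\setminus\fC_n^\cs$ inside the real part of a complex algebraic subvariety of $\wfC_n(\C)$ of complex codimension at least two, and then pass to real dimensions. The key observation is that if $\cC\in\fC_n^\rs$ fails to be complex-smooth, then every singular point $p$ of $|\cC|_\C$ must be non-real (otherwise $\cC$ would have a real singularity); and since the defining polynomial $\Phi$ has real coefficients, complex conjugation permutes the complex singular set of $\cC$, so $\bar p$ is a second, distinct complex singular point. Thus any such $\cC$ has at least two distinct singular points in $\bP^2(\C)$.

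Let $\Sigma_2\subset\wfC_n(\C)$ denote the locus of cycles with at least two distinct complex singular points; it is cut out by real polynomial equations in the coefficients of $\Phi$, so it is defined over $\R$. To bound its complex dimension I would use the incidence variety
\[
\tilde\Sigma_2\,=\,\bigl\{(\cC,p,q)\,:\,p\ne q,\ p\ \text{and}\ q\ \text{both singular for}\ \cC\bigr\}\,\subset\,\wfC_n(\C)\times\bigl(\bP^2(\C)\times\bP^2(\C)\setminus\Delta\bigr),
\]
where $\Delta$ is the diagonal. For a pair $(p,q)$ in general position, the six $\C$-linear functionals $\Phi\mapsto\Phi_x(p),\,\Phi_y(p),\,\Phi_z(p),\,\Phi_x(q),\,\Phi_y(q),\,\Phi_z(q)$ on the polynomial space of complex dimension $\binom{n+2}{2}\ge 10$ (using $n\ge 3$) are independent, so the projective fiber over $(p,q)$ has complex codimension $6$ in $\wfC_n(\C)$; combined with the $4$-dimensional base this gives $\dim_\C\tilde\Sigma_2\le\dim_\C\wfC_n(\C)-2$, whence the same bound for the image $\Sigma_2$.

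Invoking the classical fact that $\dim_\R(V\cap\bP^N(\R))\le\dim_\C V$ for any complex algebraic subvariety $V\subset\bP^N(\C)$ defined over $\R$ (see \cite{BCR}), it follows that $\Sigma_2\cap\wfC_n(\R)$ has real codimension $\ge 2$ in $\wfC_n(\R)$. By the opening remark, $\fC_n^\rs\setminus\fC_n^\cs$ lies inside this set and therefore also has real codimension $\ge 2$ inside the open manifold $\fC_n^\rs$. A semi-algebraic subset of real codimension $\ge 2$ in a connected open subset of Euclidean space is both nowhere dense (so $\fC_n^\cs$ is dense in $\fC_n^\rs$, yielding the approximation claim) and non-separating (since any two points of the complement may be joined by a generic path, which avoids codimension $\ge 2$ sets); this gives the asserted bijection of connected components.

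The main technical step I expect to require genuine work is the $\C$-linear independence of the six gradient evaluation functionals at a generic pair $(p,q)$, which I would verify by a direct computation with explicit monomial test polynomials such as $x^n,\,y^n,\,z^n,\,x^{n-1}y,\ldots$, calibrated to the value of $n$. One must also confirm that the codimension bound for $\tilde\Sigma_2$ is not spoiled by the non-generic pairs $(p,q)$ where the corresponding $6\times\binom{n+2}{2}$ Jacobian matrix drops rank; this is handled cleanly by stratifying the base according to that rank and applying the dimension formula stratum by stratum.
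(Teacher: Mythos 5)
Your proposal is correct and follows essentially the same route as the paper: both arguments introduce the incidence variety of triples (curve, marked singular point $p$, marked singular point $q$) with $p\neq q$, compute its complex dimension as $\dim_\C\wfC_n(\C)-2$ by imposing six linear vanishing conditions at the two points, push down to the curve space, and then pass to real codimension two using the observation that a real-smooth curve with a complex singularity must have a conjugate pair of singular points. The only real difference is presentational: the paper avoids any worry about non-generic pairs $(p,q)$ by using the transitivity of $\PGL_3(\C)$ on pairs of distinct points (via a local section $T_{p,q}$) to move any pair to $\bigl((0:0:1),(0:1:0)\bigr)$, where the six conditions become the vanishing of six \emph{distinct} coefficients of $\Phi$ as soon as $n\ge 3$. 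In particular, the stratification-by-rank step in your last paragraph is superfluous: by this homogeneity the $6\times\binom{n+2}{2}$ Jacobian has rank exactly $6$ for \emph{every} pair $p\neq q$ once $n\ge 3$ (it drops to rank $5$ only when $n=2$, which is excluded), so the codimension-$6$ bound for the fiber holds uniformly and there are no bad strata to handle.
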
\msk

\begin{proof} {\bf Step 1.} 
The space of all curves of degree $n$ in ${\mathbb P}^2(\C)$ has complex
dimension $d(n)={n+2\choose 2}-1=n(n+3)/2$. Let $V_n$ be the 
subvariety consisting of
curves having singular points at $(0:0:1)$ and $(0:1:0)$. Then the dimension
of $V_n$ is $d(n)-6$. In fact the curve defined by the equation\footnote{Here
$\Phi$ should have no squared factor, so that this equation defines a curve
rather than a 1-cycle.} 
$$ \Phi(x,y,z)~=~\sum_{i+j+k=n}  a_{i,j,k}\, x^iy^jz^k~=~0 $$ 
will pass through these two points only
if $a_{0,0,n}=a_{0,n,0}=0$, and will be singular at these two points only if
$$ a_{1,0,n-1}=a_{0,1,n-1}= a_{1,n-1,0}=a_{0,n-1,1}=0~.$$\ssk

{\bf Step 2.} Given two (not necessarily disjoint)
 small open sets $U_1,\,U_2\subset {\mathbb P}^2(\C)$, 
 let $W_{U_1,U_2}$ be the set of triples $(\cC,\,\p,\,\q)$ 
consisting of a degree $n$ complex curve $\cC$ having a marked 
singular   point $\p\in U_1$ and a marked singular point $\q\in U_2$, with
$\p\ne\q$. It takes 
four parameters to specify $\p$ and $\q$. We can choose a projective 
transformation $T_{\p,\q}$ depending on these four parameters which carries $\p$
 to $(0:0:1)$ and $\q$ to  $(0:1:0)$.
The equation $\Phi\big(T_{\p,\q}(x,y,z)\big)=0$ will then uniquely describe the 
most general curve of degree $n$ with $\p$ and $\q$ as singular points. It 
follows that the dimension of  $W_{U_1,U_2}$ is 
$$4+\big(d(n)-6\big)~=~d(n)-2~.$$ 
Since  a curve can have at most finitely many critical points, it follows that 
the projection map from $W_{U_1,U_2}$ into ${\mathfrak C}_n(\C)$ 
 is finite-to-one; and hence must map to a $d(n)-2$ dimensional set 
$W'_{U_1,U_2}$. Now cover ${\mathbb P}^2(\C)\times{\mathbb P}^2(\C)$
by finitely many $U_1\times U_2$
and let $W_n$ be the union of the $W'_{U_1,U_2}$.
\msk

{\bf Step 3.} Since the Zariski closure  $\overline{W_n}$ is invariant under
 complex conjugation, it must be defined over the real numbers. 
 Therefore its intersection with ${\mathfrak C}_n(\R)$ has real codimension two
 in  ${\mathfrak C}^{cs}_n(\R)$. But every
real-smooth curve with a complex singularity must also have a complex
conjugate singularity, and hence must belong to the codimension two subset
 $\overline{W_n}\cap{\mathfrak C}_n(\R)$.\end{proof} 
\bigskip

\begin{definition}
  A circle smoothly embedded in  $\bP^2=\bP^2(\R)$ will be
  called an \textbf{\textit{oval}} if it
is two-sided, separating the plane into two components, and a
\textbf{\textit{non-oval}} if it is one-sided, not separating the plane.
 (Note than an oval in this sense need  not be convex.) 
\end{definition}
\smallskip

Equivalently, an embedded circle is an oval if and only if the associated
homology class in $H_1(\bP^2;~\Z/2)$ is zero. Every oval has a
neighborhood which is an annulus. Furthermore,
one of its two complementary components
must be a topological disk, while the other must be a M\"obius band.
On the other hand, every non-oval has a M\"obius band neighborhood, and an open
topological disk as complement. It follows 
from this that any two non-ovals must 
intersect each other, since it is impossible to embed a M\"obius band in a disk.
Note that a generic line intersects an oval in an even number of points,
and a non-oval in an odd number of points.

Now consider a curve $\cC\in \fC^{\cs}_n$.  The number of intersections
between $|\cC|_\R$ and a generic line in $\bP^2(\C)$ is always congruent to $n$
mod 2. (In fact the complexified line intersects $|\cC|_\C$ $n$ times; but
an even number of these intersection points belong to complex conjugate pairs.)
Therefore the discussion above implies 
that the real locus $|\cC|_\R$ is a union of ovals if
the dimension is even; but that it contains exactly one non-oval if $n$ is odd.
\msk

In order to distinguish between different configurations of topological circles,
it  is convenient to introduce the dual graph, which is a 
combinatorial description of the topological arrangement.
\smallskip

\begin{definition} First consider a a collection of $N$ disjoint ovals 
$O_1,\,\cdots,\,O_N$ in $\bP^2(\R)$, as in Figure~\ref{F-tree}.
The associated \textbf{\textit{dual graph}} $\Gamma$
is a rooted tree which has $N+1$
vertices, one vertex $v_k$ corresponding to
each connected component $U_k$ of the complementary region.
The root point corresponds to the unique complementary region $U_0$ 
which is non-orientable. Two vertices
are joined by an edge, which will be labeled $e_j$, if and only if the closures
of the corresponding regions intersect in the common boundary curve $O_j$.
(We should think of this dual graph is an abstract tree: It can be embedded in 
the plane for illustrative purposes, but the particular choice of embedding
is arbitrary.)

Now suppose that we are given a configuration consisting of
 $N-1$ ovals together with one non-oval $O_N$. The root point will now
correspond to the complementary region $U_N$ which surrounds $O_N$.
Since we cross from $U_N$ to itself as we cross $O_N$, it is natural to define 
the resulting dual graph to be the rooted tree as constructed above, 
but augmented by an extra
edge $e_N$ which is a loop, with both endpoints at the root point.
Compare Figure~\ref{F-odd}, which shows five ovals plus one  non-oval,
indicated schematically by a line segment, together with the corresponding
dual graph. (Of course, if we ignore the non-oval, then we get 
 a rooted tree in all cases.)
\end{definition}
\smallskip

\begin{figure}[h!]
\centerline{\includegraphics[width=4.5in]{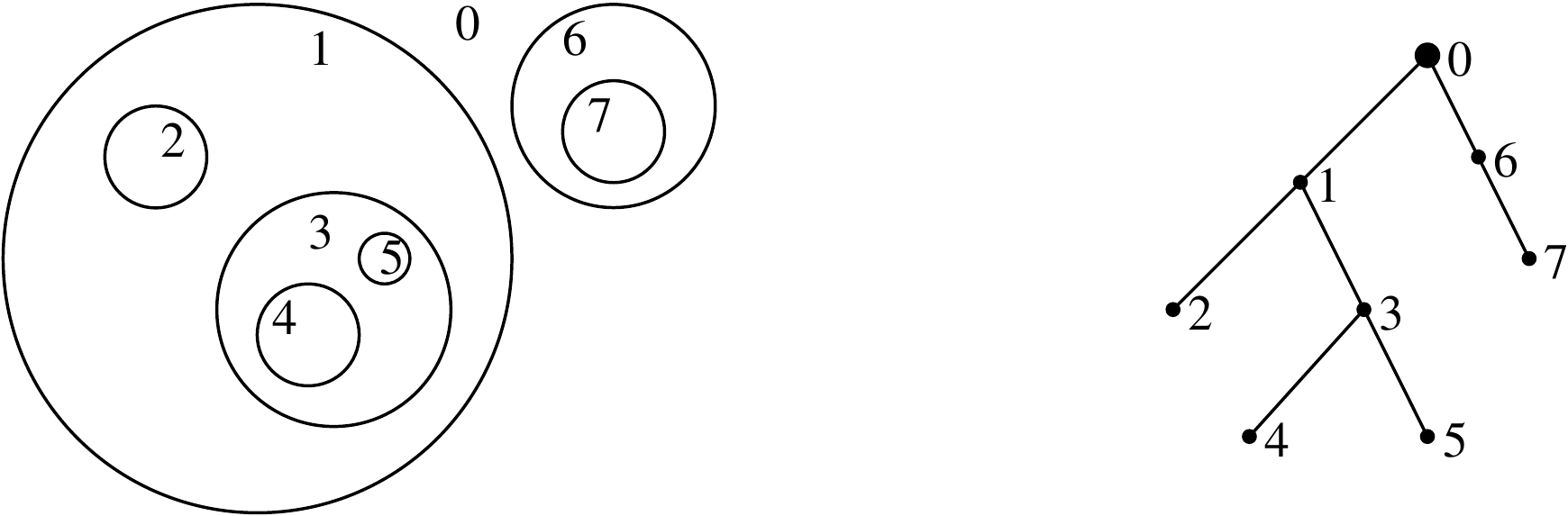}\vspace{1cm}}
\caption{\label{F-tree}\sf A collection of seven ovals in the plane,
and the associated dual graph. Each numbered vertex corresponds to the
associated numbered complementary region, and each edge corresponds to
the oval which separates two such regions.\bigskip}
\bigskip
\end{figure}
\bigskip

\begin{figure} [h!]
\centerline{\includegraphics[width=4in]{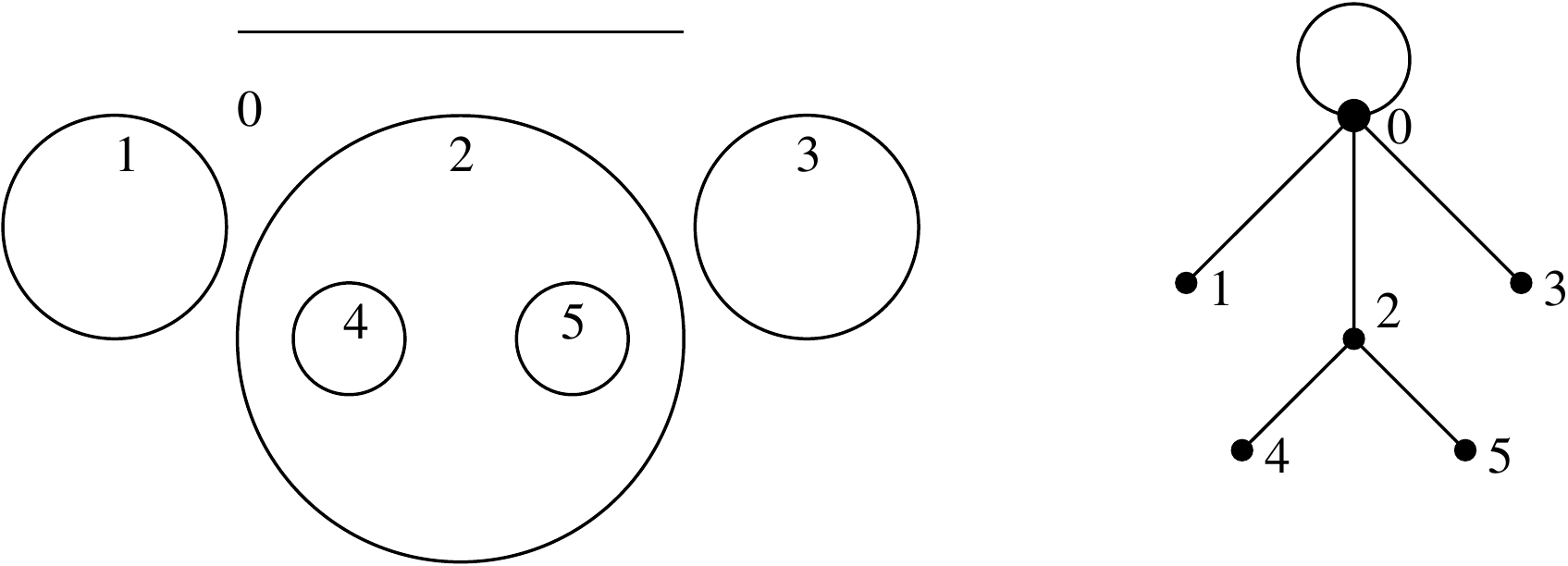}\vspace{1cm}}
\caption{\sf A similar figure with five ovals plus one non-oval.
\label{F-odd}\bigskip}
\bigskip
\end{figure}
\bigskip

It is not hard to check that one collection of embedded
topological circles in $\bP^2(\R)$ can be deformed continuously into another
if and only if they have isomorphic dual graphs, where the isomorphism
is required to preserve the root point.

However, we are interested in smooth algebraic curves in $\bP^2(\R)$.
If two curves of degree $n$ represent the same connected component in the
space $\fC^\rs_n$ or $\fC^\cs_n$ of smooth curves (or equivalently, the same 
component in the  moduli space $\M^\cs_n(\R)$ for smooth curves), then it 
follows  that they have isomorphic rooted graphs. However the converse
 statement is false.
 \ssk
 
One can learn much about a curve  $\cC\in\fC^\cs_n$ by thinking of its real
 locus $|\cC|_\R$ as a collection of  topological circles 
 embedded in the smooth Riemann surface $|\cC|_\C$. Following Klein, 
a curve is said to be of \hbox{\textbf{\textit{Type 1}}} if the Riemann
surface $|\cC|_\C$ is disconnected by $|\cC|_\R$, 
and of \textbf{\textit{Type 2}} if the difference set  $|\cC|_\C\ssm |\cC|_\R$
is connected. Rokhlin \cite{Ro} described an example of two 
connected components in the space $\fC^\cs_5$ such the the real loci $|\cC|_\R$
for curves in one component
can be deformed continuously to the real loci for curves in the other component,
even though one of these components has Type 1, while the other has Type 2.
For curves of degree six, Rokhlin and Nikulin showed 
that the space $\fC^\cs_6$ has 64  distinct connected components, although 
there are only 56 distinct real topological types.
(Compare \cite{KKPSS}.) 
\medskip

Perhaps Hilbert's Problem should be reformulated in more modern terms as
 follows:

\begin{quote}\sf
Is it possible to find an algorithm which, for any specified degree $n$ and
any rooted tree,  will decide whether or not
there is a curve in $\fC_n^\cs(\R)$ with the topological type corresponding
to this tree? More precisely, can it decide how many components in 
$\fC_n^\cs(\R)$ have this topological type; and can it decide when two
given curves belong to the same component? (Of course, to be really useful
such an algorithm would have to run in polynomial time.)
\end{quote}

It would also be interesting to find out what one can say about the topology 
of the various components of $\fC_n^\cs(\R)$. Perhaps, some components
have a complicated 
fundamental group? For even $n$ there is one component 
which is easy to  understand: It is not hard to see that
 the component consisting of curves $\cC$ with no real points, so that
 $|\cC|_\R$ is empty, is a convex subset of projective space.
\msk

\begin{rem}
One can also consider the  \textbf{\textit{moduli space}} 
$\M_n^\cs =\fC^\cs_n/\bG$
for real curves which are complex-smooth, where $\bG=\PGL_3(\R)$.
Since $\fC^\cs_n(\R)$ is by definition a subset of $\fC^\sm_n(\C)$, it follows
easily from Corollary \ref{C-dis} that the action of $\bG$ on 
$\fC^\cs_n(\R)$ is proper, and hence that the quotient space 
$\M^\cs_n(\R)$ is a Hausdorff orbifold. 
Since this group $\bG$ is connected,
it follows easily that there is a one-to-one correspondence between connected
components of $\fC_n^\cs$ and connected components of $\M_n^\cs$.
\end{rem}
\bigskip 

\setcounter{lem}{0}

\appendix
\section{Remarks on the Literature.}\label{A-A}

The moduli space $\M^\sm_n(\C)$
 for smooth curves of degree $n$ has long been studied. In
many cases it is known to be a rational variety. (Compare \cite{S-B}
 and \cite{BBK}.) For the problem of compactifying $\M^\sm_n(\C)$, compare 
\cite{Hac}. Curves in $\bP^2$ (and more generally in $\bP^n$) with an infinite
group of projective automorphisms were  studied by F. Klein and S. Lie 
in 1871  (see \cite{KL}).
 For the classification of such curves, see \cite[\bf AF2]{AF1}. 

The Algebraic
Geometer's Bible for studying moduli spaces is Mumford's\break
``Geometric Invariant Theory'' \cite{Mu}. For other expositions of this theory,
see for example \cite{New}, or \cite{Sim};  
and for the special case of 
$\PGL_{k+1}$ acting on hypersurfaces in $\bP^k$ see  \cite{Ne}, as well as
 \cite[Ch.4,\,\S2]{Mu}.
The theory takes a simpler form in the very special case where the
reductive Lie group
$\bG$ acts on a variety  $\bX\subset\bP^k$ \textbf{\textit{linearly}}, that 
is by an embedding into  $\PGL_{k+1}$ which lifts to an
embedding  into $\GL_{k+1}$. A point of $\xx\in \bX$ is then
 called \textbf{\textit{stable}} if the stabilizer
$\bG_\xx$ of $\xx$ is finite, and if the orbit of a representative point
$\widehat\xx\in\C^{k+1}\ssm\{\bf 0\}$ over $\xx$
is closed and bounded away from zero. If $\bX^{\sf s}$ is the open subset
consisting of stable points, then the quotient $\bX^{\sf s}/\bG$ is well 
behaved.
The extension of this definition to more general group actions depends
on a study of suitably linearized line bundles over $\bX$.
Particularly noteworthy are the Hilbert-Mumford numerical criterion for
stability \cite[Ch.\,2]{Mu}, and the related Kempf-Ness criterion \cite{KN}.

Alternative definitions can be provided in a somewhat simpler way 
by introducing symplectic structures (Compare  \cite{GRS},
or \cite{MS2}.)   Here is a brief outline: Suppose that $\bG$
is a complex reductive group with maximal compact subgroup $\bf K$ (for example
$\bG=\PGL_k(\C)$, with  ${{\bf K}=\rm PU}_k$), and suppose that $\bG$ acts
on a manifold $\bX$, which is provided with a ${\bf K}$-invariant symplectic 
structure.  In good cases, there is an associated \textbf{\textit{moment map}}
$$ {\mathfrak m}:\bX~\to~ \L^*,$$
where $\L^*={\rm Hom}_\R(\L,\,\R)$ is the dual vector space
to the Lie algebra $\L=\L(\bK)$, considered as a real vector space.
 (Compare \cite{GGK}.)
This map  ${\mathfrak m}$ has two important properties: The given 
action of $\bK$ on $\bX$ corresponds to the adjoint action of $\bK$ on $\L^*$.
Furthermore, for each vector $v\in\L$, if we think of the map 
$\xx\mapsto  {\mathfrak m}(\xx)(v)$
from $\bX$ to $\R$ as a Hamiltonian function, then
 the solution curves for the associated Hamiltonian differential
equation on $\bX$ are just the orbits $t\mapsto \exp(tv)(\xx)$ under the 
 one parameter subgroup $t\mapsto \exp(tv)$ of $\bK$ which is generated by $v$.
A point  $\xx\in\bX$ is called \textbf{\textit{stable}}, with respect to this 
choice of  ${\mathfrak m}$, if the stabilizer $\bG_\xx$ is finite, and if the 
intersection of the set  ${\mathfrak m}^{-1}({\bf 0})$ with the $\bG$ orbit of
$\xx$ is non-empty.

In the case of interest, with $\bG=\PGL_k(\C)$ with $k\ge 2$, there is a 
unique choice of the moment map  ${\mathfrak m}$, so that the open set
$\bX^{\sf s}$ 
consisting of stable points is also uniquely defined. Furthermore, the
quotient space  $\bX^s/\bG$ is a well defined orbifold with a Hausdorff
topology.\msk

In \S\ref{s-cc} and \S\ref{s-G} we describe open subsets of $\bX$ with
a Hausdorff orbifold quotient. Perhaps these are contained in
Mumford's set of stable points; but we don't have a proof.
\bigskip

\end{document}